%% LyX 2.1.1 created this file.  For more info, see http://www.lyx.org/.
%% Do not edit unless you really know what you are doing.
\documentclass[oneside,english]{amsart}
\usepackage[latin9]{inputenc}
\usepackage{geometry}
\geometry{verbose,tmargin=3cm,bmargin=3cm,lmargin=2cm,rmargin=2cm}
\usepackage{color}
\usepackage{units}
\usepackage{amsthm}
\usepackage{amsbsy}
\usepackage{amssymb}
\usepackage{graphicx}
\usepackage{esint}

\makeatletter
%%%%%%%%%%%%%%%%%%%%%%%%%%%%%% Textclass specific LaTeX commands.
\numberwithin{equation}{section}
\numberwithin{figure}{section}
\theoremstyle{plain}
\newtheorem{thm}{\protect\theoremname}
  \theoremstyle{plain}
  \newtheorem{cor}[thm]{\protect\corollaryname}
  \theoremstyle{plain}
  \newtheorem{lem}[thm]{\protect\lemmaname}
  \theoremstyle{remark}
  \newtheorem{rem}[thm]{\protect\remarkname}

%%%%%%%%%%%%%%%%%%%%%%%%%%%%%% User specified LaTeX commands.
\usepackage{etoolbox}% http://ctan.org/pkg/etoolbox
\patchcmd{\thmhead}{(#3)}{#3}{}{}

\makeatother

\usepackage{babel}
  \providecommand{\corollaryname}{Corollary}
  \providecommand{\lemmaname}{Lemma}
  \providecommand{\remarkname}{Remark}
\providecommand{\theoremname}{Theorem}

\begin{document}

\title{The complexity of spherical $p$-spin models - a second moment approach}

\author{Eliran Subag}
\begin{abstract}
Recently, Auffinger, Ben Arous, and {\v{C}}ern{\'y} initiated the
study of critical points of the Hamiltonian in the spherical pure
$p$-spin spin glass model, and established connections between those
and several notions from the physics literature. Denoting the number
of critical values less than $Nu$ by $\mbox{Crt}_{N}\left(u\right)$,
they computed the asymptotics of $\frac{1}{N}\log\left(\mathbb{E}\mbox{Crt}_{N}\left(u\right)\right)$,
as $N$, the dimension of the sphere, goes to $\infty$. We compute
the asymptotics of the corresponding second moment and show that,
for $p\geq3$ and sufficiently negative $u$, it matches the first
moment: 
\[
\mathbb{E}\left\{ \left(\mbox{Crt}_{N}\left(u\right)\right)^{2}\right\} /\left(\vphantom{\left(\mbox{Crt}_{N}\left(u\right)\right)^{2}}\mathbb{E}\left\{ \mbox{Crt}_{N}\left(u\right)\right\} \right)^{2}\to1.
\]
As an immediate consequence we obtain that $\mbox{Crt}_{N}\left(u\right)/\mathbb{E}\left\{ \mbox{Crt}_{N}\left(u\right)\right\} \to1$,
in $L^{2}$ and thus in probability. For any $u$ for which $\mathbb{E}\mbox{Crt}_{N}\left(u\right)$
does not tend to $0$ we prove that the moments match on an exponential
scale.
\end{abstract}

\maketitle

\section{Introduction}

The Hamiltonian of the spherical \emph{pure }$p$-spin spin glass
model is given by 
\begin{equation}
H_{N}\left(\boldsymbol{\sigma}\right):=H_{N,p}\left(\boldsymbol{\sigma}\right)=\frac{1}{N^{\left(p-1\right)/2}}\sum_{i_{1},...,i_{p}=1}^{N}J_{i_{1},...,i_{p}}\sigma_{i_{1}}\cdots\sigma_{i_{p}},\quad\boldsymbol{\sigma}\in\mathbb{S}^{N-1}\left(\sqrt{N}\right),\label{eq:Hamiltonian}
\end{equation}
where $\boldsymbol{\sigma}=\left(\sigma_{1},...,\sigma_{N}\right)$
, $\mathbb{S}^{N-1}\left(\sqrt{N}\right)\triangleq\left\{ \boldsymbol{\sigma}\in\mathbb{R}^{N}:\,\left\Vert \boldsymbol{\sigma}\right\Vert _{2}=\sqrt{N}\right\} $,
and $J_{i_{1},...,i_{p}}$ are i.i.d standard normal variables. Everywhere
in the paper we shall assume that $p\geq3$.%
\footnote{In the case $p=2$ the critical points of $H_{N}\left(\boldsymbol{\sigma}\right)$
are exactly the points $\boldsymbol{\sigma}\in\mathbb{S}^{N-1}\left(\sqrt{N}\right)$
which are eigenvectors of the matrix $\left(J_{i_{1},i_{2}}+J_{i_{2},i_{1}}\right)_{i_{1},i_{2}=1}^{N}$.
In particular, there are exactly $2N$ such points almost surely.%
} The model was introduced by Crisanti and Sommers \cite{pSPSG} as
a variant of the Ising $p$-spin spin glass model. Unlike the Ising
$p$-spin model, defined on the hypercube, the spherical $p$-spin
model is defined on a continuous space - a property they expected
to yield a model amenable to different methods of analysis, while
retaining the main features of the original model. A generalization
of the model called the spherical \emph{mixed }$p$-spin spin glass
model is obtained by setting the Hamiltonian to be $H_{N}\left(\boldsymbol{\sigma}\right)=\sum_{p\geq2}\beta_{p}H_{N,p}\left(\boldsymbol{\sigma}\right)$,
with $H_{N,p}\left(\boldsymbol{\sigma}\right)$ being independent
pure $p$-spin models and $\beta_{p}\geq0$ (such that the sum is
defined). 

Recently, Auffinger, Ben Arous, and {\v{C}}ern{\'y} \cite{A-BA-C}
suggested to study the critical points of the Hamiltonian of the spherical
pure $p$-spin model in order to understand its landscape. Their work
was later extended \cite{ABA2} to the mixed case. The main results
of \cite{A-BA-C} on the complexity of the Hamiltonian for the pure
$p$-spin model are as follows. Let $\mbox{Crt}_{N}\left(B\right)$
denote the number of critical points of $H_{N}\left(\boldsymbol{\sigma}\right)$
at which $H_{N}\left(\boldsymbol{\sigma}\right)/N$ lies in a Borel
set $B\subset\mathbb{R}$ (cf. (\ref{eq:25})). Use the notation $\mbox{Crt}_{N,k}\left(B\right)$
for the number of such critical points with index $k$. It was shown
in \cite{A-BA-C} that 
\begin{align}
\lim_{N\to\infty}\frac{1}{N}\log\left(\mathbb{E}\left\{ \mbox{Crt}_{N}\left(\left(-\infty,u\right)\right)\right\} \right) & =\Theta_{p}\left(u\right),\label{eq:e1}\\
\lim_{N\to\infty}\frac{1}{N}\log\left(\mathbb{E}\left\{ \mbox{Crt}_{N,k}\left(\left(-\infty,u\right)\right)\right\} \right) & =\Theta_{p,k}\left(u\right),\label{eq:e2}
\end{align}
where $\Theta_{p}\left(u\right)$ and $\Theta_{p,k}\left(u\right)$
are known non-decreasing functions (cf. Theorem \ref{thm:A-BA-C}).
Moreover, with $E_{k}\left(p\right)$ being equal to the unique number
satisfying $\Theta_{p,k}\left(-E_{k}\left(p\right)\right)=0$, 
\[
E_{0}\left(p\right)>E_{1}\left(p\right)>E_{2}\left(p\right)>\cdots,\,\,\mbox{and}\,\,\lim_{k\to\infty}E_{k}\left(p\right)=E_{\infty}\left(p\right)\triangleq2\sqrt{\frac{p-1}{p}},
\]
and for each $k$ and closed set $B\subset\mathbb{R}$ such that $B$
and $\left[-E_{k}\left(p\right),-E_{\infty}\left(p\right)\right]$
are disjoint, $\mathbb{P}\left\{ \mbox{Crt}_{N,k}\left(B\right)>0\right\} $
decays (at least) exponentially in $N$. In addition, they showed
that for $u<-E_{\infty}\left(p\right)$, $\Theta_{p}\left(u\right)=\Theta_{p,0}\left(u\right)$,
which, in particular, implies that for any $\epsilon>0$, with high
probability
\begin{equation}
\mbox{Crt}_{N}\left(\left(-\infty,-E_{0}\left(p\right)-\epsilon\right)\right)=0.\label{eq:88}
\end{equation}

The computation of the means is certainly a significant step in the
investigation of the critical points. However, \emph{by themselves},
the means give very limited information on the probabilistic law of
the corresponding variables. Essentially, they can only be used to
obtain (by appealing to Markov's inequality) the upper bounds on (\ref{eq:88})
stated above. A question that naturally arises is: are the corresponding
variables concentrated around their means? In the general context
of spherical mixed $p$-spin models this is not necessarily the case:
for a subclass of models termed by \cite{ABA2}\emph{ full mixture
}models, there is a range of levels $u$, such that the mean number
of critical points in $\left(-\infty,u\right)$ is exponentially high,
while the probability of having a critical point in $\left(-\infty,u\right)$
goes to zero (see \cite[Corollary 4.1]{ABA2}). 

Focusing on the pure case and on the number of critical points of
general index $\mbox{Crt}_{N}\left(\cdot\right)$, we establish that
the answer to the above is positive. This is done, as suggested in
\cite[p. 2]{A-BA-C}, by computing \textcolor{black}{the second moment
in addition to the already known first moment. }
\begin{thm}
\label{thm:Var-E2}For any $p\geq3$ and $u\in\left(-E_{0}\left(p\right),-E_{\infty}\left(p\right)\right)$,
\begin{equation}
\lim_{N\to\infty}\frac{\mathbb{E}\big\{\left({\rm Crt}_{N}\left(\left(-\infty,u\right)\right)\right)^{2}\big\}}{\big(\mathbb{E}\left\{ {\rm Crt}_{N}\left(\left(-\infty,u\right)\right)\right\} \big)^{2}}=1.\label{eq:intro-1}
\end{equation}

\end{thm}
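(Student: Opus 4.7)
The strategy is to apply the Kac-Rice formula to both moments, reduce the double integral appearing in the second moment to a one-dimensional integral over the overlap $r=\langle\boldsymbol{\sigma},\boldsymbol{\sigma}'\rangle/N$ by the rotational invariance of $H_N$, and then prove by a Laplace-type analysis on $r\in[-1,1]$ that the integral is dominated by $r=0$, where the two points decorrelate and the integrand factorizes as the square of the one-point Kac-Rice integrand.

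Concretely, the second factorial moment equals
\[
\int_{\mathbb{S}^{N-1}(\sqrt N)^2}\varphi_N(\boldsymbol{\sigma},\boldsymbol{\sigma}')\,\mathbb{E}\bigl\{|\det\nabla^2 H_N(\boldsymbol{\sigma})\det\nabla^2 H_N(\boldsymbol{\sigma}')|\,\mathbf{1}_{H_N(\boldsymbol{\sigma}),H_N(\boldsymbol{\sigma}')\le Nu}\bigm|\nabla H_N(\boldsymbol{\sigma})=\nabla H_N(\boldsymbol{\sigma}')=0\bigr\}\,d\boldsymbol{\sigma}\,d\boldsymbol{\sigma}',
\]
with $\varphi_N$ the Gaussian density of the two spherical gradients at $0$, up to a diagonal term that is negligible since $\mathbb{E}\{\mathrm{Crt}_N\}$ is already exponentially large. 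By $O(N)$-invariance the integrand depends on $(\boldsymbol{\sigma},\boldsymbol{\sigma}')$ only through $r$, and using $d\boldsymbol{\sigma}'=c_N(1-r^2)^{(N-3)/2}dr\,d\omega$ the double integral collapses to $C_N\int_{-1}^{1}e^{N\Phi_N(r,u)+\frac{N-3}{2}\log(1-r^2)}dr$. The inner rate $\Phi_N(r,u)\to\Phi(r,u)$ is obtained as follows: from the covariance $\mathbb{E}\{H_N(\boldsymbol{\sigma})H_N(\boldsymbol{\sigma}')\}\propto Nr^p$ one reads off the joint Gaussian law of $(H_N,\nabla H_N,\nabla^2 H_N)$ at both points as explicit polynomials in $r$; after conditioning on vanishing gradients and on the two values, the Hessian pair reduces to a coupled pair of centered GOE-like blocks with explicit $r$-dependent cross-covariance and a deterministic diagonal shift proportional to the conditioned values, and a large-deviation computation in the style of Ben Arous-Dembo-Guionnet and \cite{A-BA-C} extended to the coupled pair gives the asymptotics of $\mathbb{E}\{|\det\cdot\det|\}$.

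The theorem will then follow from two assertions: (i) $r=0$ is the \emph{unique} global maximum on $[-1,1]$ of $\Phi(r,u)+\tfrac12\log(1-r^2)$, with maximal value equal to $2\Theta_p(u)$; and (ii) the Laplace-method prefactor from expanding around $r=0$ combines exactly with the square of the one-point Kac-Rice prefactor to give the limit $1$ rather than merely exponential equivalence. The equality $\Phi(0,u)=2\Theta_p(u)$ is automatic at $r=0$: there the joint Gaussian law of the two one-point data becomes a product, so the conditional expectation factorizes and $\varphi_N$ splits; (ii) then follows from a careful bookkeeping of the Gaussian prefactors and of the spherical volume element, using that the single-point first moment is itself given asymptotically by the same Gaussian Laplace constant evaluated at the relevant minimizing value.

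The main obstacle is assertion (i). No obvious symmetry forces uniqueness: for even $p$ the problem has the $r\mapsto-r$ symmetry and one must exclude a symmetric pair of maximizers at $\pm r_*\ne 0$, and for odd $p$ the variational problem is genuinely asymmetric. One must therefore exploit both the explicit form of the $p$-spin covariance $r^p$ and the specific range $u\in(-E_0(p),-E_\infty(p))$: since $\Theta_p(u)=\Theta_{p,0}(u)$ for $u<-E_\infty(p)$, the typical critical points counted by $\mathrm{Crt}_N$ are local minima, so each Hessian is asymptotically positive definite and the determinant factor carries no sign cancellation, which is essential for the large-deviation rate to have a clean variational interpretation. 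Controlling the boundary $r\to\pm 1$, where the joint Gaussian law degenerates and the Kac-Rice integrand must be regularized separately, is a further technical hurdle, as is upgrading the logarithmic asymptotics of \cite{A-BA-C} to the exact polynomial-prefactor accuracy required by the precise limit $=1$ in \eqref{eq:intro-1}.
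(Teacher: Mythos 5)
Your broad outline — Kac-Rice for both moments, reduction to a one-dimensional integral over the overlap $r$, dominance of $r=0$ — matches the paper's strategy, and you correctly flag the uniqueness of the maximizer at $r=0$ (the paper's Lemma~\ref{lem:r=00003D0}) as a genuine obstacle that requires exploiting the specific form of the $p$-spin covariance. You also correctly observe that the regime $u<-E_\infty(p)$ forces the shifted Hessians to be uniformly invertible, which is indeed what makes the analysis tractable.

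However, your assertion (ii) — that the $O(1)$ prefactor matching ``follows from careful bookkeeping of the Gaussian prefactors'' — is where the real difficulty lies, and ``bookkeeping'' does not resolve it. The determinant factor $\mathbb{E}\{|\det\mathbf{M}^{(1)}_{N-1}(r)\det\mathbf{M}^{(2)}_{N-1}(r)|\}$ is itself exponentially large in $N$, and a classical Laplace expansion around $r=0$ would require showing that its ratio to the $r=0$ value is $1+o(1)$ uniformly over a shrinking window of $r$ — this cannot be inferred from smoothness alone, since the fluctuations of a single $N\times N$ determinant are already of order $e^{O(\sqrt N)}$. The paper's mechanism is quite different from what you describe: first it localizes to $(u-\epsilon_N,u)\times(-\rho_N,\rho_N)$ (Lemma~\ref{lem:21}), so that, since $\mathbb{E}\{(\mathrm{Crt}_N)^2\}\geq(\mathbb{E}\{\mathrm{Crt}_N\})^2$ trivially, only a matching \emph{upper} bound on the localized second moment is needed — no two-sided Laplace expansion. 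Then, after discarding the rank-$2$ perturbations (Lemma~\ref{lem:n7}), it controls the $r$-dependence of $w_u(r)=\mathbb{E}\{\prod_i\det(\mathbf{X}^{(i)}_{N-1}(r)-\sqrt{\tfrac{N}{N-1}\tfrac{p}{p-1}}\,uI)\}$ via a Gaussian integration-by-parts identity (Lemma~\ref{lem:n4}): all derivatives of $w_u$ in the correlation parameter are nonnegative at $0$ because each is a squared expectation, so $w_u$ is convex in a power of $r$, and since $|w_u(1)/w_u(0)|$ is bounded by an $N$-independent constant (Corollary~\ref{cor:n5}, via concentration of linear eigenvalue statistics, Lemma~\ref{lem:n2}), convexity forces $w_u(r)=(1+o(1))w_u(0)$ on the shrinking window. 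This convexity argument is the missing idea in your proposal; without it, there is no route to the precise limit $1$.
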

As an immediate corollary we obtain the following.
\begin{cor}
\label{cor:intro}For any $p\geq3$ and $u\in\left(-E_{0}\left(p\right),-E_{\infty}\left(p\right)\right)$,
\[
\lim_{N\to\infty}\frac{{\rm Crt}_{N}\left(\left(-\infty,u\right)\right)}{\mathbb{E}\left\{ {\rm Crt}_{N}\left(\left(-\infty,u\right)\right)\right\} }=1,
\]
in $L_{2}$, and thus, also in probability.
\end{cor}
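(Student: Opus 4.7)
The plan is to deduce the corollary directly from Theorem \ref{thm:Var-E2} by a standard second-moment argument; essentially no additional work is required beyond one small preliminary observation, which is to confirm that the ratio in the statement is well-defined for all large $N$. For $u\in(-E_0(p),-E_\infty(p))$ one has $\Theta_p(u)=\Theta_{p,0}(u)$ (this identity is recorded in the excerpt for $u<-E_\infty(p)$), and since $\Theta_{p,0}$ is non-decreasing with $\Theta_{p,0}(-E_0(p))=0$, the strict inequality $u>-E_0(p)$ gives $\Theta_p(u)>0$. Consequently, by (\ref{eq:e1}), $\mathbb{E}\{\mathrm{Crt}_N((-\infty,u))\}$ grows exponentially in $N$ and in particular is strictly positive for all $N$ sufficiently large, so division by it is legitimate.

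Next, write $X_N:=\mathrm{Crt}_N((-\infty,u))$ and $\mu_N:=\mathbb{E}\{X_N\}$. A direct expansion of the square gives
\[
\mathbb{E}\left\{\left(\frac{X_N}{\mu_N}-1\right)^{\!2}\right\} \;=\; \frac{\mathbb{E}\{X_N^2\}}{\mu_N^{2}} \;-\; 1,
\]
and by Theorem \ref{thm:Var-E2} the right-hand side tends to $0$ as $N\to\infty$. This is precisely the $L_2$ convergence of $X_N/\mu_N$ to the constant $1$ asserted in the corollary. Convergence in probability is then automatic: either invoke the general fact that $L_2$ convergence implies convergence in probability, or apply Chebyshev's inequality directly, which gives $\mathbb{P}\{|X_N/\mu_N-1|>\varepsilon\}\leq\varepsilon^{-2}(\mathbb{E}\{X_N^2\}/\mu_N^2-1)\to0$ for every $\varepsilon>0$.

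There is no genuine obstacle to overcome in this deduction: all of the analytic difficulty is already absorbed in Theorem \ref{thm:Var-E2}, whose proof carries out the delicate Kac--Rice-type comparison between $\mathbb{E}\{\mathrm{Crt}_N^2\}$ and $(\mathbb{E}\{\mathrm{Crt}_N\})^2$. Once that matching-moments estimate is in hand, the corollary follows by the standard textbook argument sketched above, the only subtlety being the entirely elementary verification that $\mu_N$ is eventually positive on the prescribed range of $u$.
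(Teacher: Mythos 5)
Your proposal is correct and is exactly the standard second-moment deduction the paper has in mind: the paper states this corollary as an immediate consequence of Theorem \ref{thm:Var-E2} without giving a separate proof, and your expansion of $\mathbb{E}\{(X_N/\mu_N-1)^2\}=\mathbb{E}\{X_N^2\}/\mu_N^2-1\to 0$, together with the elementary check that $\mu_N>0$ eventually, is precisely the intended argument.
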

\textcolor{black}{The main motivation for the study of the Gaussian
fields }$H_{N,p}\left(\boldsymbol{\sigma}\right)$ \textcolor{black}{is
their importance in the physics literature. Nevertheless, the model
certainly serves as a natural setting to investigate a question of
pure mathematical interest: what is the behavior of the critical points
of an isotropic random function on a high dimensional manifold? To
the best of our knowledge, the corollary above (combined with the
computation of the first moment of \cite{A-BA-C}) is the first concentration
result for the high dimensional limit. }

Computations of moments of the\emph{ }number of critical points were
done in other settings. Closest to our setting are the works of \textcolor{black}{Fyodorov
\cite{Fyo1,Fyo2} which dealt with isotropic fields on the sphere
$\mathbb{S}^{N}$ and on $\mathbb{R}^{N}$ and the first moment of
number of critical points and its large $N$ asymptotics. Further
away, are the works of Nicolaescu \cite{Nic5,Nic4,Nic3,Nic2,Nic1},
Sarnak and Wigman \cite{SrnakWigman}, Cammarota, Marinucci and Wigman
\cite{Cammarota2015,CMWigman}, Douglas, Shiffman, and Zelditch \cite{Zel1,Zel2,Zel3},
Baugher \cite{Baugh}, and Feng and Zelditch \cite{Zel4}. Those concerned
Gaussian fields on a fixed space and asymptotics in parameters of
different nature than the dimension, e.g. ones related to roughness
of the random field by adding functions of higher frequency to a random
expansion. In \cite{Nic5,Nic3,Cammarota2015,CMWigman} concentration
results were also derived by second moment computations. Lastly, we
mention works on nodal domains of Gaussian fields. See for example
Nazarov and Sodin \cite{NazarovSodin1,NazarovSodin2} and references
therein.}

For any $u$ for which $\mathbb{E}\left\{ {\rm Crt}_{N}\left(\left(-\infty,u\right)\right)\right\} $
does not tend to $0$, we show that the moments match on an exponential
scale.
\begin{thm}
\label{thm:Var-E2-log}For any $p\geq3$ and $u\in\left(-E_{0}\left(p\right),\infty\right)$,

\begin{align}
\lim_{N\to\infty}\frac{1}{N}\log\left(\mathbb{E}\left\{ \left({\rm Crt}_{N}\left(\left(-\infty,u\right)\right)\right)^{2}\right\} \right) & =2\lim_{N\to\infty}\frac{1}{N}\log\left(\mathbb{E}\left\{ {\rm Crt}_{N}\left(\left(-\infty,u\right)\right)\right\} \right)=2\Theta_{p}\left(u\right),\label{eq:intro-2}
\end{align}
where $\Theta_{p}\left(u\right)$ is given in (\ref{eq:54}).
\end{thm}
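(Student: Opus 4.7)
The lower bound $\liminf_{N\to\infty}\tfrac{1}{N}\log\mathbb{E}\{({\rm Crt}_N((-\infty,u)))^2\}\geq 2\Theta_p(u)$ is immediate from Jensen's inequality combined with \eqref{eq:e1}, so all the work lies in proving the matching upper bound. Writing $X={\rm Crt}_N((-\infty,u))$ and noting $\mathbb{E}\{X^2\}=\mathbb{E}\{X(X-1)\}+\mathbb{E}\{X\}$, the second summand contributes only at exponential rate $\Theta_p(u)<2\Theta_p(u)$, so it suffices to bound the factorial moment by $\exp(2N\Theta_p(u)+o(N))$.

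The plan is to apply the Kac--Rice formula to the second factorial moment,
\[
\mathbb{E}\{X(X-1)\}=\iint_{\mathbb{S}^{N-1}(\sqrt{N})^{2}}\mathbb{E}\big[|\det\nabla^{2}H_N(\sigma_1)||\det\nabla^{2}H_N(\sigma_2)|\mathbf{1}_{A_u}\,\big|\,\nabla H_N(\sigma_i)=0\big]\,\varphi_{\sigma_1,\sigma_2}(0,0)\,d\sigma_1 d\sigma_2,
\]
where $A_u=\{H_N(\sigma_i)/N<u,\,i=1,2\}$, $\varphi_{\sigma_1,\sigma_2}$ is the joint Gaussian density of the Riemannian gradients $(\nabla H_N(\sigma_1),\nabla H_N(\sigma_2))$, and the Hessians are also Riemannian. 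Isotropy of $H_N$ forces the integrand to depend on $(\sigma_1,\sigma_2)$ only through the overlap $r=\langle\sigma_1,\sigma_2\rangle/N$, and integrating out the $SO(N)$-action reduces the expression to
\[
\mathbb{E}\{X(X-1)\}=C_N\int_{-1}^{1}(1-r^2)^{(N-3)/2}\,\mathcal{I}_N(r,u)\,dr,
\]
with an explicit polynomial-exponential prefactor $C_N$ and $\mathcal{I}_N(r,u)$ the joint Hessian expectation at a fixed pair of overlap $r$. Conditioning on $\nabla H_N(\sigma_i)=0$ turns $(\nabla^{2}H_N(\sigma_1),\nabla^{2}H_N(\sigma_2))$ into a pair of correlated GOE-type matrices shifted by the (conditioned) values $H_N(\sigma_i)$, with coupling parameterized by $r$ through the covariance of $H_{N,p}$; this is the two-point analogue of the Hessian identification used in \cite{A-BA-C}, and its exponential rate can in principle be controlled via Coulomb-gas/large-deviation estimates for correlated empirical spectra. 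Laplace's method on $[-1,1]$ then yields
\[
\limsup_{N\to\infty}\tfrac{1}{N}\log\mathbb{E}\{X^2\}\leq\sup_{r\in[-1,1]}\Psi_p(r,u),\qquad \Psi_p(r,u):=\limsup_{N\to\infty}\tfrac{1}{N}\log\big((1-r^2)^{(N-3)/2}\mathcal{I}_N(r,u)\big),
\]
after which it remains to verify the variational bound $\sup_r\Psi_p(r,u)\leq 2\Theta_p(u)$. At $r=0$ the two critical points are asymptotically independent, so $\Psi_p(0,u)=2\Theta_p(u)$ and the bound is saturated; the endpoints $r=\pm 1$ are damped by $(1-r^2)^{(N-3)/2}$, modulo a separate crude a priori bound on $\mathcal{I}_N$ to accommodate the degenerate Gaussian conditioning there.

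The principal obstacle, in my view, is precisely the variational inequality $\Psi_p(r,u)\leq 2\Theta_p(u)$ uniformly in $r\in(-1,1)$. Unlike the first-moment computation of \cite{A-BA-C}, the two-point problem gives rise to a genuine two-matrix model with $r$-dependent coupling, whose joint spectral large-deviation rate must be computed or bounded explicitly. A Gaussian interpolation or comparison between the $r$-correlated pair and two independent copies would be the slickest route if available; otherwise one is forced to compute the two-point rate function by hand and optimize in $r$ via calculus, with the additional complication that the range $u\in(-E_0(p),\infty)$ in the theorem includes energies where many critical points of each index coexist. The boundary $r\to\pm 1$ is a milder nuisance, absorbed either by the $(1-r^2)^{(N-3)/2}$ prefactor or by the observation that two very nearby distinct critical points are exponentially rare.
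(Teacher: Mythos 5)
Your high-level architecture (Kac--Rice for the pair process, reduction to conditional Hessian determinants, Laplace asymptotics in the overlap variable $r$, and a variational inequality on the resulting rate function) matches the paper's. But there are two genuine gaps in the details you sketch, both at the places you yourself flag as difficult.

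First, the claim that "the endpoints $r=\pm 1$ are damped by $(1-r^2)^{(N-3)/2}$" so that the rate function is maximized at $r=0$ is false. The gradient density $\varphi_{\nabla f(\mathbf{n}),\nabla f(\boldsymbol{\sigma}(r))}(0,0)$ diverges as $r\to\pm 1$ at a rate that competes with the Jacobian damping, and the determinant factors also grow, so there is no automatic decay at the boundary. The paper's Lemma~\ref{lem:r=00003D0} shows that for fixed level $v$ with $|v|>u_{th}(p)$ the rate function $\bar\Psi_p^v(r)$ is in fact maximized at $r=\pm 1$, \emph{not} at $r=0$. Since your $B=(-\infty,u)$ includes arbitrarily deep levels $v<-u_{th}(p)$, the supremum over $(r,v)$ jointly is not obtained by evaluating at $r=0$. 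The theorem still holds because for those deep levels the endpoint value $\bar\Psi_p^v(1)$ is non-positive (this is (\ref{eq:68}) in the paper, and requires its own calculus argument via (\ref{eq:70})--(\ref{eq:69})), whereas $2\Theta_p(u)>0$ for $u>-E_0(p)$; so the boundary contribution is dominated, but by a sign argument, not by $(1-r^2)$-damping. Without Lemma~\ref{lem:r=00003D0} and the inequality (\ref{eq:68}), your variational step does not close.

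Second, you treat the "genuine two-matrix model" as an obstacle requiring either an explicit joint Coulomb-gas LDP for the correlated empirical spectra or a Gaussian interpolation. The paper needs neither for this theorem. The key technical insight (Lemma~\ref{lem:bound-h} and the proof of Theorem~\ref{thm:2ndmomUBBK-1}) is that after truncating the logarithm to a bounded Lipschitz function and applying H\"older's inequality, the upper bound on the product of determinants depends only on the \emph{marginal} law of each matrix's empirical spectrum, which is GOE regardless of the coupling. The single-matrix LDP of \cite{BAG97} at speed $N^2$ then beats everything else, which grows or decays at speed $N$. No joint rate function for correlated spectra is computed or needed; the correlations are simply discarded in the upper bound. (Gaussian interpolation does appear in the paper, but only for the $O(1)$-scale refinement in Theorem~\ref{thm:Var-E2}, via Lemma~\ref{lem:n4}, not here.)

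One smaller point: your factorial-moment decomposition $\mathbb{E}\{X(X-1)\}$ removes the diagonal $\boldsymbol{\sigma}'=\boldsymbol{\sigma}$ but leaves the antipodal pairs $\boldsymbol{\sigma}'=-\boldsymbol{\sigma}$ (overlap $r=-1$) in the integral, where the Kac--Rice density is also degenerate. The paper instead integrates over $(-1,1)$ and shows in Remark~\ref{rem:r=00003D1} that pairs at $r=\pm 1$ contribute at most $2\,\mathrm{Crt}_N(B)$, which is negligible on the exponential scale. Also, the reduction to $u_1=u_2$ implicit in your notation $\Psi_p(r,u)$ is not free; the paper needs Lemma~\ref{lem:u1=00003Du2-1}, which uses concavity of $\Omega$ on $(-\infty,-2)$ and a separate partitioning argument for levels inside $(-E_0,E_0)$ where $\Omega$ is not concave.
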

Connections between the critical points and two important notions
from the physics literature were established in \cite{A-BA-C,ABA2}:
the Thouless-Anderson-Palmer (TAP) equations and the free energy.
The TAP approach suggests that `pure states' of the system can be
identified with critical points of the so-called TAP functional \cite{TAP}.
One of the main objects of interest in the analysis using this approach
is the TAP-complexity - that is, the logarithm of the number of solutions
of the TAP equations. The TAP-complexity has been extensively studied
in the physics literature in the context of the Sherrington-Kirkpatrick
model \cite{TAP-SK1,TAP-SK2,TAP-SK3,TAP-SK4,TAP-SK5}, the Ising $p$-spin
spin glass model \cite{TAP-pS-Ising1,TAP-pS-Ising2,TAP-pS-Ising3},
and the spherical $p$-spin spin glass model \cite{TAP-pSPSG1,TAP-pSPSG2,TAP-pSPSG3,TAP-pSPSG4}.
The connection to critical points of the Hamiltonian is based on the
observation of \cite{A-BA-C} (see Section 6 there for more details)
that each critical point of the Hamiltonian corresponds to exactly
two solutions of the TAP equations - meaning that a study of the critical
points is equivalent to a study of the TAP complexity. 

Another interesting link that \cite{A-BA-C,ABA2} found is related
to the ground state 
\begin{equation}
GS^{\infty}=\lim_{N\to\infty}GS^{N}\triangleq\lim_{N\to\infty}\frac{1}{N}\min_{\boldsymbol{\sigma}}H_{N}\left(\boldsymbol{\sigma}\right).\label{eq:gs1}
\end{equation}
The limiting free energy $F\left(\beta\right)$ is known to exist
and is given by the Parisi formula \cite{Parisi,pSPSG}, proved in
\cite{Talag,Chen}. The formula expresses $F\left(\beta\right)$ through
an intricate variational problem, which is greatly simplified when
one-step replica symmetry breaking (1-RSB) is known to occur (see
\cite{Talag2} for a definition of this terminology). In Section 4
of their work, \cite{ABA2} define the class of\emph{ pure-like} spherical
$p$-spin models and prove for it that
\begin{equation}
E_{0}\geq-GS^{\infty}=\lim_{\beta\to\infty}\frac{1}{\beta}F\left(\beta\right)\leq\lim_{\beta\to\infty}\frac{1}{\beta}F^{{\scriptstyle \mbox{\ensuremath{{\scriptstyle 1RSB}}}}}\left(\beta\right)=E_{0},\label{eq:q2}
\end{equation}
where $F^{{\scriptstyle \mbox{\ensuremath{{\scriptstyle 1RSB}}}}}\left(\beta\right)$
is defined to be the free energy obtained from the Parisi formula
under the assumption that 1-RSB occurs.

Therefore, if 1-RSB is exhibited, i.e., the second inequality above
holds as equality, then $GS^{\infty}=-E_{0}$, and the first moment
computation (\ref{eq:e1}) gives the ground state. Using the fact
that pure spherical $p$-spin models are known to exhibit 1-RSB \cite[Proposition 2.2]{Talag},
\cite{A-BA-C} proved that $GS^{\infty}=-E_{0}$. Note that, since
$-E_{0}\leq GS^{\infty}$, in order to prove that $GS^{\infty}=-E_{0}$
only a corresponding reversed inequality is needed. In particular,
proving that w.h.p ${\rm Crt}_{N}\left(\left(-\infty,-E_{0}+\epsilon\right)\right)\geq1$,
for any $\epsilon>0$, is sufficient. Corollary \ref{cor:intro} implies
this, and in fact since $H_{N}\left(\boldsymbol{\sigma}\right)$ is
a Gaussian field, using concentration inequalities even Theorem \ref{thm:Var-E2-log}
is sufficient; see Appendix IV. This gives an alternative derivation
of the result of \cite{A-BA-C} without going through Parisi's formula.

Generally, mixed spherical $p$-spin models do not necessarily exhibit
1-RSB. But, if we are able to compute second moments and prove (\ref{eq:intro-2})
for some mixture, then it would follow that $GS^{\infty}=-E_{0}$
and, by (\ref{eq:q2}), that ``1-RSB in the zero-temperature limit''
occurs. This will be explored in future work, where we shall consider
part of the mixed case regime.

We finish with a remark about two recent works which build on the
concentration result for the critical points which we prove in the
current paper. In the first, Zeitouni and the author \cite{pspinext}
investigate the extremal point process of critical points - that is,
the point process constructed from critical values in the vicinity
of the global minimum of $H_{N}(\boldsymbol{\sigma})$ - and establish
its convergence to a Poisson point process of exponential density.
As a corollary they also obtain that the global minimum (without normalization,
in contrary to (\ref{eq:gs1})) converges to minus a Gumbel variable.
In the second work, the author \cite{geometryGibbs} relates the Gibbs
measure at low temperature to the critical points and shows that the
measure is supported on spherical `bands' around the deepest minima
of $H_{N}(\boldsymbol{\sigma})$, i.e. those of which the extremal
process consists. This allows one to derive interesting consequences,
for example the absence of temperature chaos and precise asymptotics
of the free energy.

In the next section we introduce notation. In Section \ref{sec:results}
we outline the proofs of Theorems \ref{thm:Var-E2} and \ref{thm:Var-E2-log}
and state several related auxiliary results. The rest of the paper
is devoted to proofs of the theorems stated above and those auxiliary
results. When stating each of the latter we will also point out where
its proof is given. The proof of Theorem \ref{thm:Var-E2-log} is
given is Section \ref{sec:Proofs}. Theorem \ref{thm:Var-E2} is proved
in Section \ref{sec:Finer-Asymptotics}.

\subsection*{Acknowledgments}

I am grateful to my adviser Ofer Zeitouni for introducing me to the
problem of computing the second moment and for his help through all
stages of the work. I would also like to thank G\'{e}rard Ben Arous
for helpful discussions. This work is supported by the Adams Fellowship
Program of the Israel Academy of Sciences and Humanities.

\section{\label{sec:Notation}Notation}

For any two points $\boldsymbol{\sigma}$, $\boldsymbol{\sigma}'$
on the sphere, define the overlap function 
\begin{equation}
R\left(\boldsymbol{\sigma},\boldsymbol{\sigma}'\right)\triangleq\frac{\left\langle \boldsymbol{\sigma},\boldsymbol{\sigma}'\right\rangle }{\left\Vert \boldsymbol{\sigma}\right\Vert _{2}\left\Vert \boldsymbol{\sigma}'\right\Vert _{2}}=\frac{\sum_{i=1}^{N}\sigma_{i}\sigma_{i}^{\prime}}{N}.\label{eq:overlap}
\end{equation}
Adopting the notation of \cite{A-BA-C}, for any Borel set $B\subset\mathbb{R}$,
let $\mbox{Crt}_{N}\left(B\right)$ denote the number of critical
points of $H_{N}$, at which it attains a value in $NB=\left\{ Nx:\, x\in B\right\} $:
\begin{equation}
\mbox{Crt}_{N}\left(B\right)\triangleq\#\left\{ \left.\boldsymbol{\sigma}\in\mathbb{S}^{N-1}\left(\sqrt{N}\right)\,\right|\,\nabla H_{N}\left(\boldsymbol{\sigma}\right)=0,\, H_{N}\left(\boldsymbol{\sigma}\right)\in NB\right\} ,\label{eq:25}
\end{equation}
where $\nabla H_{N}\left(\boldsymbol{\sigma}\right)$ denotes the
gradient of $H_{N}\left(\boldsymbol{\sigma}\right)$ (relative to
the standard differential structure on the sphere). We will also be
concerned with the number of ordered pairs of points $(\boldsymbol{\sigma},\boldsymbol{\sigma}')\in\left(\mbox{Crt}_{N}\left(B\right)\right)^{2}$
with overlap in some range. For any subset $I_{R}\subset\left[-1,1\right]$,
we define

\[
\left[\mbox{Crt}_{N}\left(B,I_{R}\right)\right]_{2}\triangleq\#\left\{ \left.\left(\boldsymbol{\sigma},\boldsymbol{\sigma}'\right)\in\left(\mbox{Crt}_{N}\left(B\right)\right)^{2}\,\right|\, R\left(\boldsymbol{\sigma},\boldsymbol{\sigma}'\right)\in I_{R}\right\} .
\]
Note that $\mathbb{E}\left[\mbox{Crt}_{N}\left(B,I_{R}\right)\right]_{2}$
is the `contribution' of pairs with $R\left(\boldsymbol{\sigma},\boldsymbol{\sigma}'\right)\in I_{R}$
to the second moment of $\mbox{Crt}_{N}\left(B\right)$ (and that,
in particular, when $I_{R}=\left[-1,1\right]$, the full range of
the overlap, it is equal to the second moment). In the sequel we shall
assume that each of $B$ and $I_{R}$ is a finite union of non-degenerate
open intervals in $\mathbb{R}$. In this case we shall say that $B$
(or $I_{R}$) is `nice'.

A random matrix $\mathbf{X}_{N}$ from the (normalized) $N\times N$
Gaussian orthogonal ensemble, or an $N\times N$ GOE matrix, for short,
is a real, symmetric matrix such that all elements are centered Gaussian
variables which, up to symmetry, are independent with variance given
by
\[
\mathbb{E}\left\{ \mathbf{X}_{N,ij}^{2}\right\} =\begin{cases}
1/N, & \, i\neq j\\
2/N, & \, i=j.
\end{cases}
\]

Denote the surface area of the $N-1$-dimensional unit sphere by 
\[
\omega_{N}=\frac{2\pi^{N/2}}{\Gamma\left(N/2\right)}.
\]

Let $\mu^{*}$ denote the semicircle measure, the density of which
with respect to Lebesgue measure is 
\begin{equation}
\frac{d\mu^{*}}{dx}=\frac{1}{2\pi}\sqrt{4-x^{2}}\mathbf{1}_{\left|x\right|\leq2},\label{eq:semicirc}
\end{equation}
and define the function (see, e.g., \cite[Proposition II.1.2]{logpotential})
\begin{align}
\Omega(x) & \triangleq\int_{\mathbb{R}}\log\left|\lambda-x\right|d\mu^{*}\left(\lambda\right)\label{eq:Omega}\\
 & =\begin{cases}
\frac{x^{2}}{4}-\frac{1}{2} & \mbox{ if }0\leq\left|x\right|\leq2,\\
\frac{x^{2}}{4}-\frac{1}{2}-\left[\frac{\left|x\right|}{4}\sqrt{x^{2}-4}-\log\left(\sqrt{\frac{x^{2}}{4}-1}+\frac{\left|x\right|}{2}\right)\right] & \mbox{ if }\left|x\right|>2.
\end{cases}\nonumber 
\end{align}

Lastly, set 
\begin{align}
\Psi_{p}\left(r,u_{1},u_{2}\right) & \triangleq1+\log\left(p-1\right)+\frac{1}{2}\log\left(\frac{1-r^{2}}{1-r^{2p-2}}\right)\label{eq:Psi}\\
 & -\frac{1}{2}\left(u_{1},u_{2}\right)\left(\Sigma_{U}\left(r\right)\right)^{-1}\left(\begin{array}{c}
u_{1}\\
u_{2}
\end{array}\right)+\Omega\left(\sqrt{\frac{p}{p-1}}u_{1}\right)+\Omega\left(\sqrt{\frac{p}{p-1}}u_{2}\right),\nonumber 
\end{align}
where $\Sigma_{U}\left(r\right)$ is defined in (\ref{eq:26}).

\section{\label{sec:results}Outline of proofs and Auxiliary results}

As in the calculation of the first moment \cite{A-BA-C}, or in fact
any of the moment calculations for critical points mentioned below
Corollary \ref{cor:intro}, the starting point of our analysis is
an application of (a variant of) the Kac-Rice formula (henceforth,
K-R formula). The formula expresses the expectation of $\left[\mbox{Crt}_{N}\left(B,I_{R}\right)\right]_{2}$
as an integral over $I_{R}$ and combined with a study of certain
conditional laws, in particular those of the Hessians of the Hamiltonian
at two different points $\boldsymbol{\sigma}$ and $\boldsymbol{\sigma}'$,
yields the following lemma, proved in Section \ref{sec:ExactFormula:KR+Hessians}.
\begin{lem}
\label{lem:2ndKR-refinedFrom}Let $\left(U_{1}\left(r\right),U_{2}\left(r\right)\right)\sim N\left(0,\Sigma_{U}\left(r\right)\right)$
(cf. (\ref{eq:26})) be a Gaussian vector independent of $\hat{\mathbf{M}}_{N-1}^{\left(i\right)}\left(r\right)$,
$i=1,2$, defined in Lemma \ref{lem:Hess_struct_2}. Let $\mathcal{\mathbf{M}}_{N-1}^{\left(i\right)}\left(r,U_{1}\left(r\right),U_{2}\left(r\right)\right)$
be defined by (\ref{eq:100}). Then for any nice $B\subset\mathbb{R}$
and $I_{R}\subset\left(-1,1\right)$, 
\end{lem}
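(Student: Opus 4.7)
The plan is to derive the stated formula from the Kac-Rice formula for the joint first intensity of pairs of critical points of $H_{N}$ on the restricted domain $\mathcal{D}_{I_{R}}=\{(\boldsymbol{\sigma},\boldsymbol{\sigma}')\in(\mathbb{S}^{N-1}(\sqrt{N}))^{2}:R(\boldsymbol{\sigma},\boldsymbol{\sigma}')\in I_{R}\}$. Because $I_{R}$ is bounded away from $\pm1$, the joint distribution of $(\nabla H_{N}(\boldsymbol{\sigma}),\nabla H_{N}(\boldsymbol{\sigma}'))$ is non-degenerate on $\mathcal{D}_{I_{R}}$, so the standard regularity conditions for Kac-Rice apply and (after verifying the a.s.\ absence of critical pairs with overlap on $\partial I_{R}$) yield
\begin{align*}
\mathbb{E}[{\rm Crt}_{N}(B,I_{R})]_{2} & =\int_{\mathcal{D}_{I_{R}}}\varphi_{\nabla H_{N}(\boldsymbol{\sigma}),\nabla H_{N}(\boldsymbol{\sigma}')}(0,0)\\
 & \quad\times\mathbb{E}\bigl[\bigl|\det\nabla^{2}H_{N}(\boldsymbol{\sigma})\det\nabla^{2}H_{N}(\boldsymbol{\sigma}')\bigr|\mathbf{1}_{\{H_{N}(\boldsymbol{\sigma}),H_{N}(\boldsymbol{\sigma}')\in NB\}}\bigm|\nabla H_{N}(\boldsymbol{\sigma})=\nabla H_{N}(\boldsymbol{\sigma}')=0\bigr]\,d\boldsymbol{\sigma}\,d\boldsymbol{\sigma}',
\end{align*}
with $\varphi$ denoting the joint Gaussian density of the two gradients (each in its own tangent space).

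Next I would exploit the $O(N)$-invariance of the law of $H_{N}$: the integrand depends on $(\boldsymbol{\sigma},\boldsymbol{\sigma}')$ only through $r=R(\boldsymbol{\sigma},\boldsymbol{\sigma}')$. Fixing the canonical representatives $\boldsymbol{\sigma}=\sqrt{N}\mathbf{e}_{1}$, $\boldsymbol{\sigma}'=\sqrt{N}(r\mathbf{e}_{1}+\sqrt{1-r^{2}}\mathbf{e}_{2})$ and changing variables via $r=\boldsymbol{\sigma}\cdot\boldsymbol{\sigma}'/N=\cos\theta$, the double spherical integral collapses to a one-dimensional integral in $r$ multiplied by an explicit volume factor of the form $N^{N-1}\omega_{N}\omega_{N-1}(1-r^{2})^{(N-3)/2}$. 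I would then insert the identity $\mathbb{E}[\cdot|\nabla H_{N}=\nabla H_{N}'=0]=\int\mathbb{E}[\cdot|\nabla H_{N}=\nabla H_{N}'=0,H_{N}=Nu_{1},H_{N}'=Nu_{2}]\,p_{(H_{N}/N,H_{N}'/N)|\nabla=0}(u_{1},u_{2})\,du_{1}du_{2}$ and note that, by a direct covariance computation on the sphere, the pair $(H_{N}(\boldsymbol{\sigma})/N,H_{N}(\boldsymbol{\sigma}')/N)$ is independent of $(\nabla H_{N}(\boldsymbol{\sigma}),\nabla H_{N}(\boldsymbol{\sigma}'))$, so its conditional density equals its unconditional bivariate Gaussian density with covariance $\Sigma_{U}(r)$, producing the quadratic form and normalization constant that appear (after taking logarithms) in $\Psi_{p}$.

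At this point I would invoke Lemma \ref{lem:Hess_struct_2}, which by Gaussian regression of $(\nabla^{2}H_{N}(\boldsymbol{\sigma}),\nabla^{2}H_{N}(\boldsymbol{\sigma}'))$ on $(H_{N}(\boldsymbol{\sigma}),\nabla H_{N}(\boldsymbol{\sigma}),H_{N}(\boldsymbol{\sigma}'),\nabla H_{N}(\boldsymbol{\sigma}'))$ presents the conditional Hessians, given gradients zero and field values $(Nu_{1},Nu_{2})$, as the matrices $\mathbf{M}_{N-1}^{(i)}(r,u_{1},u_{2})$ of (\ref{eq:100}), each of the form $\hat{\mathbf{M}}_{N-1}^{(i)}(r)$ plus a deterministic low-rank shift depending on $(r,u_{1},u_{2})$. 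Substituting, the conditional expectation above becomes $\mathbb{E}\bigl\{\bigl|\det\mathbf{M}_{N-1}^{(1)}(r,u_{1},u_{2})\det\mathbf{M}_{N-1}^{(2)}(r,u_{1},u_{2})\bigr|\bigr\}$, and assembling the pieces (volume factor, gradient density, bivariate density of $(U_{1},U_{2})$, conditional determinant expectation) gives the formula stated in the lemma.

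The main obstacle is the Gaussian covariance bookkeeping underlying both the explicit evaluation of $\varphi_{\nabla H_{N}(\boldsymbol{\sigma}),\nabla H_{N}(\boldsymbol{\sigma}')}(0,0)$ and the regression in Lemma \ref{lem:Hess_struct_2}. These require computing, from the isotropic covariance $\mathbb{E}\{H_{N}(\boldsymbol{\sigma})H_{N}(\boldsymbol{\sigma}')\}=NR(\boldsymbol{\sigma},\boldsymbol{\sigma}')^{p}$ and its derivatives up to fourth order, the full joint covariance of $(H_{N},\nabla H_{N},\nabla^{2}H_{N})$ at the two points in an orthonormal frame adapted to $\mathrm{span}\{\boldsymbol{\sigma},\boldsymbol{\sigma}'\}$, and then inverting the $(2N-2)\times(2N-2)$ gradient sub-block to obtain the conditional shifts and residual GOE-like law. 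Since those computations are the content of Lemma \ref{lem:Hess_struct_2}, the present lemma reduces, as described, to careful assembly once isotropy has collapsed the integration to the single variable $r$.
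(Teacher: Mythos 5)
Your overall strategy mirrors the paper's: apply a Kac--Rice type formula on $\mathcal{S}_{N}^{2}(I_{R})$, use $O(N)$-invariance and the co-area formula to collapse the double spherical integral to a one-dimensional integral in $r$, then condition further on the field values and invoke the conditional Hessian representation of Lemma~\ref{lem:Hess_struct_2}. That is exactly how the paper proceeds (via Lemmas~\ref{lem:FirstKR}, \ref{lem:dens}, and \ref{lem:Hess_struct_2}).

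However, there is a genuine error in the middle step. You assert that $(H_N(\boldsymbol{\sigma}),H_N(\boldsymbol{\sigma}'))$ is independent of $(\nabla H_N(\boldsymbol{\sigma}),\nabla H_N(\boldsymbol{\sigma}'))$, ``so its conditional density equals its unconditional bivariate Gaussian density with covariance $\Sigma_U(r)$.'' Neither half of that sentence is correct for $r\neq 0$. Independence holds between $f(\boldsymbol{\sigma})$ and $\nabla f(\boldsymbol{\sigma})$ \emph{at a single point} (because the field has constant variance on the sphere), but across two points the cross-covariance $\mathbb{E}\{f(\mathbf{n})\,E_{N-1}f(\boldsymbol{\sigma}(r))\}=-pr^{p-1}(1-r^2)^{1/2}$ is nonzero (Lemma~\ref{lem:cov}), so the field value at one point is correlated with the gradient at the other. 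Relatedly, the \emph{unconditional} covariance of $(f(\mathbf{n}),f(\boldsymbol{\sigma}(r)))$ is $\left(\begin{smallmatrix}1 & r^p\\ r^p & 1\end{smallmatrix}\right)$, while $\Sigma_U(r)$ has diagonal entries $1-a_2(r)p^2r^{2p-2}(1-r^2)\neq 1$ for $r\neq 0$; your proposal is internally inconsistent on this point. The matrix $\Sigma_U(r)$ is in fact the \emph{conditional} covariance of $(f(\mathbf{n}),f(\boldsymbol{\sigma}(r)))$ given that both gradients vanish, and that is a nontrivial Gaussian regression computation — exactly the content of Lemma~\ref{lem:dens}, which your argument cannot skip by appealing to independence.
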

\begin{align}
\mathbb{E}\left\{ \left[{\rm Crt}_{N}\left(B,I_{R}\right)\right]_{2}\right\}  & =C_{N}\int_{I_{R}}dr\cdot\left(\mathcal{G}\left(r\right)\right)^{N}\mathcal{F}\left(r\right)\nonumber \\
 & \times\mathbb{E}\left\{ \prod_{i=1,2}\left|\det\left(\mathcal{\mathbf{M}}_{N-1}^{\left(i\right)}\left(r,U_{1}\left(r\right),U_{2}\left(r\right)\right)\right)\right|\cdot\mathbf{1}\Big\{ U_{1}\left(r\right),U_{2}\left(r\right)\in\sqrt{N}B\Big\}\right\} ,\label{eq:2ndmom_refinedform}
\end{align}
where
\begin{align}
C_{N} & =\omega_{N}\omega_{N-1}\left(\frac{\left(N-1\right)\left(p-1\right)}{2\pi}\right)^{N-1},\,\,\,\mathcal{G}\left(r\right)=\left(\frac{1-r^{2}}{1-r^{2p-2}}\right)^{\frac{1}{2}},\label{eq:cgf}\\
\mathcal{F}\left(r\right) & =\left(\mathcal{G}\left(r\right)\right)^{-3}\left(1-r^{2p-2}\right)^{-\frac{1}{2}}\left(1-\left(pr^{p}-\left(p-1\right)r^{p-2}\right)^{2}\right)^{-\frac{1}{2}}.\nonumber 
\end{align}
The analysis of the ratio of the second to first moment squared splits
into two parts - analysis of the asymptotics on the exponential scale
and a refinement to $O(1)$ scale. We shall now discuss the first
part. Lemma \ref{lem:Hess_struct_2} implies that the (correlated)
random matrices $\mathcal{\mathbf{M}}_{N-1}^{\left(i\right)}\left(r,U_{1}\left(r\right),U_{2}\left(r\right)\right)$
satisfy, in distribution, 
\begin{equation}
\left(\begin{array}{c}
\vphantom{\sqrt{\frac{1}{N-1}\frac{p}{p-1}}}\mathbf{M}_{N-1}^{\left(1\right)}\left(r,U_{1}\left(r\right),U_{2}\left(r\right)\right)\\
\vphantom{\sqrt{\frac{1}{N-1}\frac{p}{p-1}}}\mathbf{M}_{N-1}^{\left(2\right)}\left(r,U_{1}\left(r\right),U_{2}\left(r\right)\right)
\end{array}\right)=\left(\begin{array}{c}
\mathbf{X}_{N-1}^{\left(1\right)}(r)-\sqrt{\frac{1}{N-1}\frac{p}{p-1}}U_{1}\left(r\right)I+\mathbf{E}_{N-1}^{\left(1\right)}(r)\\
\mathbf{X}_{N-1}^{\left(2\right)}(r)-\sqrt{\frac{1}{N-1}\frac{p}{p-1}}U_{2}\left(r\right)I+\mathbf{E}_{N-1}^{\left(2\right)}(r)
\end{array}\right),\label{eq:Ms}
\end{equation}
where $\mathbf{X}_{N-1}^{\left(i\right)}(r)$ are correlated GOE matrices
independent of $(U_{1}\left(r\right),U_{2}\left(r\right))$ and $\mathbf{E}_{N-1}^{\left(i\right)}(r)$
are random matrices of rank $2$ viewed as perturbations. On the exponential
level the rank $2$ perturbations are easily dealt with by upper bounding
their Hilbert-Schmidt norm (see Lemmas \ref{cor:r2pert} and \ref{lem:W bound}).
We remark that in parallel to the above, in the computation of the
first moment of \cite{A-BA-C} the determinant of a single shifted
GOE matrix appears in the corresponding K-R formula. There, a certain
algebraic identity related to the density of the eigenvalues of a
GOE matrix, together with Selberg's integral formula, is key to the
analysis. In our situation explicit computations such as Selberg's
formula cannot be used because of the presence of two correlated GOE
matrices. Instead, the main tool we use to upper bound the product
of determinants is the large deviation principle (LDP) satisfied by
the empirical measure of eigenvalues proved in \cite[Theorem 2.1.1]{BAG97}
(see Theorem \ref{lem:GOELD}). Of course, $\frac{1}{N}\log$ of the
absolute value of the determinant is a linear statistic of the eigenvalues
$\lambda_{i}$, namely, it is equal to $\frac{1}{N}\sum\log|\lambda_{i}|$.
Combining this with the LDP, Varadhan's integral lemma \cite[Theorem 4.3.1, Exercise 4.3.11]{LDbook},
and a truncation argument (to control extremely large or close to
$0$ eigenvalues), we derive the following theorem in Section \ref{sec:Logarithmic-asymptotics}.
We stress that the fact that the LDP is at speed $N^{2}$ in contrast
to all other quantities involved in the problem, which decay or grow
exponentially with $N$, is crucial to the proof. 
\begin{thm}
\label{thm:2ndmomUBBK-1}For any nice $B\subset\mathbb{R}$ and nice
$I_{R}\subset\left(-1,1\right)$,

\begin{equation}
\limsup_{N\to\infty}\frac{1}{N}\log\left(\mathbb{E}\left\{ \left[{\rm Crt}_{N}\left(B,I_{R}\right)\right]_{2}\right\} \right)\leq\sup_{r\in I_{R}}\sup_{u_{i}\in B}\Psi_{p}\left(r,u_{1},u_{2}\right).\label{eq:2ndUB_BK-1-1}
\end{equation}

\end{thm}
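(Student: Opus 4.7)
The plan is to extract exponential asymptotics from Lemma \ref{lem:2ndKR-refinedFrom}. By Stirling's formula, $\frac{1}{N}\log C_N \to 1 + \log(p-1)$, accounting for the leading constants in $\Psi_p$. The prefactor $\mathcal{F}(r)$ is bounded uniformly on compact subintervals of $(-1,1)$, so it contributes nothing exponentially, while $(\mathcal{G}(r))^N$ contributes the term $\frac{1}{2}\log\!\frac{1-r^2}{1-r^{2p-2}}$. Since $I_R$ is a finite union of bounded intervals, the integral over $r$ may be replaced by a supremum up to subexponential error. It therefore suffices to show that for each fixed $r\in I_R$,
\begin{equation*}
\limsup_{N\to\infty}\frac{1}{N}\log\mathbb{E}\Big\{\prod_{i=1,2}\big|\det\mathbf{M}_{N-1}^{(i)}(r,U_1,U_2)\big|\cdot\mathbf{1}\{U_1,U_2\in\sqrt{N}B\}\Big\} \le \sup_{u_1,u_2\in B}\Big[-\frac{1}{2}(u_1,u_2)\Sigma_U(r)^{-1}(u_1,u_2)^T + \sum_{i=1,2}\Omega\!\left(\sqrt{\frac{p}{p-1}}u_i\right)\Big].
\end{equation*}

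Next I would invoke the decomposition (\ref{eq:Ms}) and absorb the rank-$2$ perturbations $\mathbf{E}_{N-1}^{(i)}(r)$ into a subexponential factor by combining Lemmas \ref{cor:r2pert} and \ref{lem:W bound} with the elementary estimate $|\det(A+E)|\le|\det A|(1+\|E\|_{\mathrm{op}})^{\mathrm{rank}(E)}$. Conditioning on $(U_1,U_2)$ and changing variables to $u_i=U_i/\sqrt{N}$, the Gaussian density of $(U_1,U_2)$ produces exactly the quadratic form $-\frac{N}{2}(u_1,u_2)\Sigma_U(r)^{-1}(u_1,u_2)^T$ in the exponent, while the shift inside each matrix becomes $\sqrt{p/(p-1)}\,u_i(1+o(1))$. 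The problem thus reduces to establishing, uniformly in $u_1,u_2$ on any compact set,
\begin{equation*}
\limsup_{N\to\infty}\frac{1}{N}\log\mathbb{E}\big[\,|\det(\mathbf{X}_{N-1}-x_iI)|\,\big]\le\Omega(x_i),\qquad x_i:=\sqrt{p/(p-1)}\,u_i.
\end{equation*}
The unbounded part of $B$ is discarded via the Gaussian tail of $(U_1,U_2)$ together with a crude deterministic bound on $\|\mathbf{X}_{N-1}\|_{\mathrm{op}}$, and the joint expectation over the two correlated GOE matrices $\mathbf{X}_{N-1}^{(1)},\mathbf{X}_{N-1}^{(2)}$ is decoupled by Cauchy--Schwarz, reducing matters to each marginal (which is GOE).

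For the single-matrix estimate I would write $\frac{1}{N-1}\log|\det(\mathbf{X}_{N-1}-xI)|=\int\log|\lambda-x|\,dL_{N-1}(\lambda)$, where $L_{N-1}$ is the empirical spectral measure, and apply the large deviation principle at speed $(N-1)^2$ for $L_{N-1}$ (Theorem \ref{lem:GOELD}) together with Varadhan's lemma. The decisive feature is the mismatch of speeds: the functional is exponentiated at speed $N$ whereas deviations from the semicircle law $\mu^*$ cost $N^2$, so the rate is determined entirely by the value at $\mu^*$, namely $\int\log|\lambda-x|\,d\mu^*(\lambda)=\Omega(x)$. Combining with the quadratic penalty from the Gaussian density and supremizing over $(r,u_1,u_2)$ produces the claimed bound $\sup_{r,u_1,u_2}\Psi_p(r,u_1,u_2)$.

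The main obstacle is that $\lambda\mapsto\log|\lambda-x|$ is neither bounded above nor below, so Varadhan's lemma does not apply verbatim to the natural weak topology on probability measures. This is resolved by a two-sided truncation: restrict the linear statistic to eigenvalues in $[-K,K]$, controlling the error by the exponential concentration of $\|\mathbf{X}_{N-1}\|_{\mathrm{op}}$ (valid uniformly in $N$); and, for the logarithmic singularity at $\lambda=x$, replace $\log|\lambda-x|$ by $\log|\lambda-x|\vee(-M)$, the resulting discrepancy in expectation being controlled by a bound on the mean density of eigenvalues near $x$. After sending $N\to\infty$, then $K\to\infty$, then $M\to\infty$, Varadhan's lemma applied to the now bounded continuous linear statistic returns $\Omega(x)$, and assembling all the pieces completes the proof.
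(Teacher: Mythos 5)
Your proposal follows essentially the same approach as the paper's proof: start from the Kac--Rice representation of Lemma \ref{lem:2ndKR-refinedFrom}, absorb the rank-two perturbations via Lemmas \ref{cor:r2pert} and \ref{lem:W bound}, reduce the log-determinant to a linear statistic of eigenvalues, and exploit the speed-$N^2$ LDP for the empirical measure (Theorem \ref{lem:GOELD}) together with a two-sided truncation of $\log|\lambda-x|$ so that Varadhan-type reasoning applies; the speed mismatch then pins the empirical measure to $\mu^*$ and produces the $\Omega$ terms in $\Psi_p$. The minor discrepancies in your sketch are harmless: the floor truncation $\log|\lambda-x|\vee(-M)$ already dominates $\log|\lambda-x|$ from above, so no estimate on the density of eigenvalues near $x$ is needed, and your stated bound $|\det(A+E)|\le|\det A|(1+\|E\|_{\mathrm{op}})^{\mathrm{rank}(E)}$ is missing the division by the smallest eigenvalue of $A$ that appears in Corollary \ref{cor:Miroslav} (which is precisely why the lower truncation by $\epsilon$ enters Lemma \ref{cor:r2pert}).
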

Note that the terms involving $\Omega$ in the definition of $\Psi_{p}\left(r,u_{1},u_{2}\right)$
can be identified as the contribution from $\frac{1}{N}\log$ of the
absolute value of the determinants, whose asymptotic behavior is expressed
in terms of the semicircle law, and that the quadratic form in $u_{1}$
and $u_{2}$ corresponds to the joint Gaussian density of $U_{1}\left(r\right)$
and $U_{2}\left(r\right)$. In order to prove Theorem \ref{thm:Var-E2-log}
we need to identify the points at which the supremum above is attained.
The following lemma, proved in Section \ref{sec:maximalityPsi}, gives
sufficient conditions allowing to restrict attention to points satisfying
$u_{1}=u_{2}$.
\begin{lem}
\label{lem:u1=00003Du2-1}Defining $\Psi_{p}\left(r,u\right)\triangleq\Psi_{p}\left(r,u,u\right)$
we have the following. 
\begin{enumerate}
\item \label{enu:part 1}For nice $B\subset\left(-\infty,-E_{\infty}\left(p\right)\right)$,
for any $r\in\left(-1,1\right)$,
\[
\sup_{u_{i}\in B}\Psi_{p}\left(r,u_{1},u_{2}\right)=\sup_{u\in B}\Psi_{p}\left(r,u\right).
\]

\item \label{enu:part 2}For nice $B$ that intersect $\left(-E_{0}\left(p\right),E_{0}\left(p\right)\right)$,
\[
\limsup_{N\to\infty}\frac{1}{N}\log\left(\mathbb{E}\left\{ \left[{\rm Crt}_{N}\left(B,\left(-1,1\right)\right)\right]_{2}\right\} \right)\leq\sup_{r\in\left(-1,1\right)}\sup_{u\in B}\Psi_{p}\left(r,u\right).
\]

\end{enumerate}
\end{lem}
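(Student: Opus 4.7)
My plan is to handle the two parts separately, with Part~1 reducing to a clean convex analysis argument, and Part~2 requiring case-splitting of the rectangle $B\times B$.

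For Part~1, the key observation is that $\Psi_p(r,u_1,u_2)$ is symmetric under the swap $u_1\leftrightarrow u_2$, so it is natural to introduce coordinates $s=(u_1+u_2)/2$ and $t=(u_1-u_2)/2$. Because $\Sigma_U(r)$ is invariant under the coordinate swap, the quadratic form $-\tfrac12(u_1,u_2)\Sigma_U(r)^{-1}(u_1,u_2)^T$ diagonalizes in $(s,t)$ as a function of $s$ plus a \emph{strictly negative-definite} quadratic in $t$. The sum $\Omega(cu_1)+\Omega(cu_2)$, with $c=\sqrt{p/(p-1)}$, equals $\Omega(c(s+t))+\Omega(c(s-t))$, which is even in $t$ for fixed $s$; in particular $t=0$ is a critical point of $t\mapsto\Psi_p(r,s+t,s-t)$. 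To promote this to a global maximum on the admissible $t$-range, I would differentiate the second line of (\ref{eq:Omega}) to obtain
\[
\Omega''(x)=\tfrac12\left(1-\tfrac{|x|}{\sqrt{x^2-4}}\right)<0\quad\text{for }|x|>2,
\]
and use the assumption $B\subset(-\infty,-E_\infty(p))$, which translates (since $cE_\infty(p)=2$) into $c(s\pm t)<-2$ for every admissible $t$. Both arguments of $\Omega$ therefore lie in the concave regime, and combined with the negative quadratic contribution in $t$ we obtain strict concavity of $t\mapsto\Psi_p(r,s+t,s-t)$. Together with evenness this forces the maximum at $t=0$, i.e.\ on the diagonal.

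For Part~2, by Theorem~\ref{thm:2ndmomUBBK-1} it is enough to show that
\[
\sup_{r\in(-1,1)}\sup_{u_1,u_2\in B}\Psi_p(r,u_1,u_2)\le\sup_{r\in(-1,1)}\sup_{u\in B}\Psi_p(r,u).
\]
The plan is to decompose $B=B_-\cup B_+$ where $B_-:=B\cap(-\infty,-E_\infty(p))$ and $B_+:=B\setminus B_-\subset[-E_\infty(p),\infty)$. On the quadrant $B_-\times B_-$ the result follows immediately from Part~1 applied to the nice set $B_-$. For the remaining pairs (at least one coordinate in $B_+$), the simplification available is that on $|u|\le E_\infty(p)$ one has $\Omega(cu)=c^2u^2/4-1/2$, so $\Psi_p(r,u_1,u_2)$ is a pure quadratic in the coordinate(s) that fall in $B_+$. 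This reduces the supremum on each such piece to an explicit constrained quadratic optimization, which can then be compared with the diagonal supremum. The hypothesis that $B$ meets $(-E_0(p),E_0(p))$ enters here: it guarantees via (\ref{eq:e1}) that there exists $u^\star\in B$ with $\Theta_p(u^\star)>0$, and one checks (using $\Psi_p(0,u,u)$ and the definition (\ref{eq:Psi})) that $\sup_r\sup_{u\in B}\Psi_p(r,u)$ is at least $2\Theta_p(u^\star)>0$. This positive lower bound on the diagonal side absorbs the off-diagonal contributions from the quadrants intersecting $B_+$.

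The step I expect to be the main obstacle is the mixed case $u_1\in B_-$, $u_2\in B_+$, where the concavity argument from Part~1 breaks down because $\Omega''$ has opposite signs in the two regimes. Here I would proceed by reducing first in the variable $u_2$ (on which $\Psi_p$ is an explicit quadratic, and whose optimum lies on the boundary $\partial B_+$ or at an interior critical point that can be computed), and then use Part~1 on the one-parameter family parametrized by $u_1\in B_-$. Combined with the quantitative dominance provided by the intersection hypothesis, this should complete the proof.
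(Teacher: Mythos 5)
Your Part 1 is essentially the paper's argument: the change of variables to $(s,t)=((u_1+u_2)/2,(u_1-u_2)/2)$ is the paper's $\Psi_u^*(v)=\Psi_p(r,u+v,u-v)$, and the combination of concavity of $\Omega$ on $(-\infty,-2)$ (where $c\,u_i<-2$ is exactly $u_i<-E_\infty(p)$) with concavity of the Gaussian quadratic form and evenness in $t$ gives the maximum at $t=0$. That step is correct.

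Part 2 is where the proposal departs from the paper and where the gap lies. You propose to reduce the lemma to the pointwise statement
\[
\sup_{r\in(-1,1)}\sup_{u_1,u_2\in B}\Psi_p(r,u_1,u_2)\le\sup_{r\in(-1,1)}\sup_{u\in B}\Psi_p(r,u),
\]
but this is \emph{stronger} than the lemma claims, and it is far from obviously true: on the region where $|cu_i|\le 2$ one has $\Omega(cu)=c^2u^2/4-1/2$, so the $\Omega$-contribution is strictly \emph{convex}, and in the $t$ variable it contributes a term $\propto +c^2 t^2/2$ that \emph{competes against} the strictly concave Gaussian term $\propto -t^2/(\Sigma_{U,11}(r)-\Sigma_{U,12}(r))$. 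Whether the total sign is favourable depends on the $r$-dependent eigenvalue $\Sigma_{U,11}(r)-\Sigma_{U,12}(r)$, and your outline never settles this. Moreover, $B_+=B\setminus B_-$ is not contained in $[-E_\infty(p),E_\infty(p)]$ (it may extend into $(E_\infty(p),\infty)$ where $\Omega$ is again non-quadratic), so the assertion that $\Psi_p$ ``is a pure quadratic in the coordinate(s) that fall in $B_+$'' is incorrect as stated. You yourself flag the mixed quadrant $u_1\in B_-,\ u_2\in B_+$ as ``the main obstacle,'' and the proposed fix — optimize in $u_2$ first, then apply Part 1 in $u_1$ — does not work because Part 1 needs \emph{both} arguments below $-E_\infty(p)$. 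The paper sidesteps all of this: instead of bounding the supremum of $\Psi_p$ over $B\times B$, it exploits the pointwise counting inequality $[\mathrm{Crt}_N(B,(-1,1))]_2\le 2\sum_i(\mathrm{Crt}_N(B_i))^2$ for any finite partition $\{B_i\}$ of $B$, together with $(\mathrm{Crt}_N(B_i))^2\le[\mathrm{Crt}_N(B_i,(-1,1))]_2+2\,\mathrm{Crt}_N(B_i)$ (see Remark \ref{rem:r=00003D1}). This reduces the problem to pieces $B_i\subset[-E_0,E_0]$ of arbitrarily small diameter $\epsilon$, on which the discrepancy $\Psi_p(r,u_1,u_2)-\Psi_p(r,u,u)$ is $O(\epsilon)$ by uniform continuity of $\Omega$ and the one-sided concavity of the Gaussian quadratic form (which holds for all $u_1,u_2$). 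The intersection hypothesis is then used only to absorb the extra $\limsup_N\frac1N\log\mathbb{E}\{\mathrm{Crt}_N(B)\}$ term that appears, via (\ref{eq:66}) — not, as you suggest, to ``absorb the off-diagonal contributions.'' Without adopting some version of this decoupling you are left with an unproven inequality on $\Psi_p$ that may well fail pointwise.
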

We complement the above with the following lemma, also proved in Section
\ref{sec:maximalityPsi}, which states for which $r$ the maximum
is attained (in one point of the proof we use computer for the numeric
evaluation of certain expressions, see the paragraph following (\ref{eq:yy})). 
\begin{lem}
\label{lem:r=00003D0}Setting $u_{th}\left(p\right)\triangleq\sqrt{2\frac{p-1}{p-2}\log\left(p-1\right)}>E_{0}\left(p\right)$,
for fixed $u$, $\Psi_{p}^{u}\left(r\right)\triangleq\Psi_{p}\left(r,u,u\right)$
can be extended to a continuous function $\bar{\Psi}_{p}^{u}\left(r\right)$
on $\left[-1,1\right]$, such that:
\begin{enumerate}
\item \label{enu:p1}If $\left|u\right|<u_{th}\left(p\right)$, then $\bar{\Psi}_{p}^{u}\left(r\right)$
attains its maximum on $\left[-1,1\right]$, uniquely, at $r=0$.
\item \label{enu:p2}If $\left|u\right|>u_{th}\left(p\right)$, then $\bar{\Psi}_{p}^{u}\left(r\right)$
is maximal on $\left[-1,1\right]$ at any $r\in\left\{ 1,\left(-1\right)^{p+1}\right\} $
and only there.
\item \label{enu:p3}If $\left|u\right|=u_{th}\left(p\right)$, then $\bar{\Psi}_{p}^{u}\left(r\right)$
is maximal on $\left[-1,1\right]$ at any $r\in\left\{ 0,1,\left(-1\right)^{p+1}\right\} $
and only there.
\end{enumerate}
\end{lem}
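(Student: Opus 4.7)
The plan is to identify all candidate maxima of $\bar{\Psi}_p^u$ on $[-1,1]$, which I expect to reduce to $\{-1, 0, 1\}$, and then to compare the values of $\bar{\Psi}_p^u$ at those points.

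First I would verify the continuous extension to $[-1,1]$. On $(-1,1)$ the formula (\ref{eq:Psi}) is smooth because $\Sigma_U(r)$ from (\ref{eq:26}) is invertible. The logarithmic term $\frac{1}{2}\log\frac{1-r^2}{1-r^{2p-2}}$ extends to the finite value $-\frac{1}{2}\log(p-1)$ at $r = \pm 1$, and the quadratic form $(u,u)\Sigma_U(r)^{-1}(u,u)^T$ can be written explicitly in the entries of $\Sigma_U(r)$; combining these, a direct computation yields
\[
\bar{\Psi}_p^u(1) - \bar{\Psi}_p^u(0) = -\tfrac{1}{2}\log(p-1) + \tfrac{p-2}{4(p-1)}\,u^2,
\]
which is positive iff $|u| > u_{th}(p)$, negative iff $|u| < u_{th}(p)$, and zero at equality. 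Parity of $H_N$ under $\boldsymbol{\sigma}\mapsto -\boldsymbol{\sigma}$ shows that $r = -1$ is a maximum whenever $r = 1$ is, when $p$ is even, and is otherwise excluded from candidacy when $p$ is odd; this is the origin of the $(-1)^{p+1}$ in the statement.

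Next I would rule out interior critical points other than $r = 0$. Note that $r=0$ is always an interior critical point: the logarithmic term in (\ref{eq:Psi}) has vanishing derivative at $0$, and the quadratic form does too, because the off-diagonal of $\Sigma_U(r)$ begins at order $r^p$ with $p\geq 3$, so $\partial_r\Sigma_U(0)$ has vanishing off-diagonal entries. Differentiating (\ref{eq:Psi}) in $r$, the $\Omega$-terms drop out, and after algebraic simplification the critical-point equation takes the factored form $r\cdot Q_p(r,u) = 0$ with $Q_p$ real-analytic on $(-1,1)$. The technical heart of the lemma is to show $Q_p(r,u)\neq 0$ on $(-1,1)\setminus\{0\}$ for every $p\geq 3$ and every relevant $u$. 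For large $|u|$ the $u^2$ contribution dominates and the sign of $Q_p$ can be read off directly; for small $|u|$ the $u$-independent part controls the sign; the remaining intermediate regime, together with finitely many small values of $p$, is where I expect a rigorous numerical check on a compact grid (with explicit derivative bounds) to be used, producing the computer-assisted step referenced in the statement. For large $p$ an asymptotic analysis makes the sign unambiguous.

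Finally, once the interior candidate is reduced to $r=0$ and the boundary candidates to $\{1,(-1)^{p+1}\}$, the three cases of the lemma follow immediately from the sign of $\bar{\Psi}_p^u(1)-\bar{\Psi}_p^u(0)$ computed above, together with the continuity of $\bar{\Psi}_p^u$ in $u$ for the equality case. The principal obstacle is the interior non-vanishing of $Q_p$: controlling its joint $(r,u,p)$-dependence uniformly is subtle and appears to be exactly the step that requires numerical aid; everything else reduces to direct calculation.
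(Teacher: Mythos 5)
Your plan to rule out all interior critical points of $\bar{\Psi}_p^u$ other than $r=0$ cannot succeed, because the non-vanishing you need is in fact false for a whole range of relevant $u$. As the paper shows, up to an $r$-independent constant $\bar{\Psi}_p^u(r)$ agrees on $[0,1]$ with
\[
Q_p^u(r)=\frac{1}{2}\log\!\left(\frac{1-r^2}{1-r^{2p-2}}\right)+u^2 g_0(r),\qquad g_0(r)=\frac{r^p-r^{2p-2}}{1-r^{2p-2}+(p-1)r^{p-2}(1-r^2)},
\]
and $Q_p^{u_{th}(p)}(0)=Q_p^{u_{th}(p)}(1)=0$ while $Q_p^{u_{th}(p)}(r)<0$ on $(0,1)$. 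By Rolle's theorem there is an interior $r^*\in(0,1)$ with $\frac{d}{dr}Q_p^{u_{th}(p)}(r^*)=0$, i.e.\ the factor you denote $Q_p(r,u)$ vanishes at $(r^*,u_{th}(p))$, and by continuity it also vanishes somewhere in $(0,1)$ for all $u$ in a neighborhood of $u_{th}(p)$. Indeed the paper explicitly records that $\frac{d^2}{dr^2}Q_p^{u_{th}(p)}(0)<0$ and $\frac{d}{dr}Q_p^{u_{th}(p)}(1)>0$, forcing an interior local minimum. So the ``technical heart'' you identify --- showing $Q_p(r,u)\neq 0$ on $(-1,1)\setminus\{0\}$ for every relevant $u$ --- is not a true statement, and a critical-point classification cannot locate the global maximizer here.

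What you do have right is the endpoint comparison $\bar{\Psi}_p^u(1)-\bar{\Psi}_p^u(0)=-\frac{1}{2}\log(p-1)+\frac{p-2}{4(p-1)}u^2$ and the reduction by parity to $r\in[0,1]$; these match the paper. But the paper's proof then proceeds by \emph{value} comparison, not by enumerating critical points. It exploits the identity $Q_p^u(r)=Q_p^{u_{th}(p)}(r)+\bigl(u^2-u_{th}^2(p)\bigr)g_0(r)$ together with the strict monotonicity of $g_0$ on $[0,1]$ (with $g_0(0)=0$) to deduce parts (1) and (2) directly from the boundary case, and then proves that boundary case, $Q_p^{u_{th}(p)}(r)<0$ for $r\in(0,1)$, by analytic estimates for $p>10$ (splitting at $r=0.65$) and a rigorous finite-mesh numerical check for $3\le p\le 10$. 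You should replace the derivative-vanishing strategy with this monotone-in-$u^2$ comparison.
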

Combining Theorem \ref{thm:2ndmomUBBK-1} and Lemmas \ref{lem:u1=00003Du2-1}
and \ref{lem:r=00003D0} (and using Theorem \ref{thm:A-BA-C}, which
provides a lower bound for $\left[\mbox{Crt}_{N}\left(B,\left(-1,1\right)\right)\right]_{2}$),
we prove Theorem \ref{thm:Var-E2-log} as well as the following corollary
in Section \ref{sec:Proofs}.
\begin{cor}
\label{cor:lowoverlap}For any $u\in\left(-E_{0}\left(p\right),-E_{\infty}\left(p\right)\right)$
and $\epsilon>0$, 
\begin{align*}
\lim_{N\to\infty}\frac{1}{N}\log\left(\mathbb{E}\left\{ \left({\rm Crt}_{N}\left(\left(-\infty,u\right)\right)\right)^{2}\right\} \right) & =\lim_{N\to\infty}\frac{1}{N}\log\left(\mathbb{E}\left\{ \left[{\rm Crt}_{N}\left(\left(-\infty,u\right),\left(-1,1\right)\right)\right]_{2}\right\} \right)\\
 & >\limsup_{N\to\infty}\frac{1}{N}\log\left(\mathbb{E}\left\{ \left[{\rm Crt}_{N}\left(\left(-\infty,u\right),\left(-1,1\right)\setminus\left(-\epsilon,\epsilon\right)\right)\right]_{2}\right\} \right).
\end{align*}

\end{cor}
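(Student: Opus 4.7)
My plan is to split the corollary into two pieces: (i) the first equality, which reduces to showing that diagonal and antipodal ordered-pair contributions are negligible on the exponential scale, and (ii) the strict inequality, which combines Theorem \ref{thm:2ndmomUBBK-1} with Lemmas \ref{lem:u1=00003Du2-1} and \ref{lem:r=00003D0}, identifies $r=0$ as the unique maximizer of $\bar{\Psi}_p^v$, and extracts a uniform gap via compactness.

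For (i), set $B=(-\infty,u)$ and split ordered pairs of critical points according to whether the overlap $R(\boldsymbol{\sigma},\boldsymbol{\sigma}')$ equals $1$ (the diagonal), lies in $(-1,1)$, or equals $-1$ (antipodal):
\[
\mathbb{E}\{({\rm Crt}_N(B))^2\}=\mathbb{E}\,{\rm Crt}_N(B)+\mathbb{E}[{\rm Crt}_N(B,(-1,1))]_2+D_N.
\]
For $p$ odd, $H_N(-\boldsymbol{\sigma})=-H_N(\boldsymbol{\sigma})$ and $B\cap(-B)=\emptyset$, so $D_N=0$; for $p$ even, $D_N=\mathbb{E}\,{\rm Crt}_N(B)$. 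By (\ref{eq:e1}), both leftover terms grow at rate $\Theta_p(u)$, which is strictly smaller than $2\Theta_p(u)$ (the rate of the full second moment by Theorem \ref{thm:Var-E2-log}), since $\Theta_p(u)>0$ for $u>-E_0(p)$. Hence $\mathbb{E}[{\rm Crt}_N(B,(-1,1))]_2$ inherits the exponential rate of the second moment, giving the first equality.

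For (ii), choose $\delta>0$ so small that $E_0(p)+\delta<u_{th}(p)$, and decompose $(-\infty,u)=(-\infty,-E_0(p)-\delta)\cup B'$ with $B'=[-E_0(p)-\delta,u]$ a bounded nice set. Pair contributions in which at least one Hamiltonian value lies in the tail are negligible on the exponential scale: by (\ref{eq:e1}) one has $\mathbb{E}\,{\rm Crt}_N((-\infty,-E_0(p)-\delta))=e^{N\Theta_p(-E_0(p)-\delta)+o(N)}$ with a strictly negative exponent, and Cauchy--Schwarz bounds the cross terms against the full second moment at a rate strictly below $2\Theta_p(u)$. Applying Theorem \ref{thm:2ndmomUBBK-1} to $B'$ and to the nice set $I_R=(-1,1)\setminus(-\epsilon,\epsilon)$, and invoking Lemma \ref{lem:u1=00003Du2-1}(\ref{enu:part 1}) (applicable since $B'\subset(-\infty,-E_\infty(p))$), one gets
\[
\limsup_{N\to\infty}\frac{1}{N}\log\mathbb{E}[{\rm Crt}_N(B',I_R)]_2\le\sup_{r\in I_R}\sup_{v\in B'}\Psi_p^v(r).
\]

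Since $|v|<u_{th}(p)$ for every $v\in B'$, Lemma \ref{lem:r=00003D0}(\ref{enu:p1}) guarantees that $\bar{\Psi}_p^v$ is strictly maximized on $[-1,1]$ at $r=0$, so $(v,r)\mapsto\bar{\Psi}_p^v(0)-\bar{\Psi}_p^v(r)$ is continuous and strictly positive on the compact set $B'\times\overline{I_R}$; compactness then yields $\eta>0$ with
\[
\sup_{r\in I_R,\,v\in B'}\Psi_p^v(r)\le\sup_{v\in B'}\Psi_p^v(0)-\eta.
\]
Finally, specializing (\ref{eq:Psi}) at $r=0$ (where $\Sigma_U(0)$ decouples the two coordinates) shows that $\Psi_p(0,v,v)$ equals twice the integrand whose supremum over $v\le u$ produces $\Theta_p(u)$ in the first-moment formula, so $\sup_{v\in B'}\Psi_p^v(0)=2\Theta_p(u)$, and the upper bound becomes $2\Theta_p(u)-\eta$, strictly less than the rate from (i). The step I expect to be most delicate is the truncation from $(-\infty,u)$ to $B'$: one must verify quantitatively, and uniformly in the overlap range, that pairs involving the tail $(-\infty,-E_0(p)-\delta)$ contribute at exponential rate strictly below $2\Theta_p(u)$, so that Theorem \ref{thm:2ndmomUBBK-1} can be legitimately applied on the bounded piece; once this reduction is justified, the compactness/continuity endgame is routine.
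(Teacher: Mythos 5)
Your part (i) (removing the diagonal and antipodal contributions to reduce the full second moment to $[\mathrm{Crt}_N(B,(-1,1))]_2$) matches the paper's Remark \ref{rem:r=00003D1}. Your part (ii) is also broadly in the same spirit as the paper's proof: both truncate the level set, both apply Theorem \ref{thm:2ndmomUBBK-1} together with Lemma \ref{lem:u1=00003Du2-1} to reduce to the diagonal $u_1=u_2$, and both use Lemma \ref{lem:r=00003D0} to exploit that $r=0$ is the unique maximizer of $\bar\Psi_p^v$. The difference is that you extract the uniform gap on the compact region $B'\times\overline{I_\epsilon}$ by a general compactness/continuity argument, whereas the paper obtains it explicitly from the monotonicity of $g_0(r)$ (see (\ref{eq:aa})); both are fine.

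However, there is a genuine gap in your handling of the tail $(-\infty,-E_0(p)-\delta)$, and it is exactly where you flag the step as ``delicate.'' To make the tail negligible you need that $\mathbb{E}\{\mathrm{Crt}_N((-\infty,-E_0(p)-\delta))^2\}$ has exponential rate \emph{strictly} below $2\Theta_p(u)$; Cauchy--Schwarz plus the decay of the \emph{first} moment on the tail does not give this, since the second moment over the tail is not controlled by the first. Applying Theorem \ref{thm:2ndmomUBBK-1} together with Lemma \ref{lem:u1=00003Du2-1}(\ref{enu:part 1}) to $B_{\text{tail}}$ reduces the question to bounding $\sup_{v<-E_0(p)-\delta}\sup_{r}\bar\Psi_p^v(r)$. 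For $v\in(-u_{th}(p),-E_0(p)-\delta)$ the supremum over $r$ is at $r=0$ and equals $2\Theta_p(v)<0$, fine; but for $v<-u_{th}(p)$ Lemma \ref{lem:r=00003D0}(\ref{enu:p2}) places the maximum at $r=\pm1$, and you need the additional nontrivial fact that $\sup_{v\le -u_{th}(p)}\bar\Psi_p^v(1)\le 0$. This is precisely (\ref{eq:68}) in the paper, which is proved via the monotonicity of $\partial_v\bar\Psi_p^v(1)$ (see (\ref{eq:70})--(\ref{eq:69})) together with the boundary value $\bar\Psi_p^{-u_{th}}(1)=\bar\Psi_p^{-u_{th}}(0)=2\Theta_p(-u_{th})<0$ from Lemma \ref{lem:r=00003D0}(\ref{enu:p3}). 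Your proposal only ever invokes Lemma \ref{lem:r=00003D0}(\ref{enu:p1}), which applies for $|v|<u_{th}(p)$; the regime $|v|>u_{th}(p)$ is never addressed. This is the missing ingredient — the paper's truncation at $\tilde u$ close to $-u_{th}(p)$ and the bound (\ref{eq:68}) are what make the tail harmless, and without some version of this estimate your argument does not close.
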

We now move on to discuss the refinement of the asymptotics to $O(1)$
scale - i.e., the proof of Theorem \ref{thm:Var-E2}. Corollary \ref{cor:lowoverlap}
implies that the contribution of overlaps outside $\left(-\epsilon,\epsilon\right)$
to the second moment of ${\rm Crt}_{N}\left(\left(-\infty,u\right)\right)$
is negligible, assuming $u\in\left(-E_{0}\left(p\right),-E_{\infty}\left(p\right)\right)$.
By the fact that $\Theta_{p}\left(u\right)$ (see (\ref{eq:e1}))
is strictly increasing for $u<0$ and the equivalence of moments on
exponential scale (i.e., Theorem \ref{thm:Var-E2-log}), we also have
that the contribution of levels outside $\left(u-\epsilon,u\right)$
to either the first or second moment is negligible. Thus, relying
on the fact that the second moment is larger than the first squared,
in order to prove Theorem \ref{thm:Var-E2} it is enough to show that
(see Lemma \ref{lem:21})
\begin{equation}
\lim_{N\to\infty}\frac{\mathbb{E}\left[\mbox{Crt}_{N}\left(\left(u-\epsilon_{N},u\right),\left(-\rho_{N},\rho_{N}\right)\right)\right]_{2}}{\left(\mathbb{E}\left\{ {\rm Crt}_{N}\left(\left(u-\epsilon_{N},u\right)\right)\right\} \right)^{2}}\leq1,\label{eq:x1-1}
\end{equation}
for any sequences $\epsilon_{N},\,\rho_{N}\to0$. Using the formula
(\ref{eq:2ndmom_refinedform}) and the corresponding formula for the
first moment derived by \cite{A-BA-C}, one finds that proving (\ref{eq:x1-1})
boils down to showing that uniformly in $u_{i}\in\left(u-\epsilon_{N},u\right)$
and $r\in(-\rho_{N},\rho_{N})$, as $N\to\infty$,
\begin{equation}
\frac{\mathbb{E}\left\{ \prod_{i=1}^{2}\left|\det\left(\mathbf{M}_{N-1}^{\left(i\right)}\left(r,\sqrt{N}u_{1},\sqrt{N}u_{2}\right)\right)\right|\right\} }{\prod_{i=1}^{2}\mathbb{E}\left\{ \det\left(\mathbf{X}_{N-1}-\sqrt{\frac{N}{N-1}\frac{p}{p-1}}u_{i}I\right)\right\} }\leq1+o(1),\label{eq:x3}
\end{equation}
where $\mathbf{X}_{N-1}$ is a GOE matrix.

Recall the equality in distribution (\ref{eq:Ms}). As we shall see
(in Lemma \ref{lem:n7}), the perturbations $\mathbf{E}_{N-1}^{\left(i\right)}(r)$
are negligible when computing the expectation above, even on $O(1)$
scale. That is, it is sufficient to prove (\ref{eq:x3}) with its
numerator replaced by 
\begin{equation}
\mathbb{E}\left\{ \prod_{i=1}^{2}\left|\det\left(\mathbf{X}_{N-1}^{\left(i\right)}(r)-\sqrt{\frac{N}{N-1}\frac{p}{p-1}}u_{i}I\right)\right|\right\} ,\label{eq:MX}
\end{equation}
where $\mathbf{X}_{N-1}^{\left(i\right)}(r)$ are the correlated GOE
matrices in (\ref{eq:Ms}). Note that in the setting of Theorem \ref{thm:Var-E2}
we assume that $u$ is strictly less than $-E_{\infty}\left(p\right)$.
This exactly means that the shifts $-\sqrt{\frac{N}{N-1}\frac{p}{p-1}}u_{i}$
are larger than $2$ and therefore the eigenvalues of the shifted
GOE matrices in (\ref{eq:MX}) are bounded away from $0$ with high
probability. This will allow us to apply concentration inequalities
of linear statistics of the eigenvalues to $\frac{1}{N}\log$ of the
product in (\ref{eq:MX}) (truncated) and its derivative in $u_{i}$.
Using the latter we will relate (\ref{eq:MX}) to 
\[
w_{u}(r)=\mathbb{E}\left\{ \prod_{i=1}^{2}\det\left(\mathbf{X}_{N-1}^{\left(i\right)}(r)-\sqrt{\frac{N}{N-1}\frac{p}{p-1}}uI\right)\right\} .
\]
We note that with $r=0$, $\mathbf{X}_{N-1}^{\left(1\right)}(0)$
and $\mathbf{X}_{N-1}^{\left(2\right)}(0)$ are i.i.d, so that $w_{u}(0)$
coincides with the denominator of (\ref{eq:x3}) with $u_{i}=u$.
Combining the above, at this point what we will need to show in order
to conclude (\ref{eq:x3}) is that $w_{u}(r)=(1+o(1))w_{u}(0)$ as
$N\to\infty$, uniformly in $r\in(-\rho_{N},\rho_{N})$. The key to
proving this will be to show that $w_{u}(r)$ is convex in a power
of $r$ and bound the ratio $|w_{u}(1)/w_{u}(0)|$ by a constant independent
of $N$ (see Lemma \ref{lem:n4}).%
\footnote{To be precise, $w_{u}(r)$ is convex in a power of $r$ only on $[0,1]$,
and for negative $r$ we will use a certain relation between $w_{u}(r)$
and $w_{u}(-r)$.%
}

We finish with two remarks about generalizations. First, we note that
parts of the current work generalize to the case of general mixed
models. Specifically, by the same method, and a somewhat more tedious
algebra, one can obtain an equivalent of Theorem \ref{thm:2ndmomUBBK-1}.
In the general case however, the function that replaces $\Psi_{p}$
is more complicated (mainly due to changes in the conditional law
of the Hessians of the Hamiltonian) and its analysis, albeit just
`a matter of calculus', seems to be substantially more difficult.
(Moreover, from the remark made in the introduction, we know that
the second moment cannot match the first squared for full mixture
models, which implies that for certain mixed models the function $\Psi_{p}$
achieve its maximum in the interior of the interval $[0,1]$. We do
not have a characterization of the mixtures that allow one to carry
out the analysis we performed in the pure $p$-spin case.) 

In another direction, the authors of \cite{A-BA-C,ABA2} treat the
case of critical points of any given index. To complete the analysis
of the corresponding second moment, note that the effect of introducing
a restriction on the index in (\ref{eq:2ndmom_refinedform}) is simply
adding there the indicator of the corresponding event. By a similar
method to that used in the proof of Theorem \ref{thm:2ndmomUBBK-1},
this would result in an addition to $\Psi_{p}\left(r,u_{1},u_{2}\right)$
of the term
\[
\limsup_{N\to\infty}\frac{1}{N}\log\left(\mathbb{P}\left\{ \left(\mathbf{M}_{N-1}^{\left(i\right)}\left(r,\sqrt{N}u_{1},\sqrt{N}u_{2}\right)\right)_{i=1,2}\mbox{\,\ are of index}\, k\,\right\} \right),
\]
and would require both analyzing the probability above and the modified
function $\Psi_{p}\left(r,u_{1},u_{2}\right)$ in order to obtain
an upper bound on the logarithmic asymptotics of the second moment
of the number of critical points of index $k$. We have not attempted
to complete this computation. We remark, however, that for the study
of the Gibbs measure at low enough temperature it is sufficient to
understand the critical points with no restriction on the index; see
\cite{geometryGibbs}. In fact, only the critical points close to
$-NE_{0}(p)$ play a role in \cite{geometryGibbs} and those are typically
local minima (e.g., as follows from bounds on critical points of positive
index proved in \cite{A-BA-C}).

Lastly, we state two results of \cite{A-BA-C} that will be needed
later.

\subsection*{An integral formula and the logarithmic asymptotics of the first
moment}

We shall need the following two results borrowed from \cite{A-BA-C}. 
\begin{lem}
\cite[Lemmas 3.1, 3.2]{A-BA-C}\label{lem:1stmom} For all $p\geq3$,
\begin{equation}
\mathbb{E}\left\{ {\rm Crt}_{N}\left(\left(-\infty,u\right)\right)\right\} =\omega_{N}\left(\frac{p-1}{2\pi}(N-1)\right)^{\frac{N-1}{2}}\mathbb{E}\left\{ \left|\det\left(\mathbf{M}_{N-1}-\sqrt{\frac{p}{p-1}\frac{1}{N-1}}UI\right)\right|\mathbf{1}\Big\{ U<\sqrt{N}u\Big\}\right\} ,\label{eq:1stmom}
\end{equation}
where $\mathbf{M}_{N-1}$ is a GOE matrix of dimension $N-1\times N-1$
independent of $U\sim N\left(0,1\right)$.\end{lem}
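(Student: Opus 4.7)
The plan is to prove Lemma \ref{lem:1stmom} via a direct application of the Kac-Rice formula for smooth Gaussian fields on a Riemannian manifold, combined with the isotropy of $H_N$ and an explicit computation of the covariance structure of $(H_N,\nabla H_N,\nabla^2 H_N)$ at a single point. The target identity already has the structure of a Kac-Rice integrand that has been reduced to a single point on the sphere, so essentially three independent tasks are needed: (i) writing the K-R formula, (ii) collapsing the integral over $\mathbb{S}^{N-1}(\sqrt N)$ to a factor of $\omega_N N^{(N-1)/2}$ using rotational invariance, and (iii) identifying the conditional law of the Hessian given that the gradient vanishes.

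First I would apply the K-R formula in the form appropriate for counting critical points of a Morse function on the sphere: for every nice $B\subset\mathbb{R}$,
\begin{equation*}
\mathbb{E}\{{\rm Crt}_N(B)\}=\int_{\mathbb{S}^{N-1}(\sqrt N)}\mathbb{E}\bigl\{|\det\nabla^2H_N(\boldsymbol\sigma)|\,\mathbf{1}\{H_N(\boldsymbol\sigma)\in NB\}\bigm|\nabla H_N(\boldsymbol\sigma)=0\bigr\}\,\varphi_{\nabla H_N(\boldsymbol\sigma)}(0)\,d\boldsymbol\sigma,
\end{equation*}
where $\nabla$ and $\nabla^2$ denote the covariant gradient and Hessian in an orthonormal frame of $T_{\boldsymbol\sigma}\mathbb{S}^{N-1}(\sqrt N)$ and $\varphi_{\nabla H_N(\boldsymbol\sigma)}$ is the Gaussian density of the gradient. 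Verifying the hypotheses of K-R (non-degeneracy of the gradient density, almost-sure non-degeneracy of critical points, etc.) is routine for the pure $p$-spin model since its covariance $\mathbb{E}\{H_N(\boldsymbol\sigma)H_N(\boldsymbol\sigma')\}=N(\langle\boldsymbol\sigma,\boldsymbol\sigma'\rangle/N)^p$ is real analytic.

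Next, by the invariance in law of $H_N$ under orthogonal transformations of $\mathbb{R}^N$, the integrand is constant in $\boldsymbol\sigma$, so I can evaluate at the distinguished point $\boldsymbol\sigma_0=(\sqrt N,0,\dots,0)$ and pick up a factor equal to the volume $\omega_N N^{(N-1)/2}$ of the sphere. At $\boldsymbol\sigma_0$, a direct differentiation of the covariance function in a tangent-orthonormal frame yields: the $N-1$ components of $\nabla H_N(\boldsymbol\sigma_0)$ are i.i.d.\ centered Gaussians with a variance proportional to $p$ (this produces, after the Gaussian density at $0$, the explicit constant $((p-1)(N-1)/(2\pi))^{(N-1)/2}$ once the joint covariances are worked out), and $\nabla H_N(\boldsymbol\sigma_0)$ is independent of the pair $(H_N(\boldsymbol\sigma_0),\nabla^2 H_N(\boldsymbol\sigma_0))$ (the first-order tangent derivatives of the covariance in $\boldsymbol\sigma'$ at $\boldsymbol\sigma'=\boldsymbol\sigma$ vanish by isotropy).

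Finally, I would identify the joint law of $(H_N(\boldsymbol\sigma_0),\nabla^2 H_N(\boldsymbol\sigma_0))$: $U:=H_N(\boldsymbol\sigma_0)/\sqrt N\sim N(0,1)$ since $\mathbb{E}\{H_N(\boldsymbol\sigma_0)^2\}=N$, and the covariant Hessian decomposes as a GOE component plus a deterministic multiple of the identity coming from the radial part of the spherical connection applied to $R(\boldsymbol\sigma,\boldsymbol\sigma')^p$. Computing the second covariant derivative of $NR(\boldsymbol\sigma,\boldsymbol\sigma')^p$ at the diagonal, one obtains that in distribution
\begin{equation*}
\nabla^2 H_N(\boldsymbol\sigma_0)\stackrel{d}{=}\sqrt{p(p-1)(N-1)}\,\mathbf{M}_{N-1}-p\,H_N(\boldsymbol\sigma_0)\,I,
\end{equation*}
with $\mathbf{M}_{N-1}$ a GOE matrix independent of $U$. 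Substituting this, pulling the scalar $(p(p-1)(N-1))^{(N-1)/2}$ out of the determinant, combining it with the gradient density, and rewriting $\{H_N(\boldsymbol\sigma_0)\in N(-\infty,u)\}=\{U<\sqrt N u\}$ yields (\ref{eq:1stmom}). The main obstacle is the careful bookkeeping in step three: getting the correct powers of $N-1$, the precise coefficient $\sqrt{p/((p-1)(N-1))}$ of the identity shift, and the constant $(p-1)(N-1)/(2\pi)$ in the prefactor all hinge on computing the covariances of the components of $(H_N,\nabla H_N,\nabla^2 H_N)$ in a specific tangent frame — a calculation that is conceptually straightforward but algebraically delicate, and where sign and normalization mistakes easily occur.
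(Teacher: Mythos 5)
Your plan — Kac–Rice on the sphere, collapse to one point by isotropy, then read off the conditional law of the Hessian given a vanishing gradient — is exactly the method of the cited source [A-BA-C, Lemmas 3.1--3.2] (the present paper only borrows this lemma and does not reprove it), and it is also the method used for the two-point version in Section \ref{sec:ExactFormula:KR+Hessians}. So the strategy is right. However, the concrete Hessian decomposition you state is incorrect by a clean factor of $\sqrt N$ in each term, and as written it would not reproduce (\ref{eq:1stmom}). The identity
\[
\nabla^2 H_N(\boldsymbol\sigma_0)\stackrel{d}{=}\sqrt{p(p-1)(N-1)}\,\mathbf{M}_{N-1}-p\,H_N(\boldsymbol\sigma_0)\,I
\]
is the one satisfied by the \emph{rescaled} field $f_N(\boldsymbol\sigma)=N^{-1/2}H_N(\sqrt N\boldsymbol\sigma)$ on the unit sphere $\mathbb{S}^{N-1}$ (with $f_N(\mathbf{n})=U$ in place of $H_N(\boldsymbol\sigma_0)$). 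For $H_N$ on $\mathbb{S}^{N-1}(\sqrt N)$, in matching orthonormal tangent frames one has $\nabla^2 H_N(\boldsymbol\sigma_0)=N^{-1/2}\nabla^2 f_N(\mathbf{n})$, so the correct statement is
\[
\nabla^2 H_N(\boldsymbol\sigma_0)\stackrel{d}{=}\sqrt{\tfrac{p(p-1)(N-1)}{N}}\,\mathbf{M}_{N-1}-\tfrac{p}{\sqrt N}\,U\,I .
\]

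To see that your version fails, substitute it into the Kac--Rice expression: the volume of $\mathbb{S}^{N-1}(\sqrt N)$ is $\omega_N N^{(N-1)/2}$, the gradient density at $0$ is $(2\pi p)^{-(N-1)/2}$ (the tangential gradient of $H_N$ has i.i.d.\ components of variance exactly $p$, not merely ``proportional to $p$''), and pulling the factor $\bigl(p(p-1)(N-1)\bigr)^{(N-1)/2}$ out of the determinant gives a prefactor $\omega_N\bigl(N(p-1)(N-1)/(2\pi)\bigr)^{(N-1)/2}$ — extra $N^{(N-1)/2}$ — and a shift $\sqrt{pN/((p-1)(N-1))}\,U$ inside the determinant — extra $\sqrt N$. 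With the corrected Hessian the extra $N$'s cancel and (\ref{eq:1stmom}) comes out. Note also that your parenthetical remark attributing the constant $((p-1)(N-1)/(2\pi))^{(N-1)/2}$ to the gradient density alone is misleading: the gradient density and surface area only supply $\omega_N(N/(2\pi p))^{(N-1)/2}$, and the rest comes from the Hessian normalization. This kind of bookkeeping error is precisely why both [A-BA-C] and Section \ref{sec:ExactFormula:KR+Hessians} of this paper work throughout with the unit-variance field $f_N$ on $\mathbb{S}^{N-1}$, where the volume factor is just $\omega_N$ and your Hessian identity is already in the right form.
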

\begin{thm}
\cite[Theorem 2.8]{A-BA-C}\label{thm:A-BA-C} For all $p\geq3$,
\begin{equation}
\lim_{N\to\infty}\frac{1}{N}\log\left(\mathbb{E}\left\{ {\rm Crt}_{N}\left(\left(-\infty,u\right)\right)\right\} \right)=\Theta_{p}\left(u\right)=\begin{cases}
\frac{1}{2}+\frac{1}{2}\log\left(p-1\right)-\frac{u^{2}}{2}+\Omega\left(\sqrt{\frac{p}{p-1}}u\right) & \mbox{ if }u<0,\\
\frac{1}{2}\log\left(p-1\right) & \mbox{ if }u\geq0.
\end{cases}\label{eq:54}
\end{equation}

\end{thm}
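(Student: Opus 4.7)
The plan is to reduce the computation to the large-$N$ asymptotics of a single integral via Lemma \ref{lem:1stmom}, and then read off the exponent using the large deviation principle for the empirical spectral measure of a GOE matrix combined with Laplace's method. First I would handle the deterministic prefactor $\omega_{N}\bigl(\tfrac{(p-1)(N-1)}{2\pi}\bigr)^{(N-1)/2}$: using $\omega_{N}=2\pi^{N/2}/\Gamma(N/2)$ and Stirling, $\frac{1}{N}\log$ of this quantity converges to $\tfrac{1}{2}+\tfrac{1}{2}\log(p-1)$. After the change of variables $U=\sqrt{N}\,v$, the remaining expectation becomes
\[
\sqrt{\tfrac{N}{2\pi}}\int_{-\infty}^{u} e^{-Nv^{2}/2}\,\phi_{N}(v)\,dv,\qquad \phi_{N}(v)=\mathbb{E}\Bigl|\det\Bigl(\mathbf{M}_{N-1}-\sqrt{\tfrac{p}{p-1}\tfrac{N}{N-1}}\,v\,I\Bigr)\Bigr|,
\]
so that the theorem reduces to establishing the exponential rate of this one-dimensional integral.

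The heart of the proof is the identification
\[
\lim_{N\to\infty}\tfrac{1}{N}\log\phi_{N}(v)=\Omega\Bigl(\sqrt{\tfrac{p}{p-1}}\,v\Bigr).
\]
Writing $\tfrac{1}{N-1}\log|\det(\mathbf{M}_{N-1}-xI)|=\int \log|\lambda-x|\,d\hat\mu_{N-1}(\lambda)$ where $\hat\mu_{N-1}$ is the empirical spectral measure, this linear statistic of the eigenvalues is a continuous (after truncation) functional of $\hat\mu_{N-1}$. Since $\hat\mu_{N-1}$ satisfies an LDP at speed $N^{2}$ with the semicircle $\mu^{*}$ as its unique minimizer (Ben Arous--Guionnet \cite{BAG97}), and since $N^{2}$ dominates the exponential scale $N$ we care about, the expectation and the almost sure value share the same exponential rate $\int\log|\lambda-x|\,d\mu^{*}(\lambda)=\Omega(x)$. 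Two technical points deserve care: the singularity of $\log|\lambda-x|$ at $\lambda=x$ is integrable against $\mu^{*}$ but requires a truncation argument (cutting the log at $\pm M$) combined with a uniform moment bound on $\log|\det(\mathbf{M}_{N-1}-xI)|$ to pass from almost sure convergence to convergence of expectations; and the growth of $\log|\lambda-x|$ at infinity is controlled using the exponential tails of the largest eigenvalue of $\mathbf{M}_{N-1}$.

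With the asymptotics of $\phi_{N}(v)$ in hand, Laplace's method (or Varadhan's lemma applied to the tilted measure $e^{-Nv^{2}/2}dv$) yields
\[
\lim_{N\to\infty}\tfrac{1}{N}\log\Bigl(\sqrt{\tfrac{N}{2\pi}}\int_{-\infty}^{u}e^{-Nv^{2}/2}\phi_{N}(v)\,dv\Bigr)=\sup_{v\leq u}\Bigl\{-\tfrac{v^{2}}{2}+\Omega\Bigl(\sqrt{\tfrac{p}{p-1}}\,v\Bigr)\Bigr\}.
\]
Adding the prefactor $\tfrac{1}{2}+\tfrac{1}{2}\log(p-1)$ gives the claimed value of $\Theta_{p}(u)$ once one identifies the argmax. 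Using the explicit form of $\Omega$ from \eqref{eq:Omega}: for $|v|\leq 2\sqrt{(p-1)/p}$ the objective equals $-\tfrac{(p-2)}{4(p-1)}v^{2}-\tfrac{1}{2}$, which is strictly concave and maximized at $v=0$; for $|v|$ larger, a direct differentiation in $v$ shows the objective is strictly decreasing in $|v|$ (using that the derivative of $\Omega(\sqrt{p/(p-1)}\,v)$ matches $v$ only on the semicircle support, and falls short beyond it). Consequently, for $u\geq 0$ the supremum is attained at $v=0$ with value $-\tfrac{1}{2}$, giving $\Theta_{p}(u)=\tfrac{1}{2}\log(p-1)$; for $u<0$ the objective is strictly increasing on $(-\infty,0)$, so the supremum on $(-\infty,u]$ is attained at the boundary $v=u$, yielding the stated formula.

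The main obstacle is the truncation step: establishing $\tfrac{1}{N}\log\mathbb{E}|\det(\mathbf{M}_{N-1}-xI)|\to\Omega(x)$ uniformly enough in $x$ to justify the Laplace asymptotics for the integral over $v\leq u$. This requires handling both the logarithmic singularity of the integrand (through a small-ball estimate for the smallest singular value of the shifted GOE, or via Selberg-type identities expressing $\phi_{N}(v)$ in closed form as in \cite{A-BA-C}) and the tail contribution from atypical eigenvalues far from the bulk. Once that uniform control is in place, everything else reduces to calculus with the explicit function $\Omega$.
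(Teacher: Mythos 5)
Your proposal is correct, but it takes a genuinely different route from the one used in \cite{A-BA-C}, which is the source of this theorem (the present paper only cites it). The original proof expresses $\mathbb{E}\,|\det(\mathbf{M}_{N-1}-xI)|$ in closed form using the explicit joint density of GOE eigenvalues together with an algebraic trick that converts the $(N-1)\times(N-1)$ absolute determinant into an $N\times N$ GOE quantity, and then evaluates the asymptotics via Selberg-type integrals --- a computation that is explicit but does not generalize to products of correlated determinants. Your route instead uses the Ben~Arous--Guionnet LDP at speed $N^{2}$ for the empirical spectral measure, a truncation of $\log|\lambda-x|$, and Varadhan/Laplace to push the rate through the Gaussian weight $e^{-Nv^{2}/2}$; this is precisely the machinery the present paper deploys to prove Theorem~\ref{thm:2ndmomUBBK-1} and Lemma~\ref{lem:bound-h} for the \emph{second} moment, where Selberg identities are unavailable. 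So what you have really done is specialize the paper's second-moment method to one matrix, which is perfectly valid: you lose the exact formula but gain robustness. Two small technical remarks. First, your phrase that the derivative ``falls short beyond'' the bulk deserves to be made quantitative; a clean way is to write $h'(v)=-v+\sqrt{\tfrac{p}{p-1}}\,\Omega'\bigl(\sqrt{\tfrac{p}{p-1}}v\bigr)$, use the Stieltjes-transform identity $G(z)^{2}-zG(z)+1=0$ outside $[-2,2]$ to get $h'(v)=\tfrac{v}{p-1}-\tfrac{1}{w_{p}G(v/w_{p})}$ with $w_{p}=\sqrt{(p-1)/p}$, and note $|G(z)|<2/|z|$ for $|z|>2$, which gives $h'(v)>0$ for $v<-E_{\infty}(p)$ as soon as $p\geq3$. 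Second, for the truncation step you should record the uniform bound from Lemma~\ref{lem:maxEigBd} on the largest eigenvalue (the $h_{\kappa}^{\infty}$ control) in addition to the small-ball control; the paper does exactly this in parts~(\ref{enu:p1 bound-h})--(\ref{enu:p2 bound-h}) of Lemma~\ref{lem:bound-h}, and the same argument works verbatim here with one matrix instead of two.
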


\section{\label{sec:ExactFormula:KR+Hessians}proof of Lemma \ref{lem:2ndKR-refinedFrom}}

This section is devoted to the proof of Lemma \ref{lem:2ndKR-refinedFrom}.
Let $f_{N}\left(\boldsymbol{\sigma}\right)$ be equal to $H_{N}\left(\boldsymbol{\sigma}\right)$
reparametrized and normalized to be a Gaussian field on 
\[
\mathbb{S}=\mathbb{S}^{N-1}=\left\{ \boldsymbol{\sigma}\in\mathbb{R}^{N}:\,\left\Vert \boldsymbol{\sigma}\right\Vert _{2}=1\right\} 
\]
with constant variance $1$, 
\begin{equation}
f_{N}\left(\boldsymbol{\sigma}\right)=f_{N,p}\left(\boldsymbol{\sigma}\right)=\frac{1}{\sqrt{N}}H_{N,p}\left(\sqrt{N}\boldsymbol{\sigma}\right).\label{eq:f}
\end{equation}
The covariance of $f_{N}\left(\boldsymbol{\sigma}\right)$ is given
by
\[
\mathbb{E}\left\{ f_{N}\left(\boldsymbol{\sigma}\right),f_{N}\left(\boldsymbol{\sigma}'\right)\right\} =\left\langle \boldsymbol{\sigma},\boldsymbol{\sigma}'\right\rangle ^{p},
\]
where $\left\langle \boldsymbol{\sigma},\boldsymbol{\sigma}'\right\rangle =\sum_{i=1}^{N}\sigma_{i}\sigma_{i}^{\prime}$
is the usual inner product.

Note that
\begin{align}
\mbox{Crt}_{N}\left(B\right) & =\mbox{Crt}_{N}^{f}\left(B\right)\triangleq\#\left\{ \left.\boldsymbol{\sigma}\in\mathbb{S}^{N-1}\,\right|\,\nabla f_{N}\left(\boldsymbol{\sigma}\right)=0,\, f_{N}\left(\boldsymbol{\sigma}\right)\in\sqrt{N}B\right\} ,\nonumber \\
\left[\mbox{Crt}_{N}\left(B,I_{R}\right)\right]_{2} & =\left[\mbox{Crt}_{N}^{f}\left(B,I_{R}\right)\right]_{2}\triangleq\#\left\{ \left.\left(\boldsymbol{\sigma},\boldsymbol{\sigma}'\right)\in\left(\mathbb{S}^{N-1}\right)^{2}\,\right|\,\left\langle \boldsymbol{\sigma},\boldsymbol{\sigma}'\right\rangle \in I_{R},...\right.\label{eq:31}\\
 & \left.\nabla f_{N}\left(\boldsymbol{\sigma}\right)=\nabla f_{N}\left(\boldsymbol{\sigma}'\right)=0,\, f_{N}\left(\boldsymbol{\sigma}\right)\in\sqrt{N}B,\, f_{N}\left(\boldsymbol{\sigma}'\right)\in\sqrt{N}B\right\} .\nonumber 
\end{align}

Endow the sphere $\mathbb{S}^{N-1}$ with the standard Riemannian
structure, induced by the Euclidean Riemannian metric on $\mathbb{R}^{N}$.
Given a (piecewise) smooth orthonormal frame field $E=\left(E_{i}\right)_{i=1}^{N-1}$
on $\mathbb{S}^{N-1}$ we define 
\begin{equation}
\nabla f_{N}\left(\boldsymbol{\sigma}\right)=\left(E_{i}f_{N}\left(\boldsymbol{\sigma}\right)\right)_{i=1}^{N-1},\,\,\nabla^{2}f_{N}\left(\boldsymbol{\sigma}\right)=\left(E_{i}E_{j}f_{N}\left(\boldsymbol{\sigma}\right)\right)_{i,j=1}^{N-1}.\label{eq:grad_Hess}
\end{equation}

\begin{lem}
\label{lem:FirstKR}Let $E=\left(E_{i}\right)_{i=1}^{N-1}$ be an
arbitrary (piecewise) smooth orthonormal frame field on $\mathbb{S}^{N-1}$
and use the notation (\ref{eq:grad_Hess}). For any nice $B\subset\mathbb{R}$
and nice $I_{R}\subset\left(-1,1\right)$,
\begin{align}
\mathbb{E}\left\{ \left[{\rm Crt}_{N}\left(B,I_{R}\right)\right]_{2}\right\} = & \omega_{N}\omega_{N-1}\left(\left(N-1\right)p\left(p-1\right)\right)^{N-1}\int_{I_{R}}dr\cdot\left(1-r^{2}\right)^{\frac{N-3}{2}}\varphi_{\nabla f\left(\mathbf{n}\right),\nabla f\left(\boldsymbol{\sigma}\left(r\right)\right)}\left(0,0\right)\label{eq:35}\\
 & \times\mathbb{E}\Bigg\{\left|\det\left(\frac{\nabla^{2}f\left(\mathbf{n}\right)}{\sqrt{\left(N-1\right)p\left(p-1\right)}}\right)\right|\cdot\left|\det\left(\frac{\nabla^{2}f\left(\boldsymbol{\sigma}\left(r\right)\right)}{\sqrt{\left(N-1\right)p\left(p-1\right)}}\right)\right|\nonumber \\
 & \mathbf{1}\Big\{ f\left(\mathbf{n}\right),\, f\left(\boldsymbol{\sigma}\left(r\right)\right)\in\sqrt{N}B\Big\}\,\Bigg|\,\nabla f\left(\mathbf{n}\right)=\nabla f\left(\boldsymbol{\sigma}\left(r\right)\right)=0\Bigg\},\nonumber 
\end{align}
where $\varphi_{\nabla f\left(\boldsymbol{\sigma}\right),\nabla f\left(\boldsymbol{\sigma}'\right)}$
is the joint density of the gradients $\nabla f\left(\boldsymbol{\sigma}\right)$
and $\nabla f\left(\boldsymbol{\sigma}'\right)$, and where 
\begin{equation}
\boldsymbol{\sigma}\left(r\right)=\left(0,...,0,\sqrt{1-r^{2}},r\right).\label{eq:sig_r}
\end{equation}

\end{lem}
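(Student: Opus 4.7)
The plan is to apply a two-point Kac--Rice formula on $\mathbb{S}^{N-1}\times\mathbb{S}^{N-1}$ and then to exploit the $O(N)$-isotropy of $f_N$ to collapse the resulting double integral to a single integral in the overlap parameter $r$.

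\emph{Step 1: two-point Kac--Rice.} For $(\boldsymbol{\sigma},\boldsymbol{\sigma}')\in\mathbb{S}^{N-1}\times\mathbb{S}^{N-1}$ with $\langle\boldsymbol{\sigma},\boldsymbol{\sigma}'\rangle\in I_R\subset(-1,1)$, the Gaussian vector $(\nabla f_N(\boldsymbol{\sigma}),\nabla f_N(\boldsymbol{\sigma}'))$ has non-degenerate covariance --- this can be checked directly from $\mathbb{E}\{f_N(\boldsymbol{\sigma})f_N(\boldsymbol{\sigma}')\}=\langle\boldsymbol{\sigma},\boldsymbol{\sigma}'\rangle^p$ together with $p\geq 3$ --- so the standard two-point Kac--Rice formula, in the form used in \cite{A-BA-C}, applies and yields
\begin{align*}
\mathbb{E}\left[\mathrm{Crt}_N^f(B,I_R)\right]_2 = \int_{\mathbb{S}^{N-1}}\int_{\mathbb{S}^{N-1}} & \mathbf{1}\{\langle\boldsymbol{\sigma},\boldsymbol{\sigma}'\rangle\in I_R\}\,\varphi_{\nabla f(\boldsymbol{\sigma}),\nabla f(\boldsymbol{\sigma}')}(0,0) \\
& \times\mathbb{E}\Big[|\det\nabla^2 f(\boldsymbol{\sigma})|\,|\det\nabla^2 f(\boldsymbol{\sigma}')|\,\mathbf{1}\{f(\boldsymbol{\sigma}),f(\boldsymbol{\sigma}')\in\sqrt{N}B\}\\
&\qquad \Big|\,\nabla f(\boldsymbol{\sigma})=\nabla f(\boldsymbol{\sigma}')=0\Big]\,d\boldsymbol{\sigma}\,d\boldsymbol{\sigma}'.
\end{align*}

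\emph{Step 2: isotropy and reduction to one dimension.} The law of $f_N$ is $O(N)$-invariant, so for any pair $(\boldsymbol{\sigma},\boldsymbol{\sigma}')$ with $\langle\boldsymbol{\sigma},\boldsymbol{\sigma}'\rangle=r$ we may choose $T\in O(N)$ sending $(\boldsymbol{\sigma},\boldsymbol{\sigma}')$ to $(\mathbf{n},\boldsymbol{\sigma}(r))$ and push the frame $E$ forward by $T$. Since $|\det\nabla^2 f|$ is frame-invariant and the joint Gaussian density of $(\nabla f(\boldsymbol{\sigma}),\nabla f(\boldsymbol{\sigma}'))$ at $0$ is invariant under joint orthogonal changes of frame, the integrand in Step~1 depends on $(\boldsymbol{\sigma},\boldsymbol{\sigma}')$ only through $r$ and equals its value at $(\mathbf{n},\boldsymbol{\sigma}(r))$ computed with the original frame $E$. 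Using spherical coordinates on the inner sphere with $\boldsymbol{\sigma}$ as north pole and substituting $r=\cos\theta$ then gives the identity
\[
\int_{\mathbb{S}^{N-1}}\int_{\mathbb{S}^{N-1}} g(\langle\boldsymbol{\sigma},\boldsymbol{\sigma}'\rangle)\,d\boldsymbol{\sigma}\,d\boldsymbol{\sigma}'=\omega_N\omega_{N-1}\int_{-1}^{1}g(r)(1-r^2)^{(N-3)/2}\,dr,
\]
and pulling the scalar $\sqrt{(N-1)p(p-1)}$ out of each Hessian determinant contributes the prefactor $\omega_N\omega_{N-1}((N-1)p(p-1))^{N-1}$ displayed in the lemma.

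\emph{Main obstacle.} The most delicate step is the isotropy reduction: one must verify that the push-forward of the frame $E$ under $T$ really identifies the conditional joint law of $(\nabla^2 f(\boldsymbol{\sigma}),\nabla^2 f(\boldsymbol{\sigma}'))$ given $\nabla f(\boldsymbol{\sigma})=\nabla f(\boldsymbol{\sigma}')=0$ at $(\boldsymbol{\sigma},\boldsymbol{\sigma}')$ with the corresponding conditional law at $(\mathbf{n},\boldsymbol{\sigma}(r))$, particularly since $E$ is only piecewise smooth. A parallel point is to check the uniform non-degeneracy (and boundedness) of $\varphi_{\nabla f(\boldsymbol{\sigma}),\nabla f(\boldsymbol{\sigma}')}(0,0)$ for $r$ in any nice $I_R\subset(-1,1)$, which legitimizes applying the Kac--Rice formula. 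Both become routine once the explicit covariance structure of the derivatives of $f_N$ is written down, which is the content of the forthcoming Lemma \ref{lem:Hess_struct_2}.
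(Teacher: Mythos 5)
Your proposal is correct and follows essentially the same route as the paper: apply the two-point Kac--Rice formula (the paper invokes \cite[Theorem 12.1.1]{RFG} on the submanifold $\mathcal{S}_N^2(I_R)$, deferring the regularity checks to an appendix, and you invoke it via the form used in \cite{A-BA-C} after noting non-degeneracy), then use $O(N)$-isotropy to fix $\boldsymbol{\sigma}=\mathbf{n}$ and the co-area/spherical-slicing identity to reduce the inner sphere integral to the $r$-integral with factor $\omega_{N-1}(1-r^2)^{(N-3)/2}$, and finally pull $\sqrt{(N-1)p(p-1)}$ out of both $(N-1)\times(N-1)$ determinants. The obstacles you flag (frame push-forward under the rotation and non-degeneracy of the gradient density on $I_R$) are precisely the ones the paper handles in Appendix III and in the later covariance lemmas.
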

The proof of Lemma \ref{lem:FirstKR} is deferred to the end of the
section. Clearly, the left-hand side of (\ref{eq:35}) is independent
of the choice of the orthonormal frame $E$. Thus, as a corresponding
continuous Radon-Nikodym derivative, the integrand in the right-hand
side is also independent of $E$. Therefore, Lemma \ref{lem:2ndKR-refinedFrom}
follows from Lemma \ref{lem:FirstKR}, combined with Lemmas \ref{lem:dens}
and \ref{lem:Hess_struct_2} given below. Their computationally heavy
proof is given in Appendix II.
\begin{lem}
\label{lem:dens}(the density of the gradients and the conditional
law of $\left(f\left(\mathbf{n}\right),f\left(\boldsymbol{\sigma}\left(r\right)\right)\right)$)
For any $r\in\left(-1,1\right)$ there exists a choice of $E=\left(E_{i}\right)_{i=1}^{N-1}$
such that the following holds. The density of $\left(\nabla f\left(\mathbf{n}\right),\nabla f\left(\boldsymbol{\sigma}\left(r\right)\right)\right)$
at $\left(0,0\right)\in\mathbb{R}^{N-1}\times\mathbb{R}^{N-1}$ is
\begin{align}
 & \varphi_{\nabla f\left(\mathbf{n}\right),\nabla f\left(\boldsymbol{\sigma}\left(r\right)\right)}\left(0,0\right)\nonumber \\
 & =\left(2\pi p\right)^{-\left(N-1\right)}\left[1-r^{2p-2}\right]^{-\frac{N-2}{2}}\left[1-\left(pr^{p}-\left(p-1\right)r^{p-2}\right)^{2}\right]^{-\frac{1}{2}},\label{eq:grad_dens}
\end{align}
and conditional on $\left(\nabla f\left(\mathbf{n}\right),\nabla f\left(\boldsymbol{\sigma}\left(r\right)\right)\right)=\left(0,0\right)$,
the vector $\left(f\left(\mathbf{n}\right),f\left(\boldsymbol{\sigma}\left(r\right)\right)\right)$
is a centered Gaussian vector with covariance matrix $\Sigma_{U}\left(r\right)$
(cf. (\ref{eq:26})).
\end{lem}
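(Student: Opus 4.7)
The plan is to compute the full joint Gaussian covariance of the vector $(f(\mathbf{n}),f(\boldsymbol{\sigma}(r)),\nabla f(\mathbf{n}),\nabla f(\boldsymbol{\sigma}(r)))$ for a cleverly chosen frame $E$, and then read both statements off from the standard Gaussian density and Schur-complement formulas. The key simplification is that $E$ can be chosen to make the $2(N-1)\times 2(N-1)$ cross-covariance of the two gradients block-diagonal with $2\times 2$ blocks.

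First I would exploit that $\mathbf{n}=e_N$ and $\boldsymbol{\sigma}(r)$ both lie in the plane $\mathrm{span}(e_{N-1},e_N)$, so that $e_1,\dots,e_{N-2}$ are simultaneously tangent to $\mathbb{S}^{N-1}$ at the two points. I would take any smooth orthonormal frame $E$ on $\mathbb{S}^{N-1}$ whose values at these two points are
\[
E_i(\mathbf{n})=e_i\ (1\le i\le N-1),\quad E_i(\boldsymbol{\sigma}(r))=e_i\ (1\le i\le N-2),\quad E_{N-1}(\boldsymbol{\sigma}(r))=(0,\dots,0,r,-\sqrt{1-r^2}).
\]
Since the density depends only on the pointwise values, the specific global extension is irrelevant.

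Next, with covariance kernel $K(\boldsymbol{\sigma},\boldsymbol{\sigma}')=\langle \boldsymbol{\sigma},\boldsymbol{\sigma}'\rangle^p$, differentiating tangentially gives
\[
\mathrm{Cov}(E_i f(\boldsymbol{\sigma}),E_j f(\boldsymbol{\sigma}'))=p(p-1)\langle \boldsymbol{\sigma},\boldsymbol{\sigma}'\rangle^{p-2}(E_i(\boldsymbol{\sigma})\!\cdot\!\boldsymbol{\sigma}')(E_j(\boldsymbol{\sigma}')\!\cdot\!\boldsymbol{\sigma})+p\langle \boldsymbol{\sigma},\boldsymbol{\sigma}'\rangle^{p-1}(E_i(\boldsymbol{\sigma})\!\cdot\! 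E_j(\boldsymbol{\sigma}')).
\]
At coincident points this reduces to $p\delta_{ij}$ by tangency. At the two chosen points one checks that $E_i\cdot\boldsymbol{\sigma}'$ and $E_j\cdot\boldsymbol{\sigma}$ vanish as soon as $i$ or $j$ lies in $\{1,\dots,N-2\}$, while $E_{N-1}(\mathbf{n})\cdot E_{N-1}(\boldsymbol{\sigma}(r))=r$. Consequently, the $2(N-1)\times 2(N-1)$ joint covariance matrix splits into $N-1$ independent $2\times 2$ blocks $\bigl(\begin{smallmatrix} p & c_i\\ c_i & p\end{smallmatrix}\bigr)$, where
\[
c_i=pr^{p-1}\ (i\le N-2),\qquad c_{N-1}=-p(p-1)r^{p-2}(1-r^2)+pr^p=p\bigl(pr^p-(p-1)r^{p-2}\bigr).
\]
Taking the product of the determinants $p^2-c_i^2$ and substituting into $\varphi(0)=(2\pi)^{-(N-1)}(\det\Sigma)^{-1/2}$ produces exactly (\ref{eq:grad_dens}).

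Finally, for the conditional law I would use the cross-covariance $\mathrm{Cov}(f(\boldsymbol{\sigma}),E_i f(\boldsymbol{\sigma}'))=p\langle \boldsymbol{\sigma},\boldsymbol{\sigma}'\rangle^{p-1}(\boldsymbol{\sigma}\cdot E_i(\boldsymbol{\sigma}'))$. This vanishes when $\boldsymbol{\sigma}=\boldsymbol{\sigma}'$ (so $f$ and $\nabla f$ are uncorrelated pointwise), and it vanishes at the two-point configuration for every $i\le N-2$ by tangency, leaving only the two entries $\pm pr^{p-1}\sqrt{1-r^2}$ coming from $E_{N-1}$. The standard Gaussian conditioning formula then produces a centered Gaussian $(f(\mathbf{n}),f(\boldsymbol{\sigma}(r)))$ with covariance
\[
\begin{pmatrix} 1 & r^p\\ r^p & 1\end{pmatrix}-B\,\Sigma_\nabla^{-1}B^\top=\Sigma_U(r),
\]
where the sparsity of $B$ collapses the subtraction to a $2\times 2$ computation involving only the $(N-1)$-th diagonal block of the gradient covariance; comparing with the definition (\ref{eq:26}) closes the argument. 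The only real obstacle is careful bookkeeping of signs and indices; no analytic subtlety enters, because the tailored frame was designed precisely to diagonalize the gradient covariance and to localize all non-tangential couplings to the single direction $E_{N-1}$.
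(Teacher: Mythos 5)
Your proposal is correct and follows essentially the same route as the paper's Appendix II. The frame you construct (standard coordinate directions at $\mathbf{n}$, and $E_{N-1}(\boldsymbol{\sigma}(r))=(0,\dots,0,r,-\sqrt{1-r^2})$) is identical to the one the paper obtains from the charts $P_{\mathbf{n}}$ and $P_{\mathbf{n}}\circ R_{-\theta}$; the covariance formulas you differentiate from the kernel $\langle\boldsymbol{\sigma},\boldsymbol{\sigma}'\rangle^{p}$ reproduce the relevant entries of Lemma \ref{lem:cov}; and the block-diagonal decomposition of the gradient covariance into $N-1$ independent $2\times2$ blocks $\begin{pmatrix}p&c_i\\ c_i&p\end{pmatrix}$ is exactly the structure the paper encodes through the block matrix $G(r)$, yielding the same determinant and Schur complement. (Note $s:=r^{p}-(p-1)r^{p-2}(1-r^{2})=pr^{p}-(p-1)r^{p-2}=c_{N-1}/p$, so your $2\times2$ subtraction indeed collapses to $\Sigma_{U}(r)$ in the form (\ref{eq:26}).) The only caveat worth stating explicitly: for this lemma the pointwise frame values suffice, but the same choice of $E$ must also serve Lemma \ref{lem:Hess_struct_2}, where the first derivatives of the frame matter; the paper handles this by choosing $E$ as the parallel transport along geodesics, making the Christoffel corrections vanish at the base points, so "the specific global extension is irrelevant" holds here but should not be asserted for the companion lemma.
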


\begin{lem}
\label{lem:Hess_struct_2}(the conditional law of the Hessians) For
any $r\in\left(-1,1\right)$, with the same choice of $E=\left(E_{i}\right)_{i=1}^{N-1}$
as in Lemma \ref{lem:dens}, the following holds. Conditional on $f\left(\mathbf{n}\right)=u_{1},\, f\left(\boldsymbol{\sigma}\left(r\right)\right)=u_{2}$,
$\nabla f\left(\mathbf{n}\right)=\nabla f\left(\boldsymbol{\sigma}\left(r\right)\right)=0$,
the random variable 
\[
\left(\frac{\nabla^{2}f\left(\mathbf{n}\right)}{\sqrt{\left(N-1\right)p\left(p-1\right)}},\,\frac{\nabla^{2}f\left(\boldsymbol{\sigma}\left(r\right)\right)}{\sqrt{\left(N-1\right)p\left(p-1\right)}}\right)
\]
 has the same law as 
\[
\left(\mathbf{M}_{N-1}^{\left(1\right)}\left(r,u_{1},u_{2}\right),\,\mathbf{M}_{N-1}^{\left(2\right)}\left(r,u_{1},u_{2}\right)\right),
\]
 where 
\begin{equation}
\mathbf{M}_{N-1}^{\left(i\right)}\left(r,u_{1},u_{2}\right)=\hat{\mathbf{M}}_{N-1}^{\left(i\right)}\left(r\right)-\sqrt{\frac{1}{N-1}\frac{p}{p-1}}u_{i}I+\frac{m_{i}\left(r,u_{1},u_{2}\right)}{\sqrt{\left(N-1\right)p\left(p-1\right)}}e_{N-1,N-1},\label{eq:100}
\end{equation}
$e_{N-1,N-1}$ is an $N-1\times N-1$ matrix whose $N-1,N-1$ entry
is equal to $1$ and all other entries are $0$, $m_{i}$ is given
in (\ref{eq:m_i}), and $\hat{\mathbf{M}}_{N-1}^{\left(1\right)}\left(r\right)$
and $\hat{\mathbf{M}}_{N-1}^{\left(2\right)}\left(r\right)$ are $N-1\times N-1$
Gaussian random matrices with block structure
\begin{align}
\hat{\mathbf{M}}_{N-1}^{\left(i\right)}\left(r\right) & =\left(\begin{array}{cc}
\hat{\mathbf{G}}_{N-2}^{\left(i\right)}\left(r\right) & Z^{\left(i\right)}\left(r\right)\\
\left(Z^{\left(i\right)}\left(r\right)\right)^{T} & Q^{\left(i\right)}\left(r\right)
\end{array}\right),\label{eq:ghat}
\end{align}
satisfying the following: 
\begin{enumerate}
\item The random elements $\left(\hat{\mathbf{G}}_{N-2}^{\left(1\right)}\left(r\right),\hat{\mathbf{G}}_{N-2}^{\left(2\right)}\left(r\right)\right)$,
$\left(Z^{\left(1\right)}\left(r\right),Z^{\left(2\right)}\left(r\right)\right)$,
and $\left(Q^{\left(1\right)}\left(r\right),Q^{\left(2\right)}\left(r\right)\right)$
are independent. 
\item The matrices $\hat{\mathbf{G}}^{\left(i\right)}\left(r\right)=\hat{\mathbf{G}}_{N-2}^{\left(i\right)}\left(r\right)$
are $N-2\times N-2$ random matrices such that $\sqrt{\frac{N-1}{N-2}}\hat{\mathbf{G}}^{\left(i\right)}\left(r\right)$
is a GOE matrix and, in distribution,
\[
\left(\begin{array}{c}
\vphantom{\left\{ \left\{ \left\{ \right\} ^{2}\right\} ^{2}\right\} ^{2}}\hat{\mathbf{G}}^{\left(1\right)}\left(r\right)\\
\vphantom{\left\{ \left\{ \left\{ \right\} ^{2}\right\} ^{2}\right\} ^{2}}\hat{\mathbf{G}}^{\left(2\right)}\left(r\right)
\end{array}\right)=\left(\begin{array}{c}
\vphantom{\left\{ \left\{ \left\{ \right\} ^{2}\right\} ^{2}\right\} ^{2}}\sqrt{1-\left|r\right|^{p-2}}\bar{\mathbf{G}}^{\left(1\right)}+\left({\rm sgn}\left(r\right)\right)^{p}\sqrt{\left|r\right|^{p-2}}\bar{\mathbf{G}}\\
\vphantom{\left\{ \left\{ \left\{ \right\} ^{2}\right\} ^{2}\right\} ^{2}}\sqrt{1-\left|r\right|^{p-2}}\bar{\mathbf{G}}^{\left(2\right)}+\sqrt{\left|r\right|^{p-2}}\bar{\mathbf{G}}
\end{array}\right),
\]
where $\bar{\mathbf{G}}=\bar{\mathbf{G}}_{N-2}$, $\bar{\mathbf{G}}^{\left(1\right)}=\bar{\mathbf{G}}_{N-2}^{\left(1\right)}$,
and $\bar{\mathbf{G}}^{\left(2\right)}=\bar{\mathcal{\mathbf{G}}}_{N-2}^{\left(2\right)}$
are independent and have the same law as $\hat{\mathbf{G}}^{\left(i\right)}\left(r\right)$. 
\item The column vectors $Z^{\left(i\right)}\left(r\right)=\left(Z_{j}^{\left(i\right)}\left(r\right)\right)_{j=1}^{N-2}$
are Gaussian such that for any $j\leq N-2$, $\left(Z_{j}^{\left(1\right)}\left(r\right),\, Z_{j}^{\left(2\right)}\left(r\right)\right)$
is independent of all the other elements of the two vectors and 
\[
\left(Z_{j}^{\left(1\right)}\left(r\right),\, Z_{j}^{\left(2\right)}\left(r\right)\right)\sim N\left(0,\,\left(\left(N-1\right)p\left(p-1\right)\right)^{-1}\cdot\Sigma_{Z}\left(r\right)\right),
\]
where $\Sigma_{Z}\left(r\right)$ is given in (\ref{eq:86}).
\item Lastly, $Q^{\left(i\right)}\left(r\right)$ are Gaussian random variables
with 
\[
\left(Q^{\left(1\right)}\left(r\right),\, Q^{\left(2\right)}\left(r\right)\right)\sim N\left(0,\,\left(\left(N-1\right)p\left(p-1\right)\right)^{-1}\cdot\Sigma_{Q}\left(r\right)\right),
\]
where $\Sigma_{Q}\left(r\right)$ is given in (\ref{eq:86}).
\end{enumerate}
\end{lem}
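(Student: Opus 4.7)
\medskip

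\noindent\textbf{Proof proposal.} The plan is a direct Gaussian conditioning computation, carried out in the explicit orthonormal frame $E=(E_i)_{i=1}^{N-1}$ fixed in Lemma \ref{lem:dens}. With that frame, the last tangent vector $E_{N-1}$ points, at both $\mathbf{n}$ and $\boldsymbol{\sigma}(r)$, along the great circle through these two points, while the first $N-2$ vectors lie in the transverse hyperplane; this is what produces the block structure in (\ref{eq:ghat}). The entire jointly Gaussian vector
\[
\Big(f(\mathbf{n}),\,f(\boldsymbol{\sigma}(r)),\,\nabla f(\mathbf{n}),\,\nabla f(\boldsymbol{\sigma}(r)),\,\nabla^{2}f(\mathbf{n}),\,\nabla^{2}f(\boldsymbol{\sigma}(r))\Big)
\]
is centered Gaussian, and all its cross-covariances can be read off from $\mathbb{E}\{f(\boldsymbol\sigma)f(\boldsymbol\sigma')\}=\langle\boldsymbol\sigma,\boldsymbol\sigma'\rangle^{p}$ by applying $E_{i}$ and $E_{j}$ (with respect to either argument) and then evaluating at $(\mathbf{n},\boldsymbol{\sigma}(r))$. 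Writing the frame vectors explicitly in Euclidean coordinates, these evaluations reduce to elementary polynomial derivatives in $r$.

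Once the full covariance is in hand, the conditional law of the pair of Hessians given $(f,\nabla f)$ at both points is computed by the standard Gaussian regression formula $\mathbb{E}[X\mid Y]=\mathrm{Cov}(X,Y)\mathrm{Cov}(Y)^{-1}Y$, $\mathrm{Cov}(X\mid Y)=\mathrm{Cov}(X)-\mathrm{Cov}(X,Y)\mathrm{Cov}(Y)^{-1}\mathrm{Cov}(Y,X)$. Plugging in $\nabla f(\mathbf n)=\nabla f(\boldsymbol\sigma(r))=0$ kills half of the mean correction and leaves the contribution from $(f(\mathbf n),f(\boldsymbol\sigma(r)))=(u_{1},u_{2})$. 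Two pieces will appear in the conditional mean of $\nabla^{2}f(\mathbf n)/\sqrt{(N-1)p(p-1)}$: (i) a multiple of the identity, coming from the isotropic part of the regression on $(f(\mathbf n),f(\boldsymbol\sigma(r)))$, which after the algebra collapses to exactly the $-\sqrt{\frac{p}{p-1}\frac{1}{N-1}}\,u_{i}I$ term; and (ii) a rank-one piece in the $E_{N-1}$ direction produced by the regression on $(f(\boldsymbol\sigma(r)),\nabla f(\boldsymbol\sigma(r)))$ (the non-isotropic components), which is exactly the $\frac{m_{i}(r,u_{1},u_{2})}{\sqrt{(N-1)p(p-1)}}e_{N-1,N-1}$ correction, with the same scalar $m_{i}$ as in (\ref{eq:m_i}).

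The remaining part is the conditional covariance, which gives the law of $\hat{\mathbf M}_{N-1}^{(i)}(r)$. The block structure and the independence of the three blocks will drop out of systematic vanishing of covariances: entries of $\hat{\mathbf G}^{(i)}$ (transverse-transverse), the column $Z^{(i)}$ (transverse-longitudinal) and the scalar $Q^{(i)}$ (longitudinal-longitudinal) carry different numbers of $E_{N-1}$-derivatives, so their cross-covariances, which come from mixed partial derivatives of $\langle\boldsymbol\sigma,\boldsymbol\sigma'\rangle^{p}$ at $(\mathbf n,\boldsymbol\sigma(r))$, factor into products that vanish by isotropy in the transverse directions. The scaling $\sqrt{\frac{N-1}{N-2}}\hat{\mathbf G}^{(i)}(r)$ being GOE and the inter-point decomposition $\sqrt{1-|r|^{p-2}}\,\bar{\mathbf G}^{(i)}+(\mathrm{sgn}(r))^{p}\sqrt{|r|^{p-2}}\,\bar{\mathbf G}$ will follow from matching the marginal covariance of each $\hat{\mathbf G}^{(i)}$ with the $(N-2)\times(N-2)$ GOE covariance and checking that $\mathrm{Cov}(\hat{\mathbf G}^{(1)}_{ij}(r),\hat{\mathbf G}^{(2)}_{k\ell}(r))$ equals $(\mathrm{sgn}(r))^{p}|r|^{p-2}$ times the same GOE covariance. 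Similarly, the conditional covariances of $(Z^{(1)}(r),Z^{(2)}(r))$ and $(Q^{(1)}(r),Q^{(2)}(r))$ will be computed directly and compared with the claimed $\Sigma_{Z}(r),\Sigma_{Q}(r)$ of (\ref{eq:86}).

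The main obstacle is simply the volume of algebra: one must differentiate $\langle\boldsymbol\sigma,\boldsymbol\sigma'\rangle^{p}$ up to fourth order with respect to the specific frame $E$ at $\mathbf n$ and at $\boldsymbol\sigma(r)$, assemble a large covariance matrix, invert the $4\times 4$ block corresponding to $(f,\nabla f)$ at both points (which will bring in the factor $1-(pr^{p}-(p-1)r^{p-2})^{2}$ already seen in (\ref{eq:grad_dens})), and carefully keep track of signs arising from parallel transport of $E$ between $\mathbf n$ and $\boldsymbol\sigma(r)$ — the latter being the source of the $(\mathrm{sgn}(r))^{p}$ factor. Once these bookkeeping steps are handled, every claim in the lemma follows from matching the resulting Gaussian law with the formulas in (\ref{eq:100})–(\ref{eq:ghat}).
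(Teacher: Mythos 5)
Your plan coincides with the paper's own proof: Lemma \ref{lem:cov} records the full joint covariance of $\left(f,\nabla f,\nabla^{2}f\right)$ at $\mathbf{n}$ and $\boldsymbol{\sigma}(r)$ by differentiating $\langle\cdot,\cdot\rangle^{p}$ in the explicit frame $E$, and the lemma is then extracted by exactly the two-stage Gaussian regression you describe (first on $\nabla f=0$, then on the values $u_{1},u_{2}$). One small correction: the $(\mathrm{sgn}(r))^{p}$ in the decomposition of $\hat{\mathbf{G}}^{(i)}$ is not produced by parallel transport of the frame — the covariances in Lemma \ref{lem:cov} come out as polynomials in $r$ — but is just the algebraic step of rewriting the correlation $r^{p-2}=(\mathrm{sgn}(r))^{p}\,|r|^{p-2}$ so that two correlated copies of a GOE-like matrix can be represented via three independent ones.
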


\subsection{Proof of Lemma \ref{lem:FirstKR}}

First note that from additivity it is enough to prove the lemma under
the assumption that $I_{R}$ is an open interval. By the monotone
convergence theorem we may also assume that the closure of $I_{R}$
is contained in $(-1,1)$. Defining
\begin{equation}
\mathcal{S}_{N}^{2}\left(I_{R}\right)\triangleq\left\{ \left.\left(\boldsymbol{\sigma},\boldsymbol{\sigma}'\right)\in\left(\mathbb{S}^{N-1}\right)^{2}\,\right|\,\left\langle \boldsymbol{\sigma},\boldsymbol{\sigma}'\right\rangle \in I_{R}\right\} ,\label{eq:S2N}
\end{equation}
we have 
\begin{equation}
\left[\mbox{Crt}_{N}\left(B,I_{R}\right)\right]_{2}=\#\left\{ \left(\boldsymbol{\sigma},\boldsymbol{\sigma}'\right)\in\mathcal{S}_{N}^{2}\left(I_{R}\right)\,\left|\,\nabla f_{N}\left(\boldsymbol{\sigma}\right)=\nabla f_{N}\left(\boldsymbol{\sigma}'\right)=0,\, f_{N}\left(\boldsymbol{\sigma}\right),f_{N}\left(\boldsymbol{\sigma}'\right)\in\sqrt{N}B\right.\right\} .\label{eq:34}
\end{equation}

Consider the ($\mathbb{R}^{2\left(N-1\right)}$-valued) Gaussian field
\begin{equation}
\left(\nabla f_{N}\left(\boldsymbol{\sigma}\right),\nabla f_{N}\left(\boldsymbol{\sigma}'\right)\right),\label{eq:33}
\end{equation}
defined on the ($2\left(N-1\right)$-dimensional) submanifold $\mathcal{S}_{N}^{2}\left(I_{R}\right)$
(with boundary).

We are interested in the mean number of points in $\mathcal{S}_{N}^{2}\left(I_{R}\right)$
for which the field (\ref{eq:33}) satisfies the condition in the
definition of (\ref{eq:34}). This fits the setting of the variant
of the K-R Theorem given in \cite[Theorem 12.1.1]{RFG}. The latter
requires several regularity conditions to hold, which we prove in
Appendix III. From \cite[Theorem 12.1.1]{RFG} and an argument along
the lines of \cite[Section 11.5]{RFG} we have that
\begin{align*}
 & \negthickspace\negthickspace\mathbb{E}\left\{ \left[\mbox{Crt}_{N}\left(B,I_{R}\right)\right]_{2}\right\} =\int_{\mathbb{S}^{N-1}}d\boldsymbol{\sigma}\int_{\left\{ \boldsymbol{\sigma}'\in\mathbb{S}^{N-1}:\,\left\langle \boldsymbol{\sigma},\boldsymbol{\sigma}'\right\rangle \in I_{R}\right\} }d\boldsymbol{\sigma}'\varphi_{\nabla f\left(\boldsymbol{\sigma}\right),\nabla f\left(\boldsymbol{\sigma}'\right)}\left(0,0\right)\\
 & \times\mathbb{E}\Bigg\{\left|\det\nabla^{2}f\left(\boldsymbol{\sigma}\right)\right|\left|\det\nabla^{2}f\left(\boldsymbol{\sigma}'\right)\right|\mathbf{1}\Big\{ f\left(\boldsymbol{\sigma}\right),\, f\left(\boldsymbol{\sigma}'\right)\in\sqrt{N}B\Big\}\,\Bigg|\,\nabla f\left(\boldsymbol{\sigma}\right)=\nabla f\left(\boldsymbol{\sigma}'\right)=0\Bigg\},
\end{align*}
where $d\boldsymbol{\sigma}$ denotes the usual surface area on $\mathbb{S}^{N-1}$. 

Denote the north pole $\mathbf{n}\triangleq\left(0,0,...,0,1\right)\in\mathbb{S}^{N-1}$.
By symmetry, the inner integral is independent of $\boldsymbol{\sigma}$.
Thus, above we can set $\boldsymbol{\sigma}=\mathbf{n}$, remove the
integration over $\boldsymbol{\sigma}$ and multiply by a factor of
$\omega_{N}$. Now, note that with $\boldsymbol{\sigma}=\mathbf{n}$,
the integrand depends on $\boldsymbol{\sigma}'$ only through the
overlap $\rho\left(\boldsymbol{\sigma}'\right)=\left\langle \mathbf{n},\boldsymbol{\sigma}'\right\rangle $.
Thus we can use the co-area formula with the function $\rho\left(\boldsymbol{\sigma}'\right)$
to express the second integral as a one-dimensional integral over
a parameter $r$ (the volume of the inverse-image $\rho^{-1}\left(r\right)$
and the inverse of the Jacobian are given by $\omega_{N-1}\left(1-r^{2}\right)^{\frac{N-2}{2}}$
and $\left(1-r^{2}\right)^{-\frac{1}{2}}$, respectively). Doing so
yields (\ref{eq:35}), and completes the proof. \qed

\section{\label{sec:Logarithmic-asymptotics}Proof of Theorem \ref{thm:2ndmomUBBK-1}}

This section is dedicated to the proof of Theorem \ref{thm:2ndmomUBBK-1}.
For this we shall need the three lemmas below, which are proved in
the following subsections. Throughout the section we use the following
notation. Let 
\begin{equation}
\left(U_{1}\left(r\right),U_{2}\left(r\right)\right)\sim N\left(0,\Sigma_{U}\left(r\right)\right)\label{eq:Ui}
\end{equation}
(cf. (\ref{eq:26})) be a Gaussian vector independent of all other
variables and set 
\begin{equation}
\bar{U}_{i}\left(r\right)=\sqrt{\frac{1}{N-1}\frac{p}{p-1}}U_{i}\left(r\right).\label{eq:ubar}
\end{equation}
Also, let $\mathbf{G}_{N-2}^{\left(i\right)}\left(r\right)$ be the
upper-left $N-2\times N-2$ submatrix of $\mathbf{M}_{N-1}^{\left(i\right)}\left(r\right):=\mathcal{\mathbf{M}}_{N-1}^{\left(i\right)}\left(r,U_{1}\left(r\right),U_{2}\left(r\right)\right)$
(cf. Lemma \ref{lem:Hess_struct_2}). With $\hat{\mathbf{G}}_{N-2}^{\left(i\right)}\left(r\right)$
as defined in (\ref{eq:ghat}) we have 
\begin{equation}
\mathbf{G}_{N-2}^{\left(i\right)}\left(r\right)\triangleq\hat{\mathbf{G}}_{N-2}^{\left(i\right)}\left(r\right)-\bar{U}_{i}\left(r\right)I.\label{eq:M}
\end{equation}
Set 
\begin{equation}
W_{i}\left(r\right)=W_{i,N}\left(r\right)\triangleq\left(2\sum_{j=1}^{N-2}\left(\mathbf{M}_{N-1}^{\left(i\right)}\left(r\right)\right)_{j,N-1}+\left(\mathbf{M}_{N-1}^{\left(i\right)}\left(r\right)\right)_{N-1,N-1}\right)^{1/2}.\label{eq:W}
\end{equation}

For any $\kappa>\epsilon>0$ define
\[
h_{\epsilon}\left(x\right)=\max\left\{ \epsilon,x\right\} ,
\]
and
\begin{equation}
h_{\epsilon}^{\kappa}\left(x\right)=\begin{cases}
\epsilon & \,\,\mbox{if }x<\epsilon,\\
x & \,\,\mbox{if }x\in\left[\epsilon,\kappa\right],\\
1 & \,\,\mbox{if }x>\kappa,
\end{cases}\,\,\,\mbox{and}\,\,\, h_{\kappa}^{\infty}\left(x\right)=\begin{cases}
1 & \mbox{if }x\leq\kappa,\\
x & \mbox{if }x>\kappa,
\end{cases}\label{eq:h_trunc}
\end{equation}
so that $h_{\epsilon}^{\kappa}\left(x\right)h_{\kappa}^{\infty}\left(x\right)=h_{\epsilon}\left(x\right)$.
Lastly, define 
\begin{align}
\log_{\epsilon}^{\kappa}\left(x\right) & =\log\left(h_{\epsilon}^{\kappa}\left(x\right)\right).\label{eq:log trunc}
\end{align}
For a real symmetric matrix $\mathbf{A}$ let $\lambda_{j}\left(\mathbf{A}\right)$
denote the eigenvalues of $\mathbf{A}$.

The following bounds the determinant of $\mathbf{M}_{N-1}^{\left(i\right)}\left(r\right)$
in terms of the eigenvalues of $\mathbf{G}_{N-2}^{\left(i\right)}\left(r\right)$,
up to a multiplicative error term depending only on the last column
and row of $\mathbf{M}_{N-1}^{\left(i\right)}\left(r\right)$.
\begin{lem}
\label{cor:r2pert}Under the notation of Lemma \ref{lem:Hess_struct_2},
for any $\epsilon>0$, $r\in\left(-1,1\right)$, almost surely, 
\begin{align*}
\left|\det\left(\mathcal{\mathbf{M}}_{N-1}^{\left(i\right)}\left(r,U_{1}\left(r\right),U_{2}\left(r\right)\right)\right)\right| & \leq\frac{W_{i}\left(r\right)\left(W_{i}\left(r\right)+\epsilon\right)}{\epsilon}\prod_{j=1}^{N-2}h_{\epsilon}\left(\left|\lambda_{j}\left(\mathbf{G}_{N-2}^{\left(i\right)}\left(r\right)\right)\right|\right).
\end{align*}

\end{lem}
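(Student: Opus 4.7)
The plan is to exploit the block structure of $\mathbf{M} := \mathcal{M}_{N-1}^{(i)}(r, U_1(r), U_2(r))$ from Lemma \ref{lem:Hess_struct_2}. Write
$$\mathbf{M} = \begin{pmatrix} \mathbf{G} & z \\ z^T & q \end{pmatrix},$$
where $\mathbf{G} := \mathbf{G}_{N-2}^{(i)}(r)$ is the upper-left $(N-2)\times(N-2)$ block (cf.\ (\ref{eq:M})), $z \in \mathbb{R}^{N-2}$ consists of the first $N-2$ entries of the last column, and $q = (\mathbf{M})_{N-1,N-1}$. Reading (\ref{eq:W}) as the squared Frobenius norm of the rank-$2$ correction that turns $\begin{pmatrix}\mathbf{G}&0\\0&0\end{pmatrix}$ into $\mathbf{M}$, we have $W_i(r)^2 = 2\|z\|^2 + q^2$, so in particular $|q| \leq W_i(r)$ and $\|z\|^2 \leq W_i(r)^2/2$.

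The key algebraic step is the Schur-type determinant identity
$$\det \mathbf{M} = q \det \mathbf{G} - z^T \operatorname{adj}(\mathbf{G})\, z,$$
a polynomial identity in the entries that holds regardless of whether $\mathbf{G}$ is invertible (and can be verified directly by cofactor expansion along the last row). Diagonalizing $\mathbf{G} = U\Lambda U^T$ with eigenvalues $\lambda_j := \lambda_j(\mathbf{G})$ and setting $w = U^T z$, so that $\operatorname{adj}(\mathbf{G})$ is diagonal in the same basis, the identity becomes
$$\det \mathbf{M} = q \prod_{j=1}^{N-2} \lambda_j - \sum_{k=1}^{N-2} w_k^2 \prod_{j \neq k} \lambda_j.$$

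Taking absolute values and using $\|w\|^2=\|z\|^2$, the proof reduces to the two elementary truncation bounds
$$\prod_j |\lambda_j| \leq \prod_j h_\epsilon(|\lambda_j|), \qquad \prod_{j \neq k} |\lambda_j| \leq \frac{1}{\epsilon}\prod_j h_\epsilon(|\lambda_j|),$$
the second following from the factorization $\prod_j h_\epsilon(|\lambda_j|) = h_\epsilon(|\lambda_k|)\prod_{j \neq k}h_\epsilon(|\lambda_j|)$ combined with $h_\epsilon(|\lambda_k|) \geq \epsilon$ and $h_\epsilon(|\lambda_j|) \geq |\lambda_j|$. The triangle inequality then yields
$$|\det \mathbf{M}| \leq \left(|q| + \frac{\|z\|^2}{\epsilon}\right) \prod_{j=1}^{N-2} h_\epsilon(|\lambda_j|),$$
and inserting $|q| \leq W_i(r)$ and $\|z\|^2 \leq W_i(r)^2/2$ gives
$$|q| + \frac{\|z\|^2}{\epsilon} \leq W_i(r) + \frac{W_i(r)^2}{2\epsilon} \leq \frac{W_i(r)(W_i(r)+\epsilon)}{\epsilon},$$
which is the claimed bound. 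There is no substantive obstacle: the argument is purely algebraic, and the truncation function $h_\epsilon$ is precisely what is needed to absorb the blow-up of $\operatorname{adj}(\mathbf{G})$ caused by small eigenvalues of $\mathbf{G}$.
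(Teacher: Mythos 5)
Your proof is correct, and it takes a genuinely different route from the paper's. The paper zeroes out the last row and column of $\mathbf{M}:=\mathcal{\mathbf{M}}_{N-1}^{(i)}(r,U_1(r),U_2(r))$ to get a rank-$2$ perturbation with Hilbert--Schmidt norm $W_i(r)$, and then invokes the low-rank determinant-perturbation bound of Corollary~\ref{cor:Miroslav} (borrowed from an external reference). That yields $|\det\mathbf{M}|\leq\frac{W_i(W_i+T_i)}{T_i}\prod_j|\lambda_j(\mathbf{G})|$ with $T_i$ the smallest $|\lambda_j(\mathbf{G})|$, after which a short case distinction on $T_i\gtrless\epsilon$ converts this into the truncated form with $h_\epsilon$. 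You instead compute $\det\mathbf{M}$ exactly via the Schur-complement identity $\det\mathbf{M}=q\det\mathbf{G}-z^T\operatorname{adj}(\mathbf{G})z$, diagonalize $\mathbf{G}$, and bound the two resulting terms directly by the truncated product, using $|q|\leq W_i$ and $\|z\|^2\leq W_i^2/2$ (both consequences of $W_i^2=2\|z\|^2+q^2$, i.e.\ the Frobenius norm of the border of $\mathbf{M}$). Your route is more elementary and self-contained: it avoids the external low-rank perturbation lemma and the casework on $T_i$, and makes transparent exactly where the factor $1/\epsilon$ enters, namely from the single factor $h_\epsilon(|\lambda_k|)\geq\epsilon$ that must be removed to control each cofactor $\prod_{j\neq k}|\lambda_j|$. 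What the paper's approach buys instead is reusability: Corollary~\ref{cor:Miroslav} is applied again later (in the proof of Lemma~\ref{lem:n7}) in a situation where the perturbation is not confined to the border, so an exact block formula would be less convenient there. Both arguments are purely deterministic and give the same constant.
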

We shall need the following bound on $W_{i}(r)$.
\begin{lem}
\label{lem:W bound}There exists a bounded function $v\left(r\right):\left(-1,1\right)\to\mathbb{R}$
for which 
\[
\lim_{\delta\searrow0}\frac{v\left(1-\delta\right)}{\delta}\mbox{\,\,\ and \,\,}\lim_{\delta\searrow0}\frac{v\left(\delta-1\right)}{\delta}
\]
exist and are finite, such that for any natural $m$, the non-negative
random variables $W_{i}\left(r\right)$ satisfy for large enough $N$
\[
\mathbb{E}\left\{ \left(W_{i}\left(r\right)\right)^{2m}\right\} \leq v^{m}\left(r\right).
\]

\end{lem}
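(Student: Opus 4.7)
My plan is to prove the lemma by direct moment computation using the block decomposition of $\mathbf{M}_{N-1}^{(i)}(r)$ from Lemma~\ref{lem:Hess_struct_2}. Expanding the definition of $W_i(r)^2$ and reading off the relevant entries through (\ref{eq:100}), one gets an expression of the form
\[
(W_i(r))^2 \;=\; 2\sum_{j=1}^{N-2}\bigl(Z_j^{(i)}(r)\bigr)^{\!\ast} \;+\; \Bigl(Q^{(i)}(r) - \bar U_i(r) + \tfrac{m_i(r,U_1(r),U_2(r))}{\sqrt{(N-1)p(p-1)}}\Bigr)^{\!\ast},
\]
where the exponent $\ast$ is to be interpreted according to the precise form of $W_i$ (either as raising to the first or to the second power). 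By item~(3) of Lemma~\ref{lem:Hess_struct_2} the $Z_j^{(i)}(r)$ are independent across $j$ centered Gaussians of variance $(\Sigma_Z(r))_{ii}/((N-1)p(p-1))$, so the first sum is (depending on interpretation) either a single Gaussian of variance $4(N-2)(\Sigma_Z(r))_{ii}/((N-1)p(p-1))=O((\Sigma_Z(r))_{ii})$ or a scaled chi-squared with mean and variance of the same order. Simultaneously $Q^{(i)}(r)$ and $\bar U_i(r)$ are centered Gaussians of variance $O(1/N)$, independent of each other by Lemma~\ref{lem:2ndKR-refinedFrom}, and the correction $m_i/\sqrt{(N-1)p(p-1)}$ is a fixed polynomial in the bounded-variance Gaussians $U_1,U_2\sim N(0,\Sigma_U(r))$ scaled by $N^{-1/2}$.

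With this representation in hand the moment calculation is routine: I would apply Minkowski's inequality to split $\|(W_i(r))^2\|_{L^m}$ across the four contributions above, use the standard Gaussian (respectively chi-squared) moment formula $\mathbb{E}[X^m]=\prod_{j=0}^{m-1}(k+2j)\sigma^{2m}\le(k+2m)^m\sigma^{2m}$ on the sum over $j$, and use polynomial-moment bounds on $m_i(r,U_1,U_2)$. For fixed $m$ and $N\to\infty$, the leading term of $\mathbb{E}\{(W_i(r))^{2m}\}$ is thereby controlled by a constant multiple of $(\Sigma_Z(r))_{ii}^m$, while the remaining three contributions are of size $O(N^{-m/2})$ and are absorbed into the same bound for $N$ large enough. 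Taking $v(r)$ to be a suitable fixed multiple of $(\Sigma_Z(r))_{ii}$ (enlarged to dominate the $O(1/N)$ corrections from $(\Sigma_Q(r))_{ii}$, $(\Sigma_U(r))_{ii}$, and the coefficients of $m_i$) yields the desired $\mathbb{E}\{(W_i(r))^{2m}\}\le v(r)^m$ for $N\ge N_m$.

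The main technical obstacle is verifying the claimed boundary behavior, namely that the chosen $v(r)$ satisfies $\lim_{\delta\searrow 0} v(1-\delta)/\delta$ and $\lim_{\delta\searrow 0} v(\delta-1)/\delta$ both exist and are finite. This reduces to an asymptotic analysis of $(\Sigma_Z(r))_{ii}$, $(\Sigma_Q(r))_{ii}$, $(\Sigma_U(r))_{ii}$, and the coefficients of $m_i(r,\cdot,\cdot)$ as $r\to\pm 1$, using the explicit formulas in (\ref{eq:86}) and (\ref{eq:m_i}). Geometrically the endpoints $r=\pm 1$ are where the two points $\mathbf{n}$ and $\boldsymbol{\sigma}(r)$ coincide or become antipodal, so the conditional law of the pair of Hessians given the two gradients degenerates; algebraically one expects each relevant covariance to acquire a leading factor of the form $1-|r|$ coming from the factorization $1-r^{2p-2}=(1-r^2)\sum_{k=0}^{p-2}r^{2k}$ (together with the parity-$p$ symmetry that handles the $r=-1$ endpoint). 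The delicate part is to rule out worse-than-linear singularities from the reciprocal factors $[1-r^{2p-2}]^{-1}$ and $[1-(pr^p-(p-1)r^{p-2})^2]^{-1}$ visible in (\ref{eq:grad_dens}), which requires evaluating the Taylor expansion of these expressions around $r=\pm 1$ and confirming that the vanishing in the numerators dominates.
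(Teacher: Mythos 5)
Your overall strategy — expanding $W_i(r)^2$ through the block decomposition of $\mathbf{M}_{N-1}^{(i)}(r)$ and summing contributions — is a plausible one, but the way you propose to carry it out has a genuine gap at the boundary analysis, and the paper actually takes a different (and cleaner) route that avoids the problem entirely.

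The obstruction is in your treatment of the $(N-1,N-1)$ entry. You split it as $Q^{(i)}(r)-\bar U_i(r)+m_i(r,U_1(r),U_2(r))/\sqrt{(N-1)p(p-1)}$ and plan to control the three pieces separately via Minkowski, then claim the resulting $O(1/N)$ corrections can be absorbed by a slight enlargement of $v(r)\asymp(\Sigma_Z(r))_{11}$. That step fails near $r=\pm1$: the individual variances $\Sigma_{Q,11}(r)$, $\Sigma_{U,11}(r)$, and the coefficients of $m_i$ do \emph{not} all vanish as $r\to\pm1$ (for instance one can check $\Sigma_{U,11}(r)\to 2(p-1)/(3p-2)>0$), whereas the lemma forces $v(r)$ to vanish linearly at $r=\pm1$. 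The triangle-inequality bound therefore produces a term of size $O(1/N)$ that is independent of how close $r$ is to $\pm1$; for $|1-|r||\ll 1/N$ this term dominates the $Z$-contribution and cannot be dominated by any $v(r)$ with the claimed boundary behavior uniformly in $N$. The point you are missing is that there is a cancellation between $Q^{(i)}$, $-\bar U_i$, and the $m_i$ correction: after integrating out $U_1,U_2$ the full $(N-1,N-1)$ entry is a centered Gaussian whose variance does vanish linearly at $r=\pm1$, but this is invisible to a piecewise Minkowski bound.

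The paper sidesteps this by not conditioning on $U_1,U_2$ at all. Since the expectation in the lemma averages over $U_1(r),U_2(r)$ as well as the matrix randomness, the law of $\mathbf{M}_{N-1}^{(i)}(r)=\mathbf{M}_{N-1}^{(i)}(r,U_1(r),U_2(r))$ is precisely that of $\nabla^2 f(\mathbf{n})/\sqrt{(N-1)p(p-1)}$ conditioned only on the two gradients vanishing. Under that weaker conditioning the whole last row/column is a centered Gaussian vector with independent entries whose variances are read off directly from the conditional covariance formula (eq:cov1); both the off-diagonal variance ($\Sigma_{Z,11}(r)$) and the corner variance ($\mathrm{Cov}_{\nabla f}\{E_{N-1}E_{N-1}f(\mathbf{n}),E_{N-1}E_{N-1}f(\mathbf{n})\}$) vanish linearly at $r=\pm1$. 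This gives $(W_i(r))^2$ stochastically dominated by $\frac{\bar v(r)}{(N-1)p(p-1)}\sum_{j=1}^{N-1}X_j^2$, a scaled chi-squared, and the chi-squared moment formula then yields $\mathbb{E}\{(W_i(r))^{2m}\}\le v(r)^m$ with $v(r)$ a fixed multiple of $\bar v(r)$, uniformly in $r$ once $N$ is large depending only on $m$. If you want to salvage your block-decomposition route, you would need to treat $Q^{(i)}(r)-\bar U_i(r)+m_i/\sqrt{(N-1)p(p-1)}$ as a single Gaussian and compute its total variance (using the joint law of $(Q^{(i)},U_1,U_2)$), rather than applying Minkowski to the three pieces.
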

The following bounds, which are uniform in $r$, are the last ingredient
we need for proving Theorem \ref{thm:2ndmomUBBK-1}.
\begin{lem}
\label{lem:bound-h}For any $q>0$ and nice set $B$ the following
hold.
\begin{enumerate}
\item \label{enu:p1 bound-h}For any $\epsilon>0$ and $\kappa>\max\left\{ \epsilon,1\right\} $
there exists a constant $c=c\left(\epsilon,\kappa\right)>0$, such
that for large enough $N$, uniformly in $r\in\left(-1,1\right)$,
\begin{align}
 & \mathbb{E}\left\{ \prod_{i=1,2}\prod_{j=1}^{N-2}\left(h_{\epsilon}^{\kappa}\left(\left|\lambda_{j}\left(\mathbf{G}_{N-2}^{\left(i\right)}\left(r\right)\right)\right|\right)\right)^{q}\cdot\mathbf{1}\left\{ U_{1}(r),\, U_{2}(r)\in\sqrt{N}B\right\} \right\} \leq\exp\left\{ -cN^{2}\right\} \label{eq:40}\\
 & \quad+\mathbb{E}\left\{ \exp\left\{ \sum_{i=1,2}qN\int\log_{\epsilon}^{\kappa}\left(\left|\lambda-\bar{U}_{i}\right|\right)d\mu^{*}+2q\epsilon N\right\} \cdot\mathbf{1}\left\{ U_{1}(r),\, U_{2}(r)\in\sqrt{N}B\right\} \right\} ,\nonumber 
\end{align}
where $\mu^{*}$ is the semicircle law, given in (\ref{eq:semicirc}).
\item \label{enu:p2 bound-h}For large enough $\kappa>0$, uniformly in
$r\in\left(-1,1\right)$,
\begin{equation}
\mathbb{E}\left\{ \prod_{i=1,2}\prod_{j=1}^{N-2}\left(h_{\kappa}^{\infty}\left(\left|\lambda_{j}\left(\mathbf{G}_{N-2}^{\left(i\right)}\left(r\right)\right)\right|\right)\right)^{q}\right\} \leq2.\label{eq:44}
\end{equation}

\end{enumerate}
\end{lem}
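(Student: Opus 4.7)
The plan is to establish each part separately using the LDP for the empirical spectral measure of GOE matrices at speed $N^2$ combined with standard GOE concentration estimates. The key structural input from Lemma~\ref{lem:Hess_struct_2} is that, while the joint law of $(\hat{\mathbf{G}}_{N-2}^{(1)}(r),\hat{\mathbf{G}}_{N-2}^{(2)}(r))$ depends on $r$, the marginal law of each $\hat{\mathbf{G}}_{N-2}^{(i)}(r)$ is the same rescaled GOE for every $r$, and these matrices are independent of $(U_1(r),U_2(r))$. Hence marginal distributional estimates for the GOE apply uniformly in $r$.

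For Part~\ref{enu:p1 bound-h}, I would rewrite the integrand in exponential form,
\[
\prod_{i,j}h_\epsilon^\kappa(|\lambda_j(\mathbf{G}_{N-2}^{(i)}(r))|)^q=\exp\left(q\sum_{i=1,2}\sum_{j=1}^{N-2}\log_\epsilon^\kappa(|\lambda_j(\hat{\mathbf{G}}_{N-2}^{(i)}(r))-\bar U_i(r)|)\right),
\]
condition on $(U_1(r),U_2(r))$ so that the shifts $\bar U_i(r)$ are deterministic, and define the good event
\[
A_N(r)=\bigcap_{i=1,2}\left\{\int\log_\epsilon^\kappa(|\lambda-\bar U_i(r)|)\,dL_{N-2}^{(i)}(\lambda)\le\int\log_\epsilon^\kappa(|\lambda-\bar U_i(r)|)\,d\mu^*(\lambda)+\epsilon\right\},
\]
where $L_{N-2}^{(i)}$ is the empirical measure of $\hat{\mathbf{G}}_{N-2}^{(i)}(r)$. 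Since $\log_\epsilon^\kappa(|\cdot-v|)$ is continuous and uniformly bounded by $\max(|\log\epsilon|,\log\kappa)$, the GOE LDP gives $\mathbb{P}(A_N(r)^c\mid U_1,U_2)\le e^{-cN^2}$ for some $c=c(\epsilon,\kappa)>0$. On $A_N(r)$, the sum $\sum_j\log_\epsilon^\kappa$ is bounded by $(N-2)\int d\mu^*+(N-2)\epsilon$, and the $O(1)$ gap between $(N-2)\int d\mu^*$ and $N\int d\mu^*$ is absorbed into the $2q\epsilon N$ slack for large $N$, yielding the integral term in \eqref{eq:40}. On $A_N(r)^c$, the crude bound $\log_\epsilon^\kappa\le\log\kappa$ gives an integrand at most $\exp(2qN\log\kappa)$, which against the $e^{-cN^2}$ probability collapses to the $\exp(-cN^2)$ term of \eqref{eq:40}.

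For Part~\ref{enu:p2 bound-h}, note that $h_\kappa^\infty(x)=1$ unless $x>\kappa$, so with $T_i:=\|\mathbf{G}_{N-2}^{(i)}(r)\|_{op}$,
\[
\prod_{j=1}^{N-2}h_\kappa^\infty(|\lambda_j(\mathbf{G}_{N-2}^{(i)}(r))|)^q\le 1+T_i^{qN}\mathbf{1}\{T_i>\kappa\}.
\]
Cauchy--Schwarz reduces the two-matrix bound to showing $\mathbb{E}\{T_i^{2qN}\mathbf{1}\{T_i>\kappa\}\}\le 1$ for each $i$, for $\kappa$ large (independently of $N$). Using $T_i\le\|\hat{\mathbf{G}}_{N-2}^{(i)}(r)\|_{op}+|\bar U_i(r)|$ together with the standard GOE operator-norm tail bound $\mathbb{P}(\|\hat{\mathbf{G}}\|_{op}>t)\le Ce^{-cNt^2}$ for $t$ large and the subgaussian tail of $\bar U_i(r)$ (which has variance $O(1/N)$ uniformly in $r$), one obtains $\mathbb{P}(T_i>t)\le e^{-cNt^2}$ for $t$ large and $N$ large. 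Integration by parts then bounds the tail integral by a constant multiple of $\kappa^{2qN}e^{-cN\kappa^2}$, which is less than $1$ for any $\kappa$ satisfying $c\kappa^2>2q\log\kappa+$~constant.

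The main technical delicacy is the uniformity in $r$ for Part~\ref{enu:p1 bound-h}, since the shift $\bar U_i(r)$ is random and the GOE LDP is stated for a fixed continuous functional. After conditioning on $(U_1(r),U_2(r))$, the family $\{\log_\epsilon^\kappa(|\cdot-v|):v\in\mathbb{R}\}$ is equi-Lipschitz (with constant $1/\epsilon$) and uniformly bounded, so a finite $\eta$-net over a compact range of $v$ (values of $|v|$ outside that range being absorbed by Gaussian tails of $\bar U_i(r)$) reduces uniformity in $v$ to finitely many applications of the LDP. The speed $N^2$ of the LDP, contrasted with the merely exponential-in-$N$ growth of the worst-case bounds on the integrand, is precisely what makes this truncation scheme sharp enough.
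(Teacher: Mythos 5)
Your overall strategy coincides with the paper's: write the truncated product as $\exp(\sum \log_\epsilon^\kappa(|\lambda_j - \bar U_i|))$, split on a good/bad event for the empirical spectral measure, use the speed-$N^2$ LDP on the bad event, and bound $h_\kappa^\infty$ via operator-norm tails. The one place where you genuinely diverge is the uniformity-in-the-shift step in Part~\ref{enu:p1 bound-h}. The paper observes that since $\{\log_\epsilon^\kappa(|\cdot - x|)\}_{x\in\mathbb{R}}$ is equi-Lipschitz and uniformly bounded, the bad event
\[
\bigcup_{i}\bigcup_{x\in\mathbb{R}}\Bigl\{ \textstyle\int\log_\epsilon^\kappa(|\lambda-x|)\,d(L_{N-2}^{(i)}-\mu^*)>\epsilon\Bigr\}
\]
is contained in $\bigcup_i\{d_{LU}(L_{N-2}^{(i)},\mu^*)>c_{\epsilon,\kappa}\}$, to which the LDP (stated in the Lipschitz-bounded metric) applies directly in a single stroke. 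You instead propose a finite $\eta$-net over the shift $v$. That also works: for $|v|>\kappa+2$ both sides equal $\log\kappa$ so the bad event is empty (you don't actually need Gaussian tails of $\bar U_i$ here), and on the compact range equi-Lipschitz continuity in $v$ plus an $N$-independent net plus the LDP at each net point gives the uniform $e^{-cN^2}$ bound. The paper's $d_{LU}$ argument is cleaner and avoids the net, but your route reconstructs the same fact from the same equi-Lipschitz structure, so it is a valid alternative. One small inaccuracy in Part~\ref{enu:p2 bound-h}: after the bound $\prod_j h_\kappa^\infty(\cdot)^q\le 1+T_i^{qN}\mathbf{1}\{T_i>\kappa\}$ and Cauchy--Schwarz, showing $\mathbb{E}\{T_i^{2qN}\mathbf{1}\{T_i>\kappa\}\}\le 1$ per factor gives a product bounded by something like $4$, not $2$; you actually want these tail moments to be $o(1)$ uniformly in $N$ as $\kappa\to\infty$, which your tail estimate $\mathbb{P}(T_i>t)\le e^{-cNt^2}$ does deliver. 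This is a bookkeeping slip rather than a gap, and the underlying argument (operator-norm tail plus integration, same as the paper's Gaussian domination of $\Lambda(r)$) is correct.
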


\subsection{Proof of Lemma \ref{cor:r2pert}}

Let $\dot{\mathbf{M}}_{N-1}^{\left(i\right)}\left(r\right)$ denote
the matrix obtained from $\mathbf{M}_{N-1}^{\left(i\right)}\left(r\right)$
by replacing all entries in the last row and column by $0$. The eigenvalues
of $\dot{\mathbf{M}}_{N-1}^{\left(i\right)}\left(r\right)$ are the
same as those of $\mathbf{G}_{N-2}^{\left(i\right)}\left(r\right)$,
with an extra eigenvalue equal to $0$. For a general symmetric matrix
$\mathbf{A}$, $\sum_{i,j}\mathbf{A}_{i,j}^{2}=\sum_{j}\lambda_{j}^{2}\left(\mathbf{A}\right)$.
Thus, 
\[
\sum_{j}\lambda_{j}^{2}\left(\mathbf{M}_{N-1}^{\left(i\right)}\left(r\right)-\dot{\mathbf{M}}_{N-1}^{\left(i\right)}\left(r\right)\right)=W_{i}^{2}(r).
\]
Hence, the absolute value of any eigenvalue of $\mathbf{M}_{N-1}^{\left(i\right)}\left(r\right)-\dot{\mathbf{M}}_{N-1}^{\left(i\right)}\left(r\right)$
is bounded by $W_{i}(r)$. Note that $\mathbf{M}_{N-1}^{\left(i\right)}\left(r\right)-\dot{\mathbf{M}}_{N-1}^{\left(i\right)}\left(r\right)$
has rank $2$ at most, ant therefore has at most $2$ non-zero eigenvalues.
By an application of Corollary \ref{cor:Miroslav} we have that, almost
surely,
\[
\left|\det\left(\mathbf{M}_{N-1}^{\left(i\right)}\left(r\right)\right)\right|\leq\frac{W_{i}(r)\left(W_{i}(r)+T_{i}(r)\right)}{T_{i}(r)}\prod_{j=1}^{N-2}\left|\lambda_{j}\left(\mathbf{G}_{N-2}^{\left(i\right)}\left(r\right)\right)\right|,
\]
where $T_{i}(r)$ is the minimal absolute value of an eigenvalue of
$\mathbf{G}_{N-2}^{\left(i\right)}\left(r\right)$. The lemma follows
from this.\qed

\subsection{Proof of Lemma \ref{lem:W bound}}

From symmetry it is enough to prove the lemma with $i=1$. From Lemma
\ref{lem:Hess_struct_2} it follows that the law of $\mathcal{\mathbf{M}}_{N-1}^{\left(1\right)}\left(r\right)$
is the same as the law of 
\begin{equation}
\frac{\nabla^{2}f\left(\mathbf{n}\right)}{\sqrt{\left(N-1\right)p\left(p-1\right)}}\label{eq:42}
\end{equation}
conditional on 
\begin{equation}
\nabla f\left(\mathbf{n}\right)=\nabla f\left(\boldsymbol{\sigma}\left(r\right)\right)=0\label{eq:conditional grad}
\end{equation}
 (where $\boldsymbol{\sigma}\left(r\right)$ is given in (\ref{eq:sig_r})).
We emphasize that here the conditioning is only on the gradient at
the two points and not on the values of the Hamiltonian. The covariance
structure of the Gaussian matrix $\nabla^{2}f\left(\mathbf{n}\right)$,
conditional on (\ref{eq:conditional grad}), is computed in Section
\ref{sub:Proof-of-Lemmas}. In particular, it is given by (\ref{eq:cov1}),
in which $\mbox{Cov}_{\nabla f}$ denotes the conditional covariance.
In particular, we have that $(W_{1}\left(r\right))^{2}$ is identical
in distribution to
\begin{align*}
 & 2\frac{\mbox{Cov}_{\nabla f}\left\{ E_{1}E_{N-1}f\left(\mathbf{n}\right),E_{1}E_{N-1}f\left(\mathbf{n}\right)\right\} }{\left(N-1\right)p\left(p-1\right)}\sum_{i=1}^{N-2}X_{i}^{2}\\
 & +\frac{\mbox{Cov}_{\nabla f}\left\{ E_{N-1}E_{N-1}f\left(\mathbf{n}\right),E_{N-1}E_{N-1}f\left(\mathbf{n}\right)\right\} }{\left(N-1\right)p\left(p-1\right)}X_{N-1}^{2},
\end{align*}
where the covariances are as in (\ref{eq:cov1}) and $X_{i}$ are
i.i.d standard Gaussian variables and where we used the fact that
the conditional variance of $E_{i}E_{N-1}f\left(\mathbf{n}\right)$
is identical for all $i\leq N-2$.

Setting
\begin{equation}
\bar{v}\left(r\right)=2\left(N-1\right)p\left(p-1\right)\cdot\max_{i\in\{1,N-1\}}\left\{ \mbox{Cov}_{\nabla f}\left\{ E_{i}E_{N-1}f\left(\mathbf{n}\right),E_{i}E_{N-1}f\left(\mathbf{n}\right)\right\} \right\} ,\label{eq:43}
\end{equation}
by straightforward algebra, using (\ref{eq:cov1}), we have that
\[
\lim_{\epsilon\searrow0}\frac{\bar{v}\left(1-\epsilon\right)}{\epsilon}\mbox{\,\,\ and \,\,}\lim_{\epsilon\searrow0}\frac{\bar{v}\left(\epsilon-1\right)}{\epsilon}
\]
exist and are finite, and that $\bar{v}\left(r\right)$ is a bounded
function on $(-1,1)$.

Since $W_{1}\left(r\right)$ is stochastically dominated by 
\[
\sqrt{\frac{\bar{v}\left(r\right)}{p\left(p-1\right)}\frac{1}{N-1}\sum_{i=1}^{N-1}X_{i}^{2}},
\]
we conclude that 
\[
\mathbb{E}\left\{ \left(W_{1}\left(r\right)\right)^{2m}\right\} \leq\left(\frac{\bar{v}\left(r\right)}{\left(N-1\right)p\left(p-1\right)}\right)^{m}\mathbb{E}\left\{ \left(\sum_{i=1}^{N-1}X_{i}^{2}\right)^{m}\right\} .
\]
Since $\sum_{i=1}^{N-1}X_{i}^{2}$ is a chi-squared variable of $N-1$
degrees of freedom (cf. \cite[p. 13]{chisq}), 
\[
\mathbb{E}\left\{ \left(\sum_{i=1}^{N-1}X_{i}^{2}\right)^{m}\right\} =\left(N-1\right)\left(N+1\right)\cdots\left(N-3+2m\right).
\]
The lemma follows from this.\qed

\subsection{Proof of Lemma \ref{lem:bound-h}}

Note that
\begin{align}
 & \negthickspace\negthickspace\mathbb{E}\left\{ \prod_{i=1,2}\prod_{j=1}^{N-2}\left(h_{\epsilon}^{\kappa}\left(\left|\lambda_{j}\left(\mathbf{G}_{N-2}^{\left(i\right)}\left(r\right)\right)\right|\right)\right)^{q}\cdot\mathbf{1}\left\{ U_{i}(r)\in\sqrt{N}B\right\} \right\} \label{eq:38}\\
 & =\mathbb{E}\left\{ \prod_{i=1,2}\exp\left\{ q\sum_{j=1}^{N-2}\log_{\epsilon}^{\kappa}\left(\left|\lambda_{j}\left(\hat{\mathbf{G}}_{N-2}^{\left(i\right)}\left(r\right)\right)-\bar{U}_{i}(r)\right|\right)\right\} \cdot\mathbf{1}\left\{ U_{i}(r)\in\sqrt{N}B\right\} \right\} \nonumber \\
 & =\mathbb{E}\left\{ \prod_{i=1,2}\exp\left\{ q\left(N-2\right)\int\log_{\epsilon}^{\kappa}\left(\left|\lambda-\bar{U}_{i}(r)\right|\right)dL_{N-2}^{\left(i\right)}\left(\lambda\right)\right\} \cdot\mathbf{1}\left\{ U_{i}(r)\in\sqrt{N}B\right\} \right\} ,\nonumber 
\end{align}
where $L_{r,N-2}^{\left(i\right)}$ is the empirical measure of eigenvalues
of $\hat{\mathbf{G}}_{N-2}^{\left(i\right)}\left(r\right)$ (cf. (\ref{eq:emp_meas})).

The function $\log_{\epsilon}^{\kappa}\left(\left|\,\cdot\,-x\right|\right)$
is bounded and Lipschitz continuous, with the same bound and Lipschitz
constant for all $x\in\mathbb{R}$. Thus, there exists $c_{\epsilon,\kappa}>0$
such that (cf. Appendix I) 
\begin{equation}
A_{\epsilon}\triangleq\cup_{i=1,2}\cup_{x\in\mathbb{R}}\left\{ \int\log_{\epsilon}^{\kappa}\left(\left|\lambda-x\right|\right)d\left(L_{r,N-2}^{\left(i\right)}-\mu^{*}\right)>\epsilon\right\} \subset\cup_{i=1,2}\left\{ d_{LU}\left(\mu^{*},L_{r,N-2}^{\left(i\right)}\right)>c_{\epsilon,\kappa}\right\} .\label{eq:36}
\end{equation}

Since $\log_{\epsilon}^{\kappa}$ is bounded from above by $\log\left(\kappa\right)$
and since on $A_{\epsilon}^{c}$,
\[
\int\log_{\epsilon}^{\kappa}\left(\left|\lambda-x\right|\right)dL_{r,N-2}^{\left(i\right)}\left(\lambda\right)\leq\int\log_{\epsilon}^{\kappa}\left(\left|\lambda-x\right|\right)d\mu^{*}\left(\lambda\right)+\epsilon,
\]
with 
\begin{align*}
S\left(r,\mu_{1},\mu_{2}\right) & \triangleq\exp\left\{ q\left(N-2\right)\sum_{i=1,2}\int\log_{\epsilon}^{\kappa}\left(\left|\lambda-\bar{U}_{i}(r)\right|\right)d\mu_{i}\right\} ,\\
F_{N}\left(r\right) & \triangleq\left\{ U_{1}\left(r\right),\, U_{2}\left(r\right)\in\sqrt{N}B\right\} ,
\end{align*}
we have 
\begin{align}
 & \negthickspace\negthickspace\mathbb{E}\left\{ S\left(r,L_{r,N-2}^{\left(1\right)},L_{r,N-2}^{\left(2\right)}\right)\mathbf{\mathbf{1}}_{F_{N}\left(r\right)}\right\} \nonumber \\
 & =\mathbb{E}\left\{ S\left(r,L_{r,N-2}^{\left(1\right)},L_{r,N-2}^{\left(2\right)}\right)\cdot\mathbf{\mathbf{1}}_{A_{\epsilon}^{c}}\mathbf{\mathbf{1}}_{F_{N}\left(r\right)}\right\} +\mathbb{E}\left\{ S\left(r,L_{r,N-2}^{\left(1\right)},L_{r,N-2}^{\left(2\right)}\right)\cdot\mathbf{\mathbf{1}}_{A_{\epsilon}}\mathbf{\mathbf{1}}_{F_{N}\left(r\right)}\right\} \nonumber \\
 & \leq\exp\left\{ 2q\epsilon N\right\} \cdot\mathbb{E}\left\{ S\left(r,\mu^{*},\mu^{*}\right)\mathbf{\mathbf{1}}_{F_{N}\left(r\right)}\right\} +\exp\left\{ 2q\log\left(\kappa\right)N\right\} \cdot\mathbb{P}\left\{ A_{\epsilon}\right\} .\label{eq:39}
\end{align}

From Theorem  \ref{lem:GOELD} and (\ref{eq:36}), setting 
\[
c_{\epsilon,\kappa}^{\prime}=\frac{1}{2}\inf_{\mu\in\left(B\left(\mu^{*},c_{\epsilon,\kappa}\epsilon\right)\right)^{c}}J\left(\mu\right)>0,
\]
(where positivity follows from the fact that $J$ is a good rate function
with unique minimizer), one obtains for large enough $N$, 
\begin{equation}
\mathbb{P}\left\{ A_{\epsilon}\right\} \leq2\exp\left\{ -c_{\epsilon,\kappa}^{\prime}N^{2}\right\} .\label{eq:37}
\end{equation}

Combining (\ref{eq:38}), (\ref{eq:39}), and (\ref{eq:37}), we obtain,
for large enough $N$,

\begin{align*}
 & \negthickspace\negthickspace\mathbb{E}\left\{ \prod_{i=1,2}\prod_{j=1}^{N-2}\left(h_{\epsilon}^{\kappa}\left(\left|\lambda_{j}\left(\mathbf{G}_{N-2}^{\left(i\right)}\left(r\right)\right)\right|\right)\right)^{q}\cdot\mathbf{1}\left\{ U_{1}\left(r\right),\, U_{2}\left(r\right)\in\sqrt{N}B\right\} \right\} \\
 & \leq\exp\left\{ 2q\epsilon N\right\} \mathbb{E}\left\{ \prod_{i=1,2}\exp\left\{ qN\int\log_{\epsilon}^{\kappa}\left(\left|\lambda-\bar{U}_{i}\left(r\right)\right|\right)d\mu^{*}\right\} \cdot\mathbf{1}\left\{ U_{i}\left(r\right)\in\sqrt{N}B\right\} \right\} \\
 & +2\exp\left\{ 2q\log\left(\kappa\right)N\right\} \exp\left\{ -c_{\epsilon,\kappa}^{\prime}N^{2}\right\} ,
\end{align*}
from which part (\ref{enu:p1 bound-h}) follows.

Define
\[
\Lambda\left(r\right)=\Lambda_{N}\left(r\right)\triangleq\max_{\substack{i=1,2\\
j\leq N-2
}
}\left|\lambda_{j}\left(\mathbf{G}_{N-2}^{\left(i\right)}\left(r\right)\right)\right|.
\]
From a union bound and (\ref{eq:M}), 
\begin{equation}
\mathbb{P}\left\{ \Lambda\left(r\right)>t\right\} \leq\sum_{i=1,2}\left(\mathbb{P}\left\{ \max_{\substack{j\leq N-2}
}\left|\lambda_{j}\left(\hat{\mathbf{G}}_{N-2}^{\left(i\right)}\left(r\right)\right)\right|>t/2\right\} +\mathbb{P}\left\{ \bar{U}_{i}\left(r\right)>t/2\right\} \right).\label{eq:41}
\end{equation}

It is easy to verify that the variance of $U_{i}\left(r\right)$ is
bounded by $1$, uniformly in $r\in\left(-1,1\right)$. Recall that
$\sqrt{\frac{N-1}{N-2}}\hat{\mathbf{G}}_{N-2}^{\left(i\right)}\left(r\right)$
is a GOE matrix. Thus, from (\ref{eq:41}) and Lemma \ref{lem:maxEigBd},
there exists a constant $\tilde{c}>0$ such that for large enough
$t$ and any $N$,
\[
\mathbb{P}\left\{ \Lambda\left(r\right)>t\right\} \leq\sqrt{\frac{\tilde{c}N}{2\pi}}e^{-\frac{1}{2}\tilde{c}t^{2}N}.
\]

Let $\Lambda_{0}\sim N\left(0,\left(\tilde{c}N\right)^{-1}\right)$.
For large enough $\kappa>0$ and any $N$,

\begin{align}
\mathbb{E}\left\{ \prod_{i=1,2}\prod_{j=1}^{N-2}\left(h_{\kappa}^{\infty}\left(\left|\lambda_{j}\left(\mathbf{G}_{N-2}^{\left(i\right)}\left(r\right)\right)\right|\right)\right)^{q}\right\}  & \leq\mathbb{P}\left\{ \Lambda\left(r\right)\leq\kappa\right\} +\mathbb{E}\left\{ \left(\Lambda\left(r\right)\right)^{2qN}\mathbf{1}\left\{ \Lambda\left(r\right)>\kappa\right\} \right\} \nonumber \\
 & \leq1+\mathbb{E}\left\{ \Lambda_{0}^{2qN}\mathbf{1}\left\{ \Lambda_{0}>\kappa\right\} \right\} .\label{eq:80}
\end{align}

From the Cauchy-Schwarz inequality,
\[
\mathbb{E}\left\{ \Lambda_{0}^{2qN}\mathbf{1}\left\{ \Lambda_{0}>\kappa\right\} \right\} \leq\left[\mathbb{E}\left\{ \Lambda_{0}^{4qN}\right\} \mathbb{P}\left\{ \Lambda_{0}>\kappa\right\} \right]^{1/2}\leq\exp\left\{ -N\left(\frac{\tilde{c}\kappa^{2}}{4}-c_{q}\right)\right\} ,
\]
for some $c_{q}$. Finally, taking $\kappa$ to be large enough, this
together with (\ref{eq:80}) yields (\ref{eq:44}).\qed

\subsection{Proof of Theorem \ref{thm:2ndmomUBBK-1}}

Let $\kappa>\epsilon>0$, let $2\leq m\in\mathbb{N}$ and set $q=q\left(m\right)=m/\left(m-1\right)$.
From Lemma \ref{cor:r2pert}, the fact that $h_{\epsilon}^{\kappa}\left(x\right)h_{\kappa}^{\infty}\left(x\right)=h_{\epsilon}\left(x\right)$,
and H{\"{o}}lder's inequality,
\begin{equation}
\mathbb{E}\left\{ \prod_{i=1,2}\left|\det\left(\mathcal{\mathbf{M}}_{N-1}^{\left(i\right)}\left(r\right)\right)\right|\cdot\mathbf{1}\Big\{ U_{i}\left(r\right)\in\sqrt{N}B\Big\}\right\} \leq\left(\mathcal{E}_{\epsilon,\kappa}^{\left(1\right)}\left(r\right)\right)^{\nicefrac{1}{q}}\left(\mathcal{E}_{\epsilon,\kappa}^{\left(2\right)}\left(r\right)\right)^{\nicefrac{1}{2m}}\left(\mathcal{E}_{\epsilon,\kappa}^{\left(3\right)}\left(r\right)\right)^{\nicefrac{1}{4m}},\label{eq:83}
\end{equation}
where
\begin{align}
\mathcal{E}_{\epsilon,\kappa}^{\left(1\right)}\left(r\right) & =\mathbb{E}\left\{ \prod_{i=1,2}\prod_{j=1}^{N-2}\left(h_{\epsilon}^{\kappa}\left(\left|\lambda_{j}\left(\mathbf{G}_{N-2}^{\left(i\right)}\left(r\right)\right)\right|\right)\right)^{q}\cdot\mathbf{1}\left\{ U_{i}\left(r\right)\in\sqrt{N}B\right\} \right\} ,\nonumber \\
\mathcal{E}_{\epsilon,\kappa}^{\left(2\right)}\left(r\right) & =\mathbb{E}\left\{ \prod_{i=1,2}\prod_{j=1}^{N-2}\left(h_{\kappa}^{\infty}\left(\left|\lambda_{j}\left(\mathbf{G}_{N-2}^{\left(i\right)}\left(r\right)\right)\right|\right)\right)^{2m}\right\} ,\label{eq:84}\\
\mathcal{E}_{\epsilon,\kappa}^{\left(3\right)}\left(r\right) & =\mathbb{E}\left\{ \left(\frac{W_{1}\left(r\right)\left(W_{1}\left(r\right)+\epsilon\right)}{\epsilon}\right)^{4m}\right\} \mathbb{E}\left\{ \left(\frac{W_{2}\left(r\right)\left(W_{2}\left(r\right)+\epsilon\right)}{\epsilon}\right)^{4m}\right\} .\nonumber 
\end{align}

Substituting this in (\ref{eq:2ndmom_refinedform}) and using H{\"{o}}lder's
inequality yields
\begin{align*}
\mathbb{E}\left\{ \left[\mbox{Crt}_{N}\left(B,I_{R}\right)\right]_{2}\right\}  & \leq C_{N}\left[\int_{I_{R}}\left(\mathcal{G}\left(r\right)\right)^{qN}\mathcal{E}_{\epsilon,\kappa}^{\left(1\right)}\left(r\right)dr\right]^{1/q}\left[\int_{I_{R}}\left(\mathcal{F}\left(r\right)\right)^{m}\mathcal{E}_{\epsilon,\kappa}^{\left(2\right)}\left(r\right)\left(\mathcal{E}_{\epsilon,\kappa}^{\left(3\right)}\left(r\right)\right)^{\nicefrac{1}{2}}dr\right]^{1/m},
\end{align*}
where $C_{N}$, $\mathcal{F}\left(r\right)$, and $\mathcal{G}\left(r\right)$
are given in (\ref{eq:cgf}). 

Therefore,
\begin{align}
\limsup_{N\to\infty}\frac{1}{N}\log\left(\mathbb{E}\left\{ \left[\mbox{Crt}_{N}\left(B,I_{R}\right)\right]_{2}\right\} \right) & \leq\limsup_{N\to\infty}\frac{1}{N}\log\left(C_{N}\right)\label{eq:45}\\
 & +\limsup_{N\to\infty}\frac{1}{qN}\log\left(\int_{I_{R}}\left(\mathcal{G}\left(r\right)\right)^{qN}\mathcal{E}_{\epsilon,\kappa}^{\left(1\right)}\left(r\right)dr\right)\nonumber \\
 & +\limsup_{N\to\infty}\frac{1}{mN}\log\left(\int_{I_{R}}\left(\mathcal{F}\left(r\right)\right)^{m}\mathcal{E}_{\epsilon,\kappa}^{\left(2\right)}\left(r\right)\left(\mathcal{E}_{\epsilon,\kappa}^{\left(3\right)}\left(r\right)\right)^{\nicefrac{1}{2}}dr\right).\nonumber 
\end{align}
The first summand is equal to 
\[
1+\log\left(p-1\right).
\]

One has that $\mathcal{F}\left(r\right)$ is bounded on any interval
$\left(-r_{0},r_{0}\right)$ with $0<r_{0}<1$, and that the limits
\[
\lim_{\delta\searrow0}\delta\mathcal{F}\left(1-\delta\right)\mbox{\,\,\ and \,\,}\lim_{\delta\searrow0}\delta\mathcal{F}\left(\delta-1\right)
\]
 exist and are finite. Using Lemma \ref{lem:W bound}, we therefore
have that
\[
\left(\mathcal{F}\left(r\right)\right)^{m}\left(\mathcal{E}_{\epsilon,\kappa}^{\left(3\right)}\left(r\right)\right)^{\nicefrac{1}{2}}
\]
is a bounded function of $r$ on $\left(-1,1\right)$. Thus, from
part (\ref{enu:p2 bound-h}) of Lemma \ref{lem:bound-h}, for $\kappa$
large enough , the third summand of (\ref{eq:45}) is equal to $0$.

Lastly, we need to analyze the second summand. To do so, we use part
(\ref{enu:p1 bound-h}) of Lemma \ref{lem:bound-h} and Varadhan's
integral lemma \cite[Theorem 4.3.1, Exercise 4.3.11]{LDbook}. Define
\begin{align*}
\Omega_{\epsilon}^{\kappa}(x) & \triangleq\int_{\mathbb{R}}\log_{\epsilon}^{\kappa}\left(\left|\lambda-x\right|\right)d\mu^{*}\left(\lambda\right),\\
\gamma_{p} & \triangleq\sqrt{\frac{p}{p-1}}.
\end{align*}
Note that, for $\left(\tilde{U}_{1},\tilde{U}_{2}\right)\sim N\left(0,I_{2\times2}\right)$,
\begin{align*}
\left(U_{1}\left(r\right),U_{2}\left(r\right)\right) & \overset{d}{=}\left(\tilde{U}_{1},\tilde{U}_{2}\right)\cdot\left(\Sigma_{U}\left(r\right)\right)^{1/2}.
\end{align*}
Let $e_{i}$, $i=1,2$, denote the standard basis of $\mathbb{R}^{2}$,
taken as $2\times1$ column vectors; so that $\left(t_{1},t_{2}\right)e_{i}=t_{i}$.
Lastly, define
\[
T\left(B\right)\triangleq\left\{ \left(r,\tilde{u}_{1},\tilde{u}_{2}\right):\, r\in\left(-r_{0},r_{0}\right),\,\left(\tilde{u}_{1},\tilde{u}_{2}\right)\cdot\left(\Sigma_{U}\left(r\right)\right)^{1/2}\in B\times B\right\} .
\]

Using part (\ref{enu:p1 bound-h}) of Lemma \ref{lem:bound-h}, we
obtain that, for large $N$, assuming $\kappa>1$, for some constant
$c>0$, 
\begin{align}
 & \negthickspace\negthickspace\int_{-r_{0}}^{r_{0}}\left(\mathcal{G}\left(r\right)\right)^{qN}\mathcal{E}_{\epsilon,\kappa}^{\left(1\right)}\left(r\right)dr-\exp\left\{ -cN^{2}\right\} \label{eq:48}\\
 & \leq e^{2q\epsilon N}\int_{-r_{0}}^{r_{0}}\left(\mathcal{G}\left(r\right)\right)^{qN}\mathbb{E}\left\{ \prod_{i=1,2}\exp\left\{ qN\Omega_{\epsilon}^{\kappa}\left(\bar{U}_{i}\left(r\right)\right)\right\} \cdot\mathbf{1}\left\{ \frac{U_{i}\left(r\right)}{\sqrt{N}}\in B\right\} \right\} dr\nonumber \\
 & =2r_{0}e^{2q\epsilon N}\mathbb{E}\left\{ \exp\left\{ qN\cdot\phi_{\epsilon}^{\kappa}\left(R,\frac{\tilde{U}_{1}}{\sqrt{N}},\frac{\tilde{U}_{2}}{\sqrt{N}}\right)\right\} \cdot\mathbf{1}\left\{ \left(R,\frac{\tilde{U}_{1}}{\sqrt{N}},\frac{\tilde{U}_{2}}{\sqrt{N}}\right)\in T\left(B\right)\right\} \right\} \triangleq2r_{0}e^{2q\epsilon N}\zeta_{\epsilon,\kappa,N},\nonumber 
\end{align}
where $R$ is independent of $\tilde{U}_{1},\,\tilde{U}_{1}$ and
is uniformly distributed in $\left(-r_{0},r_{0}\right)$, and where
\[
\phi_{\epsilon}^{\kappa}\left(r,\bar{u}_{1},\bar{u}_{2}\right)\triangleq\log\left(\mathcal{G}\left(r\right)\right)+\sum_{i=1,2}\Omega_{\epsilon}^{\kappa}\left(\gamma_{p}\left(\tilde{u}_{1},\tilde{u}_{2}\right)\cdot\left(\Sigma_{U}\left(R\right)\right)^{1/2}\cdot e_{i}\right).
\]

Note that $\phi_{\epsilon}^{\kappa}$ is a continuous function on
$\left(-1,1\right)\times\mathbb{R}\times\mathbb{R}$. Since $\mathcal{G}\left(r\right)\in\left(0,1\right)$
and $\Omega_{\epsilon}^{\kappa}$ is bounded from above by $\log\kappa$,
for any $q'>0$,
\[
\limsup_{N\to\infty}\frac{1}{N}\log\left(\mathbb{E}\left\{ \exp\left\{ q'N\cdot\phi_{\epsilon}^{\kappa}\left(R,\frac{\tilde{U}_{1}}{\sqrt{N}},\frac{\tilde{U}_{2}}{\sqrt{N}}\right)\right\} \right\} \right)\leq2q'\log\kappa.
\]

The random variable $\left(R,\frac{\tilde{U}_{1}}{\sqrt{N}},\frac{\tilde{U}_{2}}{\sqrt{N}}\right)$
satisfies the LDP with the good rate function
\[
J_{0}\left(r,\tilde{u}_{1},\tilde{u}_{2}\right)=\frac{\tilde{u}_{1}^{2}}{2}+\frac{\tilde{u}_{2}^{2}}{2}.
\]

Therefore, from Varadhan's integral lemma \cite[Theorem 4.3.1, Exercise 4.3.11]{LDbook}
combined with (\ref{eq:48}),
\begin{align*}
\limsup_{N\to\infty}\frac{1}{N}\log\left(\int_{-r_{0}}^{r_{0}}\left(\mathcal{G}\left(r\right)\right)^{qN}\mathcal{E}_{\epsilon,\kappa}^{\left(1\right)}\left(r\right)dr\right) & \leq\limsup_{N\to\infty}\frac{1}{N}\log\left(2r_{0}e^{2q\epsilon N}\zeta_{\epsilon,\kappa,N}\right)\\
 & \leq2q\epsilon+\sup_{\left(r,\tilde{u}_{1},\tilde{u}_{2}\right)\in T\left(B\right)}\left\{ q\phi_{\epsilon}^{\kappa}\left(r,\tilde{u}_{1},\tilde{u}_{2}\right)-\frac{\tilde{u}_{1}^{2}}{2}-\frac{\tilde{u}_{2}^{2}}{2}\right\} .
\end{align*}

Together with our analysis of the two other summands in (\ref{eq:45}),
this yields, for large enough $\kappa$,
\begin{equation}
\limsup_{N\to\infty}\frac{1}{N}\log\left(\mathbb{E}\left\{ \left[\mbox{Crt}_{N}\left(B\right)\right]_{2}^{r_{0}}\right\} \right)\leq1+\log\left(p-1\right)+2\epsilon+\frac{1}{q}\sup_{\left(r,\tilde{u}_{1},\tilde{u}_{2}\right)\in T\left(B\right)}\left\{ q\phi_{\epsilon}^{\kappa}\left(r,\tilde{u}_{1},\tilde{u}_{2}\right)-\frac{\tilde{u}_{1}^{2}}{2}-\frac{\tilde{u}_{2}^{2}}{2}\right\} .\label{eq:50}
\end{equation}
Letting $m\to\infty$, which implies that $q=q\left(m\right)\to1$,
we obtain (\ref{eq:50}) with $q=1$.

By a change of variables,
\begin{align*}
 & \negthickspace\negthickspace\negthickspace\negthickspace\sup_{\left(r,\tilde{u}_{1},\tilde{u}_{2}\right)\in T\left(B\right)}\left\{ \phi_{\epsilon}^{\kappa}\left(r,\tilde{u}_{1},\tilde{u}_{2}\right)-\frac{\tilde{u}_{1}^{2}}{2}-\frac{\tilde{u}_{2}^{2}}{2}\right\} \\
 & =\sup_{r\in\left(-r_{0},r_{0}\right)}\sup_{u_{1},u_{2}\in B}\left\{ \log\left(\mathcal{G}\left(r\right)\right)+\sum_{i=1,2}\Omega_{\epsilon}^{\kappa}\left(\gamma_{p}u_{i}\right)-\frac{1}{2}\left(u_{1},u_{1}\right)\left(\Sigma_{U}\left(r\right)\right)^{-1}\left(u_{1},u_{1}\right)^{T}\right\} .
\end{align*}

Letting $\kappa\to\infty$ and then $\epsilon\to0$ completes the
proof. \qed

\section{\label{sec:maximalityPsi}Proofs of Lemmas \ref{lem:u1=00003Du2-1}
and \ref{lem:r=00003D0}}

The bound of Theorem \ref{thm:2ndmomUBBK-1} is given in terms of
the supremum of $\Psi_{p}\left(r,u_{1},u_{2}\right)$ on the region
$I_{R}\times B\times B.$ In order to complete the proof of Theorem
\ref{thm:Var-E2-log}, we need to identify the points at which the
supremum is attained. This is the content of Lemmas \ref{lem:u1=00003Du2-1}
and \ref{lem:r=00003D0}, which we prove in this section. The following
simple remark is related to the proof of Lemma \ref{lem:u1=00003Du2-1},
and will also be used in the sequel.
\begin{rem}
\label{rem:r=00003D1}The bound of Theorem \ref{thm:2ndmomUBBK-1}
holds for any nice $I_{R}\subset\left(-1,1\right)$. We are particularly
interested in the case where $I_{R}=\left[-1,1\right]$,
\[
\left[\mbox{Crt}_{N}\left(B,\left[-1,1\right]\right)\right]_{2}=\left(\mbox{Crt}_{N}\left(B\right)\right)^{2}.
\]

The difference 
\[
\left[\mbox{Crt}_{N}\left(B,\left[-1,1\right]\right)\right]_{2}-\left[\mbox{Crt}_{N}\left(B,\left(-1,1\right)\right)\right]_{2}
\]
is simply the number of ordered pairs of points $\boldsymbol{\sigma}=\pm\boldsymbol{\sigma}'$
with $H_{N}\left(\boldsymbol{\sigma}\right),\, H_{N}\left(\boldsymbol{\sigma}'\right)\in NB$.
Thus, it is bounded from above by $2\mbox{Crt}_{N}\left(B\right)$.

Therefore, assuming $\lim_{N\to\infty}\mathbb{E}\mbox{Crt}_{N}\left(B\right)=\infty$,
\begin{equation}
\frac{\mathbb{E}\left\{ \left[\mbox{Crt}_{N}\left(B,\left(-1,1\right)\right)\right]_{2}\right\} }{\mathbb{E}\left\{ \left(\mbox{Crt}_{N}\left(B\right)\right)^{2}\right\} }\overset{N\to\infty}{\longrightarrow}1.\label{eq:n1}
\end{equation}

\end{rem}

\subsection{Proof of Lemma \ref{lem:u1=00003Du2-1}}

We begin with part (\ref{enu:part 1}). Fix $r\in\left(-1,1\right)$.
Note that $\log\left(x\right)$ is a concave function on $\left(0,\infty\right)$
and thus $\Omega\left(x\right)$ (defined in (\ref{eq:Omega})) is
concave on $\left(-\infty,-2\right)$. Since $\Sigma_{U}^{-1}\left(r\right)$
is positive definite for any $r\in\left(-1,1\right)$, we conclude
that, for $u_{1},u_{2}<-2\sqrt{\frac{p-1}{p}}=-E_{\infty}\left(p\right)$,
the function 
\begin{equation}
\left(u_{1},u_{2}\right)\mapsto-\frac{1}{2}\left(u_{1},u_{2}\right)\left(\Sigma_{U}\left(r\right)\right)^{-1}\left(\begin{array}{c}
u_{1}\\
u_{2}
\end{array}\right)+\Omega\left(\sqrt{\frac{p}{p-1}}u_{1}\right)+\Omega\left(\sqrt{\frac{p}{p-1}}u_{2}\right)\label{eq:60}
\end{equation}
 is concave. 

Let $u\in\mathbb{R}$ and define 
\begin{align*}
\Psi_{u}^{*}\left(v\right) & =\Psi_{p}\left(r,u+v,u-v\right)\\
 & =\tau_{p,r}-\frac{1}{2}\left(u+v,u-v\right)\left(\Sigma_{U}\left(r\right)\right)^{-1}\left(\begin{array}{c}
u+v\\
u-v
\end{array}\right)\\
 & +\Omega\left(\sqrt{\frac{p}{p-1}}\left(u+v\right)\right)+\Omega\left(\sqrt{\frac{p}{p-1}}\left(u-v\right)\right),
\end{align*}
where $\tau_{p,r}$ is a constant depending on $p$, $r$. 

If $u\in\left(-\infty,-E_{\infty}\left(p\right)\right)$, then for
\[
v\in\left(E_{\infty}\left(p\right)+u,-E_{\infty}\left(p\right)-u\right)\triangleq D\left(u\right),
\]
the function $\Psi_{u}^{*}\left(v\right)$ is concave in $v$ (as
a restriction of (\ref{eq:60}) to a line in $\mathbb{R}^{2}$, up
to adding the constant $\tau_{p,r}$). Moreover, by symmetry, 
\[
\frac{\partial}{\partial v}\Psi_{u}^{*}\left(0\right)=0,
\]
and therefore 
\[
\sup_{v\in D\left(u\right)}\Psi_{u}^{*}\left(v\right)=\Psi_{u}^{*}\left(0\right)=\Psi_{p}\left(r,u,u\right).
\]

Hence, for nice $B\subset\left(-\infty,-E_{\infty}\left(p\right)\right)$,
since 
\[
B\times B\subset\left\{ \left(u+v,u-v\right):\, u\in B,\, v\in D\left(u\right)\right\} ,
\]
we conclude that 
\[
\sup_{u_{i}\in B}\Psi_{p}\left(r,u_{1},u_{2}\right)\leq\sup_{u\in B}\sup_{v\in D\left(u\right)}\Psi_{u}^{*}\left(v\right)=\sup_{u\in B}\Psi_{p}\left(r,u,u\right).
\]
This completes the proof of part (\ref{enu:part 1}) of Lemma \ref{lem:u1=00003Du2-1}.

Now, assume that $B\subset\mathbb{R}$ is nice. Let $B_{1}$ and $B_{2}$
be nice disjoint sets whose union is $B$. Note that, since $x^{2}+y^{2}\geq2xy$,
for any $x,\, y\in\mathbb{R}$,
\begin{align*}
\left[\mbox{Crt}_{N}\left(B,\left(-1,1\right)\right)\right]_{2} & \leq\left(\mbox{Crt}_{N}\left(B_{1}\right)+\mbox{Crt}_{N}\left(B_{2}\right)\right)^{2}\\
 & \leq2\left(\left(\mbox{Crt}_{N}\left(B_{1}\right)\right)^{2}+\left(\mbox{Crt}_{N}\left(B_{2}\right)\right)^{2}\right).
\end{align*}

Note that (see Remark \ref{rem:r=00003D1})

\begin{align*}
\left(\mbox{Crt}_{N}\left(B_{i}\right)\right)^{2} & =\left[\mbox{Crt}_{N}\left(B_{i},\left[-1,1\right]\right)\right]_{2}\leq\left[\mbox{Crt}_{N}\left(B_{i},\left(-1,1\right)\right)\right]_{2}+2\mbox{Crt}_{N}\left(B_{i}\right).
\end{align*}
 Thus, by Theorem \ref{thm:2ndmomUBBK-1},
\begin{align}
 & \negthickspace\negthickspace\limsup_{N\to\infty}\frac{1}{N}\log\left(\mathbb{E}\left\{ \left[\mbox{Crt}_{N}\left(B,\left(-1,1\right)\right)\right]_{2}\right\} \right)\nonumber \\
 & \leq\max_{i=1,2}\left\{ \sup_{r\in\left(-1,1\right)}\sup_{u_{1},u_{2}\in B_{i}}\Psi_{p}\left(r,u_{1},u_{2}\right)\right\} \vee\limsup_{N\to\infty}\frac{1}{N}\log\left(\mathbb{E}\left\{ \mbox{Crt}_{N}\left(B\right)\right\} \right),\label{eq:61}
\end{align}
where $x\vee y=\max\left\{ x,y\right\} $, for any two numbers $x$,
$y$.

By applying the same argument iteratively, we obtain that if $B_{i}$,
$i=1,...,n$, is an $N$-independent partition of $B$ to nice sets,
then (\ref{eq:61}) holds with the maximum taken over all $i\leq n$. 

Let $\epsilon>0$ and choose a partition $B_{1},...,B_{n+1},B_{n+2}$
of $B$ such that $B_{1},...,B_{n}$ are intervals that form a partition
of $B'=B\cap\left[-E_{0}\left(p\right),E_{0}\left(p\right)\right]$
such that the diameter of $B_{i}$ is less then $\epsilon$ and such
that 
\begin{align*}
B_{n+1} & =B\cap(-\infty,-E_{0}\left(p\right)),\\
B_{n+2} & =B\cap(E_{0}\left(p\right),\infty).
\end{align*}
Then, 
\begin{align}
 & \negthickspace\negthickspace\limsup_{N\to\infty}\frac{1}{N}\log\left(\mathbb{E}\left\{ \left[\mbox{Crt}_{N}\left(B,\left(-1,1\right)\right)\right]_{2}\right\} \right)\nonumber \\
 & \leq\limsup_{N\to\infty}\frac{1}{N}\log\left(\mathbb{E}\left\{ \mbox{Crt}_{N}\left(B\right)\right\} \right)\vee\sup_{r\in\left(-1,1\right)}\sup_{\substack{u_{1},u_{2}\in B'\\
\left|u_{1}-u_{2}\right|<\epsilon
}
}\Psi_{p}\left(r,u_{1},u_{2}\right)\label{eq:62}\\
 & \vee\sup_{r\in\left(-1,1\right)}\sup_{u_{1},u_{2}\in B_{n+1}}\Psi_{p}\left(r,u_{1},u_{2}\right)\vee\sup_{r\in\left(-1,1\right)}\sup_{u_{1},u_{2}\in B_{n+2}}\Psi_{p}\left(r,u_{1},u_{2}\right).\nonumber 
\end{align}

Since $B_{n+1}\subset\left(-\infty,-E_{\infty}\left(p\right)\right)$,
by the first part of the lemma, 
\begin{equation}
\sup_{u_{1},u_{2}\in B_{n+1}}\Psi_{p}\left(r,u_{1},u_{2}\right)=\sup_{u\in B_{n+1}}\Psi_{p}\left(r,u,u\right).\label{eq:101}
\end{equation}
By symmetry of $\Psi_{p}\left(r,u_{1},u_{2}\right)$ in $\left(u_{1},u_{2}\right)$,
the same holds with $B_{n+2}$.

By concavity considerations similar to those used in the proof of
part (\ref{enu:part 1}), for any $u_{1},u_{2}\in\mathbb{R}$, setting
$u=\left(u_{1}+u_{2}\right)/2$,
\[
-\frac{1}{2}\left(u_{1},u_{2}\right)\left(\Sigma_{U}\left(r\right)\right)^{-1}\left(\begin{array}{c}
u_{1}\\
u_{2}
\end{array}\right)\leq-\frac{1}{2}\left(u,u\right)\left(\Sigma_{U}\left(r\right)\right)^{-1}\left(\begin{array}{c}
u\\
u
\end{array}\right).
\]
Therefore,
\[
\Psi_{p}\left(r,u_{1},u_{2}\right)\leq\Psi_{p}\left(r,u,u\right)+\left|2\Omega\left(\sqrt{\frac{p}{p-1}}u\right)-\Omega\left(\sqrt{\frac{p}{p-1}}u_{1}\right)-\Omega\left(\sqrt{\frac{p}{p-1}}u_{2}\right)\right|.
\]

The function $\Omega$ is uniformly continuous on $\left[-E_{0}\left(p\right),E_{0}\left(p\right)\right]$.
Therefore, for any $u_{1}$, $u_{2}$ such that $\left|u_{1}-u_{2}\right|<\epsilon$,
\[
\Psi_{p}\left(r,u_{1},u_{2}\right)\leq\Psi_{p}\left(r,u,u\right)+O\left(\epsilon\right),\mbox{\,\,\,\ as }\epsilon\to0.
\]
Therefore,
\[
\sup_{r\in\left(-1,1\right)}\sup_{\substack{u_{1},u_{2}\in B'\\
\left|u_{1}-u_{2}\right|<\epsilon
}
}\Psi_{p}\left(r,u_{1},u_{2}\right)\leq\sup_{r\in\left(-1,1\right)}\sup_{u\in B'}\Psi_{p}\left(r,u,u\right)+O\left(\epsilon\right).
\]

By letting $\epsilon\to0$, combining the above with (\ref{eq:101})
and the similar equality for $B_{n+2}$, we obtain from (\ref{eq:62}),
\begin{align}
 & \negthickspace\negthickspace\limsup_{N\to\infty}\frac{1}{N}\log\left(\mathbb{E}\left\{ \left[\mbox{Crt}_{N}\left(B,\left(-1,1\right)\right)\right]_{2}\right\} \right)\nonumber \\
 & \leq\limsup_{N\to\infty}\frac{1}{N}\log\left(\mathbb{E}\left\{ \mbox{Crt}_{N}\left(B\right)\right\} \right)\vee\sup_{r\in\left(-1,1\right)}\sup_{u\in B}\Psi_{p}\left(r,u,u\right).\label{eq:63}
\end{align}

Now, assume that $B$ intersects $\left(-E_{0}\left(p\right),E_{0}\left(p\right)\right)$.
Since it is nice, the intersection contains an open interval and by
Theorem \ref{thm:A-BA-C},
\[
\lim_{N\to\infty}\frac{1}{N}\log\left(\mathbb{E}\left\{ \mbox{Crt}_{N}\left(B\right)\right\} \right)>0.
\]

By Remark \ref{rem:r=00003D1}, it follows that
\begin{equation}
\limsup_{N\to\infty}\frac{1}{N}\log\left(\mathbb{E}\left\{ \left[\mbox{Crt}_{N}\left(B,\left(-1,1\right)\right)\right]_{2}\right\} \right)>\lim_{N\to\infty}\frac{1}{N}\log\left(\mathbb{E}\left\{ \left(\mbox{Crt}_{N}\left(B\right)\right)\right\} \right),\label{eq:66}
\end{equation}
meaning that (\ref{eq:63}) is equal to $\sup_{r\in\left(-1,1\right)}\sup_{u\in B}\Psi_{p}\left(r,u,u\right)$.
This completes the proof of part (\ref{enu:part 2}).\qed

\subsection{Proof of Lemma \ref{lem:r=00003D0} }

By straightforward algebra, 
\begin{equation}
\Psi_{p}^{u}\left(r\right)=\zeta_{p,u}+\frac{1}{2}\log\left(\frac{1-r^{2}}{1-r^{2p-2}}\right)-u^{2}\frac{1-r^{p}+(p-1)r^{p-2}(1-r^{2})}{1-r^{2p-2}+(p-1)r^{p-2}(1-r^{2})},\label{eq:65}
\end{equation}
where $\zeta_{p,u}$ depends only on $p$ and $u$. 

Note that 
\begin{equation}
\frac{1-r^{p}+(p-1)r^{p-2}(1-r^{2})}{1-r^{2p-2}+(p-1)r^{p-2}(1-r^{2})}=1-\frac{r^{p}-r^{2p-2}}{1-r^{2p-2}+(p-1)r^{p-2}(1-r^{2})}.\label{eq:20dec1}
\end{equation}
and
\begin{equation}
1-r^{2p-2}+(p-1)r^{p-2}(1-r^{2})=\left(1-r^{2}\right)\left(p-1\right)\left(\frac{1+r^{2}+\cdots+r^{2p-4}}{p-1}+r^{p-2}\right).\label{eq:11}
\end{equation}

For any $r\in\left(-1,1\right)$, 
\begin{equation}
\frac{1+r^{2}+\cdots+r^{2p-4}}{p-1}>\left|r^{p-2}\right|,\;\mbox{ and thus}\;\frac{1+r^{2}+\cdots+r^{2p-4}}{p-1}+r^{p-2}>0,\label{eq:bd1}
\end{equation}
since these are the arithmetic and geometric means of the same non-degenerate,
non-negative sequence. 

That is, the denominator in (\ref{eq:65}) above is positive for $r\in\left(-1,1\right)$.
Hence, in order to see that $\Psi_{p}^{u}\left(r\right)$ can be continuously
extended to $\left[-1,1\right]$ all that is need is to check that
the limits at $r=\pm1$ exist. This can be verified using L'H{\^{o}}pital's
rule.

Moreover, for odd $p$, (\ref{eq:20dec1}) is less then $1$ for $r\in\left(0,1\right)$
and is greater then $1$ for $r\in\left(-1,0\right)$. For even $p$,
of course, the expression is symmetric in $r$. Thus, the maximum
of $\bar{\Psi}_{p}^{u}\left(r\right)$ is achieved on $\left[0,1\right]$,
and if and only if $p$ is even, then the maximum can be attained
at some $r^{*}<0$. In that case it is also attained at $-r^{*}$.

Set, for $r\in\left[0,1\right)$,
\begin{equation}
Q_{p}^{u}\left(r\right)\triangleq\frac{1}{2}\log\left(\frac{1-r^{2}}{1-r^{2p-2}}\right)+u^{2}\frac{r^{p}-r^{2p-2}}{1-r^{2p-2}+(p-1)r^{p-2}(1-r^{2})},\label{eq:64}
\end{equation}
and 
\[
Q_{p}^{u}\left(1\right)\triangleq\lim_{r\nearrow1}Q_{p}^{u}\left(r\right)=\frac{1}{2}\log\left(\frac{1}{p-1}\right)+u^{2}\frac{p-2}{4(p-1)}.
\]

We conclude that in order to prove the lemma, it is enough to prove
it with $\Psi_{p}\left(r,u\right)$ replaced by $Q_{p}^{u}\left(r\right)$,
with $\left[-1,1\right]$ replaced by $\left[0,1\right]$, and with
the term $\left(-1\right)^{p+1}$ removed. 

Setting, for $r\in\left[0,1\right)$,
\begin{equation}
g_{0}\left(r\right)\triangleq\frac{r^{p}-r^{2p-2}}{1-r^{2p-2}+(p-1)r^{p-2}(1-r^{2})},\label{eq:74}
\end{equation}
and
\[
g_{0}\left(1\right)\triangleq\lim_{r\nearrow1}g_{0}\left(r\right)=\frac{p-2}{4(p-1)},
\]
we have, for $r\in\left(0,1\right)$,
\begin{equation}
\frac{d}{dr}g_{0}\left(r\right)=\frac{pr^{p-1}+\left[p\left(p-2\right)\right]r^{3p-3}-(p-1)(p-2)r^{3p-5}}{\left(1-r^{2p-2}+(p-1)r^{p-2}(1-r^{2})\right)^{2}}>0,\label{eq:x1}
\end{equation}
That is, $g_{0}\left(r\right)$ is strictly increasing in $r$.

We now show that if part (\ref{enu:p3}) of the lemma holds, the other
two follow. Assume that part (\ref{enu:p3}) holds. Let $u\in\mathbb{R}$
such that $\left|u\right|<u_{th}\left(p\right)$. For any $r\in\left(0,1\right]$,
$g_{0}\left(r\right)>0$ and
\[
Q_{p}^{u}\left(r\right)<Q_{p}^{u_{th}\left(p\right)}\left(r\right)\leq Q_{p}^{u_{th}\left(p\right)}\left(0\right)=Q_{p}^{u}\left(0\right).
\]

Similarly, let $u\in\mathbb{R}$ such that $\left|u\right|>u_{th}\left(p\right)$.
For any $r\in\left[0,1\right)$, 
\begin{align*}
Q_{p}^{u}\left(1\right) & =Q_{p}^{u_{th}\left(p\right)}\left(1\right)+\left(u^{2}-u_{th}^{2}\left(p\right)\right)g_{0}\left(1\right)\geq Q_{p}^{u_{th}\left(p\right)}\left(r\right)+\left(u^{2}-u_{th}^{2}\left(p\right)\right)g_{0}\left(1\right)\\
 & =Q_{p}^{u}\left(r\right)+\left(u^{2}-u_{th}^{2}\left(p\right)\right)\left(g_{0}\left(1\right)-g_{0}\left(r\right)\right)>Q_{p}^{u}\left(r\right).
\end{align*}

All that remains is to prove part (\ref{enu:p3}). First, we note
that
\begin{equation}
Q_{p}^{u_{th}\left(p\right)}\left(1\right)=\frac{1}{2}\log\left(\frac{1}{p-1}\right)+2\frac{p-1}{p-2}\log\left(p-1\right)\frac{p-2}{4(p-1)}=0=Q_{p}^{u_{th}\left(p\right)}\left(0\right).\label{eq:yy}
\end{equation}

We need to show that for any $r\in\left(0,1\right)$, $Q_{p}^{u_{th}\left(p\right)}\left(r\right)<0$.
First we assume that $p\leq10$. We have that $\frac{d}{dr}Q_{p}^{u_{th}\left(p\right)}\left(0\right)=0$
and $\frac{d}{dr}Q_{p}^{u_{th}\left(p\right)}\left(1\right),\,-\frac{d^{2}}{dr^{2}}Q_{p}^{u_{th}\left(p\right)}\left(0\right)>c_{0}$
for some $c_{0}>0$ ($c_{0}$ and $t_{0}$, $\epsilon_{0}$, to be
defined soon, can be computed explicitly). By a Taylor expansion combined
with bounds on higher order derivatives, for some $t_{0}>0$, for
any $r\in(0,t_{0})\cup(1-t_{0},1)$, $Q_{p}^{u_{th}\left(p\right)}\left(r\right)<0$.
By bounding the absolute value of the derivative $\frac{d}{dr}Q_{p}^{u_{th}\left(p\right)}\left(r\right)$
on the interval $(t_{0},1-t_{0})$, we have that for some $\epsilon_{0}>0$,
in order to prove that $Q_{p}^{u_{th}\left(p\right)}\left(r\right)<0$
for any $r\in(t_{0},1-t_{0})$ it is enough to verify the same only
for a finite mesh $t_{0}=r_{1}<\cdots<r_{k}=1-t_{0}$, with differences
$r_{i+1}-r_{i}$ that are bounded from above by $\epsilon_{0}$. We
verified the latter numerically using computer (see also Figure \ref{fig:Qp}). 

\begin{figure}[h]
\includegraphics[scale=0.2]{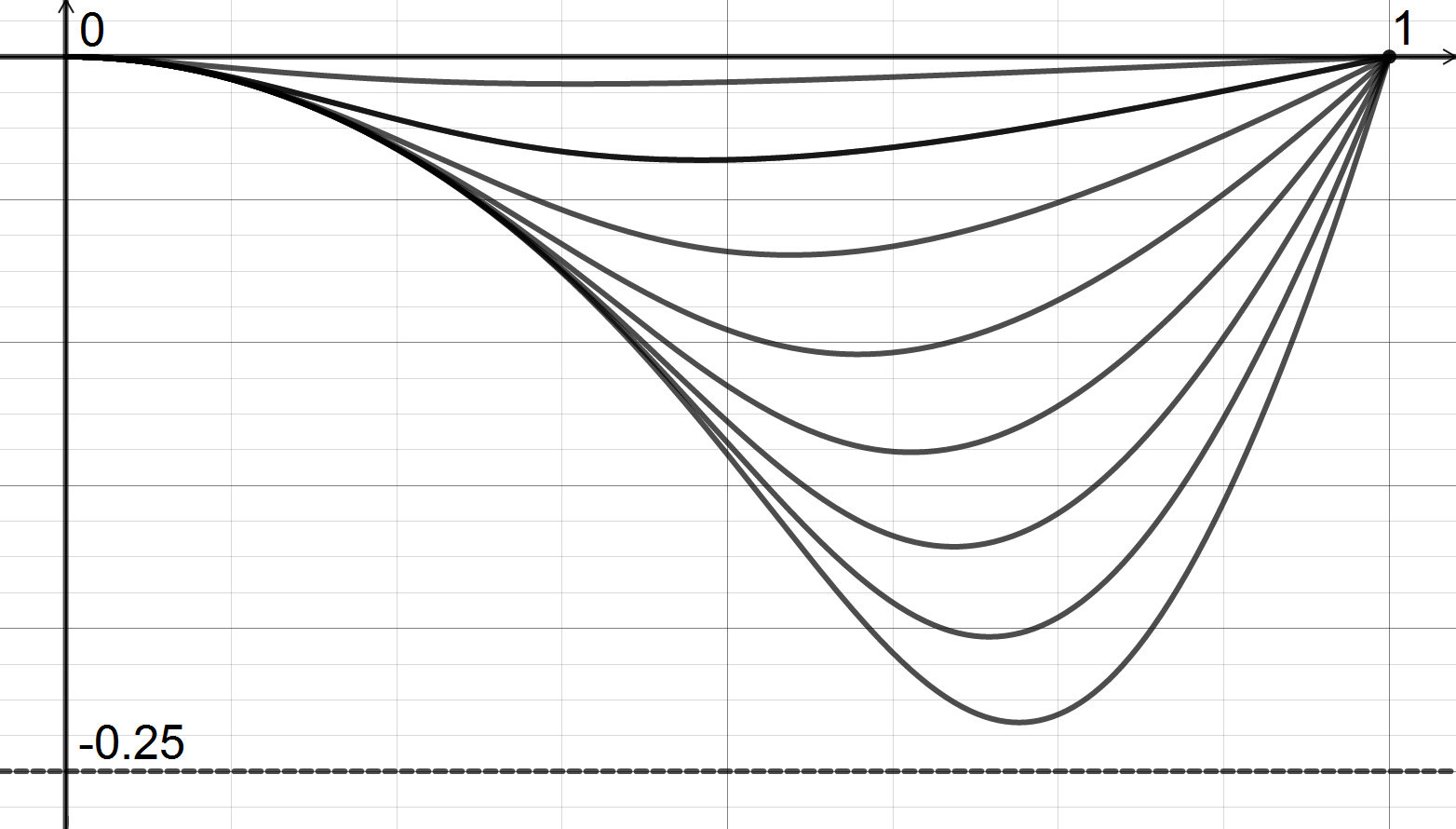}

\protect\caption{\label{fig:Qp}The functions $Q_{p}^{u_{th}\left(p\right)}\left(r\right)$
in the interval $\left[0,1\right]$, for $3\leq p\leq10$. For any
$r$, $Q_{p}^{u_{th}\left(p\right)}\left(r\right)$ decreases in $p$:
$Q_{p}^{u_{th}\left(p\right)}\left(r\right)\geq Q_{p+1}^{u_{th}\left(p+1\right)}\left(r\right)$.}
\end{figure}

We now assume that $p>10$. First, suppose also that $r\in\left(0,0.65\right]$.
By (\ref{eq:bd1}), 
\[
\frac{1-r^{2p-2}}{1-r^{2}}=1+\left(p-2\right)r^{2}\frac{1+r^{2}+\cdots+r^{2p-6}}{p-2}\geq1+(p-2)r^{p-1}.
\]

From the inequality $\log\left(1+x\right)\geq\frac{x}{1+x}$, valid
for $x>0$, we then have, for $r\in(0,0.65]$, $p\geq10$,
\[
\log\left(\frac{1-r^{2p-2}}{1-r^{2}}\right)\geq\frac{(p-2)r^{p-1}}{1+(p-2)r^{p-1}}\geq\frac{(p-2)r^{p-1}}{1+8\cdot0.65^{\left(-9\right)}},
\]
where the last inequality follows since $(p-2)\cdot0.65^{p-1}$ is
decreasing in $p$, for $p\geq10$. In addition, for $r\in(0,1)$,
\[
\frac{r^{p}-r^{2p-2}}{1-r^{2p-2}+\left(p-1\right)r^{p-2}\left(1-r^{2}\right)}\leq r^{p}\frac{1-r^{p-2}}{1-r^{2p-2}}\leq r^{p}.
\]

Thus, for $r\in(0,0.65]$, $p\geq10$,
\begin{align*}
Q_{p}^{u_{th}\left(p\right)}\left(r\right) & =\frac{1}{2}\log\left(\frac{1-r^{2}}{1-r^{2p-2}}\right)+\left(u_{th}\left(p\right)\right)^{2}\frac{r^{p}-r^{2p-2}}{1-r^{2p-2}+(p-1)r^{p-2}(1-r^{2})}\\
 & \leq-\frac{1}{2}\frac{(p-2)r^{p-1}}{1+8\cdot0.65^{\left(-9\right)}}+\left(u_{th}\left(p\right)\right)^{2}r^{p}\\
 & \leq r^{p-1}\left\{ 0.65\cdot u_{th}^{2}\left(p\right)-\frac{(p-2)}{2\left(1+8\cdot0.65^{\left(-9\right)}\right)}\right\} \triangleq\tau_{p}r^{p-1}\triangleq\bar{Q}_{p}(r).
\end{align*}

We have that $\tau_{10}<0$ and $\tau_{p}$ decreases in $p$, for
$p\geq10$. Hence, for $r\in(0,0.65]$, $p\geq10$, 
\[
Q_{p}^{u_{th}\left(p\right)}(r)<0=Q_{p}^{u_{th}\left(p\right)}(0).
\]

Now, assume that $r\in\left[0.65,1\right)$. From (\ref{eq:bd1})
and (\ref{eq:11}),
\begin{align*}
Q_{p}^{u_{th}\left(p\right)}(r) & \leq\frac{1}{2}\log\left(\frac{1-r^{2}}{1-r^{2p-2}}\right)+u_{th}^{2}\left(p\right)\frac{r^{p}-r^{2p-2}}{2(p-1)r^{p-2}(1-r^{2})}\\
 & =\frac{1}{2}\log\left(\frac{1-r^{2}}{1-r^{2p-2}}\right)+\frac{\log\left(p-1\right)}{p-2}\frac{1-r^{p-2}}{1-r^{2}}r^{2}\triangleq\widetilde{Q}_{p}\left(r\right).
\end{align*}

The derivative of $\widetilde{Q}_{p}(r)$ by $p$ is given, for $r\in(0,1)$,
by
\begin{align*}
\frac{d}{dp}\widetilde{Q}_{p}(r) & =\frac{r^{2p-2}\log r}{1-r^{2p-2}}+\frac{\frac{p-2}{p-1}-\log(p-1)}{p-2}\cdot\frac{1-r^{p-2}}{\left(1-r^{2}\right)}\cdot r^{2}+\frac{\log(p-1)}{p-2}\cdot\frac{-r^{p}\log r}{\left(1-r^{2}\right)}\\
 & \leq\frac{r^{2}}{\left(p-2\right)\left(1-r^{2}\right)}\left[\left(1-\log(p-1)\right)\left(1-r^{p-2}\right)-\log r\cdot\log(p-1)r^{p-2}\right].
\end{align*}

Therefore, for $r\in(0,1)$, $\frac{d}{dp}\widetilde{Q}_{p}(r)<0$
if
\[
\frac{1-\log(p-1)}{\log(p-1)}\left(1-r^{p-2}\right)-\log r<0.
\]
 Since for any $r\in[0.6,1)$ and any $p\geq10$, $\frac{1-\log(p-1)}{\log(p-1)}$
decreases in $p$, $\left(1-r^{p-2}\right)$ increases in $p$, and
\[
\frac{1-\log(10-1)}{\log(10-1)}\left(1-r^{10-2}\right)-\log r<0,
\]
it follows that $\frac{d}{dp}\widetilde{Q}_{p}(r)<0$, for any $r\in[0.6,1)$
and any $p\geq10$. Thus, if $\widetilde{Q}_{10}\left(r\right)<0$
for all $r\in[0.6,1)$, then the same holds for $Q_{p}^{u_{th}\left(p\right)}(r)$,
for any $p\geq10$. For $\widetilde{Q}_{10}\left(r\right)$ this was
verified numerically using a computer using a similar method to one
described above (see also Figure \ref{fig:Qtilde_10}).\qed

\begin{figure}[h]
\includegraphics[scale=0.2]{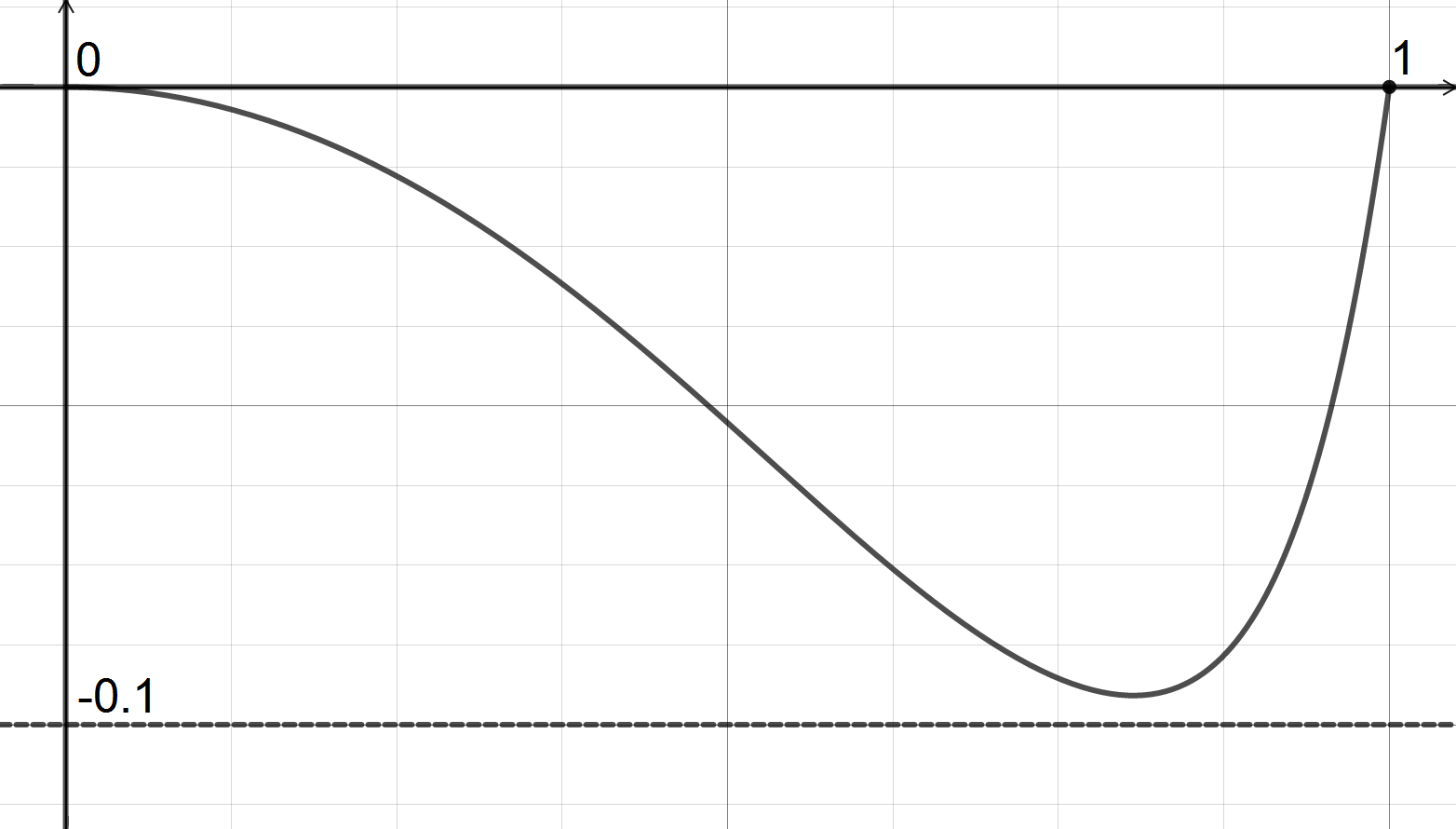}

\protect\caption{\label{fig:Qtilde_10}The function $\widetilde{Q}_{10}\left(r\right)$
in the interval $\left[0,1\right]$.}

\end{figure}

\section{\label{sec:Proofs}Proofs of Theorem \ref{thm:Var-E2-log} and Corollary
\ref{cor:lowoverlap}}

The content of this section is in its title. Our starting point is
the bound of Theorem \ref{thm:Var-E2-log} and the main tools we shall
use are Lemmas \ref{lem:u1=00003Du2-1} and \ref{lem:r=00003D0}.

\subsection{Proof of Theorem \ref{thm:Var-E2-log}}

By Theorem \ref{thm:A-BA-C}, denoting $u_{-}=u\wedge0=\min\left\{ u,0\right\} $,
\[
\frac{1}{N}\log\left(\mathbb{E}\left\{ \left(\mbox{Crt}_{N}\left(\left(-\infty,u\right)\right)\right)^{2}\right\} \right)\geq\frac{1}{N}\log\left(\left(\mathbb{E}\left\{ \mbox{Crt}_{N}\left(\left(-\infty,u\right)\right)\right\} \right)^{2}\right)\overset{N\to\infty}{\longrightarrow}2\Theta_{N}\left(u_{-}\right)=\Psi_{p}\left(0,u_{-},u_{-}\right).
\]
Combining this with (\ref{eq:n1}), it follows that what remains to
show in order to prove the theorem is that 
\begin{equation}
\limsup_{N\to\infty}\frac{1}{N}\log\mathbb{E}\left\{ \left[\mbox{Crt}_{N}\left(\left(-\infty,u\right),\left(-1,1\right)\right)\right]_{2}\right\} \leq\Psi_{p}\left(0,u_{-},u_{-}\right).\label{eq:ub1}
\end{equation}

Theorem \ref{thm:2ndmomUBBK-1}, part (\ref{enu:part 2}) of Lemma
\ref{lem:u1=00003Du2-1}, Lemma \ref{lem:r=00003D0}, and the fact
that $\bar{\Psi}_{p}^{v}\left(0\right)$ is symmetric in $v$, yield
\begin{align}
 & \negthickspace\negthickspace\negthickspace\negthickspace\limsup_{N\to\infty}\frac{1}{N}\log\mathbb{E}\left\{ \left[\mbox{Crt}_{N}\left(\left(-\infty,u\right),\left(-1,1\right)\right)\right]_{2}\right\} \nonumber \\
 & \leq\left(\sup_{v\in\left(-\infty,-u_{th}\left(p\right)\right)}\bar{\Psi}_{p}^{v}\left(1\right)\right)\vee\left(\sup_{v\in\left[-u_{th}\left(p\right),u_{-}\right]}\bar{\Psi}_{p}^{v}\left(0\right)\right).\label{eq:67}
\end{align}

We note that, for $v\leq0$, $\bar{\Psi}_{p}^{v}\left(0\right)=2\Theta_{p}\left(v\right)$
(cf. Theorem \ref{thm:A-BA-C}). Also, the monotonicity of the left-hand
side of (\ref{eq:54}) implies that $\Theta_{p}\left(v\right)$ is
non-decreasing for $v\leq0$. Since $u\in\left(-E_{0},\infty\right)$,
the supremum on the right-hand side of (\ref{eq:67}) is positive.
Hence, (\ref{eq:ub1}) holds if we are able to show that
\begin{equation}
\sup_{v\in\left(-\infty,-u_{th}\left(p\right)\right)}\bar{\Psi}_{p}^{v}\left(1\right)\leq0.\label{eq:68}
\end{equation}

By a straightforward calculation,
\begin{equation}
\frac{\partial}{\partial v}\bar{\Psi}_{p}^{v}\left(1\right)=-\frac{v\left(3p-2\right)}{2\left(p-1\right)}+2\sqrt{\frac{p}{p-1}}\Omega'\left(\sqrt{\frac{p}{p-1}}v\right).\label{eq:70}
\end{equation}
We note that for $x<-2$, 
\begin{equation}
\Omega'(x)=\int_{\left(-2,2\right)}\frac{d}{dx}\log\left(\lambda-x\right)d\mu^{*}\left(\lambda\right)\geq\inf_{\lambda\in\left(-2,2\right)}\frac{1}{x-\lambda}=\frac{1}{x+2}.\label{eq:69}
\end{equation}
From the above one can verify that $\frac{\partial}{\partial v}\bar{\Psi}_{p}^{v}\left(1\right)\geq0$
for $v\in\left(-\infty,-u_{th}\left(p\right)\right)$.

With $v=-u_{th}\left(p\right)<-E_{0}\left(p\right)$, by Lemma \ref{lem:r=00003D0},
\[
\bar{\Psi}_{p}^{v}\left(1\right)=\bar{\Psi}_{p}^{v}\left(0\right)=2\Theta_{p}\left(v\right)<0.
\]
This proves (\ref{eq:68}) and completes the proof. \qed

\subsection{Proof of Corollary \ref{cor:lowoverlap}}

The equality follows from Remark \ref{rem:r=00003D1} and the fact
that $u>-E_{0}\left(p\right)$.

Let $u\in\left(-E_{0}\left(p\right),-E_{\infty}\left(p\right)\right)$,
let $\epsilon>0$ and set $I_{\epsilon}=\left(-1,1\right)\setminus\left(-\epsilon,\epsilon\right)$.
For arbitrary $\tilde{u}\in(-u_{th}\left(p\right),u)$, Theorem \ref{thm:2ndmomUBBK-1},
Lemma \ref{lem:u1=00003Du2-1}, and Lemma \ref{lem:r=00003D0} yield
\begin{align}
 & \negthickspace\negthickspace\negthickspace\negthickspace\limsup_{N\to\infty}\frac{1}{N}\log\mathbb{E}\left\{ \left[\mbox{Crt}_{N}\left(\left(-\infty,u\right),I_{\epsilon}\right)\right]_{2}\right\} \nonumber \\
 & \leq\left(\sup_{r\in\left(-1,1\right)}\sup_{u_{1},u_{2}\in\left(-\infty,\tilde{u}\right)}\Psi_{p}\left(r,u_{1},u_{2}\right)\right)\vee\left(\sup_{r\in I_{\epsilon}}\sup_{u_{1},u_{2}\in\left[\tilde{u},u\right)}\Psi_{p}\left(r,u_{1},u_{2}\right)\right)\nonumber \\
 & \leq\left(\sup_{v\in\left(-\infty,\tilde{u}\right)}\bar{\Psi}_{p}^{v}\left(1\right)\right)\vee\left(\sup_{r\in I_{\epsilon}}\sup_{v\in\left[\tilde{u},u\right)}\Psi_{p}\left(r,v\right)\right).\label{eq:77}
\end{align}

We note that $\Psi_{p}\left(r,v\right)$ is continuous as a function
of $r$ at $\left(0,v\right)$. In the proof of Lemma \ref{lem:r=00003D0}
we saw that $\Psi_{p}\left(\left|r\right|,v\right)\geq\Psi_{p}\left(r,v\right)$,
thus
\[
\sup_{r\in I_{\epsilon}}\Psi_{p}\left(r,v\right)=\sup_{0<r\in I_{\epsilon}}\Psi_{p}\left(r,v\right).
\]
From (\ref{eq:65}), (\ref{eq:20dec1}), with $g_{0}\left(r\right)$
as defined in (\ref{eq:74}), 
\[
\Psi_{p}\left(0,v\right)-\Psi_{p}\left(r,v\right)=T_{r}-g_{0}(r)v^{2},
\]
where $T_{r}$ depends only on $r$. From this and  since $g_{0}\left(r\right)$
strictly increases in $r>0$ (see (\ref{eq:x1})) and $g_{0}\left(0\right)=0$,
we have that, uniformly in $v\in\left[\tilde{u},u\right)$,
\begin{align}
\Psi_{p}\left(0,v\right)-\sup_{0<r\in I_{\epsilon}}\Psi_{p}\left(r,v\right) & =\Psi_{p}\left(0,\tilde{u}\right)-\sup_{0<r\in I_{\epsilon}}\left(\Psi_{p}\left(r,\tilde{u}\right)-\left(\tilde{u}^{2}-v^{2}\right)g_{0}\left(r\right)\right)\nonumber \\
 & \geq\Psi_{p}\left(0,\tilde{u}\right)-\sup_{0<r\in I_{\epsilon}}\Psi_{p}\left(r,\tilde{u}\right)+\left(\tilde{u}^{2}-v^{2}\right)\inf_{r\in I_{R}}g_{0}\left(r\right)\label{eq:aa}\\
 & \geq\Psi_{p}\left(0,\tilde{u}\right)-\sup_{0<r\in I_{\epsilon}}\Psi_{p}\left(r,\tilde{u}\right)\triangleq c_{\epsilon}>0,\nonumber 
\end{align}
where the last inequality follows from Lemma \ref{lem:r=00003D0}.

Therefore, 
\begin{equation}
\sup_{r\in I_{\epsilon}}\sup_{v\in\left[\tilde{u},u\right)}\Psi_{p}\left(r,v\right)\leq\sup_{v\in\left[\tilde{u},u\right)}\Psi_{p}\left(0,v\right)-c_{\epsilon}<\Psi_{p}\left(0,u\right)=2\Theta_{p}\left(u\right).\label{eq:75}
\end{equation}

Recall that (\ref{eq:68}) holds. Thus, since $\Theta_{p}\left(u\right)>0$
and $\bar{\Psi}_{p}^{v}\left(1\right)$ is continuous in $v$, assuming
$\tilde{u}$ is close enough to $-u_{th}\left(p\right)$, 
\begin{equation}
\sup_{v\in\left(-\infty,\tilde{u}\right)}\bar{\Psi}_{p}^{v}\left(1\right)<2\Theta_{p}\left(u\right).\label{eq:76}
\end{equation}

Equations (\ref{eq:77}), (\ref{eq:75}), and (\ref{eq:76}) give
\begin{align*}
\limsup_{N\to\infty}\frac{1}{N}\log\mathbb{E}\left\{ \left[\mbox{Crt}_{N}\left(\left(-\infty,u\right),I_{\epsilon}\right)\right]_{2}\right\}  & <2\Theta_{p}\left(u\right)\\
 & =\lim_{N\to\infty}\frac{1}{N}\log\mathbb{E}\left\{ \left[\mbox{Crt}_{N}\left(\left(-\infty,u\right),\left(-1,1\right)\right)\right]_{2}\right\} ,
\end{align*}
where the equality follows from Theorems \ref{thm:Var-E2-log} and
\ref{thm:A-BA-C}.\qed

\section{\label{sec:Finer-Asymptotics}Proof of Theorem \ref{thm:Var-E2}}

The following notation will be used throughout the section. With $\mathbf{X}:=\mathbf{X}_{N-1}$
being a GOE matrix of dimension $N-1$, setting $\bar{u}:=\bar{u}_{N}=\sqrt{\frac{1}{N-1}\frac{p}{p-1}}u$,
we define for any $u<-E_{\infty}(p)$,
\begin{align}
\mathfrak{S}(u) & =\int\frac{1}{\sqrt{\frac{p-1}{p}}\lambda-u}d\mu^{*}(\lambda),\label{eq:S(u)}\\
\mathfrak{C}_{N}(u) & =\omega_{N}\left(\frac{p-1}{2\pi}(N-1)\right)^{\frac{N-1}{2}}\sqrt{\frac{N}{2\pi}}e^{-N\frac{u^{2}}{2}}\mathbb{E}\left\{ \det\left(\mathbf{X}-\sqrt{N}\bar{u}I\right)\right\} ,\label{eq:C(u)}
\end{align}
where $\mu^{*}$ denotes the semicircle law (\ref{eq:semicirc}).
We note that for $u<-E_{\infty}(p)$, with $w_{p}=\sqrt{\frac{p-1}{p}}$,
\begin{align}
\frac{d}{du}\Theta_{p}\left(u\right)=-\left(\mathfrak{S}(u)+u\right) & =-\int_{-2}^{2}\frac{1}{w_{p}\lambda-u}d\mu^{*}(\lambda)-u=\int_{0}^{2}\frac{2u(w_{p}\lambda)^{2}}{u^{2}-(w_{p}\lambda)^{2}}d\mu^{*}(\lambda)>0.\label{eq:<}
\end{align}

Below we use the standard big- and little-O notation to describe asymptotic
behavior as $N\to\infty$. Often, equations will contain several $o(a_{N}^{(i)})$
terms and will be said to hold uniformly in some variable (or more
than one), say $x\in B_{N}$. To avoid confusion, we remark that such
statements are to be understood as follows. The equation holds as
an equality with each of the $o(a_{N}^{(i)})$ terms replaced by a
function $h_{N}^{(i)}(x)$ satisfying $\sup_{x\in B_{N}}|h_{N}^{(i)}(x)|/|a_{N}^{(i)}|\to0$
as $N\to\infty$. 
\begin{lem}
\label{lem:n1}Let $u<-E_{\infty}\left(p\right)$ and suppose $J_{N}=\left(a_{N},b_{N}\right)$
is an interval such that $a_{N},\, b_{N}\to u$ as $N\to\infty$.
Then, as $N\to\infty$, 
\begin{equation}
\mathbb{E}\left\{ {\rm Crt}_{N}\left(J_{N}\right)\right\} =(1+o(1))\mathfrak{C}_{N}(b_{N})\int_{J_{N}}\exp\left\{ -N\left(u+\mathfrak{S}(u)\right)(v-b_{N})\right\} dv.\label{eq:12}
\end{equation}

\end{lem}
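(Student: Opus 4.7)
By Lemma \ref{lem:1stmom}, the change of variable $v=U/\sqrt N$ yields
\[
\mathbb{E}\{\mathrm{Crt}_N(J_N)\} = \mathfrak{K}_N \int_{J_N} e^{-Nv^2/2}\,\mathbb{E}\!\left|\det\!\bigl(\mathbf{X}_{N-1} - \sqrt{N}\bar v\, I\bigr)\right|dv,
\]
where $\mathfrak{K}_N := \omega_N\bigl(\tfrac{p-1}{2\pi}(N-1)\bigr)^{(N-1)/2}\sqrt{N/(2\pi)}$ and $\bar v = \sqrt{\tfrac{1}{N-1}\tfrac{p}{p-1}}\,v$, so $\sqrt N\bar v\to \gamma_p v$ with $\gamma_p=\sqrt{p/(p-1)}$. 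Since $u<-E_\infty(p)=-2/\gamma_p$, for every $v$ in a fixed closed neighbourhood $\mathcal U$ of $u$ the shift satisfies $\sqrt N\bar v\leq -2-\delta$ for some $\delta>0$ and all large $N$. The sub-Gaussian tail on $\lambda_{\max}(\mathbf{X}_{N-1})$ (Lemma \ref{lem:maxEigBd}) combined with a crude deterministic upper bound on $|\det(\mathbf{X}_{N-1}-\sqrt N \bar v I)|$ then yields, uniformly in $v\in \mathcal U$,
\[
\mathbb{E}\bigl|\det(\mathbf{X}_{N-1}-\sqrt N \bar v I)\bigr| = \bigl(1+O(e^{-cN})\bigr)\mathbb{E}\det(\mathbf{X}_{N-1}-\sqrt N \bar v I).
\]
Write $\mathfrak{D}_N(v):=\mathfrak{K}_N e^{-Nv^2/2}\mathbb{E}\det(\mathbf{X}_{N-1}-\sqrt N\bar v I)$; by \eqref{eq:C(u)} one has $\mathfrak{D}_N(b_N)=(1+o(1))\mathfrak{C}_N(b_N)$, and once $J_N\subset\mathcal U$,
\[
\mathbb{E}\{\mathrm{Crt}_N(J_N)\} = (1+o(1))\int_{J_N}\mathfrak{D}_N(v)\,dv.
\]

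The heart of the argument is the uniform pointwise asymptotic
\[
\log\mathfrak{D}_N(v) - \log\mathfrak{D}_N(b_N) = -N(u+\mathfrak{S}(u))(v-b_N) + o(1),\qquad v\in J_N,
\]
which yields \eqref{eq:12} upon exponentiation and integration. To obtain it I would use the explicit identification of $\mathbb{E}\det(\mathbf{X}_{N-1}-tI)$ with a (suitably rescaled) Hermite polynomial $H_{N-1}(\alpha_N t)$ --- the same algebraic identity that \cite{A-BA-C} exploits to compute the first moment on the exponential scale --- and then invoke the classical Plancherel--Rotach asymptotics for Hermite polynomials outside the oscillatory region. These give, uniformly for $t/\sqrt{N-1}$ in compact subsets of $(-\infty,-2)$,
\[
\log \mathbb{E}\det(\mathbf{X}_{N-1}-tI) = (N-1)\,\Omega\bigl(t/\sqrt{N-1}\bigr) + \tfrac12\log \Phi\bigl(t/\sqrt{N-1}\bigr) + \alpha_N + o(1),
\]
for an explicit smooth positive function $\Phi$ and a constant $\alpha_N$ independent of $t$. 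Substituting $t=\sqrt N\bar v$ and collecting the factors $-Nv^2/2$ and $\log\mathfrak{K}_N$ gives $\log\mathfrak{D}_N(v) = N\Theta_p(v) + \tfrac12\log\Phi(\gamma_p v) + \beta_N + o(1)$, uniformly on $\mathcal U$, where $\Theta_p$ is the function from Theorem \ref{thm:A-BA-C} and $\beta_N$ is independent of $v$. Differencing at $v$ and $b_N$ cancels $\beta_N$, and by continuity of $\Phi$ together with $v,b_N\to u$ it also cancels the $\log\Phi$ contributions up to $o(1)$. A first-order Taylor expansion of $\Theta_p$ around $u$ (using $\Theta_p'(u)=-(u+\mathfrak{S}(u))$, cf.\ \eqref{eq:<}) then produces the displayed asymptotic, the quadratic remainder being absorbed into $o(1)$ in the relevant regime of $J_N$.

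The main obstacle is that the displayed asymptotic must hold with an \emph{additive} $o(1)$ error in the logarithm, even though $\log\mathfrak{D}_N(v)$ itself grows linearly in $N$; a mere exponential-scale statement such as $\frac{1}{N}\log\mathfrak{D}_N(v)\to \frac12\log(p-1)+\Theta_p(v)$ would translate into an $o(N)$ error that is far too weak. Retaining the constant (subexponential) term in the Plancherel--Rotach expansion of $H_{N-1}$, and verifying the uniformity of the $o(1)$ remainder over $\mathcal U$, is therefore the central technical ingredient; once in hand, the remaining manipulations are elementary.
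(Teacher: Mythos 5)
Your route through Hermite polynomials is genuinely different from the paper's. The paper obtains the needed determinant asymptotic through Lemma \ref{lem:n2} and Corollary \ref{cor:n6}, which rest on the LDP for the empirical spectral measure (Theorem \ref{lem:GOELD}), the LDP for the extreme eigenvalue (Theorem \ref{thm:maxeigLDP}), and the concentration of linear eigenvalue statistics; no Hermite identity or Plancherel--Rotach analysis appears anywhere. That framework is built so it also applies to the correlated GOE pair in Lemma \ref{lem:n3}, whereas the Hermite route is specific to a single GOE determinant, but for the present lemma either starting point could in principle work.

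However, there is a genuine gap in what you call the heart of the argument. You assert, uniformly for $v\in J_N$, that
\[
\log\mathfrak{D}_N(v) - \log\mathfrak{D}_N(b_N) = -N\left(u+\mathfrak{S}(u)\right)(v-b_N) + o(1),
\]
with an additive $o(1)$ error. This is false as a uniform statement. Even granting the Plancherel--Rotach expansion in the exact form you posit, differencing gives $N\bigl(\Theta_p(v)-\Theta_p(b_N)\bigr)$ plus $o(1)$, and Taylor's theorem only yields
\[
N\bigl(\Theta_p(v)-\Theta_p(b_N)\bigr) = N\,\Theta_p'(u)\,(v-b_N)\bigl(1+o(1)\bigr),
\]
a \emph{multiplicative} $(1+o(1))$ on the linear term. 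If, say, $b_N-a_N = N^{-1/3}$ and $v=a_N$, the quadratic remainder is of order $N(v-b_N)^2 \sim N^{1/3}$, which is far from $o(1)$; so the additive form cannot hold over all of $J_N$. Your phrase "in the relevant regime of $J_N$'' hints at this, but you never bridge the gap: to integrate one must show the part of $J_N$ where the additive form fails contributes negligibly, and for that the sign fact $u+\mathfrak{S}(u)<0$ from \eqref{eq:<} is indispensable. This is precisely the step the paper takes at the end of its proof ("Recall that $\mathfrak{S}(u)+u<0$''), and it is the one ingredient your argument omits. Relatedly, your closing paragraph locates the main obstacle in securing a uniform $o(1)$ constant term in the Plancherel--Rotach expansion, but that is not the bottleneck; the bottleneck is the Taylor remainder of $N\Theta_p$ together with the sign argument just described.
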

For brevity, we shall use the notation $\left[\mbox{Crt}_{N}\left(B\right)\right]_{2}^{\rho}\triangleq\left[\mbox{Crt}_{N}\left(B,\left(-\rho,\rho\right)\right)\right]_{2}$
in the sequel.
\begin{lem}
\label{lem:n3}Let $u<-E_{\infty}\left(p\right)$ and suppose $J_{N}=\left(a_{N},b_{N}\right)$
is an interval such that $a_{N},\, b_{N}\to u$ as $N\to\infty$.
Let $0<\rho_{N}$ be a sequence such that $\rho_{N}\to0$ as $N\to\infty$.
Then, as $N\to\infty$, 

\begin{equation}
\mathbb{E}\left\{ \left[{\rm Crt}_{N}\left(J_{N}\right)\right]_{2}^{\rho_{N}}\right\} \leq(1+o(1))\left(\mathfrak{C}_{N}(b_{N})\int_{J_{N}}\exp\left\{ -N\left(u+\mathfrak{S}(u)\right)(v-b_{N})\right\} dv\right)^{2}.\label{eq:12-1}
\end{equation}

\end{lem}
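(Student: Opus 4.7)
The plan is to start from the Kac--Rice representation of Lemma \ref{lem:2ndKR-refinedFrom}, specialized to $B = J_N$ and $I_R = (-\rho_N, \rho_N)$, and to show that the resulting triple integral over $(r, u_1, u_2)$ is at most $(1+o(1))$ times the square of the first-moment integral provided by Lemma \ref{lem:n1}. After conditioning on $(U_1(r), U_2(r)) = (\sqrt{N} u_1, \sqrt{N} u_2)$ and changing variables, the problem reduces to controlling, uniformly in $r \in (-\rho_N, \rho_N)$ and $u_i \in J_N$, the conditional expectation
\[
D_N(r, u_1, u_2) := \mathbb{E}\Bigl\{\prod_{i=1,2}\bigl|\det \mathcal{M}_{N-1}^{(i)}(r, \sqrt{N} u_1, \sqrt{N} u_2)\bigr|\Bigr\},
\]
and then performing the $r$-integration against $(\mathcal{G}(r))^N \mathcal{F}(r) \varphi^r_{U_1, U_2}$.

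The reduction of $D_N$ proceeds in three steps. First, invoke the forthcoming Lemma \ref{lem:n7} (the $O(1)$-scale refinement of Lemma \ref{cor:r2pert}) to discard the rank-$2$ perturbations $\mathbf{E}_{N-1}^{(i)}(r)$ in (\ref{eq:Ms}) at multiplicative cost $1 + o(1)$, replacing the Hessian matrices by the shifted correlated GOE matrices $\mathbf{X}_{N-1}^{(i)}(r) - \bar{u}_i I$, where $\bar{u}_i = \sqrt{\frac{N}{N-1}\frac{p}{p-1}}\, u_i$. Second, since $u < -E_{\infty}(p) = -2\sqrt{(p-1)/p}$ and $u_i \to u$, one has $|\bar{u}_i| > 2 + \epsilon$ uniformly for some $\epsilon > 0$; the sharp upper tail bound on the largest eigenvalue of GOE (Lemma \ref{lem:maxEigBd}) then shows that with probability $1 - e^{-cN}$ the eigenvalues of $\mathbf{X}_{N-1}^{(i)}(r) - \bar{u}_i I$ are all of the same sign $(-1)^{N-1}$, and combined with polynomial $L^q$-bounds on $|\det|$ this gives
\[
D_N(r, u_1, u_2) \leq (1+o(1))\, \widetilde{w}_{u_1,u_2}(r), \qquad \widetilde{w}_{u_1,u_2}(r) := \mathbb{E}\Bigl\{\prod_{i=1,2} \det\bigl(\mathbf{X}_{N-1}^{(i)}(r) - \bar{u}_i I\bigr)\Bigr\}.
\]
Third, apply Lemma \ref{lem:n4}: $\widetilde{w}_{u_1,u_2}(\cdot)$ is convex in $|r|^{p-2}$ on $[0,1]$ (with a parallel relation tying $r<0$ to $r>0$), and the ratio $|\widetilde{w}_{u_1,u_2}(\pm 1)/\widetilde{w}_{u_1,u_2}(0)|$ is bounded by a constant independent of $N$ and of $u_i$ near $u$; convexity then forces $\widetilde{w}_{u_1,u_2}(r) \leq (1 + O(\rho_N^{p-2}))\, \widetilde{w}_{u_1,u_2}(0)$ uniformly in $r \in (-\rho_N, \rho_N)$. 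At $r = 0$ the matrices $\mathbf{X}_{N-1}^{(1)}(0)$ and $\mathbf{X}_{N-1}^{(2)}(0)$ are independent by Lemma \ref{lem:Hess_struct_2}, so $\widetilde{w}_{u_1,u_2}(0) = \prod_i \mathbb{E}\{\det(\mathbf{X}_{N-1} - \bar{u}_i I)\}$ factorizes over $u_1, u_2$.

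With the factorized bound in place, the remaining $r$-integration is straightforward: the Taylor expansions $\mathcal{G}(r) = 1 - r^2/2 + O(r^{2p-2})$ and $\Sigma_U(r) = I_2 + o(1)$ as $r \to 0$, together with $\mathcal{F}(0) = 1$ and the trivial bound $\int_{-\rho_N}^{\rho_N} e^{-Nr^2/2}\, dr \leq \sqrt{2\pi/N}$, yield
\[
\int_{-\rho_N}^{\rho_N} (\mathcal{G}(r))^N \mathcal{F}(r)\, \varphi^r_{U_1, U_2}(\sqrt{N} u_1, \sqrt{N} u_2)\, dr \leq (1+o(1))\, \sqrt{\tfrac{2\pi}{N}}\, \varphi^0_{U_1, U_2}(\sqrt{N} u_1, \sqrt{N} u_2),
\]
uniformly in $u_i \in J_N$. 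Inserting this back into the Kac--Rice formula, the resulting upper bound on $\mathbb{E}[\mbox{Crt}_N(J_N)]_2^{\rho_N}$ factors into the product of two identical single-point integrals; each of them equals $(1+o(1))\, \mathbb{E}\mbox{Crt}_N(J_N)$ via Lemma \ref{lem:1stmom} (reversing the sign-definiteness identity to convert $\mathbb{E}\{\det\}$ back to $\mathbb{E}\{|\det|\}$), and hence matches the right-hand side of (\ref{eq:12-1}) by Lemma \ref{lem:n1}.

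The main obstacle is Lemma \ref{lem:n4}: identifying $\widetilde{w}_{u_1,u_2}(r)$ as a polynomial in $|r|^{p-2}$ that is convex on $[0,1]$ and bounding its ratio at $|r|=1$ versus $r = 0$ by a dimension-free constant uniformly in $u_i$ near $u$. This is precisely what lets the smallness of $\rho_N$ absorb the off-diagonal ($r \ne 0$) contribution into a $(1+o(1))$ multiplicative error; every other step is standard GOE concentration or routine Gaussian asymptotics.
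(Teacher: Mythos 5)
Your overall roadmap — Kac--Rice $\to$ discard rank-$2$ perturbations (Lemma \ref{lem:n7}) $\to$ drop absolute values $\to$ convexity at $r=0$ $\to$ factorize and match the first-moment integral — follows the paper's structure, but there is a genuine gap in the third step. Lemma \ref{lem:n4} proves that $\hat g(\rho)=\mathbb{E}\{g(\mathbf{W}^{(1)}(\rho))g(\mathbf{W}^{(2)}(\rho))\}$ has nonnegative derivatives at $\rho=0$ precisely because the \emph{same} function $g$ acts on both matrices, so that $\frac{d^k}{d\rho^k}\hat g(0)$ is a sum of squares $(\mathbb{E}\{\partial g\})^2\geq 0$ (see (\ref{eq:020216})). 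You apply this to $\widetilde w_{u_1,u_2}(r)=\mathbb{E}\{\det(\mathbf{X}^{(1)}(r)-\bar u_1 I)\det(\mathbf{X}^{(2)}(r)-\bar u_2 I)\}$ with $u_1\neq u_2$, where the two determinants involve \emph{different} functions $g_1\neq g_2$; at $\rho=0$ the derivatives become products $\bigl(\mathbb{E}\{\partial g_1\}\bigr)\bigl(\mathbb{E}\{\partial g_2\}\bigr)$ which need not be nonnegative, so neither the convexity claim nor the dimension-free ratio bound $|\widetilde w_{u_1,u_2}(\pm 1)/\widetilde w_{u_1,u_2}(0)|\leq C$ (Corollary \ref{cor:n5}, also proved only for a common shift) applies directly. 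This is not a cosmetic issue: the whole point of the convexity step is to control the $r$-dependence uniformly in $u_1,u_2\in J_N$, and losing the sign structure of the coefficients kills the argument.

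The paper avoids this obstacle by inserting an extra reduction before the convexity step. Corollary \ref{cor:n6}, which relies on the concentration estimate (\ref{eq:z5}) of Lemma \ref{lem:n2}, shows that uniformly in $u_i\in J_N$ and over all joint GOE laws one may replace the two shifts $\bar u_1,\bar u_2$ by the common value $\bar b_N$ at a multiplicative cost $\exp\{N\mathfrak S(u)\sum_i(1+o(1))(b_N-u_i)\}$; this not only produces a \emph{single} function $g(\mathbf A)=\det\bigl(\tfrac{1}{\sqrt{N-1}}\mathbf A-\sqrt N\bar b_N I\bigr)$ to which Lemma \ref{lem:n4} and Corollary \ref{cor:n5} legitimately apply, but also produces exactly the $\mathfrak S(u)$ term that appears in the target (\ref{eq:12-1}). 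Your proposal, by keeping $u_1,u_2$ distinct and hoping to recover $\mathfrak S(u)$ only at the very end through Lemma \ref{lem:n1}, omits this essential intermediate step. To repair the argument you should interleave Corollary \ref{cor:n6} between your steps two and three, so that the convexity machinery is invoked only with the common shift $\bar b_N$. (A minor slip aside: for $u<-E_\infty(p)$ the shifted eigenvalues are bounded away from $0$ and positive, so the determinant is positive with overwhelming probability; the sign is $+1$, not $(-1)^{N-1}$.)
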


\begin{lem}
\label{lem:21}Let $u\in\left(-E_{0}\left(p\right),-E_{\infty}\left(p\right)\right)$,
$\rho\in\left(0,1\right)$ and $\epsilon>0$. Then
\[
\lim_{N\to\infty}\mathbb{E}\left\{ \left[{\rm Crt}_{N}\left(u-\epsilon,u\right)\right]_{2}^{\rho}\right\} /\mathbb{E}\left\{ \left({\rm Crt}_{N}\left(\left(-\infty,u\right)\right)\right)^{2}\right\} =1.
\]

\end{lem}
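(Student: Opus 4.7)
Set $B=(-\infty,u)$, $B_+=(u-\epsilon,u)$, $B_-=(-\infty,u-\epsilon]$, and $I_\rho=(-\rho,\rho)$. Since $u\in(-E_0(p),-E_\infty(p))$, Theorem \ref{thm:A-BA-C} gives $\Theta_p(u)>0$, so $\mathbb{E}\mathrm{Crt}_N(B)$ grows exponentially and Theorem \ref{thm:Var-E2-log} gives $\tfrac1N\log\mathbb{E}(\mathrm{Crt}_N(B))^2\to 2\Theta_p(u)$. I will sandwich $\mathbb{E}[\mathrm{Crt}_N(B_+)]_2^\rho$ between two quantities each equal to $(1+o(1))\mathbb{E}(\mathrm{Crt}_N(B))^2$.

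\textbf{Upper bound.} The trivial monotonicity $[\mathrm{Crt}_N(B_+)]_2^\rho\le[\mathrm{Crt}_N(B)]_2^\rho$ reduces the $\limsup\le 1$ direction to showing $\mathbb{E}[\mathrm{Crt}_N(B)]_2^\rho=(1+o(1))\mathbb{E}(\mathrm{Crt}_N(B))^2$. Remark \ref{rem:r=00003D1}, applicable because $\mathbb{E}\mathrm{Crt}_N(B)\to\infty$, gives $\mathbb{E}[\mathrm{Crt}_N(B,(-1,1))]_2=(1+o(1))\mathbb{E}(\mathrm{Crt}_N(B))^2$, and Corollary \ref{cor:lowoverlap} together with Theorem \ref{thm:Var-E2-log} shows the overlap-exterior contribution $\mathbb{E}[\mathrm{Crt}_N(B,(-1,1)\setminus I_\rho)]_2$ grows exponentially slower than $e^{2N\Theta_p(u)}$, hence is $o(\mathbb{E}(\mathrm{Crt}_N(B))^2)$. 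Subtracting yields the claim.

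\textbf{Lower bound and main obstacle.} The disjoint decomposition $B=B_-\sqcup B_+$ gives
\begin{equation*}
[\mathrm{Crt}_N(B)]_2^\rho=[\mathrm{Crt}_N(B_+)]_2^\rho+[\mathrm{Crt}_N(B_-)]_2^\rho+2X^\rho,
\end{equation*}
where $X^\rho$ counts ordered cross-pairs $(\sigma,\sigma')$ with $\sigma\in\mathrm{Crt}_N(B_-)$, $\sigma'\in\mathrm{Crt}_N(B_+)$, and $R(\sigma,\sigma')\in I_\rho$. Since $[\mathrm{Crt}_N(B_-)]_2^\rho\le(\mathrm{Crt}_N(B_-))^2$ and $X^\rho\le\mathrm{Crt}_N(B_-)\cdot\mathrm{Crt}_N(B)$, Cauchy--Schwarz reduces everything to establishing
\begin{equation*}
\limsup_{N\to\infty}\tfrac1N\log\mathbb{E}(\mathrm{Crt}_N(B_-))^2<2\Theta_p(u).\qquad(\ast)
\end{equation*}
When $u-\epsilon>-E_0(p)$, $(\ast)$ follows from Theorem \ref{thm:Var-E2-log} applied with upper endpoint $u-\epsilon$ together with the strict monotonicity of $\Theta_p$ on $(-\infty,-E_\infty(p))$ implied by \eqref{eq:<}. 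When $u-\epsilon\le -E_0(p)$, Theorem \ref{thm:Var-E2-log} is not directly applicable, and this is the only nontrivial point: reopening its proof, the bound \eqref{eq:67} combined with Lemmas \ref{lem:u1=00003Du2-1} and \ref{lem:r=00003D0} with $u-\epsilon$ in place of $u$ yields $\limsup\tfrac1N\log\mathbb{E}[\mathrm{Crt}_N(B_-,(-1,1))]_2\le\max\{0,2\Theta_p(u-\epsilon)\}\le 0<2\Theta_p(u)$, and Remark \ref{rem:r=00003D1} (together with the exponential bound on $\mathbb{E}\mathrm{Crt}_N(B_-)$ from Theorem \ref{thm:A-BA-C}) transfers this to $\mathbb{E}(\mathrm{Crt}_N(B_-))^2$. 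Combining the upper and lower bounds gives the stated limit.
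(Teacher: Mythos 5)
Your proof is correct and uses the same core ingredients as the paper's own proof---Remark \ref{rem:r=00003D1}, Corollary \ref{cor:lowoverlap}, the level decomposition $B=(-\infty,u)=B_-\sqcup B_+$, and the strict monotonicity of $\Theta_p$ from \eqref{eq:<}---just organized in a slightly different order: the paper first passes $(\mathrm{Crt}_N(B))^2\to(\mathrm{Crt}_N(B_+))^2$ and then restricts the overlap via Remark \ref{rem:r=00003D1} and Corollary \ref{cor:lowoverlap}, while you decompose $[\mathrm{Crt}_N(B)]_2^\rho$ directly and sandwich $[\mathrm{Crt}_N(B_+)]_2^\rho$. One substantive point in your favor: you explicitly handle the case $u-\epsilon\le -E_0(p)$, where Theorem \ref{thm:Var-E2-log} does not literally apply to $(-\infty,u-\epsilon]$; the paper's proof cites that theorem for the equality $\lim\frac1N\log\mathbb{E}\{(\mathrm{Crt}_N((-\infty,u-\epsilon]))^2\}=2\Theta_p(u-\epsilon)$ without flagging that in this regime only an upper bound is available (and needed). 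Your reopening of \eqref{eq:67} is a valid fix, and a still simpler one is to use monotonicity in $B$, picking any $u'\in(-E_0(p),u)$ and bounding $\mathrm{Crt}_N((-\infty,u-\epsilon])\le\mathrm{Crt}_N((-\infty,u'))$, to which Theorem \ref{thm:Var-E2-log} applies directly.
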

In Section \ref{sub:pfthm1} we prove Theorem \ref{thm:Var-E2} assuming
Lemmas \ref{lem:n1}, \ref{lem:n3} and \ref{lem:21}. The proof of
Lemma \ref{lem:21} only requires bounds on the exponential scale
we have already proved and will be given in Section \ref{sub:l3}.
Lemmas \ref{lem:n1} and \ref{lem:n3} will be proved in Sections
\ref{sub:l1} and \ref{sub:l2} after we prove several auxiliary results
in Section \ref{sub:Auxiliar}.

\subsection{\label{sub:pfthm1}Proof of Theorem \ref{thm:Var-E2} assuming Lemmas
\ref{lem:n1}, \ref{lem:n3} and \ref{lem:21}}

From Theorem \ref{thm:A-BA-C}, Lemma \ref{lem:21} and the fact that
$\Theta_{p}\left(u\right)$ is strictly increasing for $u<-E_{\infty}(p)$
(see (\ref{eq:<})), there exist positive sequences $\epsilon_{N}$,
$\rho_{N}$ such that as $N\to\infty$, $\epsilon_{N},\,\rho_{N}\to0$
and 
\[
\lim_{N\to\infty}\frac{\mathbb{E}\left\{ \left[{\rm Crt}_{N}\left(u-\epsilon_{N},u\right)\right]_{2}^{\rho_{N}}\right\} }{\mathbb{E}\left\{ \left({\rm Crt}_{N}\left(\left(-\infty,u\right)\right)\right)^{2}\right\} }=\lim_{N\to\infty}\frac{\mathbb{E}\left\{ {\rm Crt}_{N}\left(\left(u-\epsilon_{N},u\right)\right)\right\} }{\mathbb{E}\left\{ {\rm Crt}_{N}\left(\left(-\infty,u\right)\right)\right\} }=1.
\]
By Lemmas \ref{lem:n1} and \ref{lem:n3},
\[
\lim_{N\to\infty}\frac{\mathbb{E}\left\{ \left[{\rm Crt}_{N}\left(u-\epsilon_{N},u\right)\right]_{2}^{\rho_{N}}\right\} }{\left(\mathbb{E}\left\{ {\rm Crt}_{N}\left(\left(u-\epsilon_{N},u\right)\right)\right\} \right)^{2}}\leq1.
\]
For any $N$,
\[
\frac{\mathbb{E}\big\{\left({\rm Crt}_{N}\left(\left(-\infty,u\right)\right)\right)^{2}\big\}}{\big(\mathbb{E}\left\{ {\rm Crt}_{N}\left(\left(-\infty,u\right)\right)\right\} \big)^{2}}\geq1.
\]
Theorem \ref{thm:Var-E2} follows from the above.\qed

\subsection{\label{sub:l3}Proof of Lemma \ref{lem:21}}

Note that
\begin{align*}
\left(\mbox{Crt}_{N}\left(\left(-\infty,u\right)\right)\right)^{2}-\left(\mbox{Crt}_{N}\left(\left(u-\epsilon,u\right)\right)\right)^{2} & =\left(\mbox{Crt}_{N}\left(\left(-\infty,u-\epsilon\right]\right)\right)^{2}\\
 & +2\mbox{Crt}_{N}\left(\left(-\infty,u-\epsilon\right]\right)\mbox{Crt}_{N}\left(\left(u-\epsilon,u\right)\right).
\end{align*}
By Theorem \ref{thm:Var-E2-log} and the Cauchy-Schwarz inequality,
\begin{align*}
\lim_{N\to\infty}\frac{1}{N}\log\left(\mathbb{E}\left\{ \left(\mbox{Crt}_{N}\left(\left(-\infty,u\right)\right)\right)^{2}\right\} \right) & =2\Theta_{p}\left(u\right),\\
\lim_{N\to\infty}\frac{1}{N}\log\left(\mathbb{E}\left\{ \left(\mbox{Crt}_{N}\left(\left(-\infty,u-\epsilon\right]\right)\right)^{2}\right\} \right) & =2\Theta_{p}\left(u-\epsilon\right),\\
\limsup_{N\to\infty}\frac{1}{N}\log\left(\mathbb{E}\left\{ 2\mbox{Crt}_{N}\left(\left(-\infty,u-\epsilon\right]\right)\mbox{Crt}_{N}\left(\left(u-\epsilon,u\right)\right)\right\} \right) & =\Theta_{p}\left(u\right)+\Theta_{p}\left(u-\epsilon\right).
\end{align*}

For any $u<-E_{\infty}\left(p\right)$, by (\ref{eq:<}), $\Theta_{p}\left(u\right)>0$
and therefore the expressions in the last two lines above are strictly
less than $2\Theta_{p}\left(u\right)$. It follows that
\[
\lim_{N\to\infty}\mathbb{E}\left\{ \left(\mbox{Crt}_{N}\left(\left(-\infty,u\right)\right)\right)^{2}\right\} /\mathbb{E}\left\{ \left(\mbox{Crt}_{N}\left(\left(u-\epsilon,u\right)\right)\right)^{2}\right\} =1.
\]

By Remark \ref{rem:r=00003D1} and the fact that $u>-E_{0}\left(p\right)$,
also
\[
\lim_{N\to\infty}\mathbb{E}\left\{ \left[\mbox{Crt}_{N}\left(u-\epsilon,u\right)\right]_{2}^{1}\right\} /\mathbb{E}\left\{ \left(\mbox{Crt}_{N}\left(\left(u-\epsilon,u\right)\right)\right)^{2}\right\} =1.
\]

Since 
\[
\left[{\rm Crt}_{N}\left(\left(-\infty,u\right),\left(-1,1\right)\setminus\left(-\rho,\rho\right)\right)\right]_{2}\geq\left[{\rm Crt}_{N}\left(\left(u-\epsilon,u\right),\left(-1,1\right)\setminus\left(-\rho,\rho\right)\right)\right]_{2},
\]
Corollary \ref{cor:lowoverlap} implies that
\[
\lim_{N\to\infty}\mathbb{E}\left\{ \left[{\rm Crt}_{N}\left(\left(u-\epsilon,u\right),\left(-1,1\right)\setminus\left(-\rho,\rho\right)\right)\right]_{2}\right\} /\mathbb{E}\left\{ \left[\mbox{Crt}_{N}\left(u-\epsilon,u\right)\right]_{2}^{1}\right\} =0,
\]
and completes the proof.\qed

\subsection{\label{sub:Auxiliar}Auxiliary results}

The expectations in Lemmas \ref{lem:n1} and \ref{lem:n3} are expressed
by the integral formulas of Lemmas \ref{lem:1stmom} and \ref{lem:2ndKR-refinedFrom},
which by further conditioning on the value of $U$ and $U_{1}\left(r\right),U_{2}\left(r\right)$,
respectively, can be written as integrals over $J_{N}$ and $J_{N}\times J_{N}$.
In this section we prove several auxiliary results that are concerned
with the corresponding integrands. 

We now discuss elements in the proofs related to the more involved
Lemma \ref{lem:n3}. We note that the random matrices $\mathcal{\mathbf{M}}_{N-1}^{\left(i\right)}\left(r,\sqrt{N}u_{1},\sqrt{N}u_{2}\right)$
which appear in Lemma \ref{lem:2ndKR-refinedFrom} satisfy, in distribution,
\[
\left(\begin{array}{c}
\mathbf{M}_{N-1}^{\left(1\right)}\left(r,\sqrt{N}u_{1},\sqrt{N}u_{2}\right)\\
\mathbf{M}_{N-1}^{\left(2\right)}\left(r,\sqrt{N}u_{1},\sqrt{N}u_{2}\right)
\end{array}\right)=\left(\begin{array}{c}
\mathbf{X}_{N-1}^{\left(1\right)}(r)-\sqrt{N}\bar{u}_{1}I+\mathbf{E}_{N-1}^{\left(1\right)}\vphantom{\mathbf{M}_{N-1}^{\left(1\right)}\left(r,\sqrt{N}u_{1},\sqrt{N}u_{2}\right)}\\
\mathbf{X}_{N-1}^{\left(2\right)}(r)-\sqrt{N}\bar{u}_{2}I+\mathbf{E}_{N-1}^{\left(2\right)}\vphantom{\mathbf{M}_{N-1}^{\left(1\right)}\left(r,\sqrt{N}u_{1},\sqrt{N}u_{2}\right)}
\end{array}\right),
\]
where $\bar{u}_{i}=\sqrt{\frac{1}{N-1}\frac{p}{p-1}}u_{i}$, $\mathbf{X}_{N-1}^{\left(i\right)}(r)$
are correlated GOE matrices and $\mathbf{E}_{N-1}^{\left(i\right)}:=\mathbf{E}_{N-1}^{\left(i\right)}\left(r,\sqrt{N}u_{1},\sqrt{N}u_{2}\right)$
are random matrices of rank $2$, viewed as perturbations. We are
interested in values of $u_{1}$ and $u_{2}$ that are approximately
equal to some fixed $u$ and values of $r$ which are close to $0$.
In order to prove Lemma \ref{lem:n3} we will need to compute the
asymptotics of the ratio of 
\begin{equation}
\mathbb{E}\left\{ \prod_{i=1,2}\left|\det\left(\mathcal{\mathbf{M}}_{N-1}^{\left(i\right)}\left(r,\sqrt{N}u_{1},\sqrt{N}u_{2}\right)\right)\right|\right\} \mbox{\,\,\,\ and\,\,\,\,\,\,}\left(\mathbb{E}\left\{ \det\left(\mathbf{X}_{N-1}-\sqrt{N}\bar{b}_{N}I\right)\right\} \right)^{2},\label{eq:q97}
\end{equation}
where $\bar{b}_{N}=\sqrt{\frac{p}{p-1}\frac{1}{N-1}}b_{N}$ and $\mathbf{X}_{N-1}$
is a GOE matrix. This will be done in three steps: 1. we will show
that the perturbations $\mathbf{E}_{N-1}^{\left(i\right)}$ are negligible
- i.e., the expectation on the left-hand side of (\ref{eq:q97}) is
asymptotically equivalent to $\mathbb{E}\prod_{i=1,2}\left|\det\left(\mathbf{X}_{N-1}^{\left(i\right)}(r)-\sqrt{N}\bar{u}_{i}I\right)\right|$;
2. relate the latter expectation to the same without the absolute
value and with $\bar{u}_{i}=\bar{b}_{N}$; and 3. prove that taking
$\mathbf{X}_{N-1}^{\left(i\right)}(r)$ to be independent in the expectation
with $\bar{u}_{i}=\bar{b}_{N}$ asymptotically does not affect the
expectation. 

The first step is dealt with in Lemma \ref{lem:n7} where we bound
the  Hilbert-Schmidt norms of the perturbations $\mathbf{E}_{N-1}^{\left(i\right)}$
and relate them to the ratio of the perturbed and unperturbed determinants.
The importance of the assumption in Lemma \ref{lem:n3} that $u<-E_{\infty}(p)$,
is that for large $N$, we have that $-\sqrt{N}\bar{u}>2$, as in
the setting of Lemma \ref{lem:n2} below. The fact that the shifts
are greater than $2$, and thus the corresponding spectra of the shifted
GOE matrices are strictly positive, is crucial to the proof of Lemma
\ref{lem:n2} since it allows us to use concentration results for
linear statistics of the eigenvalues. The latter will be applied to
(uniformly) control the fluctuation of the corresponding determinants
and their derivatives in the shifts ($v_{i}$ in Lemma \ref{lem:n2},
which correspond to $-\sqrt{N}\bar{u}_{i}$ above). Other arguments
in the proof of Lemma \ref{lem:n2} are related to large deviations
and similar to ones we already used, e.g. in the proof of Lemma \ref{lem:bound-h}.
Once the bound on the fluctuations of the derivative in $\bar{u}_{i}$
is obtained step 2 above can be completed. Finally, in Lemma \ref{lem:n4}
we shall exploit certain Gaussian identities to analyze the expectation
of a product related to two shifted GOE matrices, assuming a certain
correlation structure. In the case where the product is of the determinants
of the two matrices, the lemma asserts that the corresponding expectation
is convex in a parameter controlling the correlation. This allows
us to relate the situation of low correlation to that where the matrices
are completely independent and complete step 3 above. We now proceed
to state and prove the auxiliary results.

With $\mathbf{X}_{i}=\mathbf{X}_{i,N-1}$, $i\leq k$, being random
$N-1\times N-1$ matrices, denote by $\mathcal{L}_{k,N-1}^{{\scriptstyle GOE}}$
the space of probability measures on $(\mathbb{R}^{N-1\times N-1})^{k}$
such that
\[
\mathbb{P}\left\{ \left(\mathbf{X}_{i}\right)_{i\leq k}\in\,\cdot\,\right\} \in\mathcal{L}_{k,N-1}^{{\scriptstyle GOE}}\Longleftrightarrow\forall i\leq k,\,\mbox{under }\mathbb{P}\left\{ \mathbf{X}_{i}\in\,\cdot\,\right\} \mbox{ is a GOE matrix}.
\]
That is, the collection of probability laws such that marginally each
$\mathbf{X}_{i}$ is a GOE matrix, but with no further assumptions
on the joint law. For a measure $\nu\in\mathcal{L}_{k,N-1}^{{\scriptstyle GOE}}$,
we will use $(\mathbf{X}_{i})_{i\leq k}\sim\nu$ to denote $\mathbb{P}\left\{ \left(\mathbf{X}_{i}\right)_{i\leq k}\in\,\cdot\,\right\} =\nu(\,\cdot\,)$. 
\begin{lem}
\label{lem:n2}Assume $(\mathbf{X}_{i})_{i\leq k}\sim\nu$ with $\nu\in\mathcal{L}_{k,N-1}^{{\scriptstyle GOE}}$
and denote by $\lambda_{j}^{(i)}$ the eigenvalues of $\mathbf{X}_{i}:=\mathbf{X}_{i,N-1}$.
Let $t_{2}>t_{1}>2$ be real numbers. Then:
\begin{enumerate}
\item For any $\delta>0$, there exists $c>0$ such that, for large enough
$N$, uniformly in $v_{i}:=v_{i,N}\in\left(-t_{2},-t_{1}\right)$
and $\nu\in\mathcal{L}_{k,N-1}^{{\scriptstyle GOE}}$,
\begin{align}
\mathbb{E}\left\{ \prod_{i=1}^{k}\left|\det\left(\mathbf{X}_{i}-v_{i}I\right)\right|\mathbf{1}\left\{ \min_{i,j}\lambda_{j}^{(i)}\leq-2-\delta\right\} \right\}  & \leq e^{-cN}\mathbb{E}\left\{ \prod_{i=1}^{k}\left|\det\left(\mathbf{X}_{i}-v_{i}I\right)\right|\right\} ,\label{eq:48-1}\\
\mathbb{E}\left\{ \prod_{i=1}^{k}\left|\det\left(\mathbf{X}_{i}-v_{i}I\right)\right|\mathbf{1}\left\{ \max_{i,j}\lambda_{j}^{(i)}\geq2+\delta\right\} \right\}  & \leq e^{-cN}\mathbb{E}\left\{ \prod_{i=1}^{k}\left|\det\left(\mathbf{X}_{i}-v_{i}I\right)\right|\right\} .\label{eq:48-2}
\end{align}

\item With $\mu^{*}$ denoting the semicircle law (\ref{eq:semicirc}),
as $N\to\infty$, uniformly in $v_{i}:=v_{i,N}\in\left(-t_{2},-t_{1}\right)$
and $\nu:=\nu_{N}\in\mathcal{L}_{k,N-1}^{{\scriptstyle GOE}}$, 
\begin{equation}
\frac{d}{dv_{1}}\log\left(\mathbb{E}\left\{ \prod_{i=1}^{k}\det\left(\mathbf{X}_{i}-v_{i}I\right)\right\} \right)=-(1+o(1))N\int\frac{1}{\lambda-v_{1}}d\mu^{*}(\lambda).\label{eq:z5}
\end{equation}

\end{enumerate}
\end{lem}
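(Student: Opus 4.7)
The plan rests on a single observation: for $v \in (-t_2, -t_1)$ with $t_1 > 2$, both $\lambda \mapsto \log|\lambda - v|$ and $\lambda \mapsto (\lambda - v)^{-1}$ are uniformly bounded and Lipschitz on every neighborhood $[-2-\delta, 2+\delta]$ of the semicircle support with $\delta < t_1 - 2$. Combined with Theorem \ref{lem:GOELD}, this yields concentration of the corresponding linear statistics of a single GOE matrix at speed $N^2$, uniformly in $v$. Crucially, every probabilistic input I will use depends only on the marginal GOE laws of the individual $\mathbf{X}_i$, while joint contributions are controlled via Cauchy-Schwarz and H{\"{o}}lder; this makes the uniformity in $\nu \in \mathcal{L}_{k,N-1}^{{\scriptstyle GOE}}$ automatic.

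For Part 1, fix $\delta>0$ with $2 + \delta < t_1$ and denote by $A$ the event in (\ref{eq:48-1}). Cauchy-Schwarz followed by H{\"{o}}lder yields
\[
\mathbb{E}\Bigl[\prod_{i=1}^k \bigl|\det(\mathbf{X}_i - v_i I)\bigr| \mathbf{1}_A\Bigr] \leq \mathbb{P}(A)^{1/2} \prod_{i=1}^k \mathbb{E}\bigl[|\det(\mathbf{X}_i - v_i I)|^{2k}\bigr]^{1/(2k)}.
\]
Each marginal moment will be bounded by $\exp(2kN\Omega(v_i) + \epsilon N)$ through concentration of $N^{-1}\log|\det(\mathbf{X}_i - v_i I)|$ near $\Omega(v_i)$, in the spirit of the proof of Lemma \ref{lem:bound-h}, while $\mathbb{P}(A) \leq e^{-c'N}$ by Lemma \ref{lem:maxEigBd} and a union bound. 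For the denominator $\mathbb{E}[\prod_i |\det(\mathbf{X}_i - v_i I)|]$ I will lower bound by restricting to the intersection of $\{\max_{i,j}|\lambda_j^{(i)}| \leq 2+\delta\}$ with the event that every $N^{-1}\log|\det(\mathbf{X}_i - v_i I)|$ lies within $\epsilon$ of $\Omega(v_i)$; this event has probability at least $1 - 2k e^{-c_\epsilon N^2}$, and on it the integrand is at least $\exp(\sum_i N\Omega(v_i) - k\epsilon N)$. Choosing $\epsilon$ small relative to $c'$ produces the claimed $e^{-cN}$ gap, and (\ref{eq:48-2}) follows by the symmetric argument.

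For Part 2, on $B_\delta := \{\max_{i,j} |\lambda_j^{(i)}| \leq 2+\delta\}$ each matrix $\mathbf{X}_i - v_i I$ has spectrum in $[t_1 - 2 - \delta, t_2 + 2 + \delta]$, so every $\det(\mathbf{X}_i - v_i I)$ is positive and uniformly bounded away from zero there. Part 1 gives $\mathbb{E}[\prod_i \det(\mathbf{X}_i - v_i I)] = (1 + O(e^{-cN})) \mathbb{E}[\prod_i \det(\mathbf{X}_i - v_i I) \mathbf{1}_{B_\delta}] > 0$. Applying the Jacobi formula and justifying the interchange of differentiation and expectation via dominated convergence (the relevant integrands are polynomial in the entries of $\mathbf{X}_1$), I obtain
\[
\frac{d}{dv_1} \log \mathbb{E}\Bigl[\prod_i \det(\mathbf{X}_i - v_i I)\Bigr] = - \frac{\mathbb{E}\bigl[\mathrm{tr}((\mathbf{X}_1 - v_1 I)^{-1}) \prod_i \det(\mathbf{X}_i - v_i I)\bigr]}{\mathbb{E}\bigl[\prod_i \det(\mathbf{X}_i - v_i I)\bigr]}.
\]
I then introduce a concentration event $C \subseteq B_\delta$ on which, for some $\epsilon_N \to 0$ chosen to decrease slowly, both $|N^{-1}\mathrm{tr}((\mathbf{X}_1 - v_1 I)^{-1}) - \int (\lambda - v_1)^{-1} d\mu^*(\lambda)| < \epsilon_N$ and $|N^{-1}\log|\det(\mathbf{X}_i - v_i I)| - \Omega(v_i)| < \epsilon_N$ hold for every $i$. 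The uniform Lipschitz bounds and Theorem \ref{lem:GOELD} give $\mathbb{P}(C^c) \leq 2k \exp(-c_{\epsilon_N} N^2)$, uniformly in $v$ and $\nu$. On $C$ the numerator integrand factorizes as $(N \int(\lambda - v_1)^{-1} d\mu^*(\lambda) + o(N)) \prod_i \det(\mathbf{X}_i - v_i I)$, yielding the main term. Off $C$, the identity $\mathrm{tr}((\mathbf{X}_1 - v_1 I)^{-1}) \det(\mathbf{X}_1 - v_1 I) = -\tfrac{d}{dv_1}\det(\mathbf{X}_1 - v_1 I)$ shows that this product is polynomial in the entries of $\mathbf{X}_1$, hence has second moment bounded by $e^{O(N)}$; combined with $\mathbb{P}(C^c)^{1/2} = e^{-\Omega(N^2)}$ via Cauchy-Schwarz, the off-$C$ contribution is negligible against the lower bound on $\mathbb{E}[\prod_i \det]$ from Part 1.

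The principal technical obstacle is reconciling two competing scales in Part 2: the relative error from replacing $\mathrm{tr}((\mathbf{X}_1 - v_1 I)^{-1})$ by its limiting integral is $O(\epsilon_N)$, while the probabilistic cost of enforcing this replacement on $C$ is only a factor $e^{-c_{\epsilon_N} N^2}$. Since $c_{\epsilon_N}$ shrinks only polynomially in $\epsilon_N$ whereas the competing determinantal moment growth is $e^{O(N)}$, there is substantial slack and $\epsilon_N$ may be taken to vanish polynomially in $N$; the resulting $(1+o(1))$ estimate is uniform in $v_1 \in (-t_2, -t_1)$ and $\nu \in \mathcal{L}_{k,N-1}^{{\scriptstyle GOE}}$, giving (\ref{eq:z5}).
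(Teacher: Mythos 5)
Your overall strategy --- truncation on the maximal-eigenvalue event, concentration of linear statistics of the eigenvalues, Cauchy--Schwarz for the bad-event contributions, and the Jacobi formula for the derivative in Part 2 --- is the one the paper follows. In Part 2 you work on a concentration event $C$ where the trace is effectively deterministic, whereas the paper instead proves the asymptotic decoupling $\mathbb{E}\{V_N'e^{V_N}\}=(1+o(1))\mathbb{E}\{V_N'\}\mathbb{E}\{e^{V_N}\}$ (its equation (\ref{eq:z7})); these are interchangeable routes.

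However, one cited estimate does not do what you need. In Part 1 you invoke Lemma \ref{lem:maxEigBd} to conclude $\mathbb{P}(A)\leq e^{-c'N}$, where $A$ is the event that some eigenvalue crosses $\pm(2+\delta)$. Lemma \ref{lem:maxEigBd} is stated only for $m$ \emph{large enough} and does not apply at $m=2+\delta$ with $\delta>0$ arbitrarily small; in fact for $m$ near $2$ its claimed rate $e^{-Nm^2/9}$ is simply false, since the correct exponential rate $I^{+}(2+\delta)\approx\tfrac{2}{3}\delta^{3/2}$ vanishes as $\delta\to0$. The result you actually need is Theorem \ref{thm:maxeigLDP}, the LDP for the maximal eigenvalue, and this is exactly what the paper invokes at (\ref{eq:z22}); with that substitution your Part 1 argument goes through. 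A second, more minor imprecision: you assert that $\mathbb{P}(C^c)$ (and the probability of the intersected event in Part 1) decays at speed $N^2$, and you build the discussion in your final paragraph around a comparison of $e^{-c_{\epsilon_N}N^2}$ against $e^{O(N)}$. Since $C\subseteq B_\delta$ and $\mathbb{P}(B_\delta^c)$ decays only at speed $N$, the correct rate for $\mathbb{P}(C^c)$ is $e^{-\Theta(N)}$, not $e^{-\Theta(N^2)}$. This does not invalidate the argument --- $e^{-cN/2}$ still beats the $e^{o(N)}$ discrepancy between the square root of the second moment and the first moment, which is the comparison that actually matters --- but the speed-$N^2$ language should be removed and the slack argued as the paper does, by splitting $C^c$ into the eigenvalue-escape event (speed $N$) and the concentration-failure event inside $B_\delta$ (speed $N^2$).
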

\begin{proof}
All the equalities, inequalities and limits in the proof should be
understood to hold uniformly in $v_{i}\in\left(-t_{2},-t_{1}\right)$
and $\nu\in\mathcal{L}_{k,N-1}^{{\scriptstyle GOE}}$. First we show
that 
\begin{equation}
\limsup_{N\to\infty}\frac{1}{N}\log\left(\mathbb{E}\left\{ \prod_{i=1}^{k}\left|\det\left(\mathbf{X}_{i}-v_{i}I\right)\right|\right\} \right)\leq\sum_{i=1}^{k}\Omega(v_{i}).\label{eq:z4}
\end{equation}
Recall the definition (\ref{eq:h_trunc}) of the truncation functions
$h_{\epsilon}^{\kappa}\left(x\right)$ and $h_{\kappa}^{\infty}\left(x\right)$.
Fix some $\bar{\kappa}>\bar{\epsilon}>0$. By the Cauchy-Schwarz inequality,
\begin{align*}
\mathbb{E}\left\{ \prod_{i=1}^{k}\left|\det\left(\mathbf{X}_{i}-v_{i}I\right)\right|\right\}  & \leq\left(\mathbb{E}\left\{ \prod_{i=1}^{k}\prod_{j=1}^{N-1}\left(h_{\bar{\epsilon}}^{\bar{\kappa}}\left(\left|\lambda_{j}^{(i)}-v_{i}\right|\right)\right)^{2}\right\} \right)^{1/2}\\
 & \times\left(\mathbb{E}\left\{ \prod_{i=1}^{k}\prod_{j=1}^{N-1}\left(h_{\bar{\kappa}}^{\infty}\left(x\right)\left(\left|\lambda_{j}^{(i)}-v_{i}\right|\right)\right)^{2}\right\} \right)^{1/2}.
\end{align*}
Similarly to part (\ref{enu:p2 bound-h}) of Lemma \ref{lem:bound-h},
using Lemma \ref{lem:maxEigBd} and a union bound (over $i\leq k$)
one can show that the second expectation above is smaller than $2$,
assuming $\bar{\kappa}$ is larger than some appropriate constant
$\bar{\kappa}_{0}$. From the LDP for the empirical measure of eigenvalues
of Theorem \ref{lem:GOELD} (similarly to the proof of part (\ref{enu:p1 bound-h})
of Lemma \ref{lem:bound-h}), we therefore have that%
\footnote{\label{fn:uni}We remark that uniformity in $v_{i}$ relies on the
fact that the LDP for the empirical measure of the eigenvalues is
phrased in terms of the Lipschitz bounded metric and we use the functions
$\log_{\bar{\epsilon}}^{\bar{\kappa}}\left(\left|\cdot-v_{i}\right|\right)$
which have the same bound and Lipschitz constant for all $v_{i}$.%
} 
\begin{equation}
\limsup_{N\to\infty}\frac{1}{N}\log\left(\mathbb{E}\left\{ \prod_{i=1}^{k}\left|\det\left(\mathbf{X}_{i}-v_{i}I\right)\right|\right\} \right)\leq\sum_{i=1}^{k}\int\log_{\bar{\epsilon}}^{\bar{\kappa}}\left(\left|\lambda-v_{i}\right|\right)d\mu^{*}\left(\lambda\right),\label{eq:z21}
\end{equation}
where $\log_{\bar{\epsilon}}^{\bar{\kappa}}(x)=\log(h_{\bar{\epsilon}}^{\bar{\kappa}}(x))$.
By choosing small enough $\bar{\epsilon}$ and large enough $\bar{\kappa}$
so that $\bar{\epsilon}<t_{1}-2<-v_{i}-2$ and $\bar{\kappa}>t_{2}+2>-v_{i}+2$,
(\ref{eq:z4}) follows.

Suppose $\delta,\,\epsilon,\,\kappa>0$ satisfy $0<\epsilon<t_{1}-2-\delta$
and $\kappa>t_{2}+2+\delta$. Then on the event 
\begin{equation}
A(\delta)=\left\{ -2-\delta<\min_{i,j}\lambda_{j}^{(i)}\leq\max_{i,j}\lambda_{j}^{(i)}<2+\delta\right\} \label{eq:Adelta}
\end{equation}
all the eigenvalues of $\mathbf{X}_{i}-v_{i}I$, $i\leq k$, are in
$(\epsilon,\kappa)$ and 
\[
\prod_{i=1}^{k}\det\left(\mathbf{X}_{i}-v_{i}I\right)=e^{V_{N}}\,\,\,\mbox{and}\,\,\frac{1}{N}\sum_{j=1}^{N-1}\frac{1}{\lambda_{j}^{(1)}-v_{1}}=V_{N}^{\prime},
\]
with 
\begin{equation}
V_{N}\triangleq\sum_{i=1}^{k}\sum_{j=1}^{N-1}\log\left(h_{\epsilon}^{\kappa}(\lambda_{j}^{(i)}-v_{i})\right),\,\,\, V_{N}^{\prime}\triangleq\frac{1}{N}\sum_{j=1}^{N-1}\frac{1}{h_{\epsilon}^{\kappa}\left(\lambda_{j}^{(1)}-v_{1}\right)}.\label{eq:z24}
\end{equation}
From the LDP of Theorem \ref{lem:GOELD}, as $N\to\infty$,%
\footnote{See Footnote \ref{fn:uni}.%
} 
\begin{equation}
\mathbb{E}\left\{ V_{N}^{\prime}\right\} \to\int\frac{1}{\lambda-v_{1}}d\mu^{*}(\lambda)\mbox{\,\,\,\ and\,\,}\frac{1}{N}\log\left(\mathbb{E}\left\{ e^{V_{N}}\right\} \right)\to\sum_{i=1}^{k}\Omega\left(v_{i}\right),\label{eq:z6}
\end{equation}
where we used the fact that for $\lambda$ in the support of $\mu^{*}$,
$\lambda-v_{i}\in(\epsilon,\kappa)$.

For large enough $L=L(\epsilon,\kappa)>0$, $\log\left(h_{\epsilon}^{\kappa}(x)\right)$
and $\frac{1}{N}(h_{\epsilon}^{\kappa}\left(x\right))^{-1}$ are Lipschitz
continuous with Lipschitz constant $L$ and $\frac{1}{N}L$, respectively.
Thus, by the concentration of linear statistics of Wigner matrices
as in \cite[Theorem 2.3.5]{Matrices} and the union bound, we have
that
\begin{equation}
\mathbb{P}\left\{ \left|V_{N}-\mathbb{E}V_{N}\right|>s\right\} \leq2ke^{-Cs^{2}},\,\,\mathbb{P}\left\{ \left|V_{N}^{\prime}-\mathbb{E}V_{N}^{\prime}\right|>s\right\} \leq2e^{-N^{2}Cs^{2}},\label{eq:conc}
\end{equation}
for some constant $C>0$. By the LDP for the maximal (and by symmetry,
minimal) eigenvalue of $\mathbf{X}_{i}$ (see Theorem \ref{thm:maxeigLDP}),
\begin{equation}
\limsup_{N\to\infty}\frac{1}{N}\log\left(\mathbb{P}\left\{ (A(\delta))^{c}\right\} \right)<0.\label{eq:z22}
\end{equation}
Therefore, using (\ref{eq:conc}) and the Cauchy-Schwarz inequality
we have that, as $N\to\infty$,
\begin{equation}
\mathbb{E}\left\{ V_{N}^{\prime}e^{V_{N}}\right\} =\mathbb{E}\left\{ V_{N}^{\prime}\right\} \mathbb{E}\left\{ e^{V_{N}}\right\} (1+o(1)),\label{eq:z7}
\end{equation}
\[
\mathbb{E}\left\{ V_{N}^{\prime}e^{V_{N}}\mathbf{1}_{(A(\delta))^{c}}\right\} \leq\left(\mathbb{E}\left\{ \left(V_{N}^{\prime}e^{V_{N}}\right)^{2}\right\} \mathbb{P}\left\{ (A(\delta))^{c}\right\} \right)^{1/2}=o\left(\mathbb{E}\left\{ V_{N}^{\prime}e^{V_{N}}\right\} \right)
\]
and similarly 
\begin{equation}
\mathbb{E}\left\{ V_{N}^{\prime}\mathbf{1}_{(A(\delta))^{c}}\right\} =o\left(\mathbb{E}\left\{ V_{N}^{\prime}\right\} \right),\,\,\,\,\mathbb{E}\left\{ e^{V_{N}}\mathbf{1}_{(A(\delta))^{c}}\right\} =o\left(\mathbb{E}\left\{ e^{V_{N}}\right\} \right).\label{eq:z9}
\end{equation}

Since $\prod_{i=1}^{k}\left|\det\left(\mathbf{X}_{i}-v_{i}I\right)\right|\geq e^{V_{N}}\mathbf{1}_{A(\delta)}$,
from (\ref{eq:z4}), (\ref{eq:z6}) and (\ref{eq:z9}) we have
\begin{equation}
\lim_{N\to\infty}\frac{1}{N}\log\left(\mathbb{E}\left\{ \prod_{i=1}^{k}\left|\det\left(\mathbf{X}_{i}-v_{i}I\right)\right|\right\} \right)=\sum_{i=1}^{k}\Omega(v_{i}).\label{eq:z4-1}
\end{equation}
Since $k\geq1$ was general, by taking two copies of each of the matrices
in (\ref{eq:z4-1}), we also have
\begin{equation}
\lim_{N\to\infty}\frac{1}{N}\log\left(\mathbb{E}\left\{ \prod_{i=1}^{k}\left|\det\left(\mathbf{X}_{i}-v_{i}I\right)\right|^{2}\right\} \right)=2\sum_{i=1}^{k}\Omega(v_{i}),\label{eq:z22-1}
\end{equation}
and by (\ref{eq:z22}) and the Cauchy-Schwarz inequality, the first
part of Lemma \ref{lem:n2} follows.

Since $\det\left(\mathbf{X}_{i}-v_{i}I\right)$ is a polynomial function
of the Gaussian entries of $\mathbf{X}_{i}-v_{i}I$, the left-hand
side of (\ref{eq:z5}) is equal to 
\[
\frac{d}{dv_{1}}\log\left(\mathbb{E}\left\{ Y_{N}\right\} \right)=\frac{\mathbb{E}\left\{ \frac{d}{dv_{1}}Y_{N}\right\} }{\mathbb{E}\left\{ Y_{N}\right\} }=-\frac{N\mathbb{E}\left\{ Y_{N}Z_{N}\right\} }{\mathbb{E}\left\{ Y_{N}\right\} },
\]
where we denote 
\[
Y_{N}=\prod_{i=1}^{k}\det\left(\mathbf{X}_{i}-v_{i}I\right)\mbox{\,\,\,\ and\,\,}Z_{N}=\frac{1}{N}\sum_{j=1}^{N-1}\frac{1}{\lambda_{j}^{(1)}-v_{1}}.
\]

By (\ref{eq:z7}), (\ref{eq:z9}) and (\ref{eq:z6}), as $N\to\infty$,
\begin{align*}
\frac{\mathbb{E}\left\{ Y_{N}Z_{N}\mathbf{1}_{A(\delta)}\right\} }{\mathbb{E}\left\{ Y_{N}\mathbf{1}_{A(\delta)}\right\} } & =\frac{\mathbb{E}\left\{ V_{N}^{\prime}e^{V_{N}}\mathbf{1}_{A(\delta)}\right\} }{\mathbb{E}\left\{ e^{V_{N}}\mathbf{1}_{A(\delta)}\right\} }\\
 & =(1+o(1))\mathbb{E}\left\{ V_{N}^{\prime}\right\} =(1+o(1))\int\frac{1}{\lambda-v_{1}}d\mu^{*}(\lambda),
\end{align*}
where the first equality follows since on $A(\delta)$ all the eigenvalues
of $\mathbf{X}_{i}-v_{i}I$, $i\leq k$, are in $(\epsilon,\kappa)$.
By the first part of Lemma \ref{lem:n2}, since $Y_{N}=|Y_{N}|$ on
$A(\delta)$, 
\begin{equation}
\frac{\mathbb{E}\left\{ |Y_{N}|\mathbf{1}_{(A(\delta))^{c}}\right\} }{\mathbb{E}\left\{ |Y_{N}|\right\} }\overset{{\scriptstyle N\to\infty}}{\longrightarrow}0\mbox{\,\,\,\ and\,\,\,\,\,\,}\frac{\mathbb{E}\left\{ Y_{N}\mathbf{1}_{A(\delta)}\right\} }{\mathbb{E}\left\{ Y_{N}\right\} }\overset{{\scriptstyle N\to\infty}}{\longrightarrow}1.\label{eq:wq1}
\end{equation}
What remains to show in order to complete the proof of (\ref{eq:z5})
is that 
\begin{equation}
\frac{\mathbb{E}\left\{ Y_{N}Z_{N}\mathbf{1}_{A(\delta)}\right\} }{\mathbb{E}\left\{ Y_{N}Z_{N}\right\} }\overset{{\scriptstyle N\to\infty}}{\longrightarrow}1.\label{eq:z23}
\end{equation}

Note that for any $\bar{\epsilon}>0$, 
\[
\left|Y_{N}Z_{N}\right|\leq\frac{1}{\bar{\epsilon}}\prod_{i=1}^{k}\prod_{j=1}^{N-1}h_{\bar{\epsilon}}\left(\left|\lambda_{j}^{(i)}-v_{i}\right|\right)
\]
and similarly to the proof of (\ref{eq:z4}) and (\ref{eq:z21}),
by letting $\bar{\epsilon}\to0$, it can be shown that
\[
\limsup_{N\to\infty}\frac{1}{2N}\log\left(\mathbb{E}\left\{ \left(Y_{N}Z_{N}\right)^{2}\right\} \right)\leq\sum_{i=1}^{k}\Omega(v_{i}).
\]
On $A(\delta)$, $Z_{N}\in(c_{1},c_{2})$ for appropriate constants
$0<c_{1}<c_{2}$. Thus, from (\ref{eq:wq1}) and (\ref{eq:z4-1}),
\[
\lim_{N\to\infty}\frac{1}{N}\log\left(\mathbb{E}\left\{ Y_{N}Z_{N}\mathbf{1}_{A(\delta)}\right\} \right)=\lim_{N\to\infty}\frac{1}{N}\log\left(\mathbb{E}\left\{ Y_{N}\mathbf{1}_{A(\delta)}\right\} \right)=\sum_{i=1}^{k}\Omega(v_{i}).
\]
From the Cauchy-Schwarz inequality and (\ref{eq:z22}),
\[
\frac{\mathbb{E}\left\{ \left|Y_{N}Z_{N}\right|\mathbf{1}_{\left(A(\delta)\right)^{c}}\right\} }{\mathbb{E}\left\{ Y_{N}Z_{N}\mathbf{1}_{A(\delta)}\right\} }\overset{{\scriptstyle N\to\infty}}{\longrightarrow}0.
\]
This implies (\ref{eq:z23}) and the proof is completed.\end{proof}
\begin{cor}
\label{cor:n6}Let $u<-E_{\infty}\left(p\right)$ and suppose $J_{N}=\left(a_{N},b_{N}\right)$
is an interval such that $a_{N},\, b_{N}\to u$ as $N\to\infty$.
Assume $(\mathbf{X}_{i})_{i\leq k}\sim\nu$ with $\nu\in\mathcal{L}_{k,N-1}^{{\scriptstyle GOE}}$.
Then, uniformly in $u_{i}:=u_{i,N}\in J_{N}$ and $\nu:=\nu_{N}\in\mathcal{L}_{k,N-1}^{{\scriptstyle GOE}}$,
as $N\to\infty$,
\begin{align}
 & \negthickspace\negthickspace\negthickspace\log\left(\mathbb{E}\left\{ \prod_{i=1}^{k}\left|\det\left(\mathbf{X}_{i}-\sqrt{N}\bar{u}_{i}I\right)\right|\right\} \right)=\log\left(\mathbb{E}\left\{ \prod_{i=1}^{k}\det\left(\mathbf{X}_{i}-\sqrt{N}\bar{u}_{i}I\right)\right\} \right)+o(1)\nonumber \\
 & =\log\left(\mathbb{E}\left\{ \prod_{i=1}^{k}\det\left(\mathbf{X}_{i}-\sqrt{N}\bar{b}_{N}I\right)\right\} \right)+o(1)+N\mathfrak{S}(u)\sum_{i=1}^{k}(1+o(1))(b_{N}-u_{i}),\label{eq:z19}
\end{align}
 where $\bar{b}_{N}=\sqrt{\frac{p}{p-1}\frac{1}{N-1}}b_{N}$, $\bar{u}_{i}=\sqrt{\frac{p}{p-1}\frac{1}{N-1}}u_{i}$
and $\mathfrak{S}(u)$ is given by (\ref{eq:S(u)}).\end{cor}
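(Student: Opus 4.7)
The plan is to prove the two equalities in \eqref{eq:z19} separately, each as a direct consequence of one part of Lemma \ref{lem:n2}. First, to set up the uniformity regime, note that since $u<-E_\infty(p)=-2\sqrt{(p-1)/p}$, writing $w=\sqrt{p/(p-1)}$ we have $wu<-2$, so for all sufficiently large $N$ and all $u_i\in J_N$ the shifts $v_i:=\sqrt{N}\bar{u}_i=w\sqrt{N/(N-1)}\,u_i$ lie in a fixed interval $(-t_2,-t_1)$ with $2<t_1<t_2$, and the same holds for $\sqrt{N}\bar{b}_N$. Hence Lemma \ref{lem:n2} applies uniformly in both $u_i\in J_N$ and $\nu\in\mathcal{L}^{{\scriptstyle GOE}}_{k,N-1}$ throughout.

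For the first equality, I choose $\delta>0$ small enough that $-v_i>2+\delta$ uniformly for large $N$, and set $A(\delta):=\{-2-\delta<\min_{i,j}\lambda_j^{(i)}\le\max_{i,j}\lambda_j^{(i)}<2+\delta\}$. On $A(\delta)$ every eigenvalue of $\mathbf{X}_i-v_iI$ is strictly positive, so $\det(\mathbf{X}_i-v_iI)=|\det(\mathbf{X}_i-v_iI)|$ and hence $\mathbb{E}\prod_i\det\,\mathbf{1}_{A(\delta)}=\mathbb{E}\prod_i|\det|\,\mathbf{1}_{A(\delta)}$. The inequalities \eqref{eq:48-1}--\eqref{eq:48-2} together with a union bound control the tail by $\mathbb{E}\prod_i|\det|\,\mathbf{1}_{A(\delta)^c}\le 2e^{-cN}\mathbb{E}\prod_i|\det|$, and since $|\mathbb{E}\prod\det\,\mathbf{1}_{A(\delta)^c}|\le\mathbb{E}\prod|\det|\,\mathbf{1}_{A(\delta)^c}$, one obtains $\mathbb{E}\prod\det(\mathbf{X}_i-v_iI)=(1+O(e^{-cN}))\mathbb{E}\prod|\det(\mathbf{X}_i-v_iI)|$ uniformly in $u_i$ and $\nu$. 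Taking logarithms produces an error that is $o(1)$ (in fact exponentially small).

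For the second equality, let $F(\vec v):=\mathbb{E}\prod_i\det(\mathbf{X}_i-v_iI)$ and telescope
\[
\log F(v_1,\ldots,v_k)-\log F(\sqrt{N}\bar{b}_N,\ldots,\sqrt{N}\bar{b}_N)=\sum_{i=1}^k\int_{\sqrt{N}\bar{b}_N}^{v_i}\frac{\partial}{\partial v}\log F\bigl(v_1,\ldots,v_{i-1},v,\sqrt{N}\bar{b}_N,\ldots,\sqrt{N}\bar{b}_N\bigr)\,dv.
\]
By \eqref{eq:z5} of Lemma \ref{lem:n2}, the integrand equals $-(1+o(1))N\int(\lambda-v)^{-1}d\mu^*(\lambda)$ uniformly. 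Since $v\mapsto\int(\lambda-v)^{-1}d\mu^*(\lambda)$ is analytic, hence uniformly continuous, on any compact subset of $\mathbb{R}\setminus[-2,2]$, and since $v$ remains within $o(1)$ of $wu$ along each integration path, the integrand simplifies to $-(1+o(1))N\cdot\mathfrak{S}(u)/w$, using the identity $\int(\lambda-wu)^{-1}d\mu^*=\mathfrak{S}(u)/w$, which is immediate from the definition \eqref{eq:S(u)}.

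Finally, the integration length is $v_i-\sqrt{N}\bar{b}_N=w\sqrt{N/(N-1)}(u_i-b_N)=(1+o(1))w(u_i-b_N)$, so the two factors of $w$ cancel and the $i$-th increment equals $(1+o(1))N\mathfrak{S}(u)(b_N-u_i)$; summing over $i$ produces the claimed expression. No single step is hard; the only point requiring genuine care is propagating uniformity of every $o(1)$ over both $u_i\in J_N$ and $\nu\in\mathcal{L}^{{\scriptstyle GOE}}_{k,N-1}$, which is automatic from the uniformity already built into Lemma \ref{lem:n2} together with the uniform continuity of the Cauchy transform on the compact interval $(-t_2,-t_1)\subset\mathbb{R}\setminus[-2,2]$.
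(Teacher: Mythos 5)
Your proof is correct and follows essentially the same route as the paper: you use the first part of Lemma \ref{lem:n2} together with the observation that the determinants are positive on $A(\delta)$ to remove the absolute values, and you integrate (via telescoping in the $k$ shift variables) the uniform derivative estimate from the second part of Lemma \ref{lem:n2} to relate the shift $\sqrt{N}\bar{u}_i$ to $\sqrt{N}\bar{b}_N$. Your writeup is somewhat more explicit than the paper's terse version (making the telescoping, the identity $\int(\lambda-wu)^{-1}d\mu^* = \mathfrak{S}(u)/w$, and the uniform-continuity step fully visible), but it is the same argument.
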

\begin{proof}
From our assumption on $u$, for some $t_{2}>t_{1}>2$,  for large
$N$, $\sqrt{N}\bar{b}_{N},\,\sqrt{N}\bar{u}_{i}\in\left(-t_{2},-t_{1}\right)$
for any $u_{i}\in J_{N}$. On the event $A(\delta)$ defined in (\ref{eq:Adelta}),
for small enough $\delta$,
\[
\prod_{i=1}^{k}\left|\det\left(\mathbf{X}_{i}-\sqrt{N}\bar{u}_{i}I\right)\right|=\prod_{i=1}^{k}\det\left(\mathbf{X}_{i}-\sqrt{N}\bar{u}_{i}I\right).
\]
Therefore, the first equality in (\ref{eq:z19}) follows from the
first part of Lemma \ref{lem:n2} which asserts that, as $N\to\infty$,
\[
\mathbb{E}\left\{ \prod_{i=1}^{k}\left|\det\left(\mathbf{X}_{i}-\sqrt{N}\bar{u}_{i}I\right)\right|\mathbf{1}_{(A(\delta))^{c}}\right\} =o(1)\mathbb{E}\left\{ \prod_{i=1}^{k}\left|\det\left(\mathbf{X}_{i}-\sqrt{N}\bar{u}_{i}I\right)\right|\right\} .
\]
From the second part of Lemma \ref{lem:n2}, 
\begin{align*}
\log\left(\mathbb{E}\left\{ \prod_{i=1}^{k}\det\left(\mathbf{X}_{i}-\sqrt{N}\bar{u}_{i}I\right)\right\} \right) & =\log\left(\mathbb{E}\left\{ \prod_{i=1}^{k}\det\left(\mathbf{X}_{i}-\sqrt{N}\bar{b}_{N}I\right)\right\} \right)\\
 & +N^{3/2}\int\frac{1}{\lambda-\sqrt{N}\bar{u}}d\mu^{*}(\lambda)\sum_{i=1}^{k}(1+o(1))(\bar{b}_{N}-\bar{u}_{i}),
\end{align*}
as $N\to\infty$, uniformly in $u_{i}\in J_{N}$ and $\nu\in\mathcal{L}_{k,N-1}^{\mbox{GOE}}$.
This completes the proof.\end{proof}
\begin{cor}
\label{cor:n5}Let $u<-E_{\infty}\left(p\right)$ and suppose $J_{N}=\left(a_{N},b_{N}\right)$
is an interval such that $a_{N},\, b_{N}\to u$ as $N\to\infty$.
Assume $(\mathbf{X}_{i})_{i\leq k}\sim\nu$ with $\nu\in\mathcal{L}_{k,N-1}^{{\scriptstyle GOE}}$.
Then, uniformly in $u_{i}:=u_{i,N}\in J_{N}$ and $\nu:=\nu_{N}\in\mathcal{L}_{k,N-1}^{{\scriptstyle GOE}}$,
 
\begin{equation}
\mathbb{E}\left\{ \prod_{i=1}^{k}\det\left(\mathbf{X}_{i}-\sqrt{N}\bar{u}_{i}I\right)\right\} \leq c_{k}\prod_{i=1}^{k}\mathbb{E}\left\{ \det\left(\mathbf{X}_{i}-\sqrt{N}\bar{u}_{i}I\right)\right\} ,\label{eq:z18}
\end{equation}
for appropriate constants $c_{k}>0$ independent of $N$, where $\bar{u}_{i}=\sqrt{\frac{p}{p-1}\frac{1}{N-1}}u_{i}$.\end{cor}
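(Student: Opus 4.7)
The plan is to combine generalized H\"{o}lder's inequality with a single-matrix moment bound derived from the concentration inequality (\ref{eq:conc}) for linear statistics. First I would apply H\"{o}lder with all exponents equal to $k$, together with $\det \leq |\det|$, to get
\[
\mathbb{E}_{\nu}\prod_{i=1}^{k}\det\bigl(\mathbf{X}_{i}-\sqrt{N}\bar{u}_{i}I\bigr) \leq \prod_{i=1}^{k}\bigl(\mathbb{E}|\det(\mathbf{X}_{i}-\sqrt{N}\bar{u}_{i}I)|^{k}\bigr)^{1/k}.
\]
This step eliminates the joint distribution $\nu$ entirely, since each factor on the right only involves a single marginal of $\nu$, which is GOE by assumption. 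It thus suffices to prove
\[
\mathbb{E}|\det(\mathbf{X}-vI)|^{k} \leq \tilde{c}_{k}\bigl(\mathbb{E}|\det(\mathbf{X}-vI)|\bigr)^{k}
\]
with $\tilde{c}_{k}$ independent of $N$, uniformly in $v \in \{\sqrt{N}\bar{u}_{i}:u_{i}\in J_{N}\}$, which for $N$ large lies in a fixed subinterval $(-t_{2},-t_{1})$ of $(-\infty,-2)$ by the hypothesis $u<-E_{\infty}(p)$.

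For this single-matrix bound I would apply the concentration estimate (\ref{eq:conc}) to the truncated log-determinant $V_{N}$ of (\ref{eq:z24}) with $k=1$, choosing truncation parameters $\epsilon\in(0,t_{1}-2)$ and $\kappa>t_{2}+2$ fixed. These choices make the Lipschitz constant of $\log_{\epsilon}^{\kappa}(\,\cdot\,-v)$ uniform in $v$ over the relevant range, yielding a sub-Gaussian variance proxy $\sigma^{2}$ that is independent of both $N$ and $v$. Consequently $\mathbb{E}e^{kV_{N}} \leq \exp\bigl(k\mathbb{E}V_{N}+\tfrac{1}{2}k^{2}\sigma^{2}\bigr)$, while Jensen's inequality gives $\mathbb{E}e^{V_{N}} \geq e^{\mathbb{E}V_{N}}$, so $\mathbb{E}e^{kV_{N}} \leq e^{k^{2}\sigma^{2}/2}(\mathbb{E}e^{V_{N}})^{k}$. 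On the event $A(\delta)$ of (\ref{eq:Adelta}) one has $e^{V_{N}}=|\det(\mathbf{X}-vI)|$, and the first part of Lemma \ref{lem:n2} applied with $k$ fully-correlated copies of $\mathbf{X}$ bounds the contribution from $A(\delta)^{c}$ by an $e^{-cN}$ factor. Since $\Omega(v)>0$ uniformly in the relevant range, this implies that $\mathbb{E}|\det(\mathbf{X}-vI)|^{j}$ and $\mathbb{E}e^{jV_{N}}$ agree up to multiplicative constants for both $j=1$ and $j=k$, so the sub-Gaussian bound transfers to yield the desired moment estimate.

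Substituting the moment bound back into H\"{o}lder and then using the first equality of Corollary \ref{cor:n6} to replace each $\mathbb{E}|\det(\mathbf{X}-\sqrt{N}\bar{u}_{i}I)|$ by $\mathbb{E}\det(\mathbf{X}-\sqrt{N}\bar{u}_{i}I)$ up to a uniform $(1+o(1))$ factor completes the argument, the final constant $c_{k}$ absorbing $(1+o(1))^{k}$ for large $N$ (the first finitely many $N$ being handled by enlarging $c_{k}$). I expect the main obstacle to be the careful bookkeeping in the second paragraph: ensuring that the sub-Gaussian variance proxy from (\ref{eq:conc}) is genuinely uniform in both $v$ and $N$ -- which requires $v$ to stay uniformly separated from $[-2,2]$, guaranteed by $u<-E_{\infty}(p)$ -- and verifying that the comparison errors between $|\det|$ and $e^{V_{N}}$, between $\mathbb{E}|\det|^{k}$ and $\mathbb{E}e^{kV_{N}}$, and between $\mathbb{E}|\det|$ and $\mathbb{E}\det$ are all $N$-independent multiplicative factors rather than exponential ones.
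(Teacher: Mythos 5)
Your proposal is correct and takes essentially the same route as the paper, just with the steps rearranged. The paper truncates to the good event $A(\delta)$ first (via the first part of Lemma \ref{lem:n2}), writes the product of determinants on that event as $e^{\bar{V}_{N}}$ with $\bar{V}_{N}=\sum_{i}\bar{V}_{N,i}$, invokes the concentration inequality for each $\bar{V}_{N,i}$, and then leaves the final factorization (which requires exactly the H\"older $+$ Jensen step you spell out) implicit in the phrase ``from the above, (\ref{eq:z18}) follows''. You instead perform the H\"older decoupling at the very start on the raw determinants, which reduces the problem to a single-matrix $k$-th moment bound $\mathbb{E}|\det|^{k}\leq\tilde{c}_{k}(\mathbb{E}|\det|)^{k}$; you then prove that bound by the same truncation-plus-concentration mechanism, and finally pass from $\mathbb{E}|\det|$ to $\mathbb{E}\det$ via the first equality of Corollary \ref{cor:n6}. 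The advantage of your ordering is modularity: after the opening H\"older step the joint law $\nu$ has vanished, so all remaining estimates are for a single GOE matrix, and the uniformity in $\nu$ is automatic rather than something to track through the rest of the argument. What one gives up is that the paper's route needs neither the $\det\leq|\det|$ step nor the appeal to Corollary \ref{cor:n6}, since it truncates to $A(\delta)$ (where the determinants are already positive) before doing anything else. Both versions crucially use that $u<-E_{\infty}(p)$ keeps the shifts $\sqrt{N}\bar{u}_{i}$ uniformly outside $[-2,2]$, which is what makes the Lipschitz constant of $\log_{\epsilon}^{\kappa}(\,\cdot\,-v)$ and hence the sub-Gaussian variance proxy uniform in $v$ and $N$, and makes the $A(\delta)^{c}$ contribution exponentially subdominant; your proof is careful about precisely these points and is complete.
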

\begin{proof}
From our assumption on $u$ for some $t_{2}>t_{1}>2$, for large $N$,
$\sqrt{N}\bar{u},\,\sqrt{N}\bar{u}_{i}\in\left(-t_{2},-t_{1}\right)$
for any $u_{i}\in J_{N}$. Let $\lambda_{j}^{(i)}$ denote the eigenvalues
of $\mathbf{X}_{i}$ and recall the definition of $A(\delta)$ given
in (\ref{eq:Adelta}). From the first part of Lemma \ref{lem:n2}
for small $\delta>0$, uniformly in $u_{i}:=u_{i,N}\in J_{N}$ and
$\nu:=\nu_{N}\in\mathcal{L}_{k,N-1}^{{\scriptstyle GOE}}$, as $N\to\infty$,
\[
\mathbb{E}\left\{ \prod_{i=1}^{k}\det\left(\mathbf{X}_{i}-\sqrt{N}\bar{u}_{i}I\right)\right\} =(1+o(1))\mathbb{E}\left\{ \prod_{i=1}^{k}\det\left(\mathbf{X}_{i}-\sqrt{N}\bar{u}_{i}I\right)\mathbf{1}_{A(\delta)}\right\} .
\]
For small enough $\delta,\,\epsilon>0$ and large enough $\kappa>0$,
on $A(\delta)$ we have that $\prod_{i=1}^{k}\det\left(\mathbf{X}_{i}-\sqrt{N}\bar{u}_{i}I\right)=e^{\bar{V}_{N}}$,
where 
\[
\bar{V}_{N}\triangleq\sum_{i=1}^{k}\sum_{j=1}^{N-1}\log\left(h_{\epsilon}^{\kappa}(\lambda_{j}^{(i)}-\sqrt{N}\bar{u}_{i})\right)
\]
is defined similarly to $V_{N}$ (see (\ref{eq:z24})). Similarly
to (\ref{eq:conc}), by the concentration of linear statistics of
Wigner matrices as in \cite[Theorem 2.3.5]{Matrices}, defining $\bar{V}_{N,i}=\sum_{j=1}^{N-1}\log\left(h_{\epsilon}^{\kappa}(\lambda_{j}^{(i)}-\sqrt{N}\bar{u}_{i})\right)$,
we have for all $i\leq k$,
\begin{equation}
\mathbb{P}\left\{ \left|\bar{V}_{N,i}-\mathbb{E}\bar{V}_{N,i}\right|>s\right\} \leq2ke^{-Cs^{2}},\label{eq:conc-1}
\end{equation}
with some constant $C=C(\epsilon,\kappa)>0$ that depends on the Lipschitz
constant of $\log\left(h_{\epsilon}^{\kappa}(x)\right)$. From the
above, (\ref{eq:z18}) follows.\end{proof}
\begin{lem}
\label{lem:n7}Let $u<-E_{\infty}\left(p\right)$ and suppose $J_{N}=\left(a_{N},b_{N}\right)$
is an interval such that $a_{N},\, b_{N}\to u$ as $N\to\infty$.
Let $0<\rho_{N}=o(1)$, let $\mathbf{X}_{iid}^{\left(i\right)}=\mathbf{X}_{iid,N-1}^{\left(i\right)}$,
$i=1,2$, and $\mathbf{X}_{iid}=\mathbf{X}_{iid,N-1}$ be three i.i.d
GOE matrices of dimension $N-1$, and set 
\begin{equation}
\mathbf{X}_{N-1}^{\left(i\right)}\left(r\right)=\sqrt{1-\left|r\right|^{p-2}}\mathbf{X}_{iid}^{\left(i\right)}+\left(\mbox{sgn}\left(r\right)\right)^{ip}\sqrt{\left|r\right|^{p-2}}\mathbf{X}_{iid}.\label{eq:z14}
\end{equation}
Let $\mathcal{\mathbf{M}}_{N-1}^{\left(i\right)}\left(r,u_{1},u_{2}\right)$
be as defined in Lemma \ref{lem:Hess_struct_2} and set $\bar{u}_{i}=\sqrt{\frac{p}{p-1}\frac{1}{N-1}}u_{i}$.
Then, as $N\to\infty$, uniformly in $u_{i}:=u_{i.N}\in J_{N}$ and
$r:=r_{N}\in(-\rho_{N},\rho_{N})$, 
\[
\mathbb{E}\left\{ \prod_{i=1,2}\left|\det\left(\mathcal{\mathbf{M}}_{N-1}^{\left(i\right)}\left(r,\sqrt{N}u_{1},\sqrt{N}u_{2}\right)\right)\right|\right\} \leq(1+o(1))\mathbb{E}\left\{ \prod_{i=1,2}\left|\det\left(\mathbf{X}_{N-1}^{\left(i\right)}\left(r\right)-\sqrt{N}\bar{u}_{i}I\right)\right|\right\} .
\]
\end{lem}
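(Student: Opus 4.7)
The plan is to couple $\mathcal{\mathbf{M}}_{N-1}^{(i)}:=\mathcal{\mathbf{M}}_{N-1}^{(i)}(r, \sqrt{N}u_1, \sqrt{N}u_2)$ and $B_i := \mathbf{X}_{N-1}^{(i)}(r) - \sqrt{N}\bar{u}_i I$ so that they share the same top-left $(N-2) \times (N-2)$ principal submatrix, which reduces their difference to a rank-$2$ perturbation supported on the last row and column, and then to express the determinants via Schur complements and compare the resulting last-coordinate correction terms.

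Since $\sqrt{(N-1)/(N-2)}\,\hat{\mathbf{G}}_{N-2}^{(i)}(r)$ is $(N-2)\times(N-2)$ GOE (Lemma \ref{lem:Hess_struct_2}), the entry variances of $\hat{\mathbf{G}}_{N-2}^{(i)}(r)$ exactly match those of the top-left principal $(N-2)$-block of an $(N-1)\times(N-1)$ GOE, and the joint law across $i=1,2$ is of the form (\ref{eq:z14}) with compatible signs (since $(\mathrm{sgn}\,r)^{2p}=1$). One can therefore couple the independent GOEs $\mathbf{X}_{iid}^{(i)}, \mathbf{X}_{iid}$ of (\ref{eq:z14}) to $\bar{\mathbf{G}}^{(i)}, \bar{\mathbf{G}}$ so that, under the coupling, the top-left $(N-2)\times(N-2)$ block of $\mathbf{X}_{N-1}^{(i)}(r)$ equals $\hat{\mathbf{G}}_{N-2}^{(i)}(r)$. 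Then $\Delta_i := \mathcal{\mathbf{M}}_{N-1}^{(i)} - B_i$ is supported on the last row and column. Writing $\mathbf{G}_{N-2}^{(i)}(r) := \hat{\mathbf{G}}_{N-2}^{(i)}(r) - \sqrt{N}\bar{u}_i I$, the Schur complement formula yields
\[
\det \mathcal{\mathbf{M}}_{N-1}^{(i)} = \det \mathbf{G}_{N-2}^{(i)}(r)\cdot S_i, \qquad \det B_i = \det \mathbf{G}_{N-2}^{(i)}(r)\cdot \tilde S_i,
\]
with
\[
S_i = Q^{(i)}(r) + \tfrac{m_i(r,\sqrt{N}u_1,\sqrt{N}u_2)}{\sqrt{(N-1)p(p-1)}} - \sqrt{N}\bar{u}_i - Z^{(i)}(r)^{T}\mathbf{G}_{N-2}^{(i)}(r)^{-1}Z^{(i)}(r),
\]
and $\tilde S_i$ defined analogously using the last column $\tilde Z^{(i)}$ and diagonal entry $\tilde Q^{(i)}$ of $\mathbf{X}_{N-1}^{(i)}(r)$. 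By Lemma \ref{lem:Hess_struct_2}, conditional on $\hat{\mathbf{G}}_{N-2}^{(1)}(r), \hat{\mathbf{G}}_{N-2}^{(2)}(r)$ the pairs $(S_1,S_2)$ and $(\tilde S_1,\tilde S_2)$ are independent across $i$, so
\[
\mathbb{E}\Big[\prod_{i=1,2}|\det\mathcal{\mathbf{M}}_{N-1}^{(i)}|\Big] = \mathbb{E}\Big[\prod_{i=1,2}|\det\mathbf{G}_{N-2}^{(i)}(r)|\cdot\mathbb{E}[|S_i|\,|\,\hat{\mathbf{G}}_{N-2}^{(i)}(r)]\Big],
\]
and likewise for $B_i$, so the proof reduces to a comparison of conditional first absolute moments of $S_i$ and $\tilde S_i$.

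Because $u < -E_\infty(p)$, there exist $t_2 > t_1 > 2$ with $-\sqrt{N}\bar{u}_i \in (t_1,t_2)$ for all large $N$ and all $u_i \in J_N$. Applying Theorem \ref{thm:maxeigLDP} to $\sqrt{(N-1)/(N-2)}\,\hat{\mathbf{G}}_{N-2}^{(i)}(r)$, with probability $1 - e^{-cN}$ all eigenvalues of $\mathbf{G}_{N-2}^{(i)}(r)$ are bounded below by some positive constant, so $\mathbf{G}_{N-2}^{(i)}(r)^{-1}$ has operator norm $O(1)$. On this good event, conditional on $\hat{\mathbf{G}}_{N-2}^{(i)}(r)$, both $S_i$ and $\tilde S_i$ are dominated by the $\Theta(1)$ deterministic shift $-\sqrt{N}\bar{u}_i$, perturbed by Gaussian fluctuations of standard deviation $O(N^{-1/2})$ and a Gaussian quadratic form concentrating near $\frac{1}{N}\sum_j(\lambda_j(\mathbf{G}_{N-2}^{(i)}(r)))^{-1} = O(1)$. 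Inspection of the covariances $\Sigma_Z(r),\Sigma_Q(r)$ in (\ref{eq:86}) and of the shift $m_i$ in (\ref{eq:m_i}) shows that, at $r=0$, the conditional law of $S_i$ agrees with that of $\tilde S_i$, and for $r \in (-\rho_N,\rho_N)$ differs only by $O(|r|^{p-2} + 1/N)$ corrections in mean and covariance. An explicit Gaussian calculation of $\mathbb{E}[|\cdot|\,|\,\hat{\mathbf{G}}_{N-2}^{(i)}(r)]$ on the good event then gives $\mathbb{E}[|S_i|\,|\,\hat{\mathbf{G}}_{N-2}^{(i)}(r)] \leq (1+o(1))\,\mathbb{E}[|\tilde S_i|\,|\,\hat{\mathbf{G}}_{N-2}^{(i)}(r)]$ uniformly. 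On the exceptional event of probability $e^{-cN}$, Lemma \ref{cor:r2pert} together with Lemma \ref{lem:W bound} (and Cauchy--Schwarz, applied as in Lemma \ref{lem:n2}) shows that the contribution is negligible compared to the leading asymptotics.

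The main obstacle is the uniform conditional comparison in the last step: controlling the ratio of $\mathbb{E}[|S_i|\,|\,\hat{\mathbf{G}}_{N-2}^{(i)}(r)]$ to $\mathbb{E}[|\tilde S_i|\,|\,\hat{\mathbf{G}}_{N-2}^{(i)}(r)]$ uniformly in $u_i \in J_N$ and $r \in (-\rho_N,\rho_N)$ requires careful bookkeeping of the $O(|r|^{p-2} + 1/N)$ discrepancies between the conditional Gaussian structures and the $m_i$ shift, together with the verification that these vanishing corrections are indeed absorbed by the non-degenerate deterministic shift $-\sqrt{N}\bar{u}_i > t_1 > 2$ and do not conspire with the quadratic-form term to produce a non-negligible multiplicative error.
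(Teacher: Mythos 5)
The Schur-complement route you sketch runs into two real problems that your proposal does not resolve, and the second you explicitly flag yourself as "the main obstacle."

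First, the factorization
\[
\mathbb{E}\Big[\prod_{i=1,2}|\det\mathcal{\mathbf{M}}_{N-1}^{(i)}|\Big]=\mathbb{E}\Big[\prod_{i=1,2}|\det\mathbf{G}_{N-2}^{(i)}(r)|\cdot\mathbb{E}\big[|S_i|\,\big|\,\hat{\mathbf{G}}_{N-2}^{(i)}(r)\big]\Big]
\]
requires $S_1$ and $S_2$ to be conditionally independent given the bulk, and they are not. Lemma \ref{lem:Hess_struct_2} says $(\hat{\mathbf{G}}^{(1)},\hat{\mathbf{G}}^{(2)})$, $(Z^{(1)},Z^{(2)})$, $(Q^{(1)},Q^{(2)})$ are mutually independent \emph{groups}; within each group the two members are correlated via $\Sigma_Z(r)$ and $\Sigma_Q(r)$, which are generically non-diagonal even at $r=0$ (e.g.\ $\Sigma_{Z,12}(0)=-p(p-1)(p-2)\ne 0$ for $p=3$). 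Thus $S_1$ and $S_2$ are coupled through $(Z,Q)$, and similarly $\tilde S_1,\tilde S_2$ are coupled through the shared $\mathbf{X}_{iid}$ component of (\ref{eq:z14}). The identity should instead keep the joint object $\mathbb{E}\big[|S_1S_2|\,\big|\,\hat{\mathbf{G}}^{(1)},\hat{\mathbf{G}}^{(2)}\big]$, which you cannot then reduce to a product of one-dimensional computations. Second, even setting the dependence aside, the conditional law of $S_i$ given $\hat{\mathbf{G}}^{(i)}$ is not Gaussian: $S_i$ contains the quadratic form $Z^{(i)T}\mathbf{G}_{N-2}^{(i)}(r)^{-1}Z^{(i)}$, a (non-central, anisotropic) generalized chi-square of order $O(1)$, the same order as the deterministic shift $-\sqrt{N}\bar u_i$. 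So "an explicit Gaussian calculation of $\mathbb{E}[|\cdot|]$" does not describe what has to be done; you would need an argument handling absolute moments of a bivariate non-Gaussian pair, uniformly in $u_i$ and $r$, and a pointwise $O(|r|^{p-2}+1/N)$ discrepancy in means and covariances does not by itself control a ratio of expectations of absolute values. This is precisely the unresolved step.

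The paper avoids both issues by additionally coupling the last row and column, not just the top-left block, so that the entrywise difference $\mathbf{E}_{N-1}^{(i)}$ has small operator norm on a good event, and then invoking the rank-$2$ determinant perturbation bound (\ref{eq:miroslav2}) of Corollary \ref{cor:Miroslav}: on the event where the eigenvalues of $\mathbf{D}_{N-1}^{(i)}=\mathbf{X}_{N-1}^{(i)}(r)-\sqrt{N}\bar u_i I$ are bounded below by $\eta>0$ and $\|\mathbf{E}_{N-1}^{(i)}\|<\delta$, one gets a pathwise bound $|\det\mathbf{M}_{N-1}^{(i)}|\le|\det\mathbf{D}_{N-1}^{(i)}|\,(1+2\delta/\eta)^2$, and the bad event is exponentially negligible. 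This bypasses any analysis of the Schur complement distribution entirely. If you want to keep a Schur-complement version, you would have to (i) work with the joint conditional law of $(S_1,S_2)$ and $(\tilde S_1,\tilde S_2)$, and (ii) establish the absolute-moment comparison without a Gaussian shortcut — at which point you effectively recreate, at much higher cost, what Corollary \ref{cor:Miroslav} gives in one line.
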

\begin{proof}
We start from the representation of Lemma \ref{lem:Hess_struct_2}.
Conditional on $f\left(\mathbf{n}\right)=\sqrt{N}u_{1},\, f\left(\boldsymbol{\sigma}\left(r\right)\right)=\sqrt{N}u_{2}$
and $\nabla f\left(\mathbf{n}\right)=\nabla f\left(\boldsymbol{\sigma}\left(r\right)\right)=0$
we have that, in distribution,
\[
\left(\begin{array}{c}
\frac{\nabla^{2}f\left(\mathbf{n}\right)}{\sqrt{\left(N-1\right)p\left(p-1\right)}}\\
\frac{\nabla^{2}f\left(\boldsymbol{\sigma}\left(r\right)\right)}{\sqrt{\left(N-1\right)p\left(p-1\right)}}
\end{array}\right)=\left(\begin{array}{c}
\mathbf{M}_{N-1}^{\left(1\right)}\left(r,\sqrt{N}u_{1},\sqrt{N}u_{2}\right)\\
\mathbf{M}_{N-1}^{\left(2\right)}\left(r,\sqrt{N}u_{1},\sqrt{N}u_{2}\right)
\end{array}\right),
\]
with
\begin{align*}
\mathbf{M}_{N-1}^{\left(i\right)}\left(r,u_{1},u_{2}\right) & =\hat{\mathbf{M}}_{N-1}^{\left(i\right)}\left(r\right)-\sqrt{N}\bar{u}_{i}I+\frac{m_{i}\left(r,\sqrt{N}u_{1},\sqrt{N}u_{2}\right)}{\sqrt{\left(N-1\right)p\left(p-1\right)}}e_{N-1,N-1},\\
\hat{\mathbf{M}}_{N-1}^{\left(i\right)}\left(r\right) & =\left(\begin{array}{cc}
\hat{\mathbf{G}}_{N-2}^{\left(i\right)}\left(r\right) & Z^{\left(i\right)}\left(r\right)\\
\left(Z^{\left(i\right)}\left(r\right)\right)^{T} & Q^{\left(i\right)}\left(r\right)
\end{array}\right),\\
\hat{\mathbf{G}}^{\left(i\right)} & =\sqrt{1-\left|r\right|^{p-2}}\bar{\mathbf{G}}^{\left(i\right)}+\left(\mbox{sgn}\left(r\right)\right)^{ip}\sqrt{\left|r\right|^{p-2}}\bar{\mathbf{G}},
\end{align*}
where all the variables are as described in Lemma \ref{lem:Hess_struct_2}.

Denote by $\tilde{\mathbf{X}}_{N-1}^{\left(i\right)}\left(r\right)$
the matrix obtained from $\mathbf{X}_{N-1}^{\left(i\right)}\left(r\right)$
(defined in (\ref{eq:z14})) by replacing every element not in the
last row or column by $0$ and denote by $\bar{\mathbf{X}}_{N-2}^{\left(i\right)}\left(r\right)$
the upper-left $N-2\times N-2$ submatrix of $\mathbf{X}_{N-1}^{\left(i\right)}\left(r\right)$.
Couple the variables so that, almost surely, 
\begin{equation}
\bar{\mathbf{X}}_{N-2}^{\left(i\right)}\left(r\right)=\hat{\mathbf{G}}_{N-2}^{\left(i\right)}\left(r\right),\label{eq:17}
\end{equation}
and, denoting by $(\mathbf{A})_{i,j}$ the $i,j$ element of a general
matrix $\mathbf{A}$, 
\begin{align}
Z_{j}^{\left(i\right)}\left(r\right) & =\sqrt{\frac{\Sigma_{Z,11}\left(r\right)-\left|\Sigma_{Z,12}\left(r\right)\right|}{p(p-1)}}\left(\mathbf{X}_{iid}^{\left(i\right)}\right)_{j,N-1}+\left(\mbox{sgn}\left(\Sigma_{Z,12}\left(r\right)\right)\right)^{i}\sqrt{\frac{\left|\Sigma_{Z,12}\left(r\right)\right|}{p(p-1)}}\left(\mathbf{X}_{iid}\right)_{j,N-1},\nonumber \\
Q_{i}\left(r\right) & =\sqrt{\frac{\Sigma_{Q,11}\left(r\right)-\left|\Sigma_{Q,12}\left(r\right)\right|}{2p(p-1)}}\left(\mathbf{X}_{iid}^{\left(i\right)}\right)_{N-1,N-1}+\left(\mbox{sgn}\left(\Sigma_{Q,12}\left(r\right)\right)\right)^{i}\sqrt{\frac{\left|\Sigma_{Q,12}\left(r\right)\right|}{2p(p-1)}}\left(\mathbf{X}_{iid}\right)_{N-1,N-1}.\label{eq:ZQGcoupling}
\end{align}

Define 
\[
\mathbf{T}_{N-1}^{\left(i\right)}\left(r\right)\triangleq\left(\begin{array}{cc}
0 & Z^{\left(i\right)}\left(r\right)\\
\left(Z^{\left(i\right)}\left(r\right)\right)^{T} & Q_{i}\left(r\right)
\end{array}\right)-\tilde{\mathbf{X}}_{N-1}^{\left(i\right)}\left(r\right),
\]
and note that 
\[
\hat{\mathbf{M}}_{N-1}^{\left(i\right)}\left(r\right)=\mathbf{X}_{N-1}^{\left(i\right)}\left(r\right)+\mathbf{T}_{N-1}^{\left(i\right)}\left(r\right).
\]

For a general matrix $\mathbf{A}$ with eigenvalues $\lambda_{i}\left(\mathbf{A}\right)$,
denote $\lambda_{*}\left(\mathbf{A}\right)=\max_{i}\left|\lambda_{i}\left(\mathbf{A}\right)\right|$.
Define the event 
\[
E_{N}(\delta)=\cap_{r\in(-\rho_{N},\rho_{N})}\left(\cap_{i=1,2}\left\{ \lambda_{*}\left(\mathbf{X}_{N-1}^{\left(i\right)}\left(r\right)\right)<2+\eta\right\} \cap\left\{ \lambda_{*}\left(\mathbf{T}_{N-1}^{\left(i\right)}\left(r\right)\right)<\delta\right\} \right),
\]
where $\eta>0$, which will be fixed from now on, is such that 
\[
\lambda_{*}\left(\mathbf{X}_{N-1}^{\left(i\right)}\left(r\right)\right)<2+\eta\Longrightarrow\min_{j}\lambda_{j}\left(\mathbf{X}_{N-1}^{\left(i\right)}-\sqrt{N}\bar{u}_{i}I\right)>\eta,
\]
for large $N$, uniformly in $u_{i}\in J_{N}$ (which is possible
to choose since $u<-E_{\infty}(p)$). Note that
\[
\mathbf{M}_{N-1}^{\left(i\right)}\left(r,u_{1},u_{2}\right)=\underset{\triangleq\mathbf{D}_{N-1}^{\left(i\right)}\left(r,\sqrt{N}u_{1},\sqrt{N}u_{2}\right)}{\underbrace{\vphantom{{\frac{m_{i}\left(r,u_{1},u_{2}\right)}{\sqrt{Np\left(p-1\right)}}}}\mathbf{X}_{N-1}^{\left(i\right)}(r)-\sqrt{N}\bar{u}_{i}I}}+\underset{\triangleq\mathbf{E}_{N-1}^{\left(i\right)}\left(r,\sqrt{N}u_{1},\sqrt{N}u_{2}\right)}{\underbrace{\mathbf{T}_{N-1}^{\left(i\right)}\left(r\right)+\frac{m_{i}\left(r,\sqrt{N}u_{1},\sqrt{N}u_{2}\right)}{\sqrt{(N-1)p\left(p-1\right)}}e_{N-1,N-1}}.}
\]

The rank of $\mathbf{E}_{N-1}^{\left(i\right)}\left(r,\sqrt{N}u_{1},\sqrt{N}u_{2}\right)$,
and therefore the number of non-zero eigenvalues, is $2$ at most.
On $E_{N}(\delta)$, the eigenvalues $\mathbf{E}_{N-1}^{\left(i\right)}\left(r,\sqrt{N}u_{1},\sqrt{N}u_{2}\right)$
are bounded in absolute value by 
\[
\delta+2\frac{\sup_{u_{i}\in J_{N}}\left|m_{i}\left(r,\sqrt{N}u_{1},\sqrt{N}u_{2}\right)\right|}{\sqrt{p\left(p-1\right)}}\overset{N\to\infty}{\longrightarrow}\delta,
\]
uniformly in $u_{i}\in J_{N}$ and $r\in(-\rho_{N},\rho_{N})$. From
the bound (\ref{eq:miroslav2}) of Corollary \ref{cor:Miroslav} with
$\mathbf{C}_{1}=\mathbf{D}_{N-1}^{\left(i\right)}\left(r,\sqrt{N}u_{1},\sqrt{N}u_{2}\right)$
and $\mathbf{C}_{2}=\mathbf{E}_{N-1}^{\left(i\right)}\left(r,\sqrt{N}u_{1},\sqrt{N}u_{2}\right)$
we obtain that on $E_{N}(\delta)$, for large enough $N$, for any
$u_{i}\in J_{N}$ and $r\in(-\rho_{N},\rho_{N})$, 
\begin{equation}
\left|\det\left(\mathbf{M}_{N-1}^{\left(i\right)}\left(r,\sqrt{N}u_{1},\sqrt{N}u_{2}\right)\right)\right|\leq\left|\det\left(\mathbf{X}_{N-1}^{\left(i\right)}(r)-\sqrt{N}\bar{u}_{i}I\right)\right|\cdot\left(1+2\frac{\delta}{\eta}\right)^{2}.\label{eq:81}
\end{equation}
In order to conclude the proof of Lemma \ref{lem:n7}, it will be
enough to show that for any $\delta$,  uniformly in $u_{i}\in J_{N}$
and $r\in(-\rho_{N},\rho_{N})$,
\begin{equation}
\lim_{N\to\infty}\frac{\mathbb{E}\left\{ \prod_{i=1,2}\left|\det\left(\mathcal{\mathbf{M}}_{N-1}^{\left(i\right)}\left(r,\sqrt{N}u_{1},\sqrt{N}u_{2}\right)\right)\right|\mathbf{1}\left\{ \left(E_{N}(\delta)\right)^{c}\right\} \right\} }{\mathbb{E}\left\{ \prod_{i=1,2}\left|\det\left(\mathbf{X}_{N-1}^{\left(i\right)}(r)-\sqrt{N}\bar{u}_{i}I\right)\right|\mathbf{1}\left\{ E_{N}(\delta)\right\} \right\} }=0.\label{eq:z16}
\end{equation}

By (\ref{eq:z4-1}) (which holds uniformly in $v_{i}\in(-t_{2},-t_{1})$
as in the statement of Lemma \ref{lem:n2}), uniformly in $u_{i}\in J_{N}$
and $r\in(-\rho_{N},\rho_{N})$, 
\begin{align}
\lim_{N\to\infty}\frac{1}{N}\log\left(\mathbb{E}\left\{ \prod_{i=1,2}\left|\det\left(\mathbf{X}_{N-1}^{\left(i\right)}(r)-\sqrt{N}\bar{u}_{i}I\right)\right|\right\} \right) & =\label{eq:q12}\\
\frac{1}{2}\cdot\lim_{N\to\infty}\frac{1}{N}\log\left(\mathbb{E}\left\{ \prod_{i=1,2}\left|\det\left(\mathbf{X}_{N-1}^{\left(i\right)}(r)-\sqrt{N}\bar{u}_{i}I\right)\right|^{2}\right\} \right) & =\sum_{i=1,2}\Omega\left(\sqrt{\frac{p}{p-1}}u_{i}\right).\nonumber 
\end{align}
By Lemmas \ref{cor:r2pert} and \ref{lem:W bound} and the Cauchy-Schwarz
inequality, for any $\epsilon>0$, uniformly in $u_{i}\in J_{N}$
and $r\in(-\rho_{N},\rho_{N})$, 
\begin{align}
 & \frac{1}{2}\cdot\limsup_{N\to\infty}\frac{1}{N}\log\left(\mathbb{E}\left\{ \prod_{i=1,2}\left|\det\left(\mathcal{\mathbf{M}}_{N-1}^{\left(i\right)}\left(r,u_{1}\sqrt{N},u_{2}\sqrt{N}\right)\right)\right|^{2}\right\} \right)\label{eq:z51}\\
 & \leq\frac{1}{4}\cdot\limsup_{N\to\infty}\frac{1}{N}\log\left(\mathbb{E}\left\{ \prod_{i=1,2}\prod_{j=1}^{N-2}\left(h_{\epsilon}\left(\left|\lambda_{j}\left(\hat{\mathbf{G}}_{N-2}^{\left(i\right)}\left(r\right)-\sqrt{N}\bar{u}_{i}I\right)\right|\right)\right)^{4}\right\} \right),\nonumber 
\end{align}
where $h_{\epsilon}(x)=\max\{\epsilon,x\}$. By the same arguments
used to derive (\ref{eq:z21}) and by letting $\epsilon\to0$, we
obtain that \ref{eq:z51} is bounded from above by $\sum_{i=1,2}\Omega\left(\sqrt{\frac{p}{p-1}}u_{i}\right)$. 

If we prove that for large $N$, 
\begin{equation}
\mathbb{P}\left\{ \left(E_{N}(\delta)\right)^{c}\right\} <e^{-C_{0}N}\label{eq:z12-1}
\end{equation}
for some $C_{0}=C_{0}(\delta)>0$, then from the above and the Cauchy-Schwarz
inequality we would have that the limit supremum of $\frac{1}{N}\log$
of the numerator of (\ref{eq:z16}) and the limit supremum of $\frac{1}{N}\log$
of 
\[
\mathbb{E}\left\{ \prod_{i=1,2}\left|\det\left(\mathbf{X}_{N-1}^{\left(i\right)}(r)-\sqrt{N}\bar{u}_{i}I\right)\right|\mathbf{1}\left\{ \left(E_{N}(\delta)\right)^{c}\right\} \right\} 
\]
are both asymptotically strictly smaller than $\sum_{i=1,2}\Omega\left(\sqrt{\frac{p}{p-1}}u_{i}\right)$,
which together with (\ref{eq:q12}), would imply (\ref{eq:z16}). 

From the LDP of the maximal eigenvalue of GOE matrices (see Theorem
\ref{thm:maxeigLDP}) and (\ref{eq:z14}), 
\[
\mathbb{P}\left\{ \sup_{r\in(-r_{N},r_{N})}\lambda_{*}\left(\mathbf{X}_{N-1}^{\left(i\right)}\left(r\right)\right)\geq2+\eta\right\} <e^{-C_{1}N}
\]
for some $C_{1}>0$, for large $N$. Thus, in order to prove (\ref{eq:z12-1})
it is enough to show that, for large $N$,
\begin{equation}
\mathbb{P}\left\{ \sup_{r\in(-r_{N},r_{N})}\lambda_{*}\left(\mathbf{T}_{N-1}^{\left(i\right)}\left(r\right)\right)\geq\delta\right\} <e^{-C_{2}N}\label{eq:z13}
\end{equation}
for some $C_{2}=C_{2}(\delta)>0$. From (\ref{eq:ZQGcoupling}) and
the expressions for $\Sigma_{Z}$ and $\Sigma_{Q}$ (\ref{eq:86}),
it follows that any element of $\mathbf{T}_{N-1}^{\left(i\right)}\left(r\right)$
in the last row or column can be written as 
\[
\alpha_{1}\left(r\right)\left(\mathbf{X}_{iid}^{\left(i\right)}\right)_{j,N-1}+\alpha_{2}\left(r\right)\left(\mathbf{X}_{iid}\right)_{j,N-1},
\]
for some $j\leq N-1$, such that $\sup_{i\in\{1,2\},r\in(-r_{N},r_{N})}\left|\alpha_{i}\left(r\right)\right|\to0$,
as $N\to\infty$. The variance of the Gaussian elements of $\mathbf{X}_{iid}^{\left(i\right)}$
and $\mathbf{X}_{iid}$ is bounded from above by $2/(N-1)$. Also,
\[
2\sum_{m=1}^{N-1}\left(\left(\mathbf{T}_{N-1}^{\left(i\right)}\left(r\right)\right)_{N-1,m}\right)^{2}\geq\left(\lambda_{*}\left(\mathbf{T}_{N-1}^{\left(i\right)}\left(r\right)\right)\right)^{2}.
\]
Using, for example, Cram{\'{e}}r's theorem \cite[Theorem 2.2.3]{LDbook},
(\ref{eq:z13}) follows and the proof is completed.\end{proof}
\begin{lem}
\label{lem:n4}For any $\rho\in[-1,1]$, let $\mathbf{W}_{N}^{\left(1\right)}\left(\rho\right)$
and $\mathbf{W}_{N}^{\left(2\right)}\left(\rho\right)$ be $N\times N$
centered jointly Gaussian Wigner matrices with
\begin{equation}
{\rm Cov}\left(\mathbf{W}_{ij}^{\left(m\right)}\left(\rho\right),\mathbf{W}_{kl}^{\left(n\right)}\left(\rho\right)\right)=\delta_{\left\{ i,j\right\} =\left\{ k,l\right\} }\left(1+\delta_{i=j}\right)\left(\rho+\left(1-\rho\right)\delta_{m=n}\right).\label{eq:covrho}
\end{equation}
Let $g:\mathbb{R}^{N\times N}\to\mathbb{R}$ be a smooth function
and assume all its derivatives have a $O(|x|^{n})$ growth rate at
infinity. If we define 
\[
\hat{g}\left(\rho\right)\triangleq\mathbb{E}\left\{ g\left(\mathbf{W}_{N}^{\left(1\right)}\left(\rho\right)\right)g\left(\mathbf{W}_{N}^{\left(2\right)}\left(\rho\right)\right)\right\} ,
\]
then $\frac{d^{k}}{d\rho^{k}}\hat{g}\left(0\right)\geq0$ for all
$k\geq1$. In particular, if $g(\mathbf{A})$ is a polynomial function
of the elements of $\mathbf{A}$, then $\hat{g}:[-1,1]\to\mathbb{R}$
is a polynomial function, it is convex on $[0,1]$, and for any $\rho\in[0,1]$
it satisfies 
\begin{equation}
\left|\hat{g}(-\rho)-\hat{g}(0)\right|\leq\hat{g}(\rho)-\hat{g}(0)\leq\rho\left(\hat{g}(1)-\hat{g}(0)\right).\label{eq:z28}
\end{equation}
\end{lem}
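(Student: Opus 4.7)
The plan is to diagonalize the dependence on $\rho$ by expanding $g$ in Hermite polynomials, which will express $\hat{g}(\rho)$ as a power series in $\rho$ with non-negative coefficients; the claims of the lemma will then be elementary consequences of non-negativity.

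First, parametrize each matrix by its independent entries: set $\xi^{(m)} = (\mathbf{W}^{(m)}_{ij}(\rho))_{1 \le i \le j \le N}$ for $m=1,2$. By (\ref{eq:covrho}), marginally each $\xi^{(m)}$ is a centered Gaussian vector with diagonal covariance with entries $v_{ij} := 1 + \delta_{i=j}$, while the cross-covariance is $\mathrm{Cov}(\xi^{(1)}_{ij}, \xi^{(2)}_{kl}) = \rho\, v_{ij}\, \delta_{(i,j)=(k,l)}$. After rescaling $\eta^{(m)}_{ij} := \xi^{(m)}_{ij}/\sqrt{v_{ij}}$, the pair $(\eta^{(1)}, \eta^{(2)})$ consists of two standard Gaussian vectors whose corresponding components have correlation $\rho$ and whose distinct components are independent. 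Setting $\tilde g(\eta) := g(\xi)$ and expanding in the orthonormal Hermite basis
\[
\tilde g = \sum_{\alpha} c_\alpha \prod_{(i,j)} \frac{He_{\alpha_{ij}}(\eta_{ij})}{\sqrt{\alpha_{ij}!}},
\]
where $\alpha = (\alpha_{ij})$ ranges over non-negative integer multi-indices, $|\alpha| := \sum_{i\le j} \alpha_{ij}$, and $He_n$ is the probabilist Hermite polynomial, the classical Mehler identity $\mathbb{E}\{He_n(X)He_m(Y)\} = n!\,\rho^n \delta_{nm}$ for jointly standard Gaussian $(X,Y)$ with correlation $\rho$, applied componentwise together with independence across components, yields
\[
\hat g(\rho) \;=\; \sum_{\alpha} c_\alpha^2\, \rho^{|\alpha|} \;=\; \sum_{k=0}^{\infty} a_k \rho^k, \qquad a_k := \sum_{|\alpha|=k} c_\alpha^2 \;\ge\; 0.
\]

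The polynomial-growth assumption on the derivatives of $g$ ensures $\tilde g$ lies in $L^2$ of the standard Gaussian measure, so the expansion is valid and Parseval gives $\sum_k a_k = \hat g(1) < \infty$; in particular $\sum_k a_k \rho^k$ converges absolutely and uniformly on $[-1,1]$, justifying term-by-term differentiation. All claims now reduce to manipulations of this non-negative power series. The first assertion follows from $\hat g^{(k)}(0) = k!\, a_k \ge 0$. In the polynomial case only finitely many $c_\alpha$ are non-zero, so $\hat g$ is itself a polynomial on $[-1,1]$. Convexity on $[0,1]$ follows from $\hat g''(\rho) = \sum_{k \ge 2} k(k-1)\, a_k \rho^{k-2} \ge 0$ for $\rho \ge 0$. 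The upper bound in (\ref{eq:z28}) is immediate from $\rho^k \le \rho$ for $\rho \in [0,1]$ and $k \ge 1$:
\[
\hat g(\rho) - \hat g(0) \;=\; \sum_{k \ge 1} a_k \rho^k \;\le\; \rho \sum_{k \ge 1} a_k \;=\; \rho\,(\hat g(1) - \hat g(0)).
\]
The lower bound follows from the triangle inequality:
\[
|\hat g(-\rho) - \hat g(0)| \;=\; \Big| \sum_{k \ge 1} (-1)^k a_k \rho^k \Big| \;\le\; \sum_{k \ge 1} a_k \rho^k \;=\; \hat g(\rho) - \hat g(0).
\]

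The main (and only) technical point is the validity of the Hermite expansion and of Parseval, both standard once one verifies $\tilde g \in L^2$ of the Gaussian measure, which is immediate from the polynomial-growth hypothesis. In the application within the paper, $g$ is taken to be $\det(\mathbf{A} - vI)$, a polynomial in the entries of $\mathbf{A}$, so the Hermite sum is finite and no convergence issue arises; the substance of the lemma is the single observation that the Mehler--Hermite coefficients in $\rho$ are squares and hence non-negative.
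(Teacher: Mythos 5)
Your proof is correct, but it takes a genuinely different route from the paper's. The paper differentiates $\hat g(\rho)$ in $\rho$ using the Gaussian integration-by-parts identity $\partial_{\mathbf{C}_{ij}}\varphi_{\mathbf{C}}=\partial_{x_i}\partial_{x_j}\varphi_{\mathbf{C}}$, arriving at a formula of the form
\[
\frac{d^{k}}{d\rho^{k}}\hat g(\rho)=\sum\;\mathbb{E}\Bigl\{\bigl(\partial_{\boldsymbol{\cdot}}g\bigl(\mathbf{W}_N^{(1)}(\rho)\bigr)\bigr)\bigl(\partial_{\boldsymbol{\cdot}}g\bigl(\mathbf{W}_N^{(2)}(\rho)\bigr)\bigr)\Bigr\},
\]
and observes that at $\rho=0$ independence makes each summand a perfect square $\bigl(\mathbb{E}\{\partial_{\boldsymbol{\cdot}}g\}\bigr)^2\geq0$. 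You instead diagonalize the $\rho$-dependence from the outset via the Hermite/Wiener-chaos expansion and Mehler's identity, obtaining $\hat g(\rho)=\sum_{\alpha}c_\alpha^2\rho^{|\alpha|}$ and hence a power series with non-negative coefficients in one stroke. Both arguments ultimately rest on the same representation $\hat g(\rho)=\sum_k a_k\rho^k$ with $a_k\geq0$, but yours makes that representation manifest from the start rather than deriving it derivative by derivative; as a minor bonus, your route only requires $g$ to be square-integrable against the Gaussian measure (which polynomial growth of $g$ itself already guarantees), whereas the paper's inductive integration-by-parts uses the growth hypothesis on all derivatives. In the actual application $g$ is a determinant, hence a polynomial, so the distinction is immaterial there, but it is a clean alternative and arguably the more conceptual way to see why $\hat g$ is absolutely monotone at $0$.
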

\begin{proof}
In the current proof for any function $h\left(\mathbf{A}\right)$
of a symmetric matrix $\mathbf{A}$, we denote 
\[
\frac{\partial}{\partial\mathbf{A}_{ij}}h\left(\mathbf{A}\right):=\lim_{t\to0}\left(h\left(\mathbf{A}+t\left(e_{ij}+\left(1-\delta_{ij}\right)e_{ji}\right)\right)-h\left(\mathbf{A}\right)\right)/t,
\]
where $e_{ij}$ is the matrix whose only non-zero entry is the $\left(i,j\right)$
entry, which is equal to $1$. We will also use the notation 
\[
\partial_{i_{1},j_{1},...,i_{k},j_{k}}h\left(\mathbf{A}\right)=\frac{\partial}{\partial\mathbf{A}_{i_{1}j_{1}}}\cdots\frac{\partial}{\partial\mathbf{A}_{i_{k}j_{k}}}h\left(\mathbf{A}\right).
\]
Suppose that $X_{\mathbf{C}}\sim N\left(0,\mathbf{C}\right)$ is a
general Gaussian vector of length $k$ with density $\varphi_{\mathbf{C}}\left(x\right)$,
where $\mathbf{C}=\left(\mathbf{C}_{ij}\right)$ is a non-singular
covariance matrix. From integration by parts and the well known fact
that for $i\neq j$, 
\[
\frac{\partial}{\partial\mathbf{C}_{ij}}\varphi_{\mathbf{C}}\left(x\right)=\frac{\partial}{\partial x_{i}}\frac{\partial}{\partial x_{j}}\varphi_{\mathbf{C}}\left(x\right),
\]
one has that, for any function $w:\,\mathbb{R}^{k}\to\mathbb{R}$
with $O(|x|^{n})$ growth rate at infinity,
\[
\frac{\partial}{\partial\mathbf{C}_{ij}}\mathbb{E}\left\{ w\left(X_{\mathbf{C}}\right)\right\} =\int w\left(x\right)\frac{\partial}{\partial\mathbf{C}_{ij}}\varphi_{\mathbf{C}}\left(x\right)dx=\int\left(\frac{\partial}{\partial x_{i}}\frac{\partial}{\partial x_{j}}w\left(x\right)\right)\varphi_{\mathbf{C}}\left(x\right)dx.
\]

Therefore, by applying the above with the function $(\mathbf{A},\mathbf{B})\mapsto g(\mathbf{A})g(\mathbf{B})$
and $\left(\mathbf{W}_{N}^{\left(1\right)}\left(\rho\right),\mathbf{W}_{N}^{\left(2\right)}\left(\rho\right)\right)$,
treated as a vector of the on-and-above elements, we obtain 
\begin{align}
\frac{d^{k}}{d\rho^{k}}\hat{g}\left(\rho\right) & =\sum_{\forall l\leq k:\,1\leq i_{l}\leq j_{l}\leq N}\prod_{l=1}^{k}\left(1+\delta_{i_{l}=j_{l}}\right)\label{eq:020216}\\
 & \times\mathbb{E}\left\{ \left(\partial_{i_{1},j_{1},...,i_{k},j_{k}}g\left(\mathbf{W}_{N}^{\left(1\right)}\left(\rho\right)\right)\right)\left(\partial_{i_{1},j_{1},...,i_{k},j_{k}}g\left(\mathbf{W}_{N}^{\left(2\right)}\left(\rho\right)\right)\right)\right\} .\nonumber 
\end{align}
For $\rho=0$, $\mathbf{W}_{N}^{\left(1\right)}\left(0\right)$ and
$\mathbf{W}_{N}^{\left(2\right)}\left(0\right)$ are i.i.d and the
expectation in (\ref{eq:020216}) is equal to 
\[
\left(\mathbb{E}\left\{ \left(\partial_{i_{1},j_{1},...,i_{k},j_{k}}g\left(\mathbf{W}_{N}^{\left(1\right)}\left(0\right)\right)\right)\right\} \right)^{2},
\]
which proves that $\frac{d^{k}}{d\rho^{k}}\hat{g}\left(0\right)\geq0$.
Lastly, the fact that $\hat{g}\left(\rho\right)$ is a polynomial
function whenever $g$ is, follows from the fact that $\mathbf{W}_{N}^{\left(i\right)}\left(\rho\right)$
are jointly Gaussian and (\ref{eq:covrho}). Convexity on $[0,1]$
and (\ref{eq:z28}) are direct consequences since the coefficients
of the polynomial function are equal to $\frac{d^{k}}{d\rho^{k}}\hat{g}\left(0\right)/k!$.
\end{proof}

\subsection{\label{sub:l1}Proof of Lemma \ref{lem:n1}}

Lemma \ref{lem:1stmom} expresses $\mathbb{E}\left\{ {\rm Crt}_{N}\left(J_{N}\right)\right\} $.
By further conditioning on $U$, substituting (\ref{eq:z19}), and
using the fact that uniformly in $v\in J_{N}$, as $N\to\infty$,
\[
v^{2}=b_{N}^{2}+2u(v-b_{N})(1+o(1))
\]
(where $u$ and $b_{N}$ are related to $J_{N}$ as in the statement
of Lemma \ref{lem:n1}), we obtain that, as $N\to\infty$, 
\[
\mathbb{E}\left\{ {\rm Crt}_{N}\left(J_{N}\right)\right\} =\omega_{N}\left(\frac{p-1}{2\pi}(N-1)\right)^{\frac{N-1}{2}}\sqrt{\frac{N}{2\pi}}e^{-\frac{Nb_{N}^{2}}{2}}\mathbb{E}\left\{ \det\left(\mathbf{X}_{N-1}-\sqrt{N}\bar{b}_{N}I\right)\right\} \int_{J_{N}}g(v)dv,
\]
where uniformly in $v\in J_{N}$,
\[
g(v)=\exp\left\{ -(1+o(1))N\left(\mathfrak{S}(u)+u\right)(v-b_{N})+o(1)\right\} .
\]
Recall that $\mathfrak{S}(u)+u<0$ (see (\ref{eq:<})). Thus,
\[
\int_{J_{N}}g(v)dv=(1+o(1))\int_{J_{N}}\exp\left\{ -N\left(\mathfrak{S}(u)+u\right)(v-b_{N})\right\} dv,
\]
which completes the proof.\qed

\subsection{\label{sub:l2}Proof of Lemma \ref{lem:n3}}

By Lemma \ref{lem:2ndKR-refinedFrom} and with the definitions in
its statement, by conditioning on $U_{1}\left(r\right)$, $U_{2}\left(r\right)$,
\begin{align*}
\mathbb{E}\left\{ \left[\mbox{Crt}_{N}\left(J_{N}\right)\right]_{2}^{\rho_{N}}\right\}  & =C_{N}N\int_{-\rho_{N}}^{\rho_{N}}dr\cdot\left(\mathcal{G}\left(r\right)\right)^{N}\mathcal{F}\left(r\right)\int_{J_{N}\times J_{N}}du_{1}du_{2}\\
 & \varphi_{\Sigma_{U}\left(r\right)}(\sqrt{N}u_{1},\sqrt{N}u_{2})\mathbb{E}\left\{ \prod_{i=1,2}\left|\det\left(\mathcal{\mathbf{M}}_{N-1}^{\left(i\right)}\left(r,\sqrt{N}u_{1},\sqrt{N}u_{2}\right)\right)\right|\right\} ,
\end{align*}
where by straightforward analysis, as $r\to0$,
\begin{align*}
\varphi_{\Sigma_{U}\left(r\right)}(u_{1},u_{2}) & \triangleq\frac{1}{2\pi}\left(\det\left(\Sigma_{U}\left(r\right)\right)\right)^{-1/2}\exp\left\{ -\frac{1}{2}\left(u_{1},u_{2}\right)\left(\Sigma_{U}\left(r\right)\right)^{-1}\left(u_{1},u_{2}\right)^{T}\right\} \\
 & =\left(1+O\left(r^{p}\right)\right)\frac{1}{2\pi}\exp\left\{ -\frac{1}{2}\left(u_{1}^{2}+u_{2}^{2}\right)+\left(u_{1}+u_{2}\right)^{2}O(r^{p})\right\} ,
\end{align*}
$\mathcal{F}\left(r\right)=1+O(r)$ and $\mathcal{G}\left(r\right)=e^{-\frac{1}{2}r^{2}+O(r^{4})}$.
Also note that 
\[
\frac{\omega_{N-1}}{\omega_{N}}/\sqrt{\frac{N}{2\pi}}\overset{{\scriptstyle N\to\infty}}{\longrightarrow}1
\]
and that, as $N\to\infty$, uniformly in $u_{i}\in J_{N}$ (with $u$
and $b_{N}$ related to $J_{N}$ as in the statement of Lemma \ref{lem:n3}),
\[
u_{i}^{2}=b_{N}^{2}+2u(u_{i}-b_{N})(1+o(1)).
\]

Combining all of the above, we arrive at
\begin{align*}
\mathbb{E}\left\{ \left[\mbox{Crt}_{N}\left(J_{N}\right)\right]_{2}^{\rho_{N}}\right\}  & =\left(\mathfrak{C}_{N}(b_{N})\right)^{2}\sqrt{\frac{N}{2\pi}}\int_{-\rho_{N}}^{\rho_{N}}dr\cdot e^{-\frac{1}{2}Nr^{2}+N\cdot O(r^{4})}\\
 & \int_{J_{N}\times J_{N}}du_{1}du_{2}g(u_{1},u_{2})\frac{\mathbb{E}\left\{ \prod_{i=1,2}\left|\det\left(\mathcal{\mathbf{M}}_{N-1}^{\left(i\right)}\left(r,\sqrt{N}u_{1},\sqrt{N}u_{2}\right)\right)\right|\right\} }{\left(\mathbb{E}\left\{ \det\left(\mathbf{X}_{N-1}-\sqrt{N}\bar{b}_{N}I\right)\right\} \right)^{2}},
\end{align*}
where $\mathfrak{C}_{N}(x)$ is defined in (\ref{eq:C(u)}), $\mathbf{X}_{N-1}$
is a GOE matrix and as $N\to\infty,$ uniformly in $u_{i}\in J_{N}$,
\[
g(u_{1},u_{2})=(1+o(1))\exp\left\{ -N\sum_{i=1}^{2}u(u_{i}-b_{N})(1+o(1))\right\} .
\]
Note that from our assumption that $\rho_{N}\to0$ as $N\to\infty$,
\[
\lim_{N\to\infty}\sqrt{\frac{N}{2\pi}}\int_{-\rho_{N}}^{\rho_{N}}dr\cdot e^{-\frac{1}{2}Nr^{2}+N\cdot O(r^{4})}\leq1.
\]
Therefore, since $\mathfrak{S}(u)+u<0$ (see (\ref{eq:<})), Lemma
\ref{lem:n3} follows if we can show that as $N\to\infty$, uniformly
in $u_{i}\in J_{N}$ and $r\in(-\rho_{N},\rho_{N})$,
\begin{align}
 & \mathbb{E}\left\{ \prod_{i=1,2}\left|\det\left(\mathcal{\mathbf{M}}_{N-1}^{\left(i\right)}\left(r,\sqrt{N}u_{1},\sqrt{N}u_{2}\right)\right)\right|\right\} \nonumber \\
 & \leq(1+o(1))\left(\mathbb{E}\left\{ \det\left(\mathbf{X}_{N-1}-\sqrt{N}\bar{b}_{N}I\right)\right\} \right)^{2}\exp\left\{ N\mathfrak{S}(u)\sum_{i=1}^{2}(1+o(1))(b_{N}-u_{i})\right\} .\label{eq:n271}
\end{align}

By Lemma \ref{lem:n7} and Corollary \ref{cor:n6}, as $N\to\infty$,
uniformly in $u_{i}\in J_{N}$ and $r\in(-\rho_{N},\rho_{N})$,
\begin{align}
 & \mathbb{E}\left\{ \prod_{i=1,2}\left|\det\left(\mathcal{\mathbf{M}}_{N-1}^{\left(i\right)}\left(r,\sqrt{N}u_{1},\sqrt{N}u_{2}\right)\right)\right|\right\} \nonumber \\
 & \leq(1+o(1))\mathbb{E}\left\{ \prod_{i=1}^{2}\det\left(\mathbf{X}_{N-1}^{\left(i\right)}\left(r\right)-\sqrt{N}\bar{b}_{N}I\right)\right\} \exp\left\{ N\mathfrak{S}(u)\sum_{i=1}^{2}(1+o(1))(b_{N}-u_{i})\right\} ,\label{eq:z27}
\end{align}
with $\mathbf{X}_{N-1}^{\left(i\right)}\left(r\right)$ as defined
in Lemma \ref{lem:n7}. 

Since for $r=0$, $\mathbf{X}_{N-1}^{\left(1\right)}\left(0\right)$
and $\mathbf{X}_{N-1}^{\left(2\right)}\left(0\right)$ are i.i.d,
defining 
\[
\Phi_{X}\left(r\right):=\Phi_{X,N}\left(r\right)=\mathbb{E}\left\{ \prod_{i=1}^{2}\det\left(\mathbf{X}_{N-1}^{\left(i\right)}\left(r\right)-\sqrt{N}\bar{b}_{N}I\right)\right\} ,
\]
what remains to show is that $\Phi_{X}\left(r\right)=(1+o(1))\Phi_{X}\left(0\right)$
as $N\to\infty$, uniformly in $r\in(-\rho_{N},\rho_{N})$. We show
this by appealing to Lemma \ref{lem:n4}. First, suppose that $\mathbf{W}_{N-1}^{\left(i\right)}\left(r\right)$
are defined as in this lemma and set
\[
\Phi_{W}\left(r\right):=\Phi_{W,N}\left(r\right)=\mathbb{E}\left\{ \prod_{i=1}^{2}\det\left(\frac{1}{\sqrt{N-1}}\mathbf{W}_{N-1}^{\left(i\right)}\left(r\right)-\sqrt{N}\bar{b}_{N}I\right)\right\} .
\]

Since, in distribution, 
\[
\left(\mathbf{X}_{N-1}^{\left(1\right)}\left(r\right),\mathbf{X}_{N-1}^{\left(2\right)}\left(r\right)\right)=\frac{1}{\sqrt{N-1}}\left(\mathbf{W}_{N-1}^{\left(1\right)}\left(s\left(r\right)\right),\mathbf{W}_{N-1}^{\left(2\right)}\left(s\left(r\right)\right)\right),
\]
with $s\left(r\right)=\left(\mbox{sgn}\left(r\right)\right)^{p}\sqrt{\left|r\right|^{p-2}}$,
it follows that 
\[
\Phi_{X}\left(r\right)=\Phi_{W}\left(s\left(r\right)\right).
\]
Thus, it is enough to show that for any $\rho_{N}^{\prime}>0$ such
that $\rho_{N}^{\prime}\to0$, as $N\to\infty$, 
\begin{equation}
\Phi_{W}\left(r\right)=(1+o(1))\Phi_{W}\left(0\right),\mbox{\,\,\,\,\ uniformly in }r\in(-\rho_{N}^{\prime},\rho_{N}^{\prime}).\label{eq:qq2}
\end{equation}

Assume $\rho_{N}^{\prime}>0$ is such an arbitrary sequence. By Corollary
\ref{cor:n5}, 
\begin{align}
\Phi_{W}\left(1\right) & \leq C\Phi_{W}\left(0\right)=C\left(\mathbb{E}\left\{ \det\left(\mathbf{X}_{N-1}-\sqrt{N}\bar{b}_{N}I\right)\right\} \right)^{2},\label{eq:z26}
\end{align}
where $\mathbf{X}_{N-1}$ is a GOE matrix of dimension $N-1$ and
$C>0$ is an appropriate constant.

In the notation of Lemma \ref{lem:n4}, $\hat{g}(r)=\Phi_{W}\left(r\right)$
where $g(\mathbf{A})=\det\left(\frac{1}{\sqrt{N-1}}\mathbf{A}-\sqrt{N}\bar{b}_{N}I\right)$
is a polynomial function of the elements of the matrix $\mathbf{A}$.
Thus by Lemma \ref{lem:n4}, uniformly in $r\in(-\rho_{N}^{\prime},\rho_{N}^{\prime})$,
as $N\to\infty$,
\begin{align*}
\left|\Phi_{W}\left(r\right)-\Phi_{W}\left(0\right)\right| & \leq\rho_{N}^{\prime}\left(\Phi_{W}\left(1\right)-\Phi_{W}\left(0\right)\right)\\
 & \leq\rho_{N}^{\prime}(C-1)\Phi_{W}\left(0\right)=o(1)\Phi_{W}\left(0\right),
\end{align*}
and therefore (\ref{eq:qq2}) follows. This completes the proof of
Lemma \ref{lem:n3}.\qed

\section{Appendix I: Eigenvalues}

Let $\lambda_{i}=\lambda_{i}^{N}$, $i=1,...,N$ denote the eigenvalues
of an $N\times N$ GOE matrix and denote by 
\begin{equation}
L_{N}=\frac{1}{N}\sum_{i=1}^{N}\delta_{\lambda_{i}^{N}}\label{eq:emp_meas}
\end{equation}
the empirical measure of eigenvalues. The following two bounds on
the maximal eigenvalue, both proved in \cite{BDG}, are useful to
us. 
\begin{lem}
\label{lem:maxEigBd}\cite[Lemma 6.3]{BDG} For large enough $m$
and all $N$,
\[
\mathbb{P}\left\{ \max_{i=1}^{N}\left|\lambda_{i}\right|\geq m\right\} \leq e^{-Nm^{2}/9}.
\]
\end{lem}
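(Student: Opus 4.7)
The approach is to identify $\max_i|\lambda_i^N|$ with the operator norm of the underlying GOE matrix $\mathbf{X}_N$ (valid because $\mathbf{X}_N$ is symmetric) and to bound this norm via the standard $\epsilon$-net trick on the unit sphere $S^{N-1}\subset\mathbb{R}^N$. For any fixed $v\in S^{N-1}$, the quadratic form $v^T\mathbf{X}_N v$ is a centered Gaussian whose variance, using the normalization $\mathbb{E}(\mathbf{X}_N)_{ii}^2=2/N$ and $\mathbb{E}(\mathbf{X}_N)_{ij}^2=1/N$ for $i\neq j$, works out to $(2/N)\|v\|_2^4=2/N$; a standard Gaussian tail bound then yields
\[
\mathbb{P}\{|v^T\mathbf{X}_N v|\geq t\}\leq 2e^{-Nt^2/4}
\]
uniformly in $v\in S^{N-1}$.

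Fix now $\epsilon\in(0,1/6)$ and let $\mathcal{N}_\epsilon\subset S^{N-1}$ be an $\epsilon$-net with $|\mathcal{N}_\epsilon|\leq(1+2/\epsilon)^N$. Because $\mathbf{X}_N$ is symmetric, $\max_i|\lambda_i^N|=\sup_{v\in S^{N-1}}|v^T\mathbf{X}_N v|$. The usual discretization argument---writing any unit $u$ as $v+(u-v)$ with $v\in\mathcal{N}_\epsilon$ satisfying $\|u-v\|_2\leq\epsilon$---gives the deterministic inequality
\[
\max_i|\lambda_i^N|\leq(1-2\epsilon)^{-1}\max_{v\in\mathcal{N}_\epsilon}|v^T\mathbf{X}_N v|.
\]
Combining this with the pointwise Gaussian estimate and a union bound over $\mathcal{N}_\epsilon$ produces
\[
\mathbb{P}\bigl\{\max_i|\lambda_i^N|\geq m\bigr\}\leq 2(1+2/\epsilon)^N\exp\!\bigl(-Nm^2(1-2\epsilon)^2/4\bigr).
\]

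Since $\epsilon<1/6$, the quantity $\delta_\epsilon\triangleq(1-2\epsilon)^2/4-1/9$ is strictly positive, so the right-hand side is dominated by $e^{-Nm^2/9}$ provided $N\log(1+2/\epsilon)+\log 2\leq Nm^2\delta_\epsilon$, which holds uniformly in $N$ for all sufficiently large $m$. This proves the claim. The argument is entirely standard and has no serious obstacle; the only subtle point is that the discretization loss $(1-2\epsilon)^{-2}$ in the exponent forces the choice $\epsilon<1/6$, which is precisely what keeps the effective exponent strictly above $1/9$ and hence renders the combinatorial prefactor $(1+2/\epsilon)^N$ absorbable into the target tail bound once $m$ is large.
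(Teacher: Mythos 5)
The paper does not give its own proof of this lemma; it is quoted directly from \cite[Lemma 6.3]{BDG}, so there is no internal argument to compare against. Your $\epsilon$-net proof is correct and self-contained. The variance computation is right: for unit $v$,
\[
\mathrm{Var}\bigl(v^T\mathbf{X}_N v\bigr)=\frac{2}{N}\sum_i v_i^4+\frac{4}{N}\sum_{i<j}v_i^2v_j^2=\frac{2}{N}\Bigl(\sum_i v_i^2\Bigr)^2=\frac{2}{N}.
\]
The discretization inequality $\max_i|\lambda_i|\le(1-2\epsilon)^{-1}\max_{v\in\mathcal{N}_\epsilon}|v^T\mathbf{X}_N v|$ is the standard one for symmetric matrices, obtained by writing $\langle\mathbf{X}u,u\rangle-\langle\mathbf{X}v,v\rangle=\langle\mathbf{X}(u+v),u-v\rangle$ and bounding by $2\epsilon\|\mathbf{X}\|$; the cardinality bound $|\mathcal{N}_\epsilon|\le(1+2/\epsilon)^N$ is the usual volume estimate; and the arithmetic correctly identifies $\epsilon<1/6$ as the threshold at which $\delta_\epsilon=(1-2\epsilon)^2/4-1/9>0$, after which the net penalty $\exp\{N\log(1+2/\epsilon)+\log 2\}$ is absorbed by $\exp\{Nm^2\delta_\epsilon\}$ once $m$ exceeds a fixed constant, uniformly in $N$. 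Your covering-number argument avoids any appeal to the explicit joint density of GOE eigenvalues and uses only Gaussian concentration on quadratic forms, which makes it a nice elementary alternative to the random-matrix-theoretic derivation in the cited source.
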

\begin{thm}
\label{thm:maxeigLDP}\cite[Theorem 6.2]{BDG} The maximal eigenvalues
$\lambda_{+}^{N}=\max_{i=1}^{N}\lambda_{i}^{N}$ satisfy the large
deviation principle in $\mathbb{R}$ with speed $N$ and the good
rate function 
\[
I^{+}(x)=\begin{cases}
\int_{2}^{x}\sqrt{(z/2)^{2}-1}dz, & x\geq2,\\
\infty, & otherwise.
\end{cases}
\]

\end{thm}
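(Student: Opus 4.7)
The plan is to follow the Coulomb-gas approach of Ben Arous--Dembo--Guionnet, using the explicit joint density of GOE eigenvalues together with the LDP for the empirical measure (Theorem \ref{lem:GOELD}) as black boxes.

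First, recall that the joint density of the (unordered) eigenvalues of an $N\times N$ GOE matrix is
\[
\rho_N(\lambda_1,\ldots,\lambda_N)=\frac{1}{Z_N}\prod_{i<j}|\lambda_i-\lambda_j|\exp\Bigl(-\frac{N}{4}\sum_{i=1}^N\lambda_i^2\Bigr).
\]
For the upper bound I would use the exchangeability bound $\mathbb{P}\{\lambda_+^N\ge x\}\le N\cdot\mathbb{P}\{\lambda_1\ge x\}$ (with $\lambda_1$ a single unordered eigenvalue) and write the marginal density as
\[
f_N(y)=\frac{1}{Z_N}\,e^{-Ny^2/4}\int_{\mathbb{R}^{N-1}}\prod_{j=2}^N|y-\lambda_j|\prod_{2\le i<j}|\lambda_i-\lambda_j|\exp\Bigl(-\frac{N}{4}\sum_{j\ge 2}\lambda_j^2\Bigr)d\lambda_2\cdots d\lambda_N.
\]
After the harmless rescaling $\lambda_j\mapsto\sqrt{(N-1)/N}\,\lambda_j$, the inner integral divided by $Z_{N-1}$ is an expectation with respect to the eigenvalues of an $(N-1)$-dimensional GOE matrix, up to factors that contribute only to the overall constant.

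I would then evaluate this expectation asymptotically using the LDP at speed $N^2$. Writing $\prod_{j\ge 2}|y-\lambda_j|=\exp\bigl((N-1)\int\log|y-\lambda|\,dL_{N-1}(\lambda)\bigr)$, the deviation of $L_{N-1}$ from the semicircle $\mu^*$ has probability decaying like $e^{-cN^2}$, whereas the functional being exponentiated is only of order $N$. Consequently the expectation concentrates around its value at $\mu^*$, giving $\tfrac{1}{N}\log\mathbb{E}[\,\cdot\,]\to\Omega(y)$ for $y$ outside the support of $\mu^*$. Combined with the Laplace evaluation of $\log(Z_N/Z_{N-1})$, which contributes the Euler--Lagrange constant $c^\ast=\tfrac12$ characterizing $\mu^*$, this yields
\[
\tfrac{1}{N}\log f_N(y)\to-\Bigl(\tfrac{y^2}{4}-\Omega(y)-\tfrac{1}{2}\Bigr)=:-I^+(y),\qquad y\ge 2.
\]
A direct computation using $\Omega'(y)=\tfrac{y}{2}-\tfrac{1}{2}\sqrt{y^2-4}$ (valid for $y>2$) gives $\tfrac{d}{dy}(\tfrac{y^2}{4}-\Omega(y))=\tfrac{1}{2}\sqrt{y^2-4}=\sqrt{(y/2)^2-1}$, confirming the stated form $I^+(y)=\int_2^y\sqrt{(z/2)^2-1}\,dz$. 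Integrating over $y\ge x$ and applying Laplace's method---noting that $I^+$ is strictly increasing on $[2,\infty)$ with $I^+(2)=0$---gives $\limsup N^{-1}\log\mathbb{P}\{\lambda_+^N\ge x\}\le -I^+(x)$, the factor $N$ from the union bound being absorbed into the $o(1)$ correction at speed $N$.

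For the matching lower bound I would restrict the $(N-1)$-fold integration to the joint event that one eigenvalue lies in $(x-\epsilon,x+\epsilon)$ and, simultaneously, $L_{N-1}$ is in a Lipschitz-bounded neighborhood of $\mu^*$ (which has probability $1-o(1)$ at any exponential scale), and run the same estimate in reverse to obtain $\liminf N^{-1}\log\mathbb{P}\{\lambda_+^N\in(x-\epsilon,x+\epsilon)\}\ge -I^+(x)-o_\epsilon(1)$. Exponential tightness for the upper tail via Lemma \ref{lem:maxEigBd} promotes the estimates to a full LDP, and for $x<2$ the trivial convergence $\mathbb{P}\{\lambda_+^N<x\}\to 0$ (from semicircle law) matches the rate function's value $I^+(x)=\infty$. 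The main technical obstacle is the Varadhan-type step of evaluating $\mathbb{E}\bigl[\exp\bigl((N-1)\int\log|y-\lambda|\,dL_{N-1}(\lambda)\bigr)\bigr]$: the integrand is unbounded near $\lambda=y$, so one must truncate the logarithm (as with $\log_{\epsilon}^{\kappa}$ in Section \ref{sec:Logarithmic-asymptotics}) and separately control the contribution of eigenvalues very close to $y$, using the Vandermonde repulsion together with Lemma \ref{lem:maxEigBd} to show the truncation error vanishes as $\epsilon\to 0$, $\kappa\to\infty$. A secondary bookkeeping concern is rigorously extracting the constant $c^\ast=\tfrac12$ from $\log(Z_N/Z_{N-1})$, which is cleanest via matching at the known value $I^+(2)=0$.
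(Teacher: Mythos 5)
The paper does not prove this theorem; it is imported wholesale from [BDG, Theorem 6.2] in Appendix~I and used as a black box (e.g.\ in (\ref{eq:z22}) and in the proof of Lemma \ref{lem:n7}). Your sketch therefore reconstructs the \emph{cited} proof rather than anything argued in the paper; within that scope it follows the standard Ben Arous--Dembo--Guionnet Coulomb-gas route and is sound: pull out one eigenvalue, express its marginal density as $Z_{N-1}/Z_N$ times a GOE$(N-1)$ average of $\exp\bigl((N-1)\int\log|y-\lambda|\,dL_{N-1}\bigr)$, then use the speed-$N^2$ LDP of Theorem \ref{lem:GOELD} to force $L_{N-1}$ to $\mu^*$ since the exponent is only of order $N$. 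The rate-function identity is correct: differentiating $\Omega$ from (\ref{eq:Omega}) gives $\Omega'(y)=\frac{y}{2}-\frac{1}{2}\sqrt{y^2-4}$ for $y>2$, hence $\frac{d}{dy}\left(\frac{y^2}{4}-\Omega(y)\right)=\sqrt{(y/2)^2-1}$, and the additive constant is pinned down by $1-\Omega(2)-\frac{1}{2}=0$. The two hurdles you flag --- the discontinuity of $\mu\mapsto\int\log|y-\lambda|\,d\mu$ in the weak topology, requiring the $\log_{\epsilon}^{\kappa}$ truncation, and the extraction of the constant from $Z_N/Z_{N-1}$ --- are indeed the substantive ones, and normalizing via $I^+(2)=0$ (justified by $\lambda_+^N\to 2$ almost surely) is a legitimate way to avoid explicit Selberg or Stirling bookkeeping. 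One slip of phrasing in the lower bound: the event ``one eigenvalue in $(x-\epsilon,x+\epsilon)$'' refers to the singled-out coordinate $\lambda_1=y$, not to one of the $N-1$ integrated variables; on those you should impose only that $L_{N-1}$ lies in a bounded-Lipschitz neighborhood of $\mu^*$ (in particular all lie below $2+\eta<x-\epsilon$), which is what forces $\lambda_1$ to be the maximum.
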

Next, we state the LDP satisfied by $L_{N}$ proved in \cite{BAG97}.
Let $M_{1}\left(\mathbb{R}\right)$ be the space of Borel probability
measures on $\mathbb{R},$ and endow it with the weak topology, which
is compatible with the Lipschitz bounded metric $d_{LU}\left(\cdot,\cdot\right)$,
defined by 
\[
d_{LU}\left(\mu,\mu'\right)=\sup_{f\in\mathcal{F}_{LU}}\left|\int_{\mathbb{R}}fd\mu-\int_{\mathbb{R}}fd\mu'\right|,
\]
where $\mathcal{F}_{LU}$ is the class of Lipschitz continuous functions
$f:\,\mathbb{R}\to\mathbb{R}$, with Lipschitz constant $1$ and uniform
bound $1$. The specific form of the rate function in the LDP is of
no importance to us and will therefore not be included in the statement
below. 
\begin{thm}
\label{lem:GOELD}\cite[Theorem 2.1.1]{BAG97}There exists a good
rate function $J\left(\mu\right)$, for which $J\left(\mu\right)=0$
if and only if $\mu=\mu^{*}$, where $\mu^{*}$ is the semicircle
law (see (\ref{eq:semicirc})), and such that the empirical measure
$L_{N}$ satisfies the large deviation principle on $M_{1}\left(\mathbb{R}\right)$
with speed $N^{2}$ and the rate function $J\left(\mu\right)$.
\end{thm}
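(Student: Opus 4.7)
The plan is to derive the LDP from the explicit joint density of GOE eigenvalues, following the classical Ben Arous--Guionnet approach. For an $N\times N$ matrix from the GOE with the normalization used in the paper (off-diagonal variance $1/N$, diagonal variance $2/N$), the joint law of the eigenvalues admits the explicit density
\[
\frac{1}{Z_N}\exp\left(-\frac{N}{4}\sum_{i=1}^{N}\lambda_i^{2}\right)\prod_{1\le i<j\le N}|\lambda_i-\lambda_j|,
\]
where $Z_N$ is a Selberg-type normalizing constant. Rewriting in terms of $L_N=\frac{1}{N}\sum_{i}\delta_{\lambda_i}$, the quadratic term becomes $-\frac{N^{2}}{4}\int x^{2}\,dL_N$, while the Vandermonde factor becomes $\frac{N^{2}}{2}\iint_{x\ne y}\log|x-y|\,dL_N(x)\,dL_N(y)$. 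Thus the law of $L_N$ on $M_1(\mathbb{R})$ has a Gibbsian form $Z_N^{-1}\exp(-N^{2}F(L_N))$, up to combinatorial pre-factors negligible at scale $N^{2}$, with
\[
F(\mu)=\frac{1}{4}\int x^{2}\,d\mu(x)-\frac{1}{2}\iint\log|x-y|\,d\mu(x)\,d\mu(y).
\]

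The heart of the argument is then a Laplace-type analysis producing the LDP at speed $N^{2}$ with rate function $J(\mu)=F(\mu)-\inf_{\nu}F(\nu)$. For the lower bound, I would approximate any compactly supported target $\mu$ by placing $N$ points on a fine grid calibrated to $\mu$ and directly lower-bound the probability that each eigenvalue lies in a small ball around its assigned grid point using the explicit density; this yields the correct exponential rate. For the upper bound, the main technical issue is the logarithmic singularity on the diagonal, which I would handle by truncation: replace $\log|x-y|$ by $\max(\log|x-y|,\log\epsilon)$, apply the continuous-mapping / Varadhan-type LDP for the resulting bounded continuous functional of $L_N$ (together with Sanov's theorem for the empirical measure of i.i.d.\ reference variables after a Girsanov-type change), and then let $\epsilon\downarrow 0$, using the confining quadratic to ensure tightness and that near-diagonal pairs contribute negligibly on the exponential scale. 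A direct Laplace evaluation (or a classical Selberg integral) delivers $\lim_N N^{-2}\log Z_N=-\inf_{\nu}F(\nu)$, which fixes the normalization inside $J$.

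Finally, one must check that $J$ is a good rate function and identify its unique zero. Lower semicontinuity of $F$ follows from lower semicontinuity of the quadratic term and from $-\log|x-y|$ being bounded below and lower semicontinuous on $\mathbb{R}^{2}$, while compact sublevel sets are obtained from the tightness furnished by the confinement $\int x^{2}\,d\mu$. The minimizer is characterized variationally by the Euler--Lagrange equation $\int\log|x-y|\,d\mu(y)=\tfrac{x^{2}}{4}+c$ on the support of $\mu$; by standard logarithmic potential theory (cf.\ \cite{logpotential}), its unique probability solution is the semicircle law $\mu^{*}$, yielding $J(\mu^{*})=0$ and uniqueness. The main obstacle is the logarithmic singularity in the upper bound, which forces a delicate double limit (taking $N\to\infty$ first, then $\epsilon\downarrow 0$) together with uniform control over the probability of small eigenvalue gaps; this is the only step that does not reduce to a routine continuous-mapping argument.
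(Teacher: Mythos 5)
Note first that the paper does not prove this theorem: it is cited verbatim from Ben Arous and Guionnet \cite{BAG97}, and no proof is given in the text. Your sketch reproduces, correctly in outline, the standard Ben Arous--Guionnet argument for that cited result: the Gibbsian form of the eigenvalue density at speed $N^{2}$, the Laplace analysis, the truncation of the logarithmic singularity for the upper bound, the grid-based lower bound, and the identification of the minimizer through the equilibrium problem of logarithmic potential theory. Two points in your sketch deserve correction, though neither is fatal. First, invoking Sanov's theorem with a ``Girsanov-type change'' is not how the argument runs and does not naturally fit: the Vandermonde tilt lives at scale $N^{2}$, while Sanov operates at scale $N$, so the relative-entropy contribution is sub-exponential at the relevant speed and the actual proof in \cite{BAG97} proceeds directly from the explicit density without any i.i.d.\ reference/Sanov step. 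Second, your claim that $-\log|x-y|$ is bounded below on $\mathbb{R}^{2}$ is false ($-\log|x-y|\to-\infty$ as $|x-y|\to\infty$); the standard route to lower semicontinuity is instead to write
\[
F(\mu)=\frac{1}{2}\iint\left[\frac{1}{4}\left(x^{2}+y^{2}\right)-\log|x-y|\right]d\mu(x)\,d\mu(y),
\]
observe that the bracketed kernel is bounded below and lower semicontinuous (the confinement term controls the large-separation regime), and then apply a monotone approximation/Fatou argument. With these two fixes your sketch matches the proof of the cited theorem.
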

We finish with a corollary of the main theorem of \cite{Miroslav}.
\begin{cor}
\cite{Miroslav}\label{cor:Miroslav} Let $\mathbf{C}_{1}$, $\mathbf{C}_{2}$
be two (deterministic) real, symmetric $N\times N$ matrices and let
$\lambda_{j}\left(\mathbf{C}_{i}\right)$ denote the eigenvalues of
$\mathbf{C}_{i}$, ordered with non-decreasing absolute value. Suppose
that the number of non-zero eigenvalues of $\mathbf{C}_{2}$ is $d$
at most. Then,
\[
\left|\det\left(\mathbf{C}_{1}+\mathbf{C}_{2}\right)\right|\leq\prod_{i=1}^{N}\left(\left|\lambda_{i}\left(\mathbf{C}_{1}\right)\right|+\left|\lambda_{i}\left(\mathbf{C}_{2}\right)\right|\right),
\]
and if $\left|\lambda_{1}\left(\mathbf{C}_{2}\right)\right|>0$,
\begin{equation}
\left|\det\left(\mathbf{C}_{1}+\mathbf{C}_{2}\right)\right|\leq\left|\det\left(\mathbf{C}_{1}\right)\right|\left(1+\frac{\left|\lambda_{N}\left(\mathbf{C}_{2}\right)\right|}{\left|\lambda_{1}\left(\mathbf{C}_{1}\right)\right|}\right)^{d}.\label{eq:miroslav2}
\end{equation}

\end{cor}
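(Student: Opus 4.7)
The plan is to take (\ref{eq:miroslav1}) directly as Fiedler's bound for $|\det(\mathbf{C}_{1}+\mathbf{C}_{2})|$ from the main theorem of \cite{Miroslav} (which is phrased for Hermitian matrices and so covers the real symmetric case at hand), and then to derive (\ref{eq:miroslav2}) from (\ref{eq:miroslav1}) by a one-line algebraic manipulation exploiting the rank hypothesis on $\mathbf{C}_{2}$.

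First I would observe that because the eigenvalues of $\mathbf{C}_{2}$ are ordered by non-decreasing absolute value and at most $d$ of them are non-zero, $|\lambda_{i}(\mathbf{C}_{2})|=0$ for every $i\leq N-d$. Substituting into (\ref{eq:miroslav1}) gives
\[
|\det(\mathbf{C}_{1}+\mathbf{C}_{2})|\leq \prod_{i=1}^{N-d}|\lambda_{i}(\mathbf{C}_{1})|\,\cdot\prod_{i=N-d+1}^{N}\bigl(|\lambda_{i}(\mathbf{C}_{1})|+|\lambda_{i}(\mathbf{C}_{2})|\bigr).
\]
For each index $i$ in the second product I would factor $|\lambda_{i}(\mathbf{C}_{1})|$ out of the sum; this is legitimate once the smallest absolute eigenvalue of $\mathbf{C}_{1}$ is non-zero (the natural reading of the hypothesis, since otherwise the denominator in (\ref{eq:miroslav2}) is ill-defined), which forces $|\lambda_{i}(\mathbf{C}_{1})|\geq|\lambda_{1}(\mathbf{C}_{1})|>0$ for every $i$. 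Using also $|\lambda_{i}(\mathbf{C}_{2})|\leq|\lambda_{N}(\mathbf{C}_{2})|$, each of the $d$ factors is bounded by
\[
|\lambda_{i}(\mathbf{C}_{1})|\left(1+\frac{|\lambda_{N}(\mathbf{C}_{2})|}{|\lambda_{1}(\mathbf{C}_{1})|}\right).
\]
Taking the product over $i=N-d+1,\ldots,N$ and combining with the leading product for $i\leq N-d$ reconstitutes $\prod_{i=1}^{N}|\lambda_{i}(\mathbf{C}_{1})|=|\det(\mathbf{C}_{1})|$ multiplied by $\left(1+|\lambda_{N}(\mathbf{C}_{2})|/|\lambda_{1}(\mathbf{C}_{1})|\right)^{d}$, which is exactly (\ref{eq:miroslav2}).

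The only substantive step is the quotation of Fiedler's inequality (\ref{eq:miroslav1}); the deduction of (\ref{eq:miroslav2}) is elementary, so I view this essentially as book-keeping on top of \cite{Miroslav}. In the applications of the corollary the positivity of $|\lambda_{1}(\mathbf{C}_{1})|$ required above is ensured by working on events where the spectrum of $\mathbf{C}_{1}$ is bounded away from zero (for instance in the proof of Lemma~\ref{lem:n7}, via the control on the shifted GOE spectra afforded by $u<-E_{\infty}(p)$).
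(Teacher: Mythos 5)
Your deduction of (\ref{eq:miroslav2}) from the first displayed inequality is algebraically sound, and you are right that the hypothesis must be read as $|\lambda_{1}(\mathbf{C}_{1})|>0$ rather than the printed $|\lambda_{1}(\mathbf{C}_{2})|>0$: the literal reading would contradict the low-rank assumption on $\mathbf{C}_{2}$ and would leave the denominator in (\ref{eq:miroslav2}) unaccounted for. The genuine gap lies one step earlier, in the assertion that the first displayed inequality \emph{is} Fiedler's bound verbatim. With both eigenvalue sequences ordered by non-decreasing absolute value and then paired index by index, that inequality is simply false: for $\mathbf{C}_{1}=\mathrm{diag}(0,1)$ and $\mathbf{C}_{2}=\mathrm{diag}(1,0)$ one has $\det(\mathbf{C}_{1}+\mathbf{C}_{2})=1$, whereas $(|\lambda_{1}(\mathbf{C}_{1})|+|\lambda_{1}(\mathbf{C}_{2})|)(|\lambda_{2}(\mathbf{C}_{1})|+|\lambda_{2}(\mathbf{C}_{2})|)=(0+0)(1+1)=0$. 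Fiedler's theorem places $\det(\mathbf{C}_{1}+\mathbf{C}_{2})$ in the convex hull of the products $\prod_{i}\bigl(\lambda_{i}(\mathbf{C}_{1})+\lambda_{\pi(i)}(\mathbf{C}_{2})\bigr)$ over permutations $\pi$, so after passing to absolute values the upper bound is $\max_{\pi}\prod_{i}\bigl(|\lambda_{i}(\mathbf{C}_{1})|+|\lambda_{\pi(i)}(\mathbf{C}_{2})|\bigr)$, and by the rearrangement inequality the maximizer is the \emph{reversed} pairing, $\prod_{i=1}^{N}\bigl(|\lambda_{i}(\mathbf{C}_{1})|+|\lambda_{N+1-i}(\mathbf{C}_{2})|\bigr)$, while the like-index product you quote is the minimizer and is not a valid upper bound. (A side remark: you cite a label \verb|eq:miroslav1| that does not exist in the source; only (\ref{eq:miroslav2}) is labelled.)

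This is not cosmetic. The paper's own application in Lemma~\ref{cor:r2pert} takes $\mathbf{C}_{1}=\dot{\mathbf{M}}_{N-1}^{(i)}(r)$, which has a zero eigenvalue, and $\mathbf{C}_{2}$ of rank at most $2$, so it also has zero eigenvalues; the like-index reading would then yield the vacuous $|\det(\mathbf{M}_{N-1}^{(i)}(r))|\le0$, whereas the reversed pairing produces exactly the displayed prefactor $W_{i}(W_{i}+T_{i})/T_{i}$. Fortunately your route to (\ref{eq:miroslav2}) survives the correction verbatim: with the reversed pairing the $d$ possibly non-zero eigenvalues of $\mathbf{C}_{2}$ are matched against $|\lambda_{1}(\mathbf{C}_{1})|,\dots,|\lambda_{d}(\mathbf{C}_{1})|$, and the bound $|\lambda_{N+1-i}(\mathbf{C}_{2})|/|\lambda_{i}(\mathbf{C}_{1})|\le|\lambda_{N}(\mathbf{C}_{2})|/|\lambda_{1}(\mathbf{C}_{1})|$ for $i\le d$ reconstitutes $|\det(\mathbf{C}_{1})|\bigl(1+|\lambda_{N}(\mathbf{C}_{2})|/|\lambda_{1}(\mathbf{C}_{1})|\bigr)^{d}$ just as in your argument.
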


\section{\label{sec:app-cov}Appendix II: Covariances, densities, and conditional
laws}

In this Appendix we study the covariance structure of 
\[
\left\{ f\left(\mathbf{n}\right),\nabla f\left(\mathbf{n}\right),\nabla^{2}f\left(\mathbf{n}\right),f\left(\boldsymbol{\sigma}\left(r\right)\right),\nabla f\left(\boldsymbol{\sigma}\left(r\right)\right),\nabla^{2}f\left(\boldsymbol{\sigma}\left(r\right)\right)\right\} ,
\]
where
\[
\boldsymbol{\sigma}\left(r\right)=\left(0,...,0,\sqrt{1-r^{2}},r\right),
\]
and prove Lemmas \ref{lem:dens} and \ref{lem:Hess_struct_2}. 

With the standard notation 
\[
\delta_{ij}=\begin{cases}
1 & \mbox{ if }i=j,\\
0 & \mbox{ otherwise},
\end{cases}
\]
in the lemma below we denote $\delta_{i=j}=\delta_{ij}$, $\delta_{i=j=k}=\delta_{ij}\delta_{jk}$,
$\delta_{i=j\neq k}=\delta_{ij}\left(1-\delta_{jk}\right)$, etc.
\begin{lem}
\label{lem:cov}For any $r\in\left[-1,1\right]$ there exists an orthonormal
frame field $E=\left(E_{i}\right)$ such that
\begin{align*}
\mbox{\ensuremath{\mathbb{E}}}\left\{ f\left(\mathbf{n}\right)f\left(\boldsymbol{\sigma}\left(r\right)\right)\right\}  & =r^{p},\\
\mbox{\ensuremath{\mathbb{E}}}\left\{ f\left(\mathbf{n}\right)E_{l}f\left(\boldsymbol{\sigma}\left(r\right)\right)\right\}  & =-\mbox{\ensuremath{\mathbb{E}}}\left\{ E_{l}f\left(\mathbf{n}\right)f\left(\boldsymbol{\sigma}\left(r\right)\right)\right\} =-pr^{p-1}\left(1-r^{2}\right)^{1/2}\delta_{l=N-1},\\
\mbox{\ensuremath{\mathbb{E}}}\left\{ f\left(\mathbf{n}\right)E_{k}E_{l}f\left(\boldsymbol{\sigma}\left(r\right)\right)\right\}  & =\mbox{\ensuremath{\mathbb{E}}}\left\{ E_{k}E_{l}f\left(\mathbf{n}\right)f\left(\boldsymbol{\sigma}\left(r\right)\right)\right\} =p\left(p-1\right)r^{p-2}\left(1-r^{2}\right)\delta_{l=k=N-1}-pr^{p}\delta_{k=l},\\
\mbox{\ensuremath{\mathbb{E}}}\left\{ E_{j}f\left(\mathbf{n}\right)E_{l}f\left(\boldsymbol{\sigma}\left(r\right)\right)\right\}  & =\left[pr^{p}-p\left(p-1\right)r^{p-2}\left(1-r^{2}\right)\right]\delta_{l=j=N-1}+pr^{p-1}\delta_{l=j\neq N-1},\\
\mbox{\ensuremath{\mathbb{E}}}\left\{ E_{j}f\left(\mathbf{n}\right)E_{k}E_{l}f\left(\boldsymbol{\sigma}\left(r\right)\right)\right\}  & =-\mbox{\ensuremath{\mathbb{E}}}\left\{ E_{k}E_{l}f\left(\mathbf{n}\right)E_{j}f\left(\boldsymbol{\sigma}\left(r\right)\right)\right\} =p\left(p-1\right)\left(p-2\right)r^{p-3}\left(1-r^{2}\right)^{3/2}\delta_{j=k=l=N-1}\\
 & -p\left(p-1\right)r^{p-2}\left(1-r^{2}\right)^{1/2}\times\\
 & \left[\left(\delta_{j=k\neq N-1}+r\delta_{j=k=N-1}\right)\delta_{l=N-1}+\left(\delta_{j=l<N-1}+r\delta_{j=l=N-1}\right)\delta_{k=N-1}\right]\\
 & -p^{2}r^{p-1}\left(1-r^{2}\right)^{1/2}\delta_{k=l}\delta_{j=N-1},\\
\mbox{\ensuremath{\mathbb{E}}}\left\{ E_{i}E_{j}f\left(\mathbf{n}\right)E_{k}E_{l}f\left(\boldsymbol{\sigma}\left(r\right)\right)\right\}  & =p\left(p-1\right)\left(p-2\right)\left(p-3\right)r^{p-4}\left(1-r^{2}\right)^{2}\delta_{i=j=k=l=N-1}\\
 & -p\left(p-1\right)\left(p-2\right)r^{p-3}\left(1-r^{2}\right)\left[4r\delta_{i=j=k=l=N-1}\right.\\
 & +r\delta_{i=j}\delta_{k=l=N-1}+r\delta_{i=j=N-1}\delta_{k=l}+\delta_{j=l=N-1}\delta_{i=k\neq N-1}\\
 & \left.+\delta_{i=k=N-1}\delta_{j=l\neq N-1}+\delta_{i=l=N-1}\delta_{j=k\neq N-1}+\delta_{j=k=N-1}\delta_{i=l\neq N-1}\right]\\
 & +p\left(p-1\right)r^{p-2}\\
 & \times\left[-2\left(1-r^{2}\right)\delta_{i=j=N-1}\delta_{k=l}+\left(\delta_{j=l\neq N-1}+r\delta_{j=l=N-1}\right)\left(\delta_{i=k\neq N-1}+r\delta_{i=k=N-1}\right)\right.\\
 & \left.+\left(\delta_{i=l\neq N-1}+r\delta_{i=l=N-1}\right)\left(\delta_{j=k\neq N-1}+r\delta_{j=k=N-1}\right)\right]\\
 & +p\left(p-1\right)r^{p-2}\left[-\left(1-r^{2}\right)\delta_{i=j}\delta_{l=k=N-1}+r^{2}\delta_{i=j}\delta_{k=l}\right]\\
 & -p\left(p-1\right)r^{p-2}\left(1-r^{2}\right)\delta_{i=j}\delta_{k=l=N-1}+pr^{p}\delta_{i=j}\delta_{k=l}.
\end{align*}

\end{lem}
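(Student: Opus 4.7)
My strategy is to reduce the computation of every covariance in the lemma to differentiations of the covariance kernel $K(\boldsymbol{\sigma},\boldsymbol{\sigma}')=\mathbb{E}\{f(\boldsymbol{\sigma})f(\boldsymbol{\sigma}')\}=\langle\boldsymbol{\sigma},\boldsymbol{\sigma}'\rangle^{p}$. The standard Gaussian identity says that for any first-order differential operators $L^{(1)}$ acting in the first slot and $L^{(2)}$ acting in the second, $\mathbb{E}\{L^{(1)}f(\boldsymbol{\sigma})\,L^{(2)}f(\boldsymbol{\sigma}')\}=L^{(1)}L^{(2)}K(\boldsymbol{\sigma},\boldsymbol{\sigma}')$; the same extends to second-order operators provided we know how the vector fields vary near the base points. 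Thus once an orthonormal frame is pinned down, each entry of the table becomes a concrete (if tedious) derivative of $\langle\boldsymbol{\sigma},\boldsymbol{\sigma}'\rangle^{p}$ evaluated at $(\mathbf{n},\boldsymbol{\sigma}(r))$.

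\textbf{Choice of frame.} I would pick the orthonormal frame $E=(E_{i})_{i=1}^{N-1}$ adapted to the meridian joining $\mathbf{n}$ and $\boldsymbol{\sigma}(r)$ in the $e_{N-1},e_{N}$ plane. Concretely: for $i\leq N-2$, $E_{i}(\boldsymbol{\sigma})$ is the projection of $e_{i}$ onto the tangent space $T_{\boldsymbol{\sigma}}\mathbb{S}^{N-1}$, normalized; and $E_{N-1}(\boldsymbol{\sigma})$ is the unit tangent to the meridian through $\boldsymbol{\sigma}$. At the two special points this gives $E_{i}(\mathbf{n})=e_{i}$ for all $i\leq N-1$, $E_{i}(\boldsymbol{\sigma}(r))=e_{i}$ for $i\leq N-2$, and $E_{N-1}(\boldsymbol{\sigma}(r))=(0,\dots,0,r,-\sqrt{1-r^{2}})$. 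Extending each $E_{j}$ smoothly to a vector field in a neighborhood of both points, I would also record the ambient derivatives $D_{E_{i}}E_{j}$ (viewed as $\mathbb{R}^{N}$-valued functions) at $\mathbf{n}$ and $\boldsymbol{\sigma}(r)$; these account for the connection terms that arise in $E_{i}E_{j}f$. By the symmetry of the chosen frame, the only nontrivial components of $D_{E_{i}}E_{j}(\boldsymbol{\sigma})$ lie along the normal $\boldsymbol{\sigma}$ and along $E_{N-1}(\boldsymbol{\sigma})$, and their values are elementary trigonometric expressions in $r$.

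\textbf{Differentiation and bookkeeping.} With the frame fixed, each covariance is obtained by writing $E_{j}g(\boldsymbol{\sigma})=\langle E_{j}(\boldsymbol{\sigma}),\nabla_{\boldsymbol{\sigma}}g\rangle$, so
\[
E_{j}^{(1)}E_{l}^{(2)}K=p(p-1)\langle\boldsymbol{\sigma},\boldsymbol{\sigma}'\rangle^{p-2}\langle E_{j}(\boldsymbol{\sigma}),\boldsymbol{\sigma}'\rangle\langle E_{l}(\boldsymbol{\sigma}'),\boldsymbol{\sigma}\rangle+p\langle\boldsymbol{\sigma},\boldsymbol{\sigma}'\rangle^{p-1}\langle E_{j}(\boldsymbol{\sigma}),E_{l}(\boldsymbol{\sigma}')\rangle,
\]
and similarly, iterated application produces a polynomial in $\langle\boldsymbol{\sigma},\boldsymbol{\sigma}'\rangle$, $\langle E_{i}(\boldsymbol{\sigma}),\boldsymbol{\sigma}'\rangle$, $\langle E_{k}(\boldsymbol{\sigma}'),\boldsymbol{\sigma}\rangle$, $\langle E_{i}(\boldsymbol{\sigma}),E_{k}(\boldsymbol{\sigma}')\rangle$, and inner products involving $D_{E_{i}}E_{j}$. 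Substituting $\boldsymbol{\sigma}=\mathbf{n}$, $\boldsymbol{\sigma}'=\boldsymbol{\sigma}(r)$ and using $\langle\mathbf{n},\boldsymbol{\sigma}(r)\rangle=r$, $\langle E_{i}(\mathbf{n}),\boldsymbol{\sigma}(r)\rangle=\sqrt{1-r^{2}}\,\delta_{i=N-1}$, $\langle E_{k}(\boldsymbol{\sigma}(r)),\mathbf{n}\rangle=-\sqrt{1-r^{2}}\,\delta_{k=N-1}$, and $\langle E_{i}(\mathbf{n}),E_{k}(\boldsymbol{\sigma}(r))\rangle=\delta_{i=k<N-1}+r\,\delta_{i=k=N-1}$, each of the covariances collapses to the stated closed form.

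\textbf{Main obstacle.} The principal difficulty is purely combinatorial: the last identity of the lemma, $\mathbb{E}\{E_{i}E_{j}f(\mathbf{n})\,E_{k}E_{l}f(\boldsymbol{\sigma}(r))\}$, requires keeping track of sixteen-odd terms arising from the product rule applied twice on each side, plus the connection corrections $D_{E_{i}}E_{j}$ at both base points. The calculation is mechanical but needs careful accounting of which indices can equal $N-1$ and which cannot; the Kronecker-delta patterns in the lemma statement are exactly what emerges after collapsing inner products using the coordinate expressions above. No new conceptual input is required beyond organizing the case analysis cleanly.
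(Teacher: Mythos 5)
Your overall strategy --- reduce every covariance to repeated differentiation of the kernel $\langle\boldsymbol\sigma,\boldsymbol\sigma'\rangle^{p}$ and substitute at the two base points --- is indeed the approach the paper takes. The gap is in the choice of frame, and it is not a cosmetic one: the quantities $E_{i}E_{j}f(\boldsymbol\sigma)$ (and hence their covariances) depend not only on the values $E_{i}(\boldsymbol\sigma)$, $E_{j}(\boldsymbol\sigma)$ at the base point but also on the first derivatives of the frame there, through $D_{E_{i}}E_{j}(\boldsymbol\sigma)$. The formulas stated in the lemma correspond to a frame whose tangential connection coefficients vanish at both $\mathbf{n}$ and $\boldsymbol\sigma(r)$. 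The paper achieves this by choosing the chart $P_{\mathbf{n}}$, verifying that its Christoffel symbols vanish at the origin, and then defining $E$ by parallel transport of the coordinate frame along geodesics from the base point (see the footnote in the paper's proof); this guarantees $E_{i}E_{j}f(\mathbf{n})=\partial_{i}\partial_{j}\bar f_{1}(0)$ and likewise at $\boldsymbol\sigma(r)$, so that the Hessian covariances come out as pure Euclidean derivatives of $C(x,y)=\rho(x,y)^{p}$.

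Your ``meridian-adapted'' frame does not have this property, and the discrepancy is not something that cancels. A direct computation with $E_{N-1}(\boldsymbol\sigma)=-\bigl(e_{N}-\sigma_{N}\boldsymbol\sigma\bigr)/\sqrt{1-\sigma_{N}^{2}}$ gives, for $k\le N-2$,
\begin{equation*}
D_{E_{k}}E_{N-1}\bigl(\boldsymbol\sigma(r)\bigr)=\frac{r}{\sqrt{1-r^{2}}}\,E_{k}\bigl(\boldsymbol\sigma(r)\bigr),
\end{equation*}
which is a nonzero \emph{tangential} vector for $r\neq 0$. Consequently, in your frame
\begin{equation*}
E_{k}E_{N-1}f\bigl(\boldsymbol\sigma(r)\bigr)=\bigl(\mathrm{Hess}\, f\bigr)_{k,N-1}\bigl(\boldsymbol\sigma(r)\bigr)+\frac{r}{\sqrt{1-r^{2}}}\,E_{k}f\bigl(\boldsymbol\sigma(r)\bigr),
\end{equation*}
and, for instance, $\mathbb{E}\bigl\{E_{j}f(\mathbf{n})\,E_{k}E_{N-1}f(\boldsymbol\sigma(r))\bigr\}$ with $j=k\le N-2$ picks up the extra summand $\frac{r}{\sqrt{1-r^{2}}}\cdot pr^{p-1}=\frac{pr^{p}}{\sqrt{1-r^{2}}}$ on top of the lemma's $-p(p-1)r^{p-2}\sqrt{1-r^{2}}$. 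So computing carefully with your frame and including all $D_{E_{i}}E_{j}$ corrections, as you propose, would produce \emph{different} (also correct, but different) covariances, not the ones the lemma asserts. The statement ``the Kronecker-delta patterns in the lemma statement are exactly what emerges'' is therefore not something you can take for granted; it is the whole substance of the frame-selection step, and your frame fails it. A smaller but genuine issue: for $i,j\le N-2$ with $i\neq j$, the normalized projections of $e_{i}$, $e_{j}$ onto $T_{\boldsymbol\sigma}\mathbb{S}^{N-1}$ satisfy $\langle E_{i},E_{j}\rangle=-\sigma_{i}\sigma_{j}/\bigl(\cdot\bigr)\neq 0$ away from the two base points, so as written the family is not even an orthonormal \emph{frame field}, which you need in order to form $E_{i}E_{j}f$ at all. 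To repair the argument you should either adopt the paper's parallel-transport (geodesic normal) frame, for which the Christoffel symbols vanish at the base points, or else explicitly verify that your chosen orthonormal extension has vanishing tangential $D_{E_{i}}E_{j}$ there before asserting the lemma's identities.
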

Note that $r=1$ corresponds to the case $\boldsymbol{\sigma}\left(r\right)=\mathbf{n}$.
(This is the case considered in \cite[Lemma 3.2]{A-BA-C}.)
\begin{proof}
We begin by defining the orthonormal frame field $E$. Let $r\in[-1,1]$
and let $P_{\mathbf{n}}:\,\mathbb{S}^{N-1}\to\mathbb{R}^{N-1}$ be
the projection to $\mathbb{R}^{N-1}$,
\[
P_{\mathbf{n}}\left(x_{1},...,x_{N}\right)=\left(x_{1},...,x_{N-1}\right),
\]
set $\theta\in\left[-\pi/2,\pi/2\right]$ to be the angle such that
$\sin\theta=r$, and let $R_{\theta}$ be the rotation mapping 
\[
R_{\theta}\left(x_{1},...,x_{N}\right)=\left(x_{1},...,x_{N-2},\sin\theta\cdot x_{N-1}+\cos\theta\cdot x_{N},-\cos\theta\cdot x_{N-1}+\sin\theta\cdot x_{N}\right).
\]
Let $U$ and $V$ be neighborhoods of $\mathbf{n}$ and $\boldsymbol{\sigma}\left(r\right)$,
respectively. Assuming $U$ and $V$ are small enough, the restrictions
of $P_{\mathbf{n}}$ and $P_{\mathbf{n}}\circ R_{-\theta}$ to $U$
and $V$, respectively, are coordinate systems. 

On $\mbox{Im}\left(P_{\mathbf{n}}\right)$ and $\mbox{Im}\left(P_{\mathbf{n}}\circ R_{-\theta}\right)$,
the images of the charts above, define 
\[
\bar{f}_{1}=f\circ P_{\mathbf{n}}^{-1}\mbox{\,\,\,\ and\,\,\,}\bar{f}_{2}=f\circ\left(P_{\mathbf{n}}\circ R_{-\theta}\right)^{-1}.
\]
We let $E=(E_{i})$ be an orthonormal frame field on the sphere such
that (under the notation (\ref{eq:grad_Hess}))%
\footnote{The fact that such frame field exists can be seen from the following.
If we let $\left\{ \frac{\partial}{\partial x_{i}}\right\} _{i=1}^{N-1}$
be the pull-back of $\left\{ \frac{d}{dx_{i}}\right\} _{i=1}^{N-1}$
by $P_{\mathbf{n}}$, then $\left\{ \frac{\partial}{\partial x_{i}}(\mathbf{n})\right\} _{i=1}^{N-1}$
is an orthonormal frame at the north pole. For any point in $U$ we
can define an orthonormal frame as the parallel transport of $\left\{ \frac{\partial}{\partial x_{i}}(\mathbf{n})\right\} _{i=1}^{N-1}$
along a geodesic from $\mathbf{n}$ to that point. This yields an
orthonormal frame field on $U$, say $E_{i}(\boldsymbol{\sigma})=\sum_{j=1}^{N-1}a_{ij}(\boldsymbol{\sigma})\frac{\partial}{\partial x_{j}}(\boldsymbol{\sigma})$,
$i=1,...,N-1$. Working with the coordinate system $P_{\mathbf{n}}$
one can verify that at $x=0$ the Christoffel symbols $\Gamma_{ij}^{k}$
are equal to $0$, and therefore (see e.g. \cite[Eq. (2), P. 53]{DoCarmo})
the derivatives $\frac{d}{dx_{k}}a_{ij}(P_{\mathbf{n}}^{-1}(x))$
at $x=0$ are also equal to $0$. If $r=1$, i.e., $\boldsymbol{\sigma}\left(r\right)=\mathbf{n}$,
extend the orthonormal frame field $E_{i}(\boldsymbol{\sigma})$ to
the sphere arbitrarily. Otherwise, assume $U$ and $V$ are disjoint
and construct the frame field on $V$ similarly to $U$ and then extend
it to the sphere. %
} 
\begin{align*}
\left\{ f\left(\mathbf{n}\right),\nabla f\left(\mathbf{n}\right),\nabla^{2}f\left(\mathbf{n}\right)\right\}  & =\left\{ \bar{f}_{1}\left(0\right),\nabla\bar{f}_{1}\left(0\right),\nabla^{2}\bar{f}_{1}\left(0\right)\right\} ,\\
\left\{ f\left(\boldsymbol{\sigma}\left(r\right)\right),\nabla f\left(\boldsymbol{\sigma}\left(r\right)\right),\nabla^{2}f\left(\boldsymbol{\sigma}\left(r\right)\right)\right\}  & =\left\{ \bar{f}_{2}\left(0\right),\nabla\bar{f}_{2}\left(0\right),\nabla^{2}\bar{f}_{2}\left(0\right)\right\} ,
\end{align*}
where in $\mathbb{R}^{N-1}$, $\nabla\bar{f}_{i}$ and $\nabla^{2}\bar{f}_{i}$
are the usual gradient and Hessian.

Define $C\left(x,y\right)=\mbox{Cov}\left\{ \bar{f}_{1}\left(x\right),\bar{f}_{2}\left(y\right)\right\} $
on $\mbox{Im}\left(P_{\mathbf{n}}\right)\times\mbox{Im}\left(P_{\mathbf{n}}\circ R_{-\theta}\right)$,
and note that
\begin{align*}
C\left(x,y\right) & =\left(\rho\left(x,y\right)\right)^{p}\triangleq\left\langle P_{\mathbf{n}}^{-1}\left(x\right),\left(P_{\mathbf{n}}\circ R_{-\theta}\right)^{-1}\left(y\right)\right\rangle ^{p}\\
 & =\Big(\sum_{i=1}^{N-2}x_{i}y_{i}+rx_{N-1}y_{N-1}+\sqrt{1-r^{2}}x_{N-1}\sqrt{1-\left\langle y,y\right\rangle }\\
 & \,\,\,+r\sqrt{1-\left\langle x,x\right\rangle }\sqrt{1-\left\langle y,y\right\rangle }-\sqrt{1-r^{2}}y_{N-1}\sqrt{1-\left\langle x,x\right\rangle }\Big)^{p}.
\end{align*}
The lemma follows by a (straightforward, but long) computation of
the corresponding derivatives, using the well-known formula (cf. \cite[eq. (5.5.4)]{RFG}),
\[
\mbox{Cov}\left\{ \frac{d^{k}}{dx_{i_{1}}\cdots dx_{i_{k}}}\bar{f}_{1}\left(x\right),\frac{d^{l}}{dy_{i_{1}}\cdots dy_{i_{l}}}\bar{f}_{2}\left(y\right)\right\} =\frac{d^{k}}{dx_{i_{1}}\cdots dx_{i_{k}}}\frac{d^{l}}{dy_{i_{1}}\cdots dy_{i_{l}}}C\left(x,y\right).
\]

\end{proof}
The variables in Lemma \ref{lem:cov} are jointly Gaussian. Now that
we have their covariances, the required conditional laws can be computed
using the well-known formulas for the Gaussian conditional distribution
(see \cite[p. 10-11]{RFG}). We shall need the following notation.

Define, for any $r\in\left(-1,1\right)$,
\[
\begin{array}{ll}
a_{1}\left(r\right)=\frac{1}{p\left(1-r^{2p-2}\right)}, & a_{2}\left(r\right)=\frac{1}{p\left[1-\left(r^{p}-\left(p-1\right)r^{p-2}\left(1-r^{2}\right)\right)^{2}\right]},\\
a_{3}\left(r\right)=\frac{-r^{p-1}}{p\left(1-r^{2p-2}\right)}, & a_{4}\left(r\right)=\frac{-r^{p}+\left(p-1\right)r^{p-2}\left(1-r^{2}\right)}{p\left[1-\left(r^{p}-\left(p-1\right)r^{p-2}\left(1-r^{2}\right)\right)^{2}\right]},\\
b_{1}\left(r\right)=-p & b_{2}\left(r\right)=-pr^{p}\\
\,\,+a_{2}\left(r\right)p^{3}r^{2p-2}\left(1-r^{2}\right), & \,\,-a_{4}\left(r\right)p^{3}r^{2p-2}\left(1-r^{2}\right),\\
b_{3}\left(r\right)= & b_{4}\left(r\right)=p\left(p-1\right)r^{p-2}\left(1-r^{2}\right)\\
\,\, a_{2}\left(r\right)p^{2}\left(p-1\right)r^{2p-4}\left(1-r^{2}\right)\left[-\left(p-2\right)+pr^{2}\right], & \,\,-a_{4}\left(r\right)p^{2}\left(p-1\right)r^{2p-4}\left(1-r^{2}\right)\left[-\left(p-2\right)+pr^{2}\right].
\end{array}
\]

Define $\Sigma_{U}\left(r\right)=\left(\Sigma_{U,ij}\left(r\right)\right)_{i,j=1}^{2,2}$
by 
\begin{equation}
\Sigma_{U}\left(r\right)=-\frac{1}{p}\left(\begin{array}{cc}
b_{1}\left(r\right) & b_{2}\left(r\right)\\
b_{2}\left(r\right) & b_{1}\left(r\right)
\end{array}\right),\label{eq:26}
\end{equation}
and define $\Sigma_{Z}\left(r\right)=\left(\Sigma_{Z,ij}\left(r\right)\right)_{i,j=1}^{2,2}$
and $\Sigma_{Q}\left(r\right)=\left(\Sigma_{Q,ij}\left(r\right)\right)_{i,j=1}^{2,2}$
by 
\begin{align}
\Sigma_{Z,11}\left(r\right) & =\Sigma_{Z,22}\left(r\right)=p\left(p-1\right)-a_{1}\left(r\right)p^{2}\left(p-1\right)^{2}r^{2p-4}\left(1-r^{2}\right),\nonumber \\
\Sigma_{Z,12}\left(r\right) & =\Sigma_{Z,21}\left(r\right)=p\left(p-1\right)^{2}r^{p-1}-p\left(p-1\right)\left(p-2\right)r^{p-3}+a_{3}\left(r\right)p^{2}\left(p-1\right)^{2}r^{2p-4}\left(1-r^{2}\right),\nonumber \\
\Sigma_{Q,11}\left(r\right) & =\Sigma_{Q,22}\left(r\right)=2p\left(p-1\right)-a_{2}\left(r\right)\left(1-r^{2}\right)\left[p\left(p-1\right)r^{p-3}\left(pr^{2}-\left(p-2\right)\right)\right]^{2}\nonumber \\
 & -\left(b_{3}\left(r\right),\, b_{4}\left(r\right)\right)\left(\Sigma_{U}\left(r\right)\right)^{-1}\left(\begin{array}{c}
b_{3}\left(r\right)\\
b_{4}\left(r\right)
\end{array}\right).\nonumber \\
\Sigma_{Q,12}\left(r\right) & =\Sigma_{Q,21}\left(r\right)=p^{4}r^{p}-2p\left(p-1\right)\left(p^{2}-2p+2\right)r^{p-2}+p\left(p-1\right)\left(p-2\right)\left(p-3\right)r^{p-4}\nonumber \\
 & +a_{4}\left(r\right)p^{2}r^{2p-6}\left(1-r^{2}\right)\left(p^{2}r^{2}-\left(p-1\right)\left(p-2\right)\right)^{2}\nonumber \\
 & -\left(b_{1}\left(r\right)+b_{3}\left(r\right),\, b_{2}\left(r\right)+b_{4}\left(r\right)\right)\left(\Sigma_{U}\left(r\right)\right)^{-1}\left(\begin{array}{c}
b_{2}\left(r\right)+b_{4}\left(r\right)\\
b_{1}\left(r\right)+b_{3}\left(r\right)
\end{array}\right).\label{eq:86}
\end{align}
Lastly, define 
\begin{align}
m_{1}\left(r,u_{1},u_{2}\right) & =\left(b_{3}\left(r\right),b_{4}\left(r\right)\right)\left(\Sigma_{U}\left(r\right)\right)^{-1}\left(u_{1},u_{2}\right)^{T},\label{eq:m_i}\\
m_{2}\left(r,u_{1},u_{2}\right) & =m_{1}\left(r,u_{2},u_{1}\right).\nonumber 
\end{align}

\begin{rem}
\label{rem:a}By standard analysis $1\pm\left(pr^{p}-\left(p-1\right)r^{p-2}\right)$,
and thus the denominators of $a_{i}\left(r\right)$ above, are positive
for any $r\in\left(-1,1\right)$. It is straightforward to verify
that
\begin{align*}
 & \left(\Sigma_{U,11}\left(r\right)\pm\Sigma_{U,12}\left(r\right)\right)\left(1\mp\left(pr^{p}-\left(p-1\right)r^{p-2}\right)\right)\\
 & =\left(1-r^{2}\right)(p-1)\left[\frac{1+r^{2}+\cdots+r^{2p-4}}{p-1}\pm r^{p-2}\right].
\end{align*}
Thus, from (\ref{eq:bd1}), $\Sigma_{U,11}\left(r\right)\pm\Sigma_{U,12}\left(r\right)>0$
for any $r\in\left(-1,1\right)$. Since these are the two eigenvalues
of $\Sigma_{U}\left(r\right)$, it is strictly positive definite for
$r\in\left(-1,1\right)$. In Lemma \ref{lem:non-degeneracy} we shall
prove that $\Sigma_{Z}\left(r\right)$ is strictly positive definite
for $r\in\left(-1,1\right)$. In the proof of Lemmas \ref{lem:dens}
and \ref{lem:Hess_struct_2}, we show that $\Sigma_{Q}\left(r\right)$
is semi positive definite.

Finally, we turn to the proof of Lemmas \ref{lem:dens} and \ref{lem:Hess_struct_2}.
\end{rem}

\subsection{\label{sub:Proof-of-Lemmas}Proof of Lemmas \ref{lem:dens} and \ref{lem:Hess_struct_2}}

Fix $r\in\left(-1,1\right)$ and let $E$ be the orthonormal frame
field defined in the proof of Lemma \ref{lem:cov}. We remind the
reader that 
\[
\nabla f_{N}\left(\boldsymbol{\sigma}\right)=\left(E_{i}f_{N}\left(\boldsymbol{\sigma}\right)\right)_{i=1}^{N-1},\,\,\nabla^{2}f_{N}\left(\boldsymbol{\sigma}\right)=\left(E_{i}E_{j}f_{N}\left(\boldsymbol{\sigma}\right)\right)_{i,j=1}^{N-1}.
\]
Assume all vectors in the proof are column vectors and denote the
concatenation of any two vectors $v_{1}$, $v_{2}$ by $\left(v_{1};v_{2}\right)$.
The covariance matrix of the vector $\left(\nabla f\left(\mathbf{n}\right);\nabla f\left(\boldsymbol{\sigma}\left(r\right)\right)\right)$
can be extracted from Lemma \ref{lem:cov}. By standard calculations,
one can prove (\ref{eq:grad_dens}) and show that the inverse of the
covariance matrix is the block matrix
\[
G\left(r\right)=\left(\begin{array}{cc}
a_{1}\left(r\right)I_{N-1}+\left(a_{2}\left(r\right)-a_{1}\left(r\right)\right)e_{N-1,N-1} & a_{3}\left(r\right)I_{N-1}+\left(a_{4}\left(r\right)-a_{3}\left(r\right)\right)e_{N-1,N-1}\\
a_{3}\left(r\right)I_{N-1}+\left(a_{4}\left(r\right)-a_{3}\left(r\right)\right)e_{N-1,N-1} & a_{1}\left(r\right)I_{N-1}+\left(a_{2}\left(r\right)-a_{1}\left(r\right)\right)e_{N-1,N-1}
\end{array}\right),
\]
where $I_{N-1}$ is the $N-1\times N-1$ identity matrix and where
$e_{N-1,N-1}$ is the $N-1\times N-1$ matrix whose $N-1\times N-1$
element is $1$ and all others are $0$.

For any random vector $V$ let $\mathbb{E}V$ denote the corresponding
vector of expectations. From Lemma \ref{lem:cov}, denoting by $e_{i}$
the $1\times\left(2N-2\right)$ vector with the $i$-th entry equal
to $1$ and all others equal to $0$, we obtain
\begin{align*}
 & \mbox{\ensuremath{\mathbb{E}}}\left\{ f\left(\mathbf{n}\right)\cdot\left(\nabla f\left(\mathbf{n}\right);\nabla f\left(\boldsymbol{\sigma}\left(r\right)\right)\right)\right\} =-pr^{p-1}\left(1-r^{2}\right)^{1/2}e_{2N-2},\\
 & \mbox{\ensuremath{\mathbb{E}}}\left\{ f\left(\boldsymbol{\sigma}\left(r\right)\right)\cdot\left(\nabla f\left(\mathbf{n}\right);\nabla f\left(\boldsymbol{\sigma}\left(r\right)\right)\right)\right\} =pr^{p-1}\left(1-r^{2}\right)^{1/2}e_{N-1},\\
 & \mbox{\ensuremath{\mathbb{E}}}\left\{ E_{i}E_{j}f\left(\mathbf{n}\right)\cdot\left(\nabla f\left(\mathbf{n}\right);\nabla f\left(\boldsymbol{\sigma}\left(r\right)\right)\right)\right\} \\
 & \quad=\begin{cases}
0 & ,\left|\left\{ i,j,N-1\right\} \right|=3\\
p^{2}r^{p-1}\left(1-r^{2}\right)^{1/2}e_{2N-2} & ,i=j\neq N-1\\
p\left(p-1\right)r^{p-2}\left(1-r^{2}\right)^{1/2}e_{N-1+i} & ,i\neq j=N-1\\
p\left(p-1\right)r^{p-2}\left(1-r^{2}\right)^{1/2}e_{N-1+j} & ,j\neq i=N-1\\
\left(1-r^{2}\right)^{1/2}\left(p^{3}r^{p-1}-p\left(p-1\right)\left(p-2\right)r^{p-3}\right)e_{2N-2} & ,i=j=N-1,
\end{cases}\\
 & \mbox{\ensuremath{\mathbb{E}}}\left\{ E_{i}E_{j}f\left(\boldsymbol{\sigma}\left(r\right)\right)\cdot\left(\nabla f\left(\mathbf{n}\right);\nabla f\left(\boldsymbol{\sigma}\left(r\right)\right)\right)\right\} \\
 & \quad=\begin{cases}
0 & ,\left|\left\{ i,j,N-1\right\} \right|=3\\
-p^{2}r^{p-1}\left(1-r^{2}\right)^{1/2}e_{N-1} & ,i=j\neq N-1\\
-p\left(p-1\right)r^{p-2}\left(1-r^{2}\right)^{1/2}e_{i} & ,i\neq j=N-1\\
-p\left(p-1\right)r^{p-2}\left(1-r^{2}\right)^{1/2}e_{j} & ,j\neq i=N-1\\
-\left(1-r^{2}\right)^{1/2}\left(p^{3}r^{p-1}-p\left(p-1\right)\left(p-2\right)r^{p-3}\right)e_{N-1} & ,i=j=N-1.
\end{cases}
\end{align*}

Denoting by $\mbox{Cov}_{\nabla f}\left\{ X,Y\right\} $ the covariance
of two random variables $X$, $Y$ conditional on $\nabla f\left(\mathbf{n}\right)=\nabla f\left(\boldsymbol{\sigma}\left(r\right)\right)=0$
(and the covariance with no conditioning by $\mbox{Cov}\left\{ X,Y\right\} $),
we have (cf. \cite[p. 10-11]{RFG})
\[
\mbox{Cov}_{\nabla f}\left\{ X,Y\right\} =\mbox{Cov}\left\{ X,Y\right\} -\left(\mbox{\ensuremath{\mathbb{E}}}\left\{ X\cdot\left(\nabla f\left(\mathbf{n}\right);\nabla f\left(\boldsymbol{\sigma}\left(r\right)\right)\right)\right\} \right)^{T}G\left(r\right)\mbox{\ensuremath{\mathbb{E}}}\left\{ Y\cdot\left(\nabla f\left(\mathbf{n}\right);\nabla f\left(\boldsymbol{\sigma}\left(r\right)\right)\right)\right\} .
\]

Thus, under the conditioning, $f\left(\mathbf{n}\right)$, $f\left(\boldsymbol{\sigma}\left(r\right)\right)$,
$\nabla^{2}f\left(\mathbf{n}\right)$, and $\nabla^{2}f\left(\boldsymbol{\sigma}\left(r\right)\right)$
are jointly Gaussian and centered, and, by straightforward calculations,
\begin{align*}
\mbox{Cov}_{\nabla f}\left\{ f\left(\mathbf{n}\right),f\left(\mathbf{n}\right)\right\}  & =\mbox{Cov}_{\nabla f}\left\{ f\left(\boldsymbol{\sigma}\left(r\right)\right),f\left(\boldsymbol{\sigma}\left(r\right)\right)\right\} =\Sigma_{U,11}\left(r\right),\\
\mbox{Cov}_{\nabla f}\left\{ f\left(\mathbf{n}\right),f\left(\boldsymbol{\sigma}\left(r\right)\right)\right\}  & =\Sigma_{U,12}\left(r\right),\\
\mbox{Cov}_{\nabla f}\left\{ f\left(\mathbf{n}\right),E_{i}E_{j}f\left(\mathbf{n}\right)\right\}  & =\mbox{Cov}_{\nabla f}\left\{ f\left(\boldsymbol{\sigma}\left(r\right)\right),E_{i}E_{j}f\left(\boldsymbol{\sigma}\left(r\right)\right)\right\} =\delta_{ij}\left(b_{1}\left(r\right)+\delta_{i,N-1}b_{3}\left(r\right)\right),\\
\mbox{Cov}_{\nabla f}\left\{ f\left(\mathbf{n}\right),E_{i}E_{j}f\left(\boldsymbol{\sigma}\left(r\right)\right)\right\}  & =\mbox{Cov}_{\nabla f}\left\{ f\left(\boldsymbol{\sigma}\left(r\right)\right),E_{i}E_{j}f\left(\mathbf{n}\right)\right\} =\delta_{ij}\left(b_{2}\left(r\right)+\delta_{i,N-1}b_{4}\left(r\right)\right),
\end{align*}
\begin{align}
 & \mbox{Cov}_{\nabla f}\left\{ E_{i}E_{j}f\left(\mathbf{n}\right),E_{k}E_{l}f\left(\mathbf{n}\right)\right\} =\mbox{Cov}_{\nabla f}\left\{ E_{i}E_{j}f\left(\boldsymbol{\sigma}\left(r\right)\right),E_{k}E_{l}f\left(\boldsymbol{\sigma}\left(r\right)\right)\right\} \label{eq:cov1}\\
 & =\begin{cases}
2\delta_{ik}p\left(p-1\right)-pb_{1}\left(r\right)-pb_{3}\left(r\right)\left(\delta_{i,N-1}+\delta_{k,N-1}\right)\\
-\delta_{i,N-1}\delta_{k,N-1}a_{2}\left(r\right)\left(1-r^{2}\right)\left[p\left(p-1\right)r^{p-3}\left(pr^{2}-\left(p-2\right)\right)\right]^{2} & ,\, i=j,\, k=l,\\
p\left(p-1\right) & ,\, i=k\neq j=l,\, N-1\notin\left\{ i,j\right\} ,\\
\Sigma_{Z,11}\left(r\right) & ,\, i=k\neq j=l,\, N-1\in\left\{ i,j\right\} ,\\
0 & ,\,\mbox{if }\left|\left\{ i,j,k,l\right\} \right|\geq3,
\end{cases}\nonumber \\
 & \mbox{Cov}_{\nabla f}\left\{ E_{i}E_{i}f\left(\mathbf{n}\right),E_{j}E_{j}f\left(\boldsymbol{\sigma}\left(r\right)\right)\right\} \nonumber \\
 & =\begin{cases}
-pb_{2}\left(r\right)-pb_{4}\left(r\right)\left(\delta_{i,N-1}+\delta_{j,N-1}\right) & ,\, i\neq j\\
-pb_{2}\left(r\right)+2p\left(p-1\right)r^{p-2} & ,\, i=j\neq N-1\\
p^{4}r^{p}-2p\left(p-1\right)\left(p^{2}-2p+2\right)r^{p-2}+p\left(p-1\right)\left(p-2\right)\left(p-3\right)r^{p-4}\\
+a_{4}\left(r\right)p^{2}r^{2p-6}\left(1-r^{2}\right)\left(p^{2}r^{2}-\left(p-1\right)\left(p-2\right)\right)^{2}, & ,\, i=j=N-1,
\end{cases}\nonumber \\
 & \mbox{Cov}_{\nabla f}\left\{ E_{i}E_{j}f\left(\mathbf{n}\right),E_{i}E_{j}f\left(\boldsymbol{\sigma}\left(r\right)\right)\right\} \nonumber \\
 & =\begin{cases}
p\left(p-1\right)r^{p-2} & ,\,\left|\left\{ i,j,N-1\right\} \right|=3\\
\Sigma_{Z,12}\left(r\right) & ,\,\left|\left\{ i,j,N-1\right\} \right|=2,\, i\neq j,
\end{cases}\nonumber \\
 & \mbox{Cov}_{\nabla f}\left\{ E_{i}E_{j}f\left(\mathbf{n}\right),E_{k}E_{l}f\left(\boldsymbol{\sigma}\left(r\right)\right)\right\} =0,\mbox{ \,\,\ if }\left|\left\{ i,j,k,l\right\} \right|\geq3.\nonumber 
\end{align}
Note that, in particular, this shows that the law of $\left(f\left(\mathbf{n}\right),f\left(\boldsymbol{\sigma}\left(r\right)\right)\right)$
under the conditioning is as stated in the lemma. Also, from the above
it follows that $\Sigma_{Z}\left(r\right)$ is positive definite for
any $r\in\left(-1,1\right)$.

Let $\mbox{Cov}_{f,\nabla f}\left\{ X,Y\right\} $ denote the covariance
of two random variables $X$, $Y$ conditional on 
\begin{equation}
\nabla f\left(\mathbf{n}\right)=\nabla f\left(\boldsymbol{\sigma}\left(r\right)\right)=0,\, f\left(\mathbf{n}\right)=u_{1},\, f\left(\boldsymbol{\sigma}\left(r\right)\right)=u_{2}.\label{eq:cond}
\end{equation}
 (which is independent of the values $u_{i}$) Note that
\begin{align*}
 & \mbox{Cov}_{f,\nabla f}\left\{ X,Y\right\} =\mbox{Cov}_{\nabla f}\left\{ X,Y\right\} \\
 & -\left(\mbox{Cov}_{\nabla f}\left\{ X,f\left(\mathbf{n}\right)\right\} ,\mbox{Cov}_{\nabla f}\left\{ X,f\left(\boldsymbol{\sigma}\left(r\right)\right)\right\} \right)\left(\Sigma_{U}\left(r\right)\right)^{-1}\mbox{\ensuremath{\mathbb{E}}}\left(\mbox{Cov}_{\nabla f}\left\{ X,f\left(\mathbf{n}\right)\right\} ,\mbox{Cov}_{\nabla f}\left\{ X,f\left(\boldsymbol{\sigma}\left(r\right)\right)\right\} \right)^{T}.
\end{align*}

Clearly,
\begin{align*}
\left(b_{1}\left(r\right),\, b_{2}\left(r\right)\right)\left(\Sigma_{U}\left(r\right)\right)^{-1} & =-p\left(1,0\right),\\
\left(b_{2}\left(r\right),\, b_{1}\left(r\right)\right)\left(\Sigma_{U}\left(r\right)\right)^{-1} & =-p\left(0,1\right).
\end{align*}
Thus,
\begin{align*}
 & \mbox{Cov}_{f,\nabla f}\left\{ E_{i}E_{j}f\left(\mathbf{n}\right),E_{k}E_{l}f\left(\mathbf{n}\right)\right\} -\mbox{Cov}_{\nabla f}\left\{ E_{i}E_{j}f\left(\mathbf{n}\right),E_{k}E_{l}f\left(\mathbf{n}\right)\right\} \\
 & =\mbox{Cov}_{f,\nabla f}\left\{ E_{i}E_{j}f\left(\boldsymbol{\sigma}\left(r\right)\right),E_{k}E_{l}f\left(\boldsymbol{\sigma}\left(r\right)\right)\right\} -\mbox{Cov}_{\nabla f}\left\{ E_{i}E_{j}f\left(\boldsymbol{\sigma}\left(r\right)\right),E_{k}E_{l}f\left(\boldsymbol{\sigma}\left(r\right)\right)\right\} \\
 & =-\delta_{ij}\delta_{kl}\left(b_{1}\left(r\right)+\delta_{i,N-1}b_{3}\left(r\right),\, b_{2}\left(r\right)+\delta_{i,N-1}b_{4}\left(r\right)\right)\left(\Sigma_{U}\left(r\right)\right)^{-1}\left(\begin{array}{c}
b_{1}\left(r\right)+\delta_{k,N-1}b_{3}\left(r\right)\\
b_{2}\left(r\right)+\delta_{k,N-1}b_{4}\left(r\right)
\end{array}\right)\\
 & =\delta_{ij}\delta_{kl}\cdot p\left[b_{1}\left(r\right)+\left(\delta_{i,N-1}+\delta_{k,N-1}\right)b_{3}\left(r\right)\right]\\
 & \quad-\delta_{ij}\delta_{kl}\delta_{i,N-1}\delta_{k,N-1}\left(b_{3}\left(r\right),\, b_{4}\left(r\right)\right)\left(\Sigma_{U}\left(r\right)\right)^{-1}\left(\begin{array}{c}
b_{3}\left(r\right)\\
b_{4}\left(r\right)
\end{array}\right),\\
 & \mbox{Cov}_{f,\nabla f}\left\{ E_{i}E_{j}f\left(\mathbf{n}\right),E_{k}E_{l}f\left(\boldsymbol{\sigma}\left(r\right)\right)\right\} -\mbox{Cov}_{\nabla f}\left\{ E_{i}E_{j}f\left(\mathbf{n}\right),E_{k}E_{l}f\left(\boldsymbol{\sigma}\left(r\right)\right)\right\} \\
 & =-\delta_{ij}\delta_{kl}\left(b_{1}\left(r\right)+\delta_{i,N-1}b_{3}\left(r\right),\, b_{2}\left(r\right)+\delta_{i,N-1}b_{4}\left(r\right)\right)\left(\Sigma_{U}\left(r\right)\right)^{-1}\left(\begin{array}{c}
b_{2}\left(r\right)+\delta_{k,N-1}b_{4}\left(r\right)\\
b_{1}\left(r\right)+\delta_{k,N-1}b_{3}\left(r\right)
\end{array}\right)\\
 & =\delta_{ij}\delta_{kl}\cdot p\left[b_{2}\left(r\right)+\left(\delta_{i,N-1}+\delta_{k,N-1}\right)b_{4}\left(r\right)\right]\\
 & \quad-\delta_{ij}\delta_{kl}\delta_{i,N-1}\delta_{k,N-1}\left(b_{3}\left(r\right),\, b_{4}\left(r\right)\right)\left(\Sigma_{U}\left(r\right)\right)^{-1}\left(\begin{array}{c}
b_{4}\left(r\right)\\
b_{3}\left(r\right)
\end{array}\right).
\end{align*}

Combining the previous calculations, we arrive at
\begin{align*}
 & \mbox{Cov}_{f,\nabla f}\left\{ E_{i}E_{i}f\left(\mathbf{n}\right),E_{j}E_{j}f\left(\mathbf{n}\right)\right\} =\mbox{Cov}_{f,\nabla f}\left\{ E_{i}E_{i}f\left(\boldsymbol{\sigma}\left(r\right)\right),E_{j}E_{j}f\left(\boldsymbol{\sigma}\left(r\right)\right)\right\} \\
 & =\begin{cases}
0 & ,i\neq j\\
2p\left(p-1\right) & ,i=j\neq N-1\\
\Sigma_{Q,11}\left(r\right) & ,i=j=N-1,
\end{cases}\\
 & \mbox{Cov}_{f,\nabla f}\left\{ E_{i}E_{i}f\left(\mathbf{n}\right),E_{j}E_{j}f\left(\boldsymbol{\sigma}\left(r\right)\right)\right\} \\
 & =\begin{cases}
0 & ,i\neq j\\
2p\left(p-1\right)r^{p-2} & ,i=j\neq N-1\\
\Sigma_{Q,12}\left(r\right) & ,i=j=N-1.
\end{cases}
\end{align*}
For the cases of indices that do not appear above we have 
\begin{align*}
\mbox{Cov}_{f,\nabla f}\left\{ E_{i}E_{j}f\left(\mathbf{n}\right),E_{k}E_{l}f\left(\mathbf{n}\right)\right\}  & =\mbox{Cov}_{\nabla f}\left\{ E_{i}E_{j}f\left(\mathbf{n}\right),E_{k}E_{l}f\left(\mathbf{n}\right)\right\} ,\\
\mbox{Cov}_{f,\nabla f}\left\{ E_{i}E_{j}f\left(\boldsymbol{\sigma}\left(r\right)\right),E_{k}E_{l}f\left(\boldsymbol{\sigma}\left(r\right)\right)\right\}  & =\mbox{Cov}_{\nabla f}\left\{ E_{i}E_{j}f\left(\boldsymbol{\sigma}\left(r\right)\right),E_{k}E_{l}f\left(\boldsymbol{\sigma}\left(r\right)\right)\right\} ,\\
\mbox{Cov}_{f,\nabla f}\left\{ E_{i}E_{j}f\left(\mathbf{n}\right),E_{k}E_{l}f\left(\boldsymbol{\sigma}\left(r\right)\right)\right\}  & =\mbox{Cov}_{\nabla f}\left\{ E_{i}E_{j}f\left(\mathbf{n}\right),E_{k}E_{l}f\left(\boldsymbol{\sigma}\left(r\right)\right)\right\} .
\end{align*}
From the above it follows that $\Sigma_{Q}\left(r\right)$ is semi
positive definite for any $r\in\left(-1,1\right)$.

It is now easy to compare covariances and see that, conditional on
(\ref{eq:cond}), the law of 
\[
\left(\frac{\nabla^{2}f\left(\mathbf{n}\right)-\mathbb{E}\left\{ \nabla^{2}f\left(\mathbf{n}\right)\right\} }{\sqrt{Np\left(p-1\right)}},\frac{\nabla^{2}f\left(\boldsymbol{\sigma}\left(r\right)\right)-\mathbb{E}\left\{ \nabla^{2}f\left(\boldsymbol{\sigma}\left(r\right)\right)\right\} }{\sqrt{Np\left(p-1\right)}}\right)
\]
 is the same as that of 
\[
\left(\hat{\mathbf{M}}_{N-1}^{\left(1\right)}\left(r\right),\,\hat{\mathbf{M}}_{N-1}^{\left(2\right)}\left(r\right)\right).
\]

What remains is to show that the conditional expectation of $\nabla^{2}f\left(\mathbf{n}\right)$
and $\nabla^{2}f\left(\boldsymbol{\sigma}\left(r\right)\right)$ under
(\ref{eq:cond}) are equal to 
\begin{equation}
-pu_{1}I+m_{1}\left(r,u_{1},u_{2}\right)e_{N-1,N-1}\mbox{\,\,\ and\,\,}-pu_{2}I+m_{2}\left(r,u_{1},u_{2}\right)e_{N-1,N-1},\label{eq:104}
\end{equation}
respectively. Denoting expectation conditional on (\ref{eq:cond})
by $\mathbb{E}_{f,\nabla f}^{u_{1},u_{2}}\left\{ \cdot\right\} $,
\begin{align*}
\mathbb{E}_{f,\nabla f}^{u_{1},u_{2}}\left\{ E_{i}E_{j}f\left(\mathbf{n}\right)\right\}  & =\left(\mbox{Cov}_{\nabla f}\left\{ E_{i}E_{j}f\left(\mathbf{n}\right),f\left(\mathbf{n}\right)\right\} ,\mbox{Cov}_{\nabla f}\left\{ E_{i}E_{j}f\left(\mathbf{n}\right),f\left(\boldsymbol{\sigma}\left(r\right)\right)\right\} \right)\left(\Sigma_{U}\left(r\right)\right)^{-1}\left(u_{1},u_{2}\right)^{T}\\
 & =\delta_{ij}\left(b_{1}\left(r\right)+\delta_{i,N-1}b_{3}\left(r\right),b_{2}\left(r\right)+\delta_{i,N-1}b_{4}\left(r\right)\right)\left(\Sigma_{U}\left(r\right)\right)^{-1}\left(u_{1},u_{2}\right)^{T}\\
 & =-\delta_{ij}pu_{1}+\delta_{ij}\delta_{i,N-1}\left(b_{3}\left(r\right),b_{4}\left(r\right)\right)\left(\Sigma_{U}\left(r\right)\right)^{-1}\left(u_{1},u_{2}\right)^{T}.
\end{align*}
Similarly,
\begin{align*}
\mathbb{E}_{f,\nabla f}^{u_{1},u_{2}}\left\{ E_{i}E_{j}f\left(\boldsymbol{\sigma}\left(r\right)\right)\right\}  & =-\delta_{ij}pu_{2}+\delta_{ij}\delta_{i,N-1}\left(b_{3}\left(r\right),b_{4}\left(r\right)\right)\left(\Sigma_{U}\left(r\right)\right)^{-1}\left(u_{2},u_{1}\right)^{T}.
\end{align*}
Which gives the required expectation (\ref{eq:104}). This completes
the proof.\qed

\section{\label{sec:app-nondeg}Appendix III: Regularity conditions for the
K-R formula}

In Section \ref{sec:ExactFormula:KR+Hessians} we needed to apply
the K-R Theorem to `count' pairs of different points $\left(\boldsymbol{\sigma},\boldsymbol{\sigma}'\right)\in\mathbb{S}^{N-1}\times\mathbb{S}^{N-1}$
at which $\nabla f_{N}\left(\boldsymbol{\sigma}\right)=\nabla f_{N}\left(\boldsymbol{\sigma}'\right)=0$
and $f_{N}\left(\boldsymbol{\sigma}\right),\, f_{N}\left(\boldsymbol{\sigma}'\right)\in\sqrt{N}B$.
The variant of the K-R Theorem we used is \cite[Theorem 12.1.1]{RFG}
which in particular accounts for the case where the parameter space
is a (Riemannian) manifold. It requires a long list of technical conditions
to be met (conditions (a)-(g) in the statement of the theorem) which
we discuss in this section. We start by relating our notation to that
of \cite[Theorem 12.1.1]{RFG}. 

In \cite[Theorem 12.1.1]{RFG}, $f\left(t\right)=\left(f^{1}\left(t\right),...,f^{N}\left(t\right)\right)$
is a random field on an $N$-dimensional manifold $M$ taking values
in $\mathbb{R}^{N}$, $\nabla f\left(t\right)=\left(E_{j}f^{i}\left(t\right)\right)_{i,j=1}^{N}$
is its Jacobian matrix (where $E$ is a fixed orthonormal frame field),
and $h\left(t\right)=\left(h^{1}\left(t\right),...,h^{K}\left(t\right)\right)$
is an additional random field from $M$ to $\mathbb{R}^{K}$. Those
$f$, $\nabla f$, and $h$ correspond to our $\left(\nabla f_{N}\left(\boldsymbol{\sigma}\right),\nabla f_{N}\left(\boldsymbol{\sigma}'\right)\right)$,
$J\left(\boldsymbol{\sigma},\boldsymbol{\sigma}'\right)$, and $\left(f_{N}\left(\boldsymbol{\sigma}\right),f_{N}\left(\boldsymbol{\sigma}'\right)\right)$,
respectively, where $J\left(\boldsymbol{\sigma},\boldsymbol{\sigma}'\right)$
is defined as the Jacobian matrix of $\left(\nabla f_{N}\left(\boldsymbol{\sigma}\right),\nabla f_{N}\left(\boldsymbol{\sigma}'\right)\right)$
with respect to the orthonormal frame field $E$. That is, if $E_{i}\left(\boldsymbol{\sigma}\right)$
(respectively, $E_{j}\left(\boldsymbol{\sigma}'\right)$) is considered
as a derivation with respect to the first (respectively, second) coordinate
of $f_{N}\left(\boldsymbol{\sigma},\boldsymbol{\sigma}'\right)$,
then $J\left(\boldsymbol{\sigma},\boldsymbol{\sigma}'\right)$ is
the block matrix
\[
J\left(\boldsymbol{\sigma},\boldsymbol{\sigma}'\right)\triangleq\left(E_{i'}\left(\boldsymbol{\sigma}_{i}\right)E_{j'}\left(\boldsymbol{\sigma}_{j}\right)f_{N}\left(\boldsymbol{\sigma},\boldsymbol{\sigma}'\right)\right)_{i,j=1}^{2N-2}=\left(\begin{array}{cc}
\nabla^{2}f_{N}\left(\boldsymbol{\sigma}\right) & 0\\
0 & \nabla^{2}f_{N}\left(\boldsymbol{\sigma}'\right)
\end{array}\right),
\]
where $i'=i\mbox{ mod }N-1$ and similarly for $j'$, and 
\[
\boldsymbol{\sigma}_{i}=\begin{cases}
\boldsymbol{\sigma} & \mbox{ if }i<N-1,\\
\boldsymbol{\sigma}' & \mbox{ if }i\geq N-2.
\end{cases}
\]

The manifold $M$ in our case is $\mathcal{S}_{N}^{2}\left(I_{R}\right)$
of (\ref{eq:S2N}) where $I_{R}$ is an open interval whose closure
is contained in $(-1,1)$.%
\footnote{In \cite[Theorem 12.1.1]{RFG} it is required that $M$ is compact
but going the proof of the theorem it can be seen that since in our
case $M=\mathcal{S}_{N}^{2}\left(I_{R}\right)$ has a finite atlas,
this requirement can be replaced by requiring conditions (a)-(g) to
hold on the closure of $\mathcal{S}_{N}^{2}\left(I_{R}\right)$.%
} Conditions (a), (f) and (g) of \cite[Theorem 12.1.1]{RFG} regarding
the continuity, moduli of continuity and moments of the involved random
fields are trivial consequences of the representation (\ref{eq:Hamiltonian})
of the Hamiltonian $H_{N}\left(\boldsymbol{\sigma}\right)$, Gaussianity
and stationarity. The remaining conditions concern the continuity
of certain conditional densities.%
\footnote{Though this is not explicit in the statement of \cite[Theorem 12.1.1]{RFG},
from its proof it can be seen that the support of the density of $\nabla f$
(which in our setting is $J\left(\boldsymbol{\sigma},\boldsymbol{\sigma}'\right)$)
can be any subspace $L\subset\mathbb{R}^{N^{2}}$ such that is $\det\left(\nabla f\right)$
has density whose support is $\mathbb{R}$. For example, in our case
$J\left(\boldsymbol{\sigma},\boldsymbol{\sigma}'\right)$ has entries
which are identically $0$.%
} Below we will prove the following lemma.
\begin{lem}
\label{lem:non-degeneracy}For any $r\in\left(-1,1\right)$, the Gaussian
array 
\begin{equation}
\left\{ \nabla f\left(\mathbf{n}\right),\nabla f\left(\boldsymbol{\sigma}\left(r\right)\right),\nabla^{2}f\left(\mathbf{n}\right),\nabla^{2}f\left(\boldsymbol{\sigma}\left(r\right)\right)\right\} ,\label{eq:nondeg2}
\end{equation}
is non-degenerate, up to symmetry of the Hessians. That is, if we
replace the Hessians in (\ref{eq:nondeg2}) by only their on-and-above
elements, then the support of the Gaussian density corresponding to
(\ref{eq:nondeg2}) is $\mathbb{R}^{2+(N-1)(N-2)}$.
\end{lem}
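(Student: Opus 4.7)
The plan is to decompose the joint Gaussian law by iterated conditioning and verify non-degeneracy of each resulting piece. As a preliminary observation, enlarging the target array by the two scalars $f(\mathbf{n})$ and $f(\boldsymbol{\sigma}(r))$ is harmless: a Gaussian sub-vector is non-degenerate whenever the corresponding super-vector is, since every principal submatrix of a strictly positive-definite symmetric matrix is itself strictly positive-definite. It therefore suffices to show that the enlarged joint is non-degenerate.

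I would factor this enlarged joint in three stages. First, the marginal of $(\nabla f(\mathbf{n}), \nabla f(\boldsymbol{\sigma}(r)))$ admits the explicit density $(\ref{eq:grad_dens})$ from Lemma \ref{lem:dens}, which is strictly positive on $(-1,1)$; this marginal is therefore non-degenerate. Second, conditionally on these gradients vanishing, the pair $(f(\mathbf{n}), f(\boldsymbol{\sigma}(r)))$ is centered Gaussian with covariance $\Sigma_U(r)$, which Remark \ref{rem:a} establishes is strictly positive-definite on $(-1,1)$. Third, by Lemma \ref{lem:Hess_struct_2}, conditionally on both the gradients and the $f$-values, the Hessian pair is a deterministic translation of $(\hat{\mathbf{M}}_{N-1}^{(1)}(r), \hat{\mathbf{M}}_{N-1}^{(2)}(r))$, whose three blocks $(\hat{\mathbf{G}}_{N-2}^{(1)}, \hat{\mathbf{G}}_{N-2}^{(2)})$, $(Z^{(1)}, Z^{(2)})$ and $(Q^{(1)}, Q^{(2)})$ are mutually independent. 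Combined non-degeneracy of the Hessian pair thus reduces to non-degeneracy of each of these three blocks.

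For the $\hat{\mathbf{G}}$-block, the representation as a linear combination of three independent GOE-like matrices $\bar{\mathbf{G}}, \bar{\mathbf{G}}^{(1)}, \bar{\mathbf{G}}^{(2)}$ makes distinct on-or-above entries mutually independent, and for each fixed $(i,j)$ a direct computation shows that the covariance matrix of $(\hat{G}_{ij}^{(1)}, \hat{G}_{ij}^{(2)})$ has equal diagonal entries and equal off-diagonal entries with determinant proportional to $1 - |r|^{2(p-2)}$, strictly positive for $r \in (-1,1)$ and $p \geq 3$. What remains, and what constitutes the main obstacle, is to establish the strict positive-definiteness of $\Sigma_Z(r)$ and $\Sigma_Q(r)$ on $(-1,1)$; of these, $\Sigma_Z$ is explicitly advertised in Remark \ref{rem:a} as part of the content to be proved here.

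Each of these two $2\times 2$ matrices has equal diagonal entries $a$ and equal off-diagonal entries $b$, so its eigenvalues are $a \pm b$, and the task reduces to showing $\Sigma_{\bullet,11}(r) \pm \Sigma_{\bullet,12}(r) > 0$ for $\bullet \in \{Z,Q\}$. The plan is to mirror the argument of Remark \ref{rem:a} for $\Sigma_U$: clear the common denominator $1 - r^{2p-2}$ (and, for $\Sigma_Q$, the additional factor $1 - (pr^p - (p-1)r^{p-2})^2$), both positive on $(-1,1)$ by that remark, and reduce each residual numerator to a manifestly positive combination of $(1-r^2)$, even powers of $r$, and the AM--GM inequality $(\ref{eq:bd1})$. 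The $\Sigma_Q$ case is the most delicate, since its formula in $(\ref{eq:86})$ carries the Schur-complement correction arising from the conditioning on the $f$-values; a careful bookkeeping of those terms should nevertheless exhibit $\Sigma_{Q,11} \pm \Sigma_{Q,12}$ as ratios of non-negative polynomials in $r$ and $(1-r^2)$, strictly positive on $(-1,1)$. Assembling the four resulting positivity assertions with the three factorization steps completes the proof.
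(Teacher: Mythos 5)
Your opening move---enlarging the Gaussian array by $f(\mathbf{n})$ and $f(\boldsymbol{\sigma}(r))$---commits you to a strictly stronger claim, and that claim is false for $p=3$. Restrict $f$ to the great circle through $\mathbf{n}$ and $\boldsymbol{\sigma}(r)$: the resulting one-dimensional process $g(t)$ has covariance $\cos^{p}(t-s)$, a trigonometric polynomial of degree $p$. For $p=3$ one has $\cos^{3}\theta=\frac{1}{4}(3\cos\theta+\cos 3\theta)$, so $g$ is a random linear combination of $\cos t,\sin t,\cos 3t,\sin 3t$ only---a \emph{four}-dimensional Gaussian. The six quantities $g(0),g'(0),g''(0),g(\theta),g'(\theta),g''(\theta)$, which in the frame of Lemma \ref{lem:cov} are exactly $f$, $E_{N-1}f$, $E_{N-1}E_{N-1}f$ at the two points, therefore satisfy two deterministic linear relations, so the enlarged array is degenerate. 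In concrete terms, $\Sigma_{Q}$ fails to be strictly positive-definite: plugging $r=0$, $p=3$ into (\ref{eq:86}) and noting $r^{p-3}=r^{0}=1$ while $b_{3}(0)=b_{4}(0)=0$ gives
\[
\Sigma_{Q,11}(0)=2p(p-1)-\frac{1}{p}\bigl[p(p-1)\cdot\bigl(-(p-2)\bigr)\bigr]^{2}=12-12=0,
\]
and likewise $\Sigma_{Q,12}(0)=0$. This is precisely why Remark \ref{rem:a} and Appendix \ref{sec:app-cov} only ever assert that $\Sigma_{Q}$ is \emph{semi} positive-definite; your hoped-for ``careful bookkeeping'' to show $\Sigma_{Q,11}\pm\Sigma_{Q,12}>0$ cannot succeed.

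The paper's own proof avoids this by \emph{not} conditioning on the values: it reduces the lemma to non-degeneracy of the Hessian pair conditional on the gradients alone, then marginalizes over $(U_{1},U_{2})\sim N(0,\Sigma_{U}(r))$. The fluctuations of $(U_{1},U_{2})$ enter the corner Hessian entries through the shift $-\sqrt{\frac{1}{N-1}\frac{p}{p-1}}\,U_{i}I+\frac{m_{i}(r,U_{1},U_{2})}{\sqrt{(N-1)p(p-1)}}\,e_{N-1,N-1}$ in (\ref{eq:100}), and it is this randomness that supplies what $\Sigma_{Q}$ alone lacks: one then needs only $\Sigma_{Z}(r)>0$, $\Sigma_{Q}(r)\geq 0$, and the surjectivity of $(u_{1},u_{2})\mapsto(m_{1},m_{2})$, i.e.\ $b_{3}(r)\pm b_{4}(r)\neq 0$ for $r\neq 0$. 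Your first two factorization stages and your treatment of the $\hat{\mathbf{G}}$ and $Z$ blocks track the paper's route; the third stage, with its target $\Sigma_{Q}>0$, is where the plan collapses.
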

We wish to apply the K-R formula with $\sqrt{N}B$, the target set
of $f_{N}\left(\boldsymbol{\sigma}\right),\, f_{N}\left(\boldsymbol{\sigma}'\right)$,
being equal to an open interval or a finite union of such. Suppose
that instead of considering critical points $\boldsymbol{\sigma}$,
$\boldsymbol{\sigma}'$ with $f_{N}\left(\boldsymbol{\sigma}\right),\, f_{N}\left(\boldsymbol{\sigma}'\right)\in\sqrt{N}B$,
we consider critical points such that $f_{N}\left(\boldsymbol{\sigma}\right)+\epsilon g_{N}\left(\boldsymbol{\sigma}\right)$,
$f_{N}\left(\boldsymbol{\sigma}'\right)+\epsilon g_{N}\left(\boldsymbol{\sigma}'\right)\in\sqrt{N}B$
with $g_{N}\left(\boldsymbol{\sigma}\right)$ being a continuous Gaussian
field on $\mathbb{S}^{N-1}$ independent of $f_{N}\left(\boldsymbol{\sigma}\right)$
such that $\left(g_{N}\left(\boldsymbol{\sigma}\right),g_{N}\left(\boldsymbol{\sigma}'\right)\right)$
forms a non-degenerate Gaussian vector for any $\boldsymbol{\sigma}'\neq\pm\boldsymbol{\sigma}$.
In the latter case with $\epsilon>0$, the additional regularity conditions,
conditions (b)-(e) can be verified provided that Lemma \ref{lem:non-degeneracy}
holds. Then, by letting $\epsilon\to0$ we obtain that the K-R formula
holds for case $\epsilon=0$, which is what we wish to prove. Thus,
what remains is to prove the lemma.

\subsection*{Proof of Lemma \ref{lem:non-degeneracy}}

For $r=0$ the lemma can be verified from the covariance computations
of Lemma \ref{lem:cov}. Fix $r\in\left(-1,1\right)\setminus\{0\}$.
It will be enough to show that: 1. $\left(\nabla f\left(\mathbf{n}\right),\nabla f\left(\boldsymbol{\sigma}\left(r\right)\right)\right)$
is non-degenerate and that conditional on $\left(\nabla f\left(\mathbf{n}\right),\nabla f\left(\boldsymbol{\sigma}\left(r\right)\right)\right)=0$,
and 2.$\left(\nabla^{2}f\left(\mathbf{n}\right),\nabla^{2}f\left(\boldsymbol{\sigma}\left(r\right)\right)\right)$
is non-degenerate (in the sense as in the statement of the lemma).
The first of the two follows directly from the covariance computations
of Lemma \ref{lem:cov}. From Lemma \ref{lem:Hess_struct_2} we have
that second condition follows if we are able to show that $\Sigma_{Z}\left(r\right)$is
invertible and that
\[
\left\{ \left(m_{1}\left(r,u_{1},u_{2}\right),m_{2}\left(r,u_{1},u_{2}\right)\right)\,:\, u_{1},\, u_{2}\in\mathbb{R}\right\} =\mathbb{R}^{2}.
\]

It can verified that
\[
\frac{\left(\Sigma_{Z,11}\left(r\right)\pm\Sigma_{Z,12}\left(r\right)\right)\left(1\mp r^{p-1}\right)}{p\left(p-1\right)}=1-r^{2p-4}\pm\left(p-2\right)r^{p-1}\mp\left(p-2\right)r^{p-3}.
\]
If $r\geq0$ or $p$ is odd, then
\[
\varpi\left(r\right)\triangleq1-r^{2p-4}-\left(p-2\right)r^{p-1}+\left(p-2\right)r^{p-3}>0.
\]
If $p$ is even, it can be verified that the derivative of $\varpi\left(r\right)$
has constant sign on $\left(-1,0\right)$, from which it follows,
by the fact that $\varpi\left(0\right)=1$ and $\varpi\left(-1\right)=0$,
that $\varpi\left(r\right)>0$ for any $r\in\left(-1,0\right)$. A
similar analysis shows that 
\[
1-r^{2p-4}+\left(p-2\right)r^{p-1}-\left(p-2\right)r^{p-3}>0.
\]
This proves that $\Sigma_{Z}\left(r\right)$ is strictly positive
definite for $r\in(-1,1)$. 

By definition (see (\ref{eq:m_i})),
\[
\left(\begin{array}{c}
m_{1}\left(r,u_{1},u_{2}\right)\\
m_{2}\left(r,u_{1},u_{2}\right)
\end{array}\right)=\left(\begin{array}{cc}
b_{3}\left(r\right) & b_{4}\left(r\right)\\
b_{4}\left(r\right) & b_{3}\left(r\right)
\end{array}\right)\left(\Sigma_{U}\left(r\right)\right)^{-1}\left(\begin{array}{c}
u_{1}\\
u_{2}
\end{array}\right),
\]
where we recall that $\Sigma_{U}\left(r\right)$ invertible as shown
in Remark \ref{rem:a}. Thus, it is enough to show that $b_{3}\left(r\right)\pm b_{4}\left(r\right)\neq0$
(and therefore the matrix above is invertible). From straightforward
algebra,
\[
b_{3}(r)\pm b_{4}(r)=p(p-1)r^{p-2}(1-r^{2})\frac{r^{p-2}\pm1}{1\mp\left(r^{p}-\left(p-1\right)r^{p-2}\left(1-r^{2}\right)\right)}.
\]
As mentioned in Remark \ref{rem:a}, $1\pm\left(pr^{p}-\left(p-1\right)r^{p-2}\right)>0$
and therefore the denominator above is positive. This completes the
proof.\qed

\section{Appendix IV: upper bound on the ground state from moments equivalence
on exponential scale}

In this appendix we show how Theorem \ref{thm:Var-E2-log} can be
used to prove that 
\begin{equation}
\lim_{N\to\infty}GS^{N}=-E_{0},\,\,\,\mbox{almost surely.}\label{eq:a1}
\end{equation}
The fact that (\ref{eq:a1}) holds was already proved in \cite{A-BA-C}
based on fact that pure models are 1-RSB. The proof below is based
on the equivalence of second and first moment squared only on the
exponential level -- a fact which may be useful when investigating
general mixed models which are not known to exhibit 1-RSB.

The Borell-TIS inequality \cite{Borell,TIS} (see also \cite[Theorem 2.1.1]{RFG})
gives, for $\epsilon>0$,

\begin{equation}
\mathbb{P}\left\{ \left|GS^{N}-\mathbb{E}\left\{ GS^{N}\right\} \right|>\epsilon\right\} \leq\exp\left\{ -\epsilon^{2}N/2\right\} .\label{eq:Borel}
\end{equation}
From the Borel-Cantelli lemma that in order to prove (\ref{eq:a1}),
it is sufficient to show that 

\begin{equation}
\lim_{N\to\infty}\mathbb{E}\left\{ GS^{N}\right\} =-E_{0}.\label{eq:a2}
\end{equation}

Note that 
\begin{equation}
GS^{N}<u\,\,\Longleftrightarrow\,\,{\rm Crt}_{N}\left(\left(-\infty,u\right)\right)\geq1.\label{eq:o2}
\end{equation}
Thus, by Markov's inequality, Theorem \ref{thm:A-BA-C}, and the definition
of $E_{0}$, 
\begin{equation}
\limsup_{N\to\infty}\mathbb{P}\left\{ GS^{N}<-E_{0}-\epsilon\right\} =\limsup_{N\to\infty}\mathbb{P}\left\{ {\rm Crt}_{N}\left(\left(-\infty,-E_{0}-\epsilon\right)\right)\geq1\right\} \leq\lim_{N\to\infty}e^{-NC_{\epsilon}}=0,\label{eq:o1}
\end{equation}
for any $\epsilon>0$, where $C_{\epsilon}>0$ is a constant depending
on $\epsilon$. 

Now, assume towards contradiction that, for some $\delta>0$, $N_{k}\to\infty$,
\[
\liminf_{N\to\infty}\mathbb{E}\left\{ GS^{N}\right\} =\lim_{k\to\infty}\mathbb{E}\left\{ GS^{N_{k}}\right\} \leq-E_{0}-\delta.
\]
Then, from (\ref{eq:Borel}),
\[
\lim_{k\to\infty}\mathbb{P}\left\{ GS^{N_{k}}<-E_{0}-\delta/2\right\} \geq\lim_{k\to\infty}\mathbb{P}\left\{ \left|GS^{N_{k}}-\mathbb{E}\left\{ GS^{N_{k}}\right\} \right|\leq\delta/4\right\} =1,
\]
which contradicts (\ref{eq:o1}).

Next, assume towards contradiction that , for some $\delta>0$, $N_{k}\to\infty$,
\[
\limsup_{N\to\infty}\mathbb{E}\left\{ GS^{N}\right\} =\lim_{k\to\infty}\mathbb{E}\left\{ GS^{N_{k}}\right\} \geq-E_{0}+\delta.
\]
Then, from (\ref{eq:Borel}),
\[
\limsup_{k\to\infty}\frac{1}{N_{k}}\log\left(\mathbb{P}\left\{ GS^{N_{k}}<-E_{0}\left(p\right)+\delta/2\right\} \right)\leq\lim_{k\to\infty}\frac{1}{N_{k}}\log\left(\mathbb{P}\left\{ \left|GS^{N_{k}}-\mathbb{E}\left\{ GS^{N_{k}}\right\} \right|>\delta/4\right\} \right)\leq-\delta^{2}/32.
\]

On the other hand, from the Paley-Zygmund inequality and (\ref{eq:o2}),
\begin{align*}
\liminf_{k\to\infty}\frac{1}{N_{k}}\log\left(\mathbb{P}\left\{ GS^{N_{k}}<-E_{0}\left(p\right)+\delta/2\right\} \right) & =\liminf_{k\to\infty}\frac{1}{N_{k}}\log\left(\mathbb{P}\left\{ {\rm Crt}_{N_{k}}\left(\left(-\infty,-E_{0}+\delta/2\right)\right)\geq1\right\} \right)\\
 & =\liminf_{k\to\infty}\frac{1}{N_{k}}\log\left(\frac{\left(\mathbb{E}\left\{ \mbox{Crt}_{N_{k}}\left(\left(-\infty,-E_{0}\left(p\right)+\delta\right]\right)\right\} \right)^{2}}{\mathbb{E}\left\{ \left(\mbox{Crt}_{N_{k}}\left(\left(-\infty,-E_{0}\left(p\right)+\delta\right]\right)\right)^{2}\right\} }\right)=0,
\end{align*}
which, of course, contradicts the previous inequality. Hence, (\ref{eq:a2})
and therefore (\ref{eq:a1}) follow.

\bibliographystyle{alpha}
\bibliography{master}

\end{document}